\theoremstyle{plain}
\newtheorem{theorem}{Theorem}[section]
\newtheorem{lemma}[theorem]{Lemma}
\newtheorem{proposition}[theorem]{Proposition}
\newtheorem{corollary}[theorem]{Corollary}
\newtheorem{question}[theorem]{Question}
\newtheorem{conjecture}[theorem]{Conjecture}
\theoremstyle{definition}
\newtheorem{definition}[theorem]{Definition}
\newtheorem{example}[theorem]{Example}
\newtheorem{remark}[theorem]{Remark}
\newtheorem{prop/def}[theorem]{Proposition/Definition}
\numberwithin{figure}{section}
\numberwithin{table}{section}
\numberwithin{equation}{section}
\newcommand*{\transpose}{%
  {\mathpalette\@transpose{}}%
}
\newcommand*{\@transpose}[2]{%
  \raisebox{\depth}{$\m@th#1\intercal$}%
}
\newcommand*\Ext{\underline{\textnormal{Hom}}}
\newcommand*\A{\mathbb{A}}
\newcommand*\BB{\mathbb{B}}
\newcommand*\CC{\mathbb{C}}
\newcommand*\EE{\mathbb{E}}
\newcommand*\GG{\mathbb{G}}
\newcommand*\LL{\mathbb{L}}
\newcommand*\N{\mathbb{N}}
\newcommand*\Q{\mathbb{Q}}
\newcommand*\ZZ{\mathbb{Z}}
\newcommand{\mA}{{\mathcal A}}
\newcommand{\mB}{{\mathcal B}}
\newcommand{\mC}{{\mathcal C}}
\newcommand{\mE}{{\mathcal E}}
\newcommand{\mF}{{\mathcal F}}
\newcommand{\mL}{{\mathcal L}}
\newcommand{\mM}{{\mathcal M}}
\newcommand{\mN}{{\mathcal N}}
\newcommand{\mP}{{\mathcal P}}
\newcommand{\mS}{{\mathcal S}}
\newcommand{\mV}{{\mathcal V}}
\newcommand*\coker{\textnormal{coker}}
\newcommand*\inv{\textnormal{inv}}
\newcommand*\vir{\textnormal{vir}}
\newcommand*\uvir{\textnormal{uvir}}
\newcommand*\ch{\textnormal{ch}}
\newcommand*\dg{\textnormal{dg}}
\newcommand*\mO{\mathcal{O}}
\newcommand{\pl}{{\textnormal{pl}}}
\newcommand{\ses}{{\textnormal{ss}}}
\newcommand{\pa}{{\textnormal{pa}}}
\newcommand{\ttop}{\textnormal{top}}
\newcommand*\Perf{\textnormal{Perf}_{\CC}}
\newcommand*\Hilb{\textnormal{Hilb}^n(X)}
\newcommand*\Sym{\textnormal{Sym}}
\newcommand*\Quot{\textnormal{Quot}_S(\mathbb{C}^N,n)}
\newcommand\numberthis{\addtocounter{equation}{1}\tag{\theequation}}
\newcommand\Item[1][]{%
  \ifx\relax#1\relax  \item \else \item[#1] \fi
  \abovedisplayskip=0pt\abovedisplayshortskip=0pt~\vspace*{-\baselineskip}}
  \renewcommand{\text}[1]{\textnormal{#1}}
\newcommand{\ov}[1]{\overline{#1}}
\title{Hilbert Schemes of Points on Calabi--Yau 4-Folds via Wall-Crossing}
\author{Arkadij Bojko \thanks{Address: Institute of Mathematics, Academia Sinica,
6F, Astronomy-Mathematics Building,
No. 1, Sec. 4, Roosevelt Road,
Taipei 10617, Taiwan; Email: abojko@gate.sinica.edu.tw
}}
\date{}
\begin{document}

\maketitle
\begin{abstract}
 Gross--Joyce--Tanaka \cite{GJT} proposed a wall-crossing conjecture for Calabi--Yau fourfolds.  Assuming it, we prove the conjecture of Cao--Kool \cite{CK1} for 0-dimensional sheaf-counting invariants on projective Calabi--Yau 4-folds. From it, we extract the full topological information contained in the virtual fundamental classes of Hilbert schemes of points which turns out to be equivalent to the data of all descendent integrals.   
 As a consequence, we can express many generating series of invariants in terms of explicit universal power series.
\begin{enumerate}[label=\roman*)]
    \item  On $\CC^4$, Nekrasov proposed invariants with a conjectured closed form \cite{NMF}.  We show that an analog of his formula holds for compact Calabi--Yau 4-folds satisfying the wall-crossing conjecture.
    \item   We notice a relationship to corresponding generating series for Quot schemes on elliptic surfaces which are also governed by a wall-crossing formula. This leads to a Segre--Verlinde correspondence for Calabi--Yau fourfolds. 
\end{enumerate}
\end{abstract}
\tableofcontents
\setstretch{1.2}
\section{Introduction}
\label{sec:intro}
Following the philosophy of Donaldson \cite{Do83}, who defined invariants counting \textit{anti-self-dual connections} on a real 4-manifold, Donaldson--Thomas proposed a holomorphic version of this construction for a Calabi--Yau fourfold $X$ in \cite{DT96}. To give a rigorous formulation of their ideas, a new approach to sheaf counting was pioneered by Borisov--Joyce \cite{BJ} (using derived differential geometry) and Oh-Thomas \cite{OT}\footnote{See also the work of Cao--Leung  \protect\cite{CL} which focused on some special cases.}. They constructed new virtual fundamental classes (from now on VFCs for short) for moduli spaces of sheaves on Calabi--Yau 4-folds, which unlike their analogs obtained using Behrend--Fantechi \cite{BF}, are not canonically determined by their deformation and obstruction theory. 

The additional input needed to use the machinery of \cite{BJ} and \cite{OT} is a choice of orientation on the moduli space, the existence of which was proved in \cite{CGJ} for projective Calabi--Yau 4-folds and by the author \cite{bojko} for quasi-projective ones. It should be noted that this leads to complications that need to be addressed.

In this paper, we focus on the compact case. Some results in this setting have been obtained by Park \cite{Huy} extending the ones of Cao--Kool \cite{CK1}, but the majority of computations have been done for local 4-folds. Because for now, the construction of the deformation invariant VFC requires the Calabi--Yau condition even for ideal sheaves of points, one is unable to use algebraic cobordism to reduce everything to toric computations as in \cite{Li, LP}. Instead, we will use the conjectural wall-crossing along the lines of Gross--Joyce--Tanaka \cite{GJT}. 

Parallel results using toric computations have been obtained by Martijn Kool and Jørgen Rennemo \cite{KRdraft} on $\CC^4$ in the process of writing this paper. Unlike the Donaldson--Thomas invariants of Hilbert schemes of points for 3-folds, where this would describe the corresponding invariants of all compact 3-folds, there is currently no relation between the local and compact setting for Calabi--Yau 4-folds. Additionally, the computation in \cite{KRdraft} uses localization to the fixed point locus of $\textnormal{Hilb}^n(\CC^4)$ with respect to the Calabi--Yau torus $\GG_m^3$. Then the problem of orientations reduces to choices of signs at each isolated fixed point which are determined in \cite{KRdraft} using a global description of $\textnormal{Hilb}^n(\CC^4)$ as the vanishing locus of an isotropic section. However, when gluing copies of $\CC^4$ to larger toric Calabi--Yau 4-folds, there is currently no indication whether the signs in \cite{KRdraft} should be modified by some global sign for each copy of $\CC^4$.

\paragraph{Wall-crossing conjecture for Calabi--Yau 4-folds}
We use the term Calabi--Yau 4-fold for a smooth projective 4-fold $X$ with a trivial canonical bundle $K_X$ and $H^2(\mathcal{O}_X) = 0$, as this is an assumption, we will need to obtain all of our results. The general setup of wall-crossing for Calabi--Yau fourfolds explained in §\ref{VAinAG} does not need the last condition. We also always assume that $X$ is connected as everything can be easily generalized to multiple components.

We use $K^0(X)$ to denote the complex topological K-theory of $X$. Let $\alpha\in K^0(X)$  and $M_{\alpha,L}$ be a projective moduli scheme of perfect complexes with fixed determinant $L$ in class $\alpha$  satisfying some stability condition, then Oh--Thomas \cite{OT} or Borisov--Joyce \cite{BJ} give us a VFC $$[M_{\alpha,L}]^{\textnormal{vir}}\in H_{\chi(\mathcal{O}_X)-\chi(\alpha,\alpha)}(M_{\alpha,L},\Q)$$ when $\chi(\alpha,\alpha)$ is even. Working with singular homology instead of chow homology and thus halving the degree of the cycle is the result of taking ``half of the obstruction theory" and the degree $\chi(\mathcal{O}_X)-\chi(\alpha,\alpha)$ was interpreted in Borisov--Joyce \cite{BJ} as the real virtual dimension of $M_{\alpha,L}$. 

These fundamental classes were conjectured to satisfy universal wall-crossing formulae by Gross--Joyce--Tanaka \cite{GJT}. This leads to a wall-crossing conjecture for Joyce--Song stable pairs as in Joyce--Song \cite{JoyceSong} (see Conjecture \ref{conjecture WC}, also \cite[Def. 4.3]{GJT}). We present here the first application by computing virtual fundamental classes of Hilbert schemes of points and the related invariants. Our goal is two-fold:
\begin{enumerate}[label=\roman*)]
\item The main novelty of this work as the first successful application of the wall-crossing machinery is the description of the full topological data of $[\Hilb]^{\vir}$. More explicitly, using the minimal input provided to us by Park \cite[Cor. 0.3]{Huy} (recalled in Theorem \ref{thm:park}), we recover an \underline{explicit} description of all the descendent integrals on $\Hilb$ in Theorem \ref{theoremhilb} under the additional condition $H^1(\mO_X)=0$. 
    \item Out of the above data, we are able to compute closed formulae for the generating series of many new invariants and study their symmetries; all this without the extra assumption $H^1(\mO_X)=0$. Here we distinguish two sub-points:
    \begin{enumerate}
        \item    We reduce existing conjectures in literature to the wall-crossing conjecture. These are the Conjecture \ref{conjecture CK} of Cao--Kool and its K-theoretic refinement, the analog of which was conjectured by Nekrasov for $\CC^4$.  
        \item We obtain a relation between universal generating series for Calabi--Yau fourfolds and for surfaces satisfying $c_1^2 = 0$. A consequence of this is for example the Segre--Verlinde correspondence for Calabi--Yau fourfolds. This relation is not unexpected, as DT$_4$ invariants are meant to be the holomorphic version of Donaldson invariants on real fourfolds, in particular on complex surfaces where they were studied by \cite{AJLOP, MOPhigher}.
    \end{enumerate}
 
\end{enumerate}

In the introduction, we choose to focus more on the particular generating series and their symmetries over the more general result mentioned in the first point. A concise summary of the presently used methods appears in §\cite[§1.2]{bojkoquot} where its applications to Quot schemes is the main point. 

\paragraph{Tautological insertions}
One natural insertion on $\Hilb$, considered for surfaces by Göttsche \cite{Gottschechern} and by Cao--Kool  \cite{CK1}, Park \cite{Huy}, Cao--Qu \cite{CaoQu} and Nekrasov\footnote{In the toric setting.} \cite{NMF} for Calabi--Yau 4-folds is the Chern class $c(L^{[n]})$ of the vector bundle 
\begin{equation}
\label{Ln}
    L^{[n]} = \pi_{2\, *}\big(\mathcal{F}_{n}\otimes \pi_X^*(L)\big)\,,
\end{equation}
where $X\xleftarrow{\pi_X} X\times \Hilb \xrightarrow{\pi_2}\Hilb$ are the projections, $L$ is a line bundle on $X$, and $\mathcal{O}\to \mathcal{F}_n$ is the universal complex on $\Hilb$.

For the generating series of invariants 
\begin{equation} 
\label{tautdef}
I(L;q)=1+\sum_{n>0}I_n(L)q^n =1+\sum_{n>0} \int_{[\Hilb]^{\textnormal{vir}}}c_n(L^{[n]})q^n
\end{equation} Cao--Kool \cite{CK1} conjecture the following:
\begin{conjecture}[Cao--Kool \cite{CK1}]
\label{conjecture CK}
Let $X$ be a projective Calabi--Yau 4-fold and  $L$ a line bundle on $X$ then 
\begin{equation}
\label{taut=mm}
  I(L;q) =M(-q)^{\int_Xc_1(L)\cdot c_3(X)}  
\end{equation}
for \underline{some choice of orientations}. Here $M(q) = \prod_{i=1}^\infty(1-(q)^i)^{-i}$ is the Mac-Mahon function. 
\end{conjecture}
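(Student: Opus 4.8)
The plan is to deduce Conjecture~\ref{conjecture CK} from the wall-crossing Conjecture~\ref{conjecture WC}, by first establishing the explicit description of $[\Hilb]^{\textnormal{vir}}$ in the homology of $\mathcal{C}_X$ promised by Theorem~\ref{theoremhilb}, and then pinning down the one remaining universal power series by a low-order check.

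First I would realize a point of $\Hilb$ as a Joyce--Song stable pair $\mathcal{O}_X\to F$ with $F$ of dimension $0$ and length $n$, living in the heart generated by $\mathcal{O}_X$ together with coherent sheaves supported in dimension $\le 0$, and run Conjecture~\ref{conjecture WC} from the stable-pair chamber down to the chamber in which no such pairs exist. This expresses $[\Hilb]^{\textnormal{vir}}$, pushed forward to $H_*(\mathcal{C}_X)$, as an explicit finite sum of iterated Joyce brackets $[\,\cdot\,,\cdot\,]$ of the invariant classes $[\M^{\,\mathrm{pt}}_m]\in H_*(\mathcal{C}_X)$ attached to the moduli stacks of $0$-dimensional sheaves of length $m$ with the fixed class $[\mathcal{O}_X]$; this is exactly the shape of Theorem~\ref{theoremhilb}, which turns the virtual class into a universal expression in the $0$-dimensional building blocks.

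The next step is to control those building blocks and extract the exponent. The classes $[\M^{\,\mathrm{pt}}_m]$ are local — a $0$-dimensional sheaf is étale-locally one on $\CC^4$ — so after the bracket manipulations the answer can depend on $X$ only through characteristic numbers and on $L$ only through $c_1(L)$. The component of Joyce's bracket relevant when one bolts a $0$-dimensional class onto $[\mathcal{O}_X]$ is governed by the square-root Euler class of $\mathrm{Ext}^\bullet(\mathcal{O}_X,F)\ominus\mathrm{Ext}^\bullet(F,\mathcal{O}_X)$; on a Calabi--Yau $4$-fold, for $F$ supported at a point this reduces, via the Koszul resolution and $c_1(T_X)=0$, to $c_3(X)$ — exactly as one already sees in the base case $n=1$, where the Hilbert scheme is $X$ itself, the obstruction bundle is $\Lambda^2 T_X$, and $\sqrt{e}(\Lambda^2 T_X)=\pm c_3(X)$. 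Pairing the class of Theorem~\ref{theoremhilb} with the tautological bundle $L^{[n]}$ on $\mathcal{C}_X$, and using that the wall-crossing output is multiplicative over connected components, turns the sum of brackets into an exponential, whence $I(L;q)=F(q)^{\int_X c_1(L)\cdot c_3(X)}$ for a single power series $F(q)\in 1+q\,\Q[\![q]\!]$ independent of $X$ and of $L$. It then remains to identify $F$: being universal, it is determined by evaluating both sides in one nontrivial case, e.g.\ by matching $I_1(L)=\pm\int_X c_1(L)\,c_3(X)$ and the next few coefficients against Cao--Kool's direct computations on $\Hilb$ for small $n$, or against Nekrasov's $\CC^4$ formula \eqref{nekrasov}; fixing the overall sign is where the clause ``for some choice of orientations'' enters, and the product formula $M(q)=\prod_{i\ge 1}(1-q^i)^{-i}$ then yields \eqref{taut=mm}.

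I expect the genuine difficulty to lie in the middle step: showing that the $0$-dimensional invariants $[\M^{\,\mathrm{pt}}_m]$, after the iterated brackets, assemble into a clean closed form controlled by a single series. This demands a careful analysis of Joyce's Lie bracket on $H_*(\mathcal{C}_X)$ restricted to $0$-dimensional classes, a dimensional-reduction argument comparing $X$ with $\CC^4$, compatibility of orientations across the wall (responsible for the sign turning $M(q)$ into $M(-q)$), and control over the convergence of all the generating series as formal power series in $q$.
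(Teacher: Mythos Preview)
Your overall architecture matches the paper's: realize $\Hilb$ as a moduli of Joyce--Song pairs, apply Conjecture~\ref{conjecture WC} to write $[\Hilb]_{\textnormal{vir}}$ as iterated brackets of $e^{(0,1)}\otimes 1$ with the point-sheaf classes $\mathscr{M}_{np}$, and then read off $I(L;q)$. The genuine gap is in how you propose to determine the point-sheaf classes. The paper shows (Proposition~\ref{proposition 68}) only that $\mathscr{N}_{np}=\sum_{v\in B_6}a_v(n)u_{v,1}$ for \emph{unknown} coefficients $a_v(n)$; the bracket computation \eqref{liebrackettwL} then expresses $I_n(L)$ through the numbers $\sum_{v}\int_X c_1(L)\textnormal{ch}(v)\,a_v(n)$. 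Nothing in the wall-crossing formalism forces $a_v(n)$ to be proportional to $c_3(X)_v$, so the factorisation $I(L;q)=F(q)^{c_1(L)\cdot c_3(X)}$ is not a formal consequence of the bracket structure --- it is exactly what has to be proved. The paper obtains it by importing Cao--Qu's theorem (Conjecture~\ref{conjecture CK} for $L=\mathcal{O}_X(D)$ with $D$ a smooth connected divisor): knowing $I(L;q)$ \emph{in full} for a spanning set of very ample $L$'s determines all the $a_v(n)$ by an induction on $n$ (Theorem~\ref{theorem workhorse}), and simultaneously pins down the point-canonical orientations. Note also that Theorem~\ref{theoremhilb}, which you invoke, is itself a corollary of this computation of $\mathscr{N}_{np}$, so citing it as input is circular.

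Your proposed substitutes for Cao--Qu do not close this gap. The ``locality'' heuristic fails because the classes $\mathscr{M}_{np}$ live in $\check{H}_*(\mathcal{C}_X)$, a space depending on the global topology of $X$; there is no étale-local comparison with $\CC^4$ at the level of these homology classes, and the paper explicitly notes that no compact/local comparison for DT4 invariants is known. Matching ``$I_1(L)$ and the next few coefficients'' cannot determine a power series $F(q)$ with infinitely many unknown coefficients, and at the time of writing Nekrasov's $\CC^4$ formula \eqref{nekrasov} was itself conjectural (the paper in fact deduces its compact analogue \eqref{prediction} \emph{from} the machinery built on Cao--Qu, not the other way around). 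In short: the missing idea is the Cao--Qu input, which supplies the full generating series for one family of line bundles and is what allows the inductive determination of $\mathscr{N}_{np}$ for all $n$.
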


Park \cite[Cor. 0.3]{Huy} proves that if $L=\mathcal{O}(D)$ for a smooth connected divisor $D\subset X$, then this conjecture holds (see Theorem \ref{thm:park}). Using  Conjecture \ref{conjecture WC} we reduce Conjecture \ref{conjecture CK} for any line bundle $L$ to their result. It turns out that the correct orientations of Conjecture \ref{conjecture CK} are obtained by fixing canonical orientations for K-theory classes of $\mathcal{O}_X$ and $\mathcal{O}_x$, where $x\in X$ is a $\CC$-point, and using the compatibility under sums in the sense of Cao--Gross--Joyce \cite[Thm. 1.15]{CGJ} to extend these. 

We call these orientations \textit{point-canonical}. They are precisely the orientations of Park\cite{Huy} for any smooth divisor $D$ such that $D\cdot c_3(X)\neq 0$. In particular, they are induced by the canonical orientations of Behrend--Fantechi \cite{BF} under the virtual pullback.

To work with general invariants that follow, it will be useful to define the \textit{universal transformation} on power-series $f$ with constant term $f(0)=1$:
$$
U\big(f(q)\big) = \prod_{n>0}\prod_{k=1}^nf(-e^{\frac{2\pi i k}{n}}q)^{-n}\,.
$$
Note that this transformation plays an important role in the follow-up paper \cite{bojkoquot} as it appears in wall-crossing for Calabi--Yau fourfolds in general. 
\paragraph{Segre series of all ranks}
\label{segreintro paragraph}
For a surface $S$ and a line bundle $L\to S$ the Segre series
$$
R(S,L;q) = \int_{\textnormal{Hilb}^n(S)}s_{2n}\big(L^{[n]}\big)q^n\,
$$
defined in terms of the degree $2n$ Segre class $s_{2n}(-)$ appeared in Tyurin \cite{Tyurin} in relation to Donaldson invariants. Its precise form was conjectured by Lehn \cite{Lehn} and proved by Marian--Oprea--Pandharipande \cite{MOP1} for K3 surfaces and the general case in \cite{MOP2}. One can replace $L$ in \eqref{Ln} by any class $\alpha$ in the Grothendieck group of algebraic vector bundles $G^0(X)$. These more general invariants were studied by Marian--Oprea--Pandharipande in \cite{MOPhigher} because of their relation to Verlinde numbers and strange duality.  We study analogous questions in our case with a more general relation between the invariants of surfaces and fourfolds given in \eqref{eqTra}.

Let $\vec{\alpha}=(\alpha_1,\ldots, \alpha_M)$ for $\alpha_1,\ldots \alpha_M\in G^0(X)$ and $\vec{t} = (t_1,\ldots,t_M)$. We define the \textit{generalized $DT_4$-Segre series} for Calabi--Yau 4-folds by
\begin{equation}
\label{segre}
R(\vec{\alpha}, \vec{t} ;q) =1+\sum_{n>0}q^n\int_{\big[\Hilb\big]^{\textnormal{vir}}}s_{t_1}\big(\alpha_1^{[n]}\big)\ldots s_{t_M}\big(\alpha_M^{[n]}\big) 
\end{equation}
in terms of the Segre classes.
The corresponding series for virtual fundamental classes of Quot schemes on surfaces were studied by Oprea--Pandharipande \cite{OP1}. When $M=1$, we will write
$$
R(\alpha;q) = 1+\sum_{n>0}q^n\int_{\big[\Hilb\big]^{\textnormal{vir}}}s_n\big(\alpha^{[n]}\big)\,.
$$
To express their generating series, recall that \textit{Fuss-Catalan numbers} are given by
 \begin{equation}
 \label{fusscatalan}
  C_{n,a} = \frac{1}{an+1}{an+1\choose n}
\end{equation}
for any positive integer $a$. They were defined by Fuss \cite{fuss}\footnote{They were rigorously studied in \protect\cite{chu, GKP, sagan, sand, OP1}. } and appeared also in the work of Oprea--Pandharipande \cite{OP1}. 
 We denote their generating series by
\begin{equation}
\label{fusscatalan series}
  \mathscr{B}_a(q) = \sum_{n\geq 0}  C_{n,a}q^n\,.
 \end{equation}
\begin{theorem}
\label{theoremsegreintro}
Let $\alpha,\alpha_1,\ldots,\alpha_M \in G^0(X)$, $a=\textnormal{rk}(\alpha), a_i=\textnormal{rk}(\alpha_i)$, then assuming Conjecture \ref{conjecture WC} we have
$$
R(\vec{\alpha},\vec{t}; q) = U\Big[(1+t_1z)^{c_1(\alpha_1)\cdot c_3(X)}\cdots (1+t_Mz)^{c_1(\alpha_M)\cdot c_3(X)}\Big]\,
$$
for point-canonical orientations. Here $z$ is the unique solution to $$z(1+t_1z)^{a_1}\cdots(1+t_Mz)^{a_M} = q\,.$$ Moreover, in the same setting we have the explicit expressions 
\begin{equation}
\label{segreseriesexplicit}
R(\alpha;q) = 
\left\{\begin{array}{ll}
    \displaystyle  U\big[\mathscr{B}_{a+1}(-q)^{-c_1(\alpha)\cdot c_3(X)}\big]  & \textnormal{for }a\geq 0 \\
    &\\
  \displaystyle    U\big[ \mathscr{B}_{-a}(q)^{c_1(\alpha)\cdot c_3(X)}\big] &\textnormal{for }a< 0 \\
\end{array} 
\right.
 \,. 
  \end{equation}
\end{theorem}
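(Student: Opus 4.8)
\medskip
\noindent\emph{Proof proposal.}
The plan is to deduce the theorem from Theorem~\ref{theoremhilb}, which under Conjecture~\ref{conjecture WC} gives an explicit universal formula for $[\Hilb]^{\mathrm{vir}}$ inside $H_*(\mathcal{C}_X;\Q)$ (and which is itself proven from Conjecture~\ref{conjecture WC}). The first step is to record that each tautological class $\alpha_i^{[n]}$ is pulled back along the classifying map $\sigma_n\colon\Hilb\to\mathcal{C}_X$ from a universal $K$-theory class determined by $\alpha_i$, so that the insertion $s_{t_1}(\alpha_1^{[n]})\cdots s_{t_N}(\alpha_N^{[n]})$ is $\sigma_n^{*}$ of an explicit universal cohomology class on $\mathcal{C}_X$ (a product of Segre-type generating classes). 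Pairing this with the universal class of Theorem~\ref{theoremhilb} exhibits $R(\vec\alpha,\vec t;q)$ as a universal power series in $q$ and $\vec t$ whose coefficients depend on the input only through the ranks $a_i=\mathrm{rk}(\alpha_i)$ and the integers $d_i:=c_1(\alpha_i)\cdot c_3(X)$: all other characteristic numbers of $X$ and of the $\alpha_i$ drop out because $c_1(X)=0$ and because the pairing is constrained to cohomological degree $2n$ by $\dim_{\R}[\Hilb]^{\mathrm{vir}}=2n$. This last constraint is also where $c_3(X)$ enters at all, since the length-$1$ moduli space is $X$ itself with virtual class $\pm\,c_3(X)\cap[X]$.

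The heart of the matter is to evaluate this universal pairing and show it equals $U\big[\prod_i(1+t_iz)^{d_i}\big]$ with $z$ determined by $z\prod_i(1+t_iz)^{a_i}=q$. I would do this in two moves. First, since $\beta\mapsto\beta^{[n]}$ is additive in $K$-theory and Segre classes are multiplicative under direct sums, the splitting principle reduces the general statement to the case where each $\alpha_i$ is a (virtual) sum of line bundles; this collapses the multivariable formula to the interaction of rank-$1$ building blocks, the factor $\prod_i(1+t_iz)^{a_i}$ in the constraint being the net effect of the positive and negative line-bundle summands. Second, one matches the resulting generating function with the Joyce-type transformation encoded by Conjecture~\ref{conjecture WC} that relates the stable-pair/Hilbert-scheme series to the series of length-$n$ $0$-dimensional sheaf invariants, which are themselves all determined by the length-$1$ invariant on $X$; the implicit equation $z\prod_i(1+t_iz)^{a_i}=q$ is precisely the Lagrange-inversion substitution produced by this transformation, while the prefactor records the $c_3(X)$-weights and appears linearly in $\log R$ by the vertex-/Lie-algebra structure of the wall-crossing. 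To pin the rank-$1$ building block uniquely I would specialize to $N=1$, $\alpha=L$ a line bundle, with the insertion $c_n(L^{[n]})$: the formula must then reproduce $M(-q)^{c_1(L)\cdot c_3(X)}$, which under the point-canonical orientations is Cao--Qu's theorem (Conjecture~\ref{conjecture CK}), and this fixes the universal series. I expect this middle step — extracting from Conjecture~\ref{conjecture WC} the exact claim that the universal series is the Lagrange-inversion expression with the single coupled constraint tying together all the $t_i$ and all the ranks — to be the main obstacle; the remaining steps are formal.

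Granting the multivariable formula, the explicit series \eqref{segreseriesexplicit} follow quickly. Setting all $t_i=1$ replaces $s_{t_i}(\alpha_i^{[n]})$ by the total Segre class of $\alpha_i^{[n]}$, and since $[\Hilb]^{\mathrm{vir}}$ has real dimension $2n$ only the codimension-$n$ component survives; in particular $R(\alpha;q)=R(\alpha,1;q)=U\big[(1+z)^{c_1(\alpha)\cdot c_3(X)}\big]$ with $z(1+z)^a=q$. It then remains to identify $1+z$ with a Fuss--Catalan series. From the functional equation $\mathscr{B}_b(q)=1+q\,\mathscr{B}_b(q)^b$ one checks by Lagrange inversion that the solution of $z(1+z)^a=q$ is $1+z=\mathscr{B}_{a+1}(-q)^{-1}$ when $a\geq 0$ (take $B=\mathscr{B}_{a+1}(-q)$, so $q=(1-B)B^{-a-1}$, and substitute $B=(1+z)^{-1}$) and $1+z=\mathscr{B}_{-a}(q)$ when $a<0$ (take $B=\mathscr{B}_{-a}(q)$, so $z=B-1=qB^{-a}=q(1+z)^{-a}$). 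Substituting these into $U[\,\cdot\,]$ gives the two cases of \eqref{segreseriesexplicit}, completing the proof.
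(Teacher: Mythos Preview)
Your framing is right and your Fuss--Catalan endgame matches the paper's Lemma~\ref{lemma change} essentially verbatim. The gap is the middle step, which you yourself flag. You gesture at Lagrange inversion and at the linearity of $\log R$ in the numbers $c_1(\alpha_i)\cdot c_3(X)$, but the concrete mechanism you propose --- reduce to line bundles by the splitting principle, then calibrate the rank-one building block against Cao--Qu --- does not work as written. Splitting $\alpha_i$ virtually turns $s_{t_i}(\alpha_i^{[n]})$ into a mixture of Segre \emph{and} Chern classes of rank-one tautological bundles (since $s_t(-\beta)=c_t(\beta)$), so you land in another instance of the same multivariable problem rather than a strictly simpler one. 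And the specialization you propose for calibration, $c_n(L^{[n]})$, computes $I(L;q)$, which is a different insertion from $R(L;q)$; knowing the former does not by itself pin down the Segre universal series.

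The paper bypasses all of this with a direct computation, packaged as Proposition~\ref{mega theorem}. One writes the Segre insertion as a multiplicative genus with $f_i(x)=(1+t_ix)^{-1}$; Lemma~\ref{propmain} converts $f_i(\alpha_i^{[n]})$ into an exponential that is \emph{linear} in the generators $\mu_{v,k}$, so the general pairing formula of Proposition~\ref{theorem big} applies and produces a residue sum. Lemma~\ref{Gessel-Lagrange} (Lagrange inversion) then collapses that sum into the implicit substitution $q=H\prod_i f_i(H)^{-a_i}$, i.e.\ $z\prod_i(1+t_iz)^{a_i}=q$, and the divisor sum already present in the class of Theorem~\ref{theoremhilb} is exactly what becomes the $U$-transform. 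No splitting and no external calibration are needed: once you recognize $s_{t_i}$ as a multiplicative genus, the formula is a one-line specialization of Proposition~\ref{mega theorem}.
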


As a Corollary, we obtain
$$
R(L;q) =   U\Big[\frac{(1+\sqrt{1+4q})}{2}\Big]^{c_1(L)\cdot c_3(X)}\,.
$$

\paragraph{Nekrasov genus
}
\label{Nekrasovintro paragraph}
K-theoretic invariants for Calabi--Yau 3-folds using twisted virtual structure sheaves were introduced by Nekrasov--Okounkov \cite{NO} to study the relation between DT$_3$ invariants and curve-counting in Calabi--Yau five-folds. They were defined in terms of twisting the usual virtual structure sheaf of $\mO^{\vir}$ constructed by Fantechi--Göttsche \cite{GF} by a square root of the virtual canonical bundle given at each sheaf $E$ in the moduli space by $\det\big(\Ext^\bullet(E,E)\big)$. Extending their ideas to Calabi--Yau fourfolds, Oh--Thomas \cite{OT} define twisted virtual structure sheaves $\hat{\mathcal{O}}^{\textnormal{vir}}$ on $M_{\alpha,L}$ which no longer originate naturally from some untwisted version.

For any $A\in G^0(\Hilb)$, one can compute the \textit{twisted virtual Euler characteristic}
\begin{equation}
\label{chivir}
    \hat{\chi}^{\textnormal{vir}}\big(\Hilb,A\big) = \chi\big(\Hilb,\hat{\mathcal{O}}^{\textnormal{vir}}\otimes A\big) \,.
\end{equation}
Set
$$\mathscr{N}_y\big(\alpha^{[n]}\big)=\Lambda^\bullet_{y^{-1}}\big(\alpha^{[n]}\big)\otimes \textnormal{det}^{-\frac{1}{2}}\big(\alpha^{[n]}\cdot y^{-1}\big)\in G^0(M_{\alpha,L},\Q)(\!(y^{-\frac{1}{2}})\!) \,,$$
where the square-root exists because we are working over rational coefficients. One should think of $y$ as the equivariant parameter for the trivial $\CC^*$ action on $\mathcal{O}_X$. For any $\alpha_1,\ldots,\alpha_M\in G^0(X)$, we define
\begin{equation}
\label{intronekrasov}
    K(\vec{\alpha},\vec{y};q) = 1+\sum_{n>0}\hat{\chi}^{\textnormal{vir}}\big(\Hilb, \mathscr{N}_{y_1}(\alpha^{[n]}_1)\otimes\cdots\otimes \mathscr{N}_{y_M}(\alpha^{[n]}_M)\big)\,.
\end{equation}

Segre series from §\ref{segreintro paragraph} can be obtained as a \textit{classical limit} of these invariants (see Proposition \ref{proplimit}). We again use $K(\alpha,y;q)$ to denote the $N=1$ case. 
 
 The case $\alpha=L$ for a line bundle $L$  was introduced by Nekrasov \cite[§4.2.4]{NMF}, Nekrasov--Piazzalunga \cite{NP} and were further studied by Cao--Kool--Monavari \cite{CKM} in relation to the equivariant DT/PT correspondence. An equivariant version of these invariants can be defined for any toric Calabi--Yau 4-fold using localization of Oh--Thomas \cite[§7]{OT} and the action of the 3-dimensional torus $T =\{t\in (\CC^*)^4:t_1t_2t_3t_4=1\}$.  Nekrasov \cite{NMF} conjectured the following formula for $\CC^4$:
\begin{equation}
\label{nekrasov}
  K_{\textnormal{Nek}}(L_y,t; q) =\textnormal{Exp}\bigg[\chi\Big(\CC^4,q\frac{(T\CC^4-T^*\CC^4)(L_y^{\frac{1}{2}}-L_{y}^{-\frac{1}{2}}}{(1-qL_y^{\frac{1}{2}})(1-qL_y^{-\frac{1}{2}})}\Big)\bigg]\,,
\end{equation}
where $\chi$ denotes the equivariant Euler characteristic on $\CC^4$, $L_y$ is the line bundle on $\CC^4$ with weight $y$ of the $\CC^*$ action and $\textnormal{Exp}[f(y,q)] = \textnormal{exp}\Big[\sum_{n>0}\frac{f(y^n,q^n)}{n}\Big]$ is the plethystic exponential of a power-series $f$. For $X = \textnormal{Tot}_{\mathbb{P}^1}(\mathcal{O}(-1)\oplus \mathcal{O}(-1)\oplus \mathcal{O})$ a similar conjecture was given by Cao--Kool--Monavari \cite[Conjecture 0.16]{CKM}. 

Replacing  $\CC^4$ with a compact $X$ in \eqref{nekrasov}, we obtain a prediction for compact Calabi--Yau 4-folds:
\begin{equation}
\label{prediction}
     K(L,y; q) =\textnormal{Exp}\bigg[\chi\Big(X,q\frac{(TX-T^*X)(L^{\frac{1}{2}}y^{\frac{1}{2}}-L^{-\frac{1}{2}}y^{-\frac{1}{2}}}{(1-qL^{\frac{1}{2}}y^{\frac{1}{2}})(1-qL^{-\frac{1}{2}}y^{-\frac{1}{2}})}\Big)\bigg]\,.
\end{equation}
We thank Noah Arbesfeld for pointing out this direct relation to our previous version of the formula.

\begin{theorem}
\label{theorem nekrasov intro}
If Conjecture \ref{conjecture WC} holds, then for all $\alpha_1,\ldots,\alpha_M$ with $a_i=\textnormal{rk}(\alpha_i)$,  $\sum_{i}a_i=2b+1$ and point-canonical orientations, we have
\begin{align*}
K(\vec{\alpha},\vec{y};q)&=\prod_{i=1}^NU\bigg[\frac{(y_i-1)^2u}{(y_i-u)^2}\bigg]^{\frac{1}{2}c_1(\alpha_i)\cdot c_3(X)}
\end{align*}
where $u$ is the unique solution to 
$$
q=\frac{(u-1)u^b}{\prod_{j=1}^N (y^{\frac{1}{2}}-y^{-\frac{1}{2}}u)^{a_i}}\,.
$$
In particular, we recover \eqref{prediction} when $N=1$, $\alpha_1=\alpha$, $y_1=y$, $a_1=1$.
\end{theorem}
During the writing of this paper, Martijn Kool and Jørgen Rennemo \cite{KRdraft} announced a proof of \eqref{nekrasov}, which in particular proves also the equivariant version of Conjecture \ref{conjecture CK} on $\CC^4$. This gives further motivation to our belief that an argument replacing that of algebraic cobordism in \cite{LP,Li} should exist for DT$_4$ invariants. 

\paragraph{Segre--Verlinde correspondence}
\label{untwistedintro paragraph}
The invariants considered in the previous paragraph are special to 4-folds. Here we address K-theoretic invariants which are the analog of \textit{Verlinde series} for surfaces.

Verlinde series for a smooth projective surface $S$, a line bundle $L$ on it, and an integer $r$ is given by
\begin{equation}
    V(S,L,r;q)=1+\sum_{n>0}\chi\big(\textnormal{Hilb}^n(S),L_{(n)}\otimes E^r\big)q^n\,.
\end{equation}
Here $E=\textnormal{det}\big(\mathcal{O}_S^{[n]}\big)$, and $L_{(n)}$ is constructed by starting from $L^{\boxtimes n}$ on $S^{\times n}$ which descends to a line bundle $L^{(n)}$ on the symmetric product $S^{(n)}$. One then defines $L_{(n)}$ as the pullback of $L^{(n)}$ along the resolution of singularities $S^{[n]}\to S^{(n)}$.  Verlinde series were defined and studied by Ellingsrud--Göttsche--Lehn \cite{EGL}. By Göttsche \cite[Rem. 5.3]{Gottscherefined} it follows that  when  $r=\textnormal{rk}(\alpha)-1$ and $L=\textnormal{det}(\alpha)$ they can be expressed as 
$$
V(S,L,r;q)=V(S,\alpha;q)= 1+\sum_{n>0}\chi\Big(\textnormal{Hilb}^n(S),\textnormal{det}\big(\alpha^{[n]}\big)\Big)q^n\,.
$$
Their virtual analogs for Quot schemes were studied in \cite{GKvirtual} and \cite{AJLOP}.

Motivated by the above two equivalent definitions of $V(S,\alpha;q)$ together with the relation to higher rank Nekrasov genus (see Remark \ref{Verlinde-Nekrasov remark}), we define \textit{square root DT$_4$ Verlinde series}  
$$
V^{\frac{1}{2}}(\alpha;q) =1+\sum_{n>0}\hat{\chi}^{\textnormal{vir}}\Big(\Hilb, \textnormal{det}^{\frac{1}{2}}\big(L^{[n]}_\alpha\big)\otimes E^{r+1} \Big)q^n\,,
$$
where $L_\alpha =\textnormal{det}(\alpha)$, $r=\textnormal{rk}(\alpha)-1$, and $E=\textnormal{det}(\mathcal{O}_X^{[n]})$. Simultaneously, we define the \textit{DT$_4$ Verlinde series} for Calabi--Yau 4-folds:
$$
V(\alpha;q)=1+\sum_{n>0}\hat{\chi}^{\textnormal{vir}}\Big(\Hilb, \textnormal{det}\big(\alpha^{[n]}\big)\otimes E^{\frac{1}{2}}\Big)q^n\,.
$$
Assuming the wall-crossing conjecture, these are related as their name would suggest:  $$V(\alpha;q) = \big(V^{\frac{1}{2}}(\alpha;q)\big)^2\,.$$
This statement is obtained as a consequence of direct computation of both series, so it would be interesting to give an independent reason for it. In \cite[§5.2]{BH}, we study the equivariant version of the above series in the case when $X=\CC^4$. We do so using the description of $\textnormal{Hilb}^n(\CC^4)$ as a vanishing locus of an isotropic section of a vector bundle on a moduli scheme of representations of a framed four-loop quiver. Using this geometric picture that was originally presented by Kool--Rennemo in \cite{KRdraft}, we show that the above factor $E^{\frac{1}{2}}$ is the most natural correction needed to obtain integer invariants. 
\begin{remark}
   The observations made in §\ref{sect verlinde} and \cite[§5.2]{BH} suggested that for Hilbert schemes of points the \textit{untwisted virtual structure sheaf}
   $$
   \mO^{\uvir} = \hat{\mO}^{\vir}\otimes E^{\frac{1}{2}}
   $$
   is a particularly useful modification of $\hat{\mO}^{\vir}$ that guarantees integer invariants. 
\end{remark}

One of the most notable properties of the Verlinde and Segre series on surfaces motivated by strange duality was proposed by Johnson \cite{Johnson}. An explicit formulation was given by Marian--Oprea--Pandharipande \cite{MOP2} as a change of variables $z=f(q)$, $w=g(q)$ giving 
$$
   V(S,\alpha;z) = R(S,-\alpha;w)\,. 
$$
For Quot schemes of $n$ points $\textnormal{Quot}_{S}(\mathbb{C}^N,n)$ on surfaces, it was observed by Arbesfeld--Johnson--Lim--Oprea--Pandharipande \cite{AJLOP} that this correspondence takes the simple form 
$$
V(\alpha;q)=R(\alpha;(-1)^Nq)\,.
$$
We recover an analogous formula for fourfolds.
\begin{theorem}
\label{verlindeintro theorem}
If Conjecture \ref{conjecture WC} holds, then for any $\alpha\in G^0(X)$, we have for any choices of orientations
$$
\quad V(\alpha;q) = R(\alpha;-q)\,.
$$
\end{theorem}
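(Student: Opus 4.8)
The plan is to realise both $V(\alpha;q)$ and $R(\alpha;q)$ as specialisations of the higher-rank Nekrasov genus $K(\vec{\alpha},\vec{y};q)$ of Theorem~\ref{theorem nekrasov} and then to match the resulting closed forms. On the Segre side this is already done: Theorem~\ref{theoremsegreintro} gives $R(\alpha;q)=U\big[\phi_R(q)\big]^{c_1(\alpha)\cdot c_3(X)}$ with $\phi_R(q)=\mathscr{B}_{a+1}(-q)^{-1}$ for $a\ge 0$ (resp.\ $\phi_R(q)=\mathscr{B}_{-a}(q)$ for $a<0$), and alternatively $R(\alpha;q)$ arises as the classical limit of $K(\alpha,y;q)$ by Proposition~\ref{proplimit}. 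For the Verlinde side I would use the relation between $V^{\frac{1}{2}}(\alpha;q)$ and the higher-rank Nekrasov genus recorded in Remark~\ref{Verlinde-Nekrasov remark}. For a class $\beta$ of rank $\rho$ one has $\text{rk}\big(\beta^{[n]}\big)=\rho n$, so from $\Lambda^\bullet_{y^{-1}}\big(\beta^{[n]}\big)\sim y^{-\rho n}\,\text{det}\big(\beta^{[n]}\big)$ and $\text{det}^{-\frac{1}{2}}\big(\beta^{[n]}\cdot y^{-1}\big)=y^{\frac{1}{2}\rho n}\,\text{det}^{-\frac{1}{2}}\big(\beta^{[n]}\big)$ one gets $\mathscr{N}_y\big(\beta^{[n]}\big)=y^{-\frac{1}{2}\rho n}\,\text{det}^{\frac{1}{2}}\big(\beta^{[n]}\big)\big(1+O(y)\big)$ as $y\to 0$. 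After the clean rescaling $q\mapsto q\prod_i y_i^{a_i/2}$ of the generating variable, the limit $y_i\to 0$ thus turns each $\mathscr{N}_{y_i}$ into $\text{det}^{\frac{1}{2}}$; choosing $\vec{\alpha}=(L_\alpha,\mathcal{O}_X,\ldots,\mathcal{O}_X)$ with $L_\alpha=\text{det}(\alpha)$ and $2(r+1)$ copies of $\mathcal{O}_X$ — whose ranks sum to the odd number $2(r+1)+1$ as required in Theorem~\ref{theorem nekrasov} — and using $E^{\frac{1}{2}}=\text{det}^{\frac{1}{2}}\big(\mathcal{O}_X^{[n]}\big)$ together with an Ellingsrud--Göttsche--Lehn-type identity expressing $\text{det}\big(\alpha^{[n]}\big)$ through $\text{det}\big(L_\alpha^{[n]}\big)$ and a power of $E$ (forced here by the universal structure of Theorem~\ref{theoremhilb}), this reproduces the insertion defining $V^{\frac{1}{2}}(\alpha;q)$, hence $V(\alpha;q)=V^{\frac{1}{2}}(\alpha;q)^2$.

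Feeding this tuple into Theorem~\ref{theorem nekrasov} and using $c_1(\mathcal{O}_X)=0$, only the $L_\alpha$-factor survives; since $c_1(L_\alpha)\cdot c_3(X)=c_1(\alpha)\cdot c_3(X)$ we obtain, before passing to the limit,
\[
V^{\frac{1}{2}}(\alpha;q)=U\!\left[\frac{(y-1)^2u}{(y-u)^2}\right]^{\frac{1}{2}c_1(\alpha)\cdot c_3(X)},\qquad q\prod_i y_i^{a_i/2}=\frac{(u-1)u^{r+1}}{\prod_j\big(y_j^{1/2}-y_j^{-1/2}u\big)^{a_j}}=\frac{(u-1)u^{r+1}}{\prod_j(y_j-u)^{a_j}}.
\]
Letting all $y_j\to 0$, the bracket tends to $1/u$ and the change of variables collapses to $q=\pm(u-1)u^{-(r+2)}$; squaring gives $V(\alpha;q)=U\big[\phi_V(q)\big]^{c_1(\alpha)\cdot c_3(X)}$ with $\phi_V$ explicit. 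Now $U$ is multiplicative in its argument, and substituting $q\mapsto -q$ in $U[h(q)]$ yields $U[h(-q)]$, so $R(\alpha;-q)=U\big[\phi_R(-q)\big]^{c_1(\alpha)\cdot c_3(X)}$; the theorem therefore reduces to the single identity of power series $\phi_V(q)=\phi_R(-q)$, equivalently to checking that the change of variables producing $V$ is obtained from the one governed by the Fuss--Catalan functional equation $\mathscr{B}_m=1+q\,\mathscr{B}_m^{\,m}$ (with $m=a+1$, resp.\ $m=-a$) by the substitution $q\mapsto -q$. This last point is a finite manipulation of explicit algebraic functions.

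The main obstacle is fixing the sign in $q=\pm(u-1)u^{-(r+2)}$ — that is, deciding whether the limit lands on $R(\alpha;q)$ or on $R(\alpha;-q)$. Several sign sources must be reconciled: the point-canonical orientations, which enter through the Oh--Thomas square root $\hat{\mathcal{O}}^{\textnormal{vir}}$ and through $\text{det}^{-\frac{1}{2}}$ of the negative-weight twist; the Ellingsrud--Göttsche--Lehn identity and the powers of $E$ used to exhibit the Verlinde insertion via the tuple above; and, most visibly, the factor $\prod_j(y_j-u)^{a_j}\to(-1)^{\sum_j a_j}u^{\sum_j a_j}$ in the limit, whose overall sign is $(-1)^{\sum_j a_j}=(-1)^{2(r+1)+1}=-1$ precisely because Theorem~\ref{theorem nekrasov} forces the rank sum to be odd. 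Verifying that these assemble into exactly $q\mapsto -q$, rather than into $q\mapsto q$ or a root-of-unity twist of it, is where the real content lies; once it is in place the identity $V(\alpha;q)=R(\alpha;-q)$ follows formally from the displayed expressions.
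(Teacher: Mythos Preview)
Your approach is genuinely different from the paper's, and also incomplete.

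The paper's proof (Theorem~\ref{theorem Segre--Verlinde}) does not pass through the Nekrasov genus at all. It applies Proposition~\ref{mega theorem} directly to the Verlinde insertion: writing $\textnormal{ch}\big(\textnormal{det}(\alpha^{[n]})\big)=e^{c_1(\alpha^{[n]})}$ as the multiplicative genus with $f(x)=e^x$, and combining with the $\sqrt{\textnormal{Td}}\cdot E^{1/2}$ contribution (which by Definition~\ref{defuntwisted} amounts to $\{f_0\}(x)=x/(1-e^{-x})$), one reads off
\[
V(\alpha;q)=U\big[e^{z}\big]^{c_1(\alpha)\cdot c_3(X)},\qquad q=\frac{1-e^{-z}}{e^{az}}\,.
\]
The substitution $t=e^{z}-1$ turns this into $V(\alpha;q)=U[1+t]^{c_1(\alpha)\cdot c_3(X)}$ with $q=t(1+t)^{-(a+1)}$, and Lemma~\ref{lemma change} then identifies this with the Segre formula \eqref{segreseriesexplicit} under $q\mapsto -q$. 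No limits, no sign-chasing beyond what is already built into Proposition~\ref{mega theorem}.

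Your route has two real gaps. First, you explicitly leave the sign undetermined, saying its verification ``is where the real content lies''; but the theorem \emph{is} the sign --- the statement that one gets $-q$ rather than $+q$. Tracking it through the degeneration $y_j\to 0$ of the change of variables in Theorem~\ref{theorem nekrasov} is genuinely delicate, since the functional equation for $u$ degenerates and one must control which branch survives. Second, your tuple $(L_\alpha,\mathcal{O}_X,\ldots,\mathcal{O}_X)$ with $2a$ copies of $\mathcal{O}_X$ only makes literal sense for $a\ge 0$; the case $a<0$ needs separate treatment, and Remark~\ref{Verlinde-Nekrasov remark} part~1 is stated only for vector bundles. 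The ``Ellingsrud--G\"ottsche--Lehn-type identity'' you invoke is also not proved here; it is in fact unnecessary, since by Corollary~\ref{cordependence} the invariants depend only on $c_1(\alpha)\cdot c_3(X)$ and $\textnormal{rk}(\alpha)$, which lets one bypass it entirely --- as the paper's direct computation does.
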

We obtain this result by computation, which leads to us asking the following.
\begin{question}
Is there a geometric interpretation of the Segre--Verlinde correspondence for Calabi--Yau 4-folds similar to the one of \cite{Johnson}?
\end{question}

\paragraph{4D-2D correspondence}

We discuss now a stronger correspondence between invariants on surfaces and Calabi--Yau 4-folds. To a reader familiar with the results of \cite{OP1} and \cite{AJLOP}, this will be a generalization of some of the above results including the Segre--Verlinde correspondence.

Let $S$ be a surface, then $\textnormal{Hilb}^n(S)$ carries a virtual fundamental class $\big[\textnormal{Hilb}^n(S)\big]^{\textnormal{vir}}=[\textnormal{Hilb}^n(S)]\cap c_n\Big(\big(K_S^{[n]}\big)^\vee\Big)$ constructed in \cite[Lem. 1]{MOP1}. Let $f,h$ be multiplicative genera (see §\ref{multigen paragraph}). If $S$ satisfies $c_1(S)^2 = 0$, then by \cite[§2.2]{AJLOP}, there is a universal series $A(q)$ depending on $f,h$ and $a=\textnormal{rk}(\beta)$, such that
$$
 1+\sum_{n>0}q^n\int_{\big[\textnormal{Hilb}^n(S)\big]^{\textnormal{vir}}}f(\beta^{[n]})h\big(T^{\textnormal{vir}}_{\textnormal{Hilb}^n(S)}\big) =A^{c_1(\beta)\cdot c_1(S)}\,,
$$
where $T^{\textnormal{vir}}_Y$ denotes the virtual tangent bundle of $Y$ dual to a given obstruction theory and $\beta\in G^0(S)$.
Assuming Conjecture \ref{conjecture WC}, we show that for point-canonical orientations, $\alpha \in G^0(X)$ with $\textnormal{rk}(\alpha)=a$ and $h(z) = g(z)g(-z)$ we get
\begin{equation}
\label{eqTra}
 1+\sum_{n>0}q^n\int_{\big[\Hilb\big]^{\textnormal{vir}}}f(\alpha^{[n]})g\big(T^{\textnormal{vir}}_{\Hilb}\big) = U\big(A(q)^{c_1(\alpha)\cdot c_3(X)}\big)\,.
\end{equation}
We consider a single series $f$ for the purpose of the introduction. For a general statement see Theorem \ref{4d2d1d}.
Taking $g$ to be trivial and $f$ to be the Segre class, we relate Theorem \ref{segreintro paragraph} to \cite[Thm. 3, Thm. 6, Thm. 14]{OP1}. A further analogous statement in $K$-theory is stated in \ref{4d2d1d} leading to a natural question.
\begin{question}
What is the geometric interpretation of the universal transformation $U$ and the correspondence between the universal series for Calabi--Yau 4-folds and surfaces satisfying $c_1(S)^2 = 0$?
\end{question}
A less geometric answer is given in Proposition \ref{mega theorem surfaces}, where we recover the universal generating series of AJLOP \cite{AJLOP} for surfaces satisfying $c_1(S)^2=0$ from an \underline{explicit} expression for its virtual fundamental class obtained using the same wall-crossing methods. This explains the relation between the two generating series as a consequence of the underlying theories being governed by the same wall-crossing formula.

In our future work \cite{bojkoquot}, we pursue this question further by studying (virtual) fundamental classes of Quot schemes $\big[\textnormal{Quot}_Y(F,n)\big]^{\textnormal{vir}}$, where $F\to Y$ is a torsion-free sheaf and $Y$ is a curve, surface or a Calabi--Yau fourfold. All three theories in the three different dimensions can be approached by the same universal wall-crossing formula, so we obtain a unification of the corresponding invariants which is best summarized in \cite[§1.4]{bojkoquot}.

\section*{Acknowledgments} 
 We would like to thank Dominic Joyce for supervising this project and giving multiple helpful ideas. We are grateful to Martijn Kool for recommending to study K-theoretic invariants. We would also like to express our thanks to Noah Arbesfeld, Yalong Cao, Sergej Monavari, Jeongseok Oh, and Rahul Pandharipande. 

 The author was supported by a Clarendon Fund Scholarship at the University of Oxford. 
\section*{Notation}
\begin{small}
	\begin{center}
		\begin{tabular}{p{4cm} p{10cm}}
$(V_*,\ket{0}, T,Y)$& The data of a vertex algebra on its underlying $\ZZ$-graded vector spaces $V_*$. It is given by the vacuum vector $\ket{0}$, the translation operator $T:V_*\to V_{*+2}$,  and the state-field correspondence $Y$.\\
$[z^n]\{f(z)\}$& The $n$'th coefficient of a Laurent series $f$ in $z$.\\
$\epsilon_{\alpha,\beta}$&Some signs forming a group 2-cocycle.\\
$(-)^{\ttop}$& Topological realization functor of Blanc \cite[§3]{Blanc}.\\
$K^0(-)$&Topological complex K-theory of topological spaces and (higher) stacks. \\
$\llbracket \mE\rrbracket$& Topological complex K-theory class associated with a perfect complex $\mE$.\\
$V^*,(E^\bullet)^\vee,\alpha^\vee$& The duals of a vector bundle $V$, a perfect complex $E^\bullet$ and a K-theory class $\alpha$.\\
$\textbf{Ho}(\textbf{Top}), \textbf{Ho}(\textbf{HSt})$& The homotopy categories of the model categories of topological spaces $\textbf{Top}$ and higher stacks $\textbf{HSt}$.\\
$K_0(\mA)$&The Grothendieck group of an abelian/triangulated category $\mA$. \\
$\hat{H}_*(\mC),\check{H}_*(\mC)$& A vertex algebra on the homology of $\mC$ and its associated Lie algebra.\\
$\hat{H}^F_*(\mC),\check{H}^F_*(\mC)$& The $F$-twisted vertex algebra on the homology of $\mC$ and its associated Lie algebra.\\
$\mM_X$& The higher stack of perfect complexes on $X$.\\
$\mC_X$& The classifying space of topological complex K-theory on $X$.\\
$\mathfrak{U}$,$\mathfrak{E}$& The universal K-theory classes on $BU\times \ZZ$ and $X\times \mC_X$, respectively. 
\\
$\theta_{\mC}$&The K-theory class $\theta_{\mathcal{C}} = \pi_{2,3\,*}\big(\pi^*_{1,2}(\mathfrak{E})\cdot\pi^*_{1,3}(\mathfrak{E})^\vee\big)$ on $\mC_X\times \mC_X$ where $\pi_{2,3}$ is the projection from $X\times \mC_X\times \mC_X$ to the last two factors. 
\\
$\Gamma$& The natural map $\mM^{\ttop}_X\to \mC_X$ mapping perfect complexes on $X$ to their K-theory classes.\\
$p$& The K-theory class of a sky-scraper sheaf on a connected $X$.\\
$\tau$&Gieseker stability.\\
\\
$\mA_q$& The abelian category of $\tau$-semistable coherent sheaves with reduced Gieseker polynomial $q$.\\
$\mB_{q,k}$, $\mN_{q,k}$& The exact category of pairs $V\otimes \mO_X(-k)\to E$ for $k\gg 0$ and $E$ in $\mA_q$ together with its moduli stack.\\
$\Theta$& The dual of the Ext-complex on $\mM_X\times \mM_X$ and its restriction to different substacks.\\
$\Theta^{\pa}$& The Ext-complex on $\mN_{q,k}\times \mN_{q,k}$.\\
$\chi$& The Euler pairing $K^0(X)\oplus K^0(X)\to \ZZ$.\\
$(-)^{\pa}$& Denotes the analog of an existing notation for sheaves applied to pairs in $\mB_{q,k}$ (for example the symmetric pairing $\chi^{\pa}$ on objects of $\mB_{q,k}$).\\
$\mP_X, \mP_{\alpha,d}$& The topological space $\mC_X\times BU\times \ZZ$ and its connected component labeled by $(\alpha,d)\in K^0(X)\oplus \ZZ$\\
\end{tabular}
	\end{center}
	\end{small}
\begin{small}
	\begin{center}
		\begin{tabular}{p{4cm} p{10cm}}
  $\mathscr{I}_n,\mF_n$& The universal pair $\mathscr{I}_n = (\mO\to \mF_n)$ on $X\times \Hilb$.\\
  $\Omega$& The map $\mN_{q,k}^{\ttop}\to \mP_X$ used to compare the respective vertex algebras constructed on both spaces.\\
$\theta_{\mP}$, $\tilde{\chi}$, $\tilde{\epsilon}$& The self-dual K-theory class, the symmetric pairing and the group two-cocycle used in the construction of $\widehat{H}_*(\mP_X)$. \\
 $\mathfrak{E}_{\mP}$& The universal K-theory class on $X\times \mP_X$.\\
$-\backslash -$& Slant product (acting from the left) in cohomology and complex K-theory over rational numbers .\\
$e^{\alpha,d}\otimes 1$& The unit element in $H^0(\mP_{\alpha,d})$.\\
$e^{(\alpha,d)}\otimes\mu_{\sigma,i}$& Natural generators of $H^*(\mP_{\alpha,d})$ obtained as slant products.\\
$e^{(\alpha,d)}\otimes u_{\sigma,i}$& Dual generators of $H_*(\mP_{\alpha,d})$ as a polynomial ring.\\
$I_n(L), I(L;q)$&The invariant $\int_{[\Hilb]^{\textnormal{vir}}}c_{n}\big(L^{[n]}\big)$ and its generating series.\\
$M(q)$& The MacMahon function.\\
$\mL^{[-,-]}$& The vector bundle on $\mN_1\times \mN_1$ extending the usual tautological vector bundle $L^{[n]}$ on $\Hilb$.\\
$\hat{H}^L_*(-), \check{H}^L_*(-)$& The $L$-twisted versions of vertex algebras $\hat{H}_*(-)$ and Lie algebras $\check{H}_*(-)$.\\
$\mathscr{H}_n,\mathscr{M}_{np}$& The classes in $\check{H}_*(\mP_X)$ obtained by pushing forward $[\Hilb]^{\vir}_{\mN}$ and $[M_{np}]^{\inv}$.\\
$\mathscr{N}_{np}$& A polynomial in the variables $u_{v,k}$ such that $e^{np,0}\otimes \mathscr{N}_{np}$ is a lift of $\mM_{np}$ to $\hat{H}(\mP_X)$.\\
$\mathscr{G}_f,f$& The multiplicative Hizrebruch-genus determined by the power-series $f$ and the shortened notation for it.\\
$\mathfrak{T}(\alpha)$&The weight-zero K-theory class on $\mP_X\times \mP_X$ extending $\alpha^{[n]}$ from $\Hilb$.\\
$\{-\}$& The operation $f(z)\mapsto f(z)f(-z)$ on the power-series of multiplicative genera. 
\\
$C_{n,a},\mB_a(q)$& The Fuss-Catalan numbers and their generating series.\\
$\mN_y(-)$& Nekrasov genus.\\
$\hat{\mO}^{\vir},\hat{\chi}^{\vir}$& The twisted virtual structure sheaf and the twisted virtual Euler characteristic.\\
$\mO^{\uvir},\chi^{\uvir}$& The untwisted virtual structure sheaf and the untwisted virtual Euler characteristic.\\
$\mB_N, \mN^N_1$& The abelian category and its moduli stack of pairs $V\otimes \CC^N\otimes \mO_S\to F$ for a zero-dimensional sheaf $F$ and a vector space $V$. \\
$\mathscr{Q}_{N,n}$& The pushforward of $[\Quot]^{\vir}_{\mN}$ to $\check{H}_*(\mP_S)$.
\end{tabular}
	\end{center}
	\end{small}
\section{Vertex algebras in algebraic geometry and topology}
Vertex algebras constructed on the homology of moduli stacks appeared in \cite{JoyceWC}. Their main ingredients will be recalled in §\ref{sec:axiomsgVA}, but it is essential to keep in mind that the class of objects parametrized by the moduli stack should carry some additive structure. This is always the case if they form triangulated or abelian categories. In cases, when the homology of the stacks is explicitly described, Joyce's construction is known to produce lattice vertex algebras.

In this section, we give a quick recollection of the definition of vertex algebras and the algebro-geometric vertex algebra construction of Joyce \cite{Joycehall} which allowed Gross--Joyce--Tanaka \cite{GJT} to formulate their wall-crossing conjectures. We then state the specific version of their conjecture that we will apply to compute integrals over $\Hilb$. This formulation is based on the wall-crossing between sheaves and Joyce--Song pairs in \cite[§5.4]{JoyceSong} and it relies on constructing a vertex algebra on the moduli stack $\mN_{q,k}$ of pairs of the from $V\otimes \mO_X(-k)\to F$.

In the second half of the section, we rephrase everything in a more topological language by introducing $\mP_X$ -- the topological space of pairs. Using a natural map $\mN_{q,k}\to \mP_X$, we compare the the vertex algebra structures on both sides. While $H_*(\mN_{q,k})$ is difficult to describe, the vertex algebra on $H_*(\mP_X)$ has an explicit form presented in §\ref{sec:ecplicitVApa}. This will be the main tool for obtaining closed formulae for all descendent integrals in §\ref{VFC section}.

Because wall-crossing takes place in the Lie algebra associated to the vertex algebra as some quotient, we explain how to integrate insertions over classes in such Lie algebras. The appropriate insertions are called \textit{weight zero} and they form a subalgebra of the cohomology of the underlying stack dual to the Lie algebra. Top Chern classes of vector bundles satisfying Definition \ref{E-twisted} are are a particular example of such insertions that behaves well under wall-crossing. For this reason, the invariants \eqref{tautdef} for a line bundle $L$ present a convenient set of data to wall-cross from. 

\subsection{Vertex algebras}
\label{VAinAG}
Let us recall first the definition of vertex algebras focusing on \textit{graded super-lattice vertex algebras} which will be the only explicit example that we will be working with. For background literature, we recommend  \cite{Borcherds, KacVA, BZVA, FLM, LLVA, gebert}, with Borcherds \cite{Borcherds} being the most concise. 

\begin{definition}
A \textit{$\mathbb{Z}$-graded vertex algebra} over a field $\mathbb{Q}$ is a collection of data $(V_*,T,\ket{0}, Y)$, where $V_*$ is a $\ZZ$-graded vector space, the \textit{translation operator} $T: V_*\to V_{*+2}$ is graded linear, \textit{the vacuum vector} $\ket{0}\in V_0$, and the \textit{the state-field correspondence} $Y: V_*\to \textnormal{End}(V_*)\llbracket z,z^{-1}\rrbracket$ is graded linear after setting $\textnormal{deg}(z)=-2$.  For any $u,v\in V_*$, this data is required to satisfy the following axioms:
\begin{enumerate}
    \item \textit{(vacuum)} $T\ket{0} = 0$\,, $Y(|0\rangle, z)=\textnormal{id}$\,, $Y(u,z)|0\rangle\in u+zV_*\llbracket z\rrbracket$\,,
    \item \textit{(translation covariance)} $[T,Y(u,z)] = \frac{d}{dz} Y(u,z)$\,,
    \item \textit{(locality)} there is an $N\gg 0$ such that 
    $$
    (z_1-z_2)^N[Y(u,z_1),Y(v,z_2)] = 0\,,
    $$
    where the supercommutator is defined on $\textnormal{End}(V_*)$ by
    $$
    [A,B] =A\circ B -  (-1)^{\deg(A)\deg(B)}B\circ A\,.
    $$
\end{enumerate}
\renewcommand{\theenumi}{\roman{enumi}}
\end{definition}
By Borcherds \cite{Borcherds}, the graded vector-space $V_{*+2}/T(V_*)$ carries a graded Lie algebra structure determined by
\begin{equation}
\label{lie algebra}
  [\bar{u},\bar{v}]=[z^{-1}]\big\{Y(u,z)v\big\}  
\end{equation}
where we use $[z^n]\{f(z)\}$ for the $n$'th coefficient of the power-series $f(z)$ and will continue to do so. 

Let $A^{\pm}$ be abelian groups and $$\chi^{+}: A^{+}\times A^{+}\to \ZZ\,,\qquad \chi^{-}: A^{-}\times A^{-}\to \ZZ$$ be symmetric, respectively anti-symmetric bi-additive maps. Let us denote $\mathfrak{h}^{\pm} = A^{\pm}\otimes_{\ZZ} \mathbb{Q}$ and fix a basis of $B^{\pm}$ of $\mathfrak{h}^{\pm}$. For $(A^+,\chi^+)$ and a choice of a \textit{group 2-cocycle} $\epsilon: A^+\times A^+\to \ZZ_2$ satisfying
\begin{align*}
\label{signs}
 \epsilon_{\alpha,\beta} &= (-1)^{\chi^+(\alpha,\beta)+\chi^+(\alpha,\alpha)\chi^+(\beta,\beta)}\epsilon_{\beta,\alpha}   \,,\quad  \forall \,\,\alpha,\beta\in A^+\\
     \epsilon_{\alpha,0} &= \epsilon_{0,\alpha}=1\,,\qquad
    \epsilon_{\alpha,\beta}\epsilon_{\alpha+\beta,\gamma}=\epsilon_{\beta,\gamma}\epsilon_{\alpha,\beta+\gamma}\,,
    \numberthis
\end{align*}
(the second line is precisely the condition for the map to be a group 2-cocycle) there is a natural graded vertex algebra on
\begin{equation}
\label{latticeVA}
  \mathbb{Q}[A^+]\otimes_{\mathbb{Q}} \textnormal{Sym}_{\mathbb{Q}}[u_{v,i},v \in B^+, i>0]\cong \mathbb{Q}[A^+]\otimes_{\mathbb{Q}} \textnormal{Sym}_{\Q}\big(\mathfrak{h}^+\otimes t^2\Q[t^2]\big)\,,  
\end{equation}
where $\Q[G]$ denotes the group algebra spanned by the elements of a group $G$, $\Sym_{\Q}[-]$ is a free polynomial algebra in the variables appearing inside of the square brackets, and $\Sym_{\Q}(C)$ is the free polynomial algebra generated by a basis of the vector space $C$.
The isomorphism takes $u_{v,i}\mapsto v\otimes t^{2i}$  and $t$ is of degree $1$. This vertex algebra was called the \textit{generalized lattice vertex algebra} by Gross \cite[Def. 3.5]{gross} (see \cite[§6.4]{LLVA}, \cite[§5.4]{KacVA}).
For given $(A^{-},\chi^-)$, Kac \cite[§3.6]{KacVA} describes a natural $\ZZ$-graded vertex algebra on 
\begin{equation}
\label{super}
  \Lambda_{\mathbb{Q}}[u_{v,i}, v\in B^-,i>0]\cong \Lambda_{\Q}\big(\mathfrak{h}^-\otimes t\Q[t^2]\big)\,, 
\end{equation}
where $\Lambda_{\Q}[-]$ is a free polynomial super-algebra in the odd variables contained inside of the square bracket and $\Lambda(C)$ is again freely generated by a basis of an odd degree vector space $C$. The isomorphism is given by $u_{v,i}\mapsto v\otimes t^{2i-1}$ and $t$ is of degree 1. 

Suppose we have vertex algebras $(V_*,T_V,\ket{0}_V, Y_V)$ and $(W_*,T_W,\ket{0}_W,Y_W)$, then there is a graded Vertex algebra on their tensor product, with state-field correspondence
$$
Y_{V_*\otimes W_*}(v\otimes w,z)(u\otimes t) = (-1)^{\textnormal{deg}(u)\textnormal{deg}(w)}Y_{V_*}(v,z)u\otimes Y_{W_*}(w,z)t\,.
$$
 The \textit{generalized superlattice vertex algebra} for $(A^+\oplus A^-,\chi^\bullet)$ where $\chi^{\bullet} = \chi^+\oplus \chi^-$ is then given by the tensor product of \eqref{latticeVA} and \eqref{super}.

From the definition of the super-lattice vertex algebra $(V_*,T,\ket{0},Y)$ associated to $(A^+\oplus A^-,\chi^\bullet)$ we can easily deduce the fields $Y(u_{v,i},z)$ on the generators of:
\begin{align*}
V_*&\cong \Q[A^+]\otimes_{\Q}\textnormal{SSym}_{\Q}[ u_{v,i}, v\in B,i>0 ]\\
&\cong\mathbb{Q}[A^+]\otimes _{\mathbb{Q}}\textnormal{Sym}_{\Q}\big(A^+\otimes_{\ZZ}t^2\Q[t^2]\big)\otimes_{\Q}\Lambda_{\Q}\big(A^-\otimes_{\ZZ}t\Q[t^2]\big)\,.    
\end{align*}
Let $\alpha\in A^+$ be such that its image in $\mathfrak{h}^+$ can be written as $\sum_{v\in B^+}\alpha_v v$. We use $e^{\alpha}$ to denote the corresponding element in $\Q[A^+]$.  For $K\in \textnormal{SSym}_{\Q}[ u_{v,i}, v\in B,i>0 ]$, any $\beta\in A^+$ and $v\in \mathfrak{h}^{\pm}$ we have
	\begin{align*}
	\label{fields}
	Y(e^{0}\otimes u_{v,1},z)e^{\beta}\otimes K &= e^\beta\otimes \Big\{\sum_{k>0}u_{v,k}\cdot Kz^{k-1} \\
	+ &\sum_{k>0}\sum_{w\in B}k^{(1-\delta_v)}\chi^{\bullet}(v,w)\frac{dK}{d u_{w,k}}z^{-k-1+\delta_v} + \chi^\bullet(v,\beta)Kz^{-1}\Big\}\,,\\
	Y(e^{\alpha}\otimes 1,z)e^\beta\otimes K &= \epsilon_{\alpha,\beta}z^{\chi^+(\alpha,\beta)}e^{\alpha+\beta}\otimes \textnormal{exp}\Big[\sum_{k>0}\sum_{v\in B^+}\frac{\alpha_v}{k}u_{v,k}z^k\Big]\\
	&\textnormal{exp}\Big[-\sum_{k>0}\sum_{v\in B^+}\chi^+(\alpha,v)\frac{d}{du_{v,k}}z^{-k}\Big]K\,,
	\numberthis
	\end{align*}
 where 
 $$
 \delta_v =\begin{cases}
     0&\textnormal{if}\quad v\in \mathfrak{h}^+\\
    1&\textnormal{if}\quad v\in \mathfrak{h}^-
 \end{cases}\,.
 $$
Note that by the reconstruction lemma  \cite[Thm. 5.7.1]{LLVA}, Ben-Zvi \cite[Thm. 4.4.1]{BZVA} and Kac \cite[Thm. 4.5]{KacVA} these formulae are sufficient for determining all fields. 

\subsection{Axioms of vertex algebras on homology of stacks}
\label{sec:axiomsgVA}
Here, we recall Joyce's definition of vertex algebras from \cite{Joycehall} where they are defined for a given moduli stack $\mM$ of some additive category $\mA$. The associated Lie algebras are used to formulate wall-crossing formulae based on \cite{GJT} in the later sections.

For a higher stack $\mathcal{S}$ (see \cite{Toen1, TVHAGDAG, TV2}), we denote by $H_*(\mathcal{S}) = H_*(\mathcal{S}^{\textnormal{top}})$, $H^*(\mathcal{S}) = H^*(\mathcal{S}^{\textnormal{top}})$ its rational Betti (co)homology as in Joyce \cite{Joycehall}, Gross \cite{gross} using the topological realization functor $(-)^{\textnormal{top}}$ from Blanc \cite[§3]{Blanc}. For any topological space $T$, we treat $H_*(T)$ as a direct sum and $H^*(T)$ as a product ranging over all degrees. Following May \cite[§24.1]{May}, define the topological K-theory of $\mathcal{S}$ to be
$$
K^0(\mathcal{S}) =[\mathcal{S}^{\textnormal{top}}, BU\times\ZZ]\,,
$$
where $[X,Y] =\pi_{0}\big(\textnormal{Map}_{C^0}(X,Y) \big)$ after denoting $\textnormal{Map}_{C^0}(-,-)$ the mapping space of continuous maps between two topological spaces. Because the stack $\Perf$ defined in \cite[§1.3.7, Def. 1.3.7.5]{TV2} is the classifying stack of perfect complexes, there is for any perfect complex $\mathcal{E}$ on $\mS$ a unique map $\phi_{\mathcal{E}}: \mathcal{S}\to \textnormal{Perf}_{\CC}$ in $\textbf{Ho}(\textbf{HSt})$. Using Blanc \cite[§4.1]{Blanc} stating that
\begin{equation}
\label{eq:PerfBUZ}
\big(\textnormal{Perf}_{\CC}\big)^{\textnormal{top}}  = BU\times \ZZ
\end{equation}
this leads to
$$
\llbracket \mathcal{E}\rrbracket: \mathcal{S}^{\textnormal{top}}\longrightarrow BU\times \ZZ
$$
in $\textbf{Ho}(\textbf{Top})$. We then have a well-defined map assigning to each perfect complex $\mathcal{E}$ its class $\llbracket \mathcal{E}\rrbracket\in K^0(\mathcal{S})$.  

By \cite[p. 207]{May}, the cohomology of $BU\times\ZZ$ is given by
$$
H^*(BU\times \ZZ) \cong \Q[\ZZ]\otimes_{\Q}\Q\llbracket \beta_1,\beta_2,\ldots \rrbracket\,,
$$
where $\beta_i=\textnormal{ch}_i(\mathfrak{U})$, $\mathfrak{U}$ is the universal K-theory class, and $\Q\llbracket -\rrbracket$ denotes the ring of power-series over $\Q$ in the variables contained in the double bracket. Similarly to \cite{Joycehall}, we define $\textnormal{ch}_i(\mathcal{E}) = \llbracket \mathcal{E}\rrbracket^*(\beta_i)$ and the Chern classes by the Newton identities for symmetric polynomials:
  \begin{equation}
  \label{chern classes}
      \sum_{n\geq 0}c_n(\mathcal{E})q^n=\textnormal{exp}\Big[\sum_{n=1}^\infty(-1)^{n+1}(n-1)!\textnormal{ch}_n(\mathcal{E})q^n \Big]\,.
  \end{equation}
  As $BU\times \ZZ$ is a ring space \cite[§24.1]{May}, the set $K^0(\mathcal{S})$ carries a natural ring structure. Moreover, by similar arguments as in \cite[§24.1]{May}, one also has a map $(-)^\vee: BU\times \ZZ\to BU\times \ZZ$ inducing a map $(-)^\vee: K^0(\mathcal{S})\to K^0(\mathcal{S})$. When $\mathcal{S}$ is replaced with a compact CW-complex $X$, this becomes the standard complex K-theory $K^0(X)$ and $(-)^\vee$ corresponds to taking duals. If $V$ is a vector bundle, then I will write $V^*$ for its dual instead. Later on, we will also use the higher K-theory groups $K^1(X)$.
  
  For a class of topological spaces/higher stacks $\{Z_i\}_{i\in I}$ labelled by an index set $I$ and a  perfect complex/K-theory/cohomology class $\kappa$ on a product  $Z_i\times Z_j$ for $i,j\in I$, we use the notation $(\kappa)_{i,j}$ to denote $\pi_{i,j}^*(\kappa)$, where 
$$\pi_{i,j}: \prod_{k\in I} Z_k \longrightarrow Z_i\times Z_j$$
is a projection to the $i,j$ components.
\begin{definition}[Joyce \cite{Joycehall}]
\label{definition hall}
Let $(\mathcal{A},K(\mathcal{A}),\mathcal{M}, \Phi, \mu, \Theta, \epsilon)$ be data satisfying:
\begin{itemize}
    \item $\mathcal{A}$ is an abelian category or derived category.
    \item Let $K_0(\mathcal{A})$ be the Grothendieck group of $\mA$ and $K_0(\mathcal{A})\to K(\mathcal{A})$ be a map of abelian groups. For each $E\in\textnormal{Ob}(\mathcal{A})$ denote $\llbracket E\rrbracket\in K(\mathcal{A})$ the image of its class.
    \item $\mathcal{M}$ a moduli stack of objects in $\mathcal{A}$ (see \cite[§7]{JoyI}) with an action $\Phi:[*/\GG_m]\times \mathcal{M}\to \mathcal{M}$ corresponding to multiplication by $\lambda\textnormal{id}$ of $\textnormal{Aut}(E)$ for any $E\in \textnormal{Ob}(\mathcal{A})$ and a map $\mu:\mathcal{M}\times \mathcal{M}\to \mathcal{M}$ corresponding to direct sum\footnote{For any nice enough moduli stack, the maps $\Phi, \mu$ are an immediate consequence of $\mA$ being an abelian category.}.
    \item For each $\alpha\in K(\mathcal{A})$, write $\mathcal{M}_\alpha$ for the substack of objects $E$ satisfying $\llbracket E\rrbracket=\alpha$. We require $\mM_{\alpha}$ to be open and closed in $\mM$. 
    \item $\Theta$ is a perfect complex on $\mathcal{M}\times \mathcal{M}$ satisfying $\sigma^*(\Theta)\cong \Theta^\vee[2n]$ for some $n\in\ZZ$ where $\sigma:\mathcal{M}\times \mathcal{M}\to \mathcal{M}\times \mathcal{M}$ interchanges factors and 
    \begin{align*}
    \label{sumcond}
    (\mu\times \textnormal{id}_{\mathcal{M}})^*(\Theta)\cong  \pi_{1,3}^*(\Theta)\oplus \pi_{2,3}^*(\Theta)\,,&\qquad 
    (\textnormal{id}_{\mathcal{M}}\times \mu)^*(\Theta)\cong \pi_{1,2}^*(\Theta)\oplus \pi_{1,3}^*(\Theta)\\
    (\Phi\times \textnormal{id}_{\mathcal{M}})^*(\Theta) \cong \mathcal{V}_{1}\boxtimes \Theta\,,&\qquad
    (\textnormal{id}_{\mathcal{M}}\times \Phi)^*(\Theta) \cong  \mathcal{V}_1 ^*\boxtimes \Theta\,,
    \numberthis
    \end{align*}
    where $\mathcal{V}_1$ is the universal line bundle on $[*/\GG_m]$.
    One also writes $\Theta_{\alpha,\beta}=\Theta|_{\mathcal{M}_\alpha\times \mathcal{M}_\beta}$ and requires that $\textnormal{rk}(\Theta_{\alpha,\beta})$ is constant. Setting 
    $$
    \chi(\alpha,\beta) =  \textnormal{rk}(\Theta_{\alpha,\beta})\,,
    $$
  we obtain a bi-additive symmetric form $\chi: K(\mathcal{A})\times K(\mathcal{A})\to \ZZ$. 
    \item A group  2-cocycle $\epsilon: K(\mathcal{A})\times K(\mathcal{A})\to \ZZ_2$  satisfying \eqref{signs} with respect to $\chi^+=\chi$.
\end{itemize}
Let $\hat{H}_*(\mathcal{M})$ be the homology with shifted grading given by $\hat{H}_n(\mathcal{M}_\alpha)=H_{n-\chi(\alpha,\alpha)}(\mathcal{M}_\alpha)$ for each $\alpha\in K(X)$, then using the above data one constructs a vertex algebra $(\hat{H}_*(\mathcal{M}), \ket{0}, e^{zT}, Y)$ over the $\Q$ vector space $\hat{H}_*(\mathcal{M})$. It is defined by:
\begin{itemize}
    \item $\ket{0} =0_*(*)$, where $0:*\to \mathcal{M}$ is the inclusion of the zero object,
    \item $T(u) = \Phi_*(t\boxtimes u)$ for all $u\in \hat{H}_*(\mathcal{M})$ where $t\in H_2([*/\GG_m]) =H_2(\mathbb{C}\mathbb{P}^{\infty})$ is the generator of homology given by inclusion $\mathbb{C}\mathbb{P}^1\subset\mathbb{C}\mathbb{P}^{\infty}$.
    \item The state to field correspondence $Y$ is given by
\begin{align*}
    Y(u,z)v = \epsilon_{\alpha,\beta} (-1)^{a\chi(\beta,\beta)}z^{\chi(\alpha,\beta)}\mu^{\textnormal{top}}_*(e^{zT}\otimes \textnormal{id})\big((u\boxtimes v)\cap c_{z^{-1}}(\Theta_{\alpha,\beta})\big)\,.
\end{align*}
for all $u\in \hat{H}_a(\mathcal{M}_\alpha)$, $v\in \hat{H}_b(\mathcal{M}_\beta)$.
\end{itemize}
\end{definition}

\begin{remark}
\label{rem:ob}
 The complex $\Theta$ has a geometric interpretation which is useful to keep in mind when constructing these vertex algebras in relation to wall-crossing. For this, we fix $\mM_0$ to be a scheme of perfect complexes on $Y$ with fixed determinants. We assume that it admits an obstruction theory 
		$$
		\EE = \textnormal{RHom}_{\mM_0}(\mE,\mE)_0^\vee[-1] \longrightarrow \LL_{\mM_0}\,,
		$$
		where $\mE$ is the universal perfect complex on $Y\times \mM_0$ and $(-)_0$ denotes the traceless part. If
		$$
		\mu: \mM_0\times \mM_0\longrightarrow \mM_0
		$$
		were an embedding induced by taking direct sums of perfect complexes, then the virtual co-normal bundle would be given by
  \footnote{We work in the next equation with the factors $Y\times \mM_0\times \mM_0$ in this specified order.}
		$$
		\textnormal{RHom}_{\mM_0\times \mM_0}\big(\mE_{1,2},\mE_{1,3}\big)^\vee[-1]\oplus \textnormal{RHom}_{\mM_0\times \mM_0}\big(\mE_{1,3},\mE_{1,2}\big)^\vee[-1]\,.
		$$
		The first summand is equal to $\Theta[-1]$ when $Y=X$ is a Calabi--Yau fourfold. If we pull it back along the diagonal $\Delta: \mM_0\to \mM_0\times \mM_0$, then we recover $\EE$ after taking its traceless part. We can also map $\mM_0$ to the stack $\mM$ where the determinants are not fixed. The vertex algebras on $\mM_0$ and $\mM$ are compatible under the pushforward in homology along this map. This implies that we may construct vertex algebras without worrying about fixed determinants and only specify that we need traceless obstruction theories when construction virtual fundamental classes that we want to study. 
  
  The general set-up for Quot schemes in §\ref{sec:4d2d} is similar, but we need to adapt the class $\Theta$ to the pair obstruction theory when $Y$ is a surface or a curve.
	\end{remark}
The wall-crossing formulae in Joyce \cite{JoyceWC}, Gross--Joyce--Tanaka \cite{GJT} are expressed in terms of a Lie algebra defined by Borcherds \cite{Borcherds}. Let $(\hat{H}_*(\mM), \ket{0},e^{zT}, Y)$ be the vertex algebra from Definition \ref{definition hall} and define $$\check{H}_*(\mM) = \hat{H}_{*+2}(\mM)/T\big(\hat{H}_{*}(\mM)\big)$$ with the projection 
$\Pi_{*+2}:\hat{H}_{*+2}(\mM)\longrightarrow \check{H}_*(\mM).$
By \eqref{lie algebra}, the quotient $\check{H}_*(\mM)$ has a natural Lie algebra structure.

\subsection{Point-canonical orientations}
When working with sheaves on Calabi--Yau fourfolds, the signs $\epsilon_{\alpha,\beta}$ from Definition \ref{definition hall} are meant to relate orientations on different components of $\mM$ under taking direct sums. In this subsection, we state this in a rigorous way while recalling the existence results for said orientations.

Let $\mathcal{M}_X$ be the $\infty$-stack of perfect complexes on the Calabi--Yau fourfold $X$ as in Toën--Vaquié \cite{TVaq}. Let $\mathcal{E}$ be the universal perfect complex on $X\times \mathcal{M}_X$ and $\mathcal{E}\textnormal{xt} = \underline{\textnormal{Hom}}_{\mathcal{M}_X}(\mathcal{E}^\vee\otimes \mathcal{E})$, then there is a natural isomorphism
\begin{equation}
\label{serredualityext}
   \mathcal{E}\textnormal{xt}\cong \mathcal{E}\textnormal{xt}^\vee[-4]. 
\end{equation}

There is an orientation $\ZZ_2$-bundle $O^\omega\to \mathcal{M}_X$ whose sheaf of sections   consists of local trivializations $o:\textnormal{det}(\mathcal{E}xt)\to \mathcal{O}$ satisfying 
$$
o\otimes o=i^\omega: \textnormal{det}(\mathcal{E}xt)\otimes \textnormal{det}(\mathcal{E}xt)\to \mathcal{O}\,,
$$
where $i^\omega:  \textnormal{det}(\mathcal{E}xt)\otimes \textnormal{det}(\mathcal{E}xt)\to \mathcal{O}$ is the result of taking determinants of \eqref{serredualityext}. We follow the sign conventions of \cite{BJ, CGJ, JTU} which are suitable for wall-crossing unlike the ones proposed in \cite{OT}. For any quasi-projective Calabi--Yau 4-fold $X$ this $\ZZ_2$-bundle is trivializable and Oh--Thomas \cite{OT} and Borisov--Joyce \cite{BJ} use this to construct virtual fundamental classes depending on the choice of trivializations which are called orientations. More can be said about the orientations and their compatibility under direct sums. For this recall also that using the notation $\mathcal{C}_X=\textnormal{Map}_{C^0}(X^{\textnormal{an}},BU\times \ZZ)$, there is a natural map $$\Gamma:\mathcal{M}^{\textnormal{top}}_X\to \mathcal{C}_X$$ induced by $X^{\textnormal{an}}\times \mathcal{M}_X^{\textnormal{top}}\to (\textnormal{Perf}_{\CC}^{\textnormal{top}})=BU\times \ZZ$.

\begin{theorem}[{Cao--Gross--Joyce \cite[Thm. 1.15 (b)]{CGJ}}]
\label{theorem CGJ}
Let $X$ be a projective Calabi--Yau 4-fold, then there exists a natural trivializable $\ZZ_2$-bundle $O^{\textnormal{dg}}\to \mathcal{C}_X$ with a natural isomorphism
$$
\Gamma^*(O^{\textnormal{dg}})\cong (O^\omega)^{\textnormal{top}}\,.
$$
Let $\mC_\alpha$ be the connected component labelled by $\alpha\in \pi_0(\mathcal{C}_X) = K^0(X)$, and define $\mathcal{M}_\alpha$ such that $\mathcal{M}^{\textnormal{top}}_\alpha = \Gamma^{-1}(\mathcal{C}_\alpha)$. Further set $O^{\textnormal{dg}}_\alpha = O^{\textnormal{dg}}|_{\mathcal{C}_\alpha}$ and $O^{\omega}_\alpha = O^{\omega}|_{\mathcal{M}_\alpha}$, then there are natural isomorphisms
\begin{equation}
\label{eq:orientphis}
\phi^{\textnormal{dg}}_{\alpha,\beta}: O^{\textnormal{dg}}_{\alpha}\boxtimes O^{\textnormal{dg}}_{\beta}\to \mu^*_{\alpha,\beta}(O^{\textnormal{dg}}_{\alpha+\beta})\,,\qquad \phi^{\omega}_{\alpha,\beta}: O^{\omega}_{\alpha}\boxtimes O^{\omega}_{\beta}\to \mu^*_{\alpha,\beta}(O^{\omega}_{\alpha+\beta})\,,
\end{equation}
Let $o^{\textnormal{dg}}_\alpha$ be a choice of trivialization of $ O^{\textnormal{dg}}_{\alpha}$ for all $\alpha\in K^0(X)$, then define $o^\omega_{\alpha}= \Gamma^*(o^{\textnormal{dg}}_\alpha)$ the trivializations of $O^{\omega}_\alpha$.  There exist unique signs $\epsilon_{\alpha,\beta}$ for all $\alpha,\beta\in K^0(X)$ satisfying \eqref{signs}, such that
$$
\phi^{\omega}_{\alpha,\beta}(o^\omega_\alpha\boxtimes o^\omega_\beta) = \epsilon_{\alpha,\beta}o^\omega_{\alpha+\beta}
$$
if one chooses $o^{\textnormal{dg}}_{0}$ such that $\phi^{\textnormal{dg}}(o^{\textnormal{dg}}_0\boxtimes o^\textnormal{dg}_0)=o^{\textnormal{dg}}_0$. 
\end{theorem}
We will no longer distinguish between $o^{\dg}_{\alpha}$ and $o^\omega_{\alpha}$ and simply write $o_{\alpha}$ for both. Let $p$ denote the K-theory class of a sky-scraper sheaf at some point $x\in X$. We describe how to fix orientations $o_\alpha$ for $\alpha=N\llbracket \mathcal{O}_X\rrbracket + np$. We do so by using the procedure detailed by Joyce--Tanaka--Upmeier \cite[Thm. 2.27]{JTU} which starts with orientations determined for a set of generators $G\subset K^0(X)$ of the K-theory group (see also \cite[Thm. 5.4]{bojko} for compactly supported K-theory specialized to Calabi--Yau fourfolds). Writing a chosen $\alpha\in K^0(X)$ as a sum of the generators, one applies \eqref{eq:orientphis} with $\beta\in G$ multiple times to induce orientations for the sum under the isomorphisms of $\ZZ_2$-bundles. In the process, it is important to fix the order of the generators in which they are added up.

For our purposes, we can work just with the subgroup of $K^0(X)$ generated by $\llbracket \mathcal{O}_X\rrbracket$ and $p$, and we can choose $G=\{\llbracket \mathcal{O}_X\rrbracket,p\}$. We also choose the order by imposing $\llbracket \mO_X\rrbracket < p$.  Let $M_p$ denote the moduli scheme of sheaves of class $p$. There is an isomorphism $M_p = X$ and Cao--Leung \cite[Prop. 7.17]{CL}) showed that 
\begin{equation}
\label{Mpclass}
 [M_p]^{\textnormal{vir}} = \pm \textnormal{Pd}(c_3(X))\in H_2(X)\,,   
\end{equation}
where $\textnormal{Pd}(-)$ denotes the Poincare dual. There is a natural map $m_p: M_p\to \mathcal{M}_X$ leading to $\Gamma\circ m_p^{\textnormal{top}}: M_p^{\textnormal{top}}\to \mathcal{C}_p$. Similarly, the point moduli space $\{\mathcal{O}_X\} =*$ comes with a natural map $i_{\mathcal{O}_X}: \{\mathcal{O}_X\}\to \mathcal{M}_X$ and carries a natural virtual fundamental class $1\in H_0(*)\cong \ZZ$.
\begin{definition}
\label{definition canonicalor}
 We fix orientations $o_p$ and $o_{\llbracket\mO_X \rrbracket}$, such that $(\Gamma\circ m_p^{\textnormal{top}})^*(o_p)$ induces the virtual fundamental class $[M_p]^{\textnormal{vir}} =\textnormal{Pd}(c_3(X))$ and $o_{\llbracket\mathcal{O}_X\rrbracket}$ induces the virtual fundamental class $[\{\mathcal{O}_X\}]^{\textnormal{vir}}=1\in H_0(\textnormal{pt})$. We will denote these choices of orientations by $o^{\textnormal{can}}_p$ and $o^{\textnormal{can}}_{\llbracket\mathcal{O}_X\rrbracket}$, respectively.  By the procedure of \cite[Thm. 2.27]{JTU} or \cite[Thm. 5.4]{bojko} recalled above and using the chosen order $\llbracket \mO_X\rrbracket <p$,  these determine canonical orientations for all $\alpha=N\llbracket\mathcal{O}_X\rrbracket + np$. We will call the resulting orientations \textit{point-canonical}.
\end{definition}

\subsection{Vertex algebras over pairs}
\label{sec:VApa}
 Let us now construct the vertex algebra on the auxiliary category of pairs based on \cite{GJT} and \cite{JoyceSong}. Joyce--Song type wall-crossing will then be formulated using this vertex algebra, and wall-crossing from zero-dimensional sheaves to Hilbert schemes of points will be a special case of it. For the generalization of this construction and some additional reasoning behind it in addition to Remark \ref{rem:ob} see the sequel to this work \cite[§3.2]{bojkoquot}.

\begin{definition}
\label{definition pairs}
Let $X$ be a Calabi--Yau fourfold and $\mathcal{A} =\textnormal{Coh}(X)$. For now, we do not need to assume that $H^2(\mO_X)=0$. Fix an ample divisor $H$ and let $\tau$ denote the Gieseker stability condition with respect to $H$. Then let $\mathcal{A}_{q}$ be the full abelian subcategory of $\mathcal{A}$ with objects the zero sheaf and $\tau$-semistable sheaves with reduced Hilbert polynomial $q$. Define a further smaller exact subcategory $\mA_{q,k}$ consisting of objects $E$ in $\mA_q$ that are \textit{$k$-regular} -- they satisfy $H^{i}\big(E(k-i)\big) = 0$ for $i>0$. From the assumption, it follows that $H^i(E(k))=0$ for $i>0$ and $E(k)$ is generated by global sections for any $(E,V,\phi)$ in $\mB_{q,k}$. 

Let $\mathcal{B}_{q,k}$ be the exact category of triples $(E,V,\phi)$, where $E\in \textnormal{Ob}(\mathcal{A}_{q,k})$, $V\in \textnormal{Vect}_{\CC}$ and $\phi:V\otimes \mathcal{O}_X(-k)\to E$. The morphisms are pairs $(f,g): (E,V,\phi)\to (E',V',\phi')$ where $f:E\to E'$ and $g:V\to V'$ satisfy $\phi'\circ \big(g\otimes \textnormal{id}_{\mathcal{O}_X(-k)}\big) = f\circ \phi$. The moduli stack $\mathcal{N}_{q,k}$ of $\mathcal{B}_{q,k}$ is Artin by \cite[Lem. 13.2]{JoyceSong} combined with the openness of the $k$-regularity condition.

Let
\begin{equation}
\lambda: K_0(\mathcal{A})\longrightarrow K^0(X)
\end{equation}
be induced by the usual comparison map mapping each coherent sheaf to its topological K-theory class. We define $C(\mathcal{A}_{q,k})\subset K_0(\mathcal{A})$ to be the cone of $\llbracket E\rrbracket$ for non-zero $E\in \textnormal{Ob}(\mathcal{A}_{q})$ such that $(E,0,0)\in\textnormal{Ob}\big(\mathcal{B}_{q,k}\big)$. Let $C_0(\mathcal{B}_{q,k})=(C(\mathcal{A}_{q,k})\sqcup \{0\})\times (\N\sqcup\{0\})$, then for all $(\alpha,d)\in C_0(\mathcal{B}_{q,k})$ define $\mathcal{N}^{\alpha,d}_{q,k}$ to be the moduli stack of objects $(E,V,\phi)$ in $\mB_{q,k}$ with $\big(\llbracket E\rrbracket, \dim(V)\big) =(\alpha,d)$. They are constructed as follows:
\begin{itemize}
    \item Let $\mathcal{M}^{\alpha}_{q,k}$ be the moduli stack of objects $E$ in $\mA_{q,k}$ satisfying $\llbracket E\rrbracket=\alpha$.
    If $(\alpha,d)\in C(\mathcal{A}_{q,k})\times \N$ then $\mathcal{N}^{\alpha,d}_{q,k}$ is the total space of a vector bundle $\pi_{\alpha,d}: \pi_{2\,*}\big(\mathcal{F}^{\alpha}_{q,k}\big)\boxtimes \mathcal{V}^*_d\to \mathcal{M}^{\alpha}_{q,k}\times [*/\textnormal{GL}(d,\CC)]$. Here 
    \begin{equation}
    \label{falphan}
    \mathcal{F}_{q,k}^{\alpha}=\pi_1^*\big(\mathcal{O}_X(k)\big)\otimes \mathcal{E}^{\alpha}_{q,k}\,,
    \end{equation}
    where $\mathcal{E}^{\alpha}_{q,k}$ is the universal sheaf over $X\times \mathcal{M}^{\alpha}_{q,k}$ and  we used $\mathcal{V}_d$ to denote the universal vector bundle of rank $d$.
    \item  $\mathcal{N}^{\alpha,0}_{q,k} = \mathcal{M}^{\alpha}_{q,k}$,   $\mathcal{N}^{0,d}_{q,k}=[*/\textnormal{GL}(d,\CC)]$ and $\mathcal{N}^{0,0}_{q,k} = *$.
\end{itemize}
The moduli stack $\mN_{q,k}$ is the disjoint union
$$
\mathcal{N}_{q,k} = \bigsqcup_{(\alpha,d)\in C_0(\mathcal{B}_{q,k})}\mathcal{N}^{\alpha,d}_{q,k}\,.
$$
\end{definition}
We now describe the remaining ingredients needed in Definition \ref{definition hall}.  
\begin{definition}
\label{definition pairVA}
We have a natural action $\Phi_{\mathcal{N}_{q,k}}:[*/\GG_m]\times \mathcal{N}_{q,k}^{\alpha,d}\to  \mathcal{N}^{\alpha,d}_{q,k}$ compatible with the diagonal $[*/\GG_m]$ action on the base $\mathcal{M}_{q,k}^{\alpha}\times [*/\textnormal{GL}(d,\mathbb{C})]$.  We define the map of monoids 
\begin{equation}
\label{komega}
K(\Omega):C_0(\mathcal{B}_{q,k})\stackrel{(\lambda,\textnormal{id})}{\longrightarrow}K^0(X)\oplus \ZZ
\end{equation}
that will later be the K-theory version of the topological map \eqref{omega}. Let $$\Theta_{\alpha,\beta}=\underline{\textnormal{Hom}}_{\mathcal{M}_{q,k}^{\alpha}\times \mathcal{M}^{\beta}_{q,k}}(\mathcal{E}^{\alpha}_{q,k},\mathcal{E}_{q,k}^{\beta})^\vee\,.$$ 
Let $\mathcal{F}_{q,k}^{\alpha}$ be as in \eqref{falphan}. We define  $\Theta^{\textnormal{pa}}_{(\alpha_1,d_1),(\alpha_2,d_2)}$ on $(\mathcal{N}^{\alpha_1,d_1}_{q,k}\times \mathcal{N}^{\alpha_2,d_2}_{q,k})$ for all $(\alpha_i,d_i)\in C_0(\mathcal{N}_{q,k})$ by \footnote{To see that this is the correct choice for $\Theta^{\pa}$, one should use the natural map $\mN_{q,k}\to \mM_X$ mapping pairs to two term-complexes. Pulling back the naturally defined K-theory class $\Theta$ on $\mM_X\times \mM_X$ gives $\Theta^{\pa}$. We replaced $H^*(\mO_X)\otimes \big(\mV_{d_1}\boxtimes \mV^*_{d_2}\big)$ by $\big(\mV_{d_1}\boxtimes \mV^*_{d_2}\big)^{\oplus \chi(\mO_X)}$ in the process.}
   \begin{align*} 
   \label{thetacom}
 \Theta^{\textnormal{pa}}_{(\alpha_1,d_1),(\alpha_2,d_2)} =& (\pi_{\alpha_1,d_1}\times \pi_{\alpha_2,d_2})^*\\&
   \Big\{(\Theta_{\alpha_1,\alpha_2})_{1,3} \oplus \Big((\mV_{d_1}\boxtimes \mV^*_{d_2})^{\oplus \chi(\mO_X)}\Big)_{2,4}\oplus  \Big(\mathcal{V}_{d_1}\boxtimes \pi_{2\,*}(\mathcal{F}_{q,k}^{\alpha_2})^\vee\Big)_{2,3}[1]
   \\&\oplus \Big(\pi_{2\, *}(\mathcal{F}_{q,k}^{\alpha_1}) \boxtimes \mathcal{V}^*_{d_2}\Big)_{1,4} [1]\Big\}\,.
     \numberthis
   \end{align*}
\sloppy The perfect complex $\Theta^{\textnormal{pa}}$ on $\mathcal{N}_{q,k}\times \mathcal{N}_{q,k}$ is defined to have the restriction to $\mathcal{N}^{\alpha_1,d_1}_{q,k}\times \mathcal{N}^{\alpha_2,d_2}_{q,k}$ given by \eqref{thetacom}.
The corresponding bi-additive symmetric form is given by  
\begin{align*}
\label{chip}
&\chi^{\textnormal{pa}}\big((\alpha_1,d_1),(\alpha_2,d_2)\big)=\textnormal{rk}\big(\Theta^{\textnormal{pa}}_{(\alpha_1,d_1),(\alpha_2,d_2)}\big)\\&=\chi(\alpha_1,\alpha_2) + \chi(\mO_X) d_1d_2- d_1\chi(\alpha_2(k)) - d_2\chi\big(\alpha_1(k)\big)\,.
\end{align*}
The signs $\epsilon^{\textnormal{pa}}_{(\alpha_1.d_1),(\alpha_2.d_2)}$ are defined by:
\begin{equation}
\label{signsp}
   \epsilon^{\textnormal{pa}}_{(\alpha_1,d_1),(\alpha_2,d_2)}= \epsilon_{\lambda(\alpha_1-d_1\llbracket\mathcal{O}_X(-k)\rrbracket),\lambda(\alpha_2 -d_2\llbracket\mathcal{O}_X(-k)\rrbracket)}\,.
\end{equation}
where $\epsilon$ is from Theorem \ref{theorem CGJ}.
\end{definition}
The above data does not satisfy the conditions of Definition \ref{definition hall} in the strict sense. This does not pose any problems because these conditions are still satisfied on the level of
 K-theory. This observation is sufficient to conclude in Lemma \ref{pairtopdata} that the same recipe for constructing $$(\hat{H}(\mathcal{N}_{q,k}), \ket{0}, e^{zT},Y)$$ as in Definition \ref{definition hall} leads to a vertex algebra. We leave the proof to §\ref{sec:VAtoppa} after formulating a fully topological perspective of Joyce's vertex algebras.
\subsection{Wall-crossing conjecture for Calabi--Yau fourfolds}
\label{secwallcr}
In this subsection, we state the conjectural wall-crossing formula for Joyce--Song stable pairs based on Gross--Joyce--Tanaka \cite[Def. 4.3]{GJT} and following Joyce--Song \cite[§5.4]{JoyceSong}, Joyce \cite{JoyceWC}. For the abelian category of coherent sheaves, the conjecture has been stated in \cite[Conj. 4.11]{GJT}. Just like in Definition \ref{definition pairs}, we do not require $H^2(\mO_X)=0$ here.\\

Before stating the conjecture, we recall some background from \cite{Joycehall}. For an Artin stack $\mathcal{M}$ as in Definition \ref{definition hall}, we define $\mathcal{M}_{\backslash 0}=\mathcal{M}\backslash\{0\}$ where $0$ is the (point corresponding to the) zero object of $\mA$. Using rigidification as in Abramovich--Olsson--Vistoli \cite{AOV} and Romagny \cite{Romagny}, one defines $\mathcal{M}^{\textnormal{pl}}_{\backslash 0}=\mathcal{M}_{\backslash 0}\fatslash [*/\GG_m]$. Taking the shifted grading on $\check{H}_*(\mathcal{M}^{\textnormal{pl}}_{\backslash 0}) = H_{*+2-\chi(\alpha,\alpha)}(\mathcal{M}^{\textnormal{pl}}_{\backslash 0})$ such that the projection $\Pi^{\textnormal{pl}}:\mathcal{M}_{\backslash 0}\to \mathcal{M}^{\textnormal{pl}}_{\backslash 0}$ induces a map of graded $\Q$-vector spaces, Joyce \cite{Joycehall} proves that it factors as $\hat{H}_{*+2}(\mathcal{M}_{\backslash 0})\xrightarrow{\Pi}\check{H}_{*}(\mathcal{M}_{\backslash 0})\xrightarrow{\check{\Pi}_{*}}\check{H}_{*+2}(\mathcal{M}^{\textnormal{pl}}_{\backslash 0})$, such that $\check{\Pi}_0:\check{H}_0(\mathcal{M}_{\backslash 0})\to \check{H}_0(\mathcal{M}^{\textnormal{pl}}_{\backslash 0})$ is an isomorphism. 
For a stability condition $\tau$ on $\mathcal{A}$ as in \cite[Def. 4.1]{joyceconfigurations} and $0\neq\alpha\in K(\mathcal{A})$, let $M^{\textnormal{st}}_{\alpha}(\tau)$ denote the moduli space of $\tau$-stable objects in class $\alpha$ and $\mathcal{M}^{\textnormal{st}}_\alpha(\tau)\subset\mathcal{M}^{\textnormal{ss}}_\alpha(\tau)$ the finite type stacks of $\tau$-stable and $\tau$-semistable objects. There is a natural open embedding  $i^{\textnormal{st}}_{\alpha}:M^{\textnormal{st}}_{\alpha}(\tau)\hookrightarrow \mathcal{M}^{\textnormal{pl}}_{\backslash 0}$. In particular, if $[M^{\textnormal{st}}_{\alpha}(\tau)]^{\textnormal{vir}}\in H_*\big(M^{\textnormal{st}}_{\alpha}(\tau)\big)$ is defined, then we write $[M^{\textnormal{st}}_{\alpha}(\tau)]^{\vir}_{\mM}=i^{\textnormal{st}}_{\alpha\,*}\big([M^{\textnormal{st}}_{\alpha}(\tau)]^{\textnormal{vir}}\big)\in H_*\big(\mathcal{M}^{\textnormal{pl}}_{\backslash 0}\big)$.

Let now $X$ be a Calabi--Yau fourfold, $\mathcal{A} =\textnormal{Coh}(X)$ and $\tau$ a Gieseker stability, then in the case that $\mathcal{M}^{\textnormal{st}}_\alpha(\tau)=\mathcal{M}^{\textnormal{ss}}_\alpha(\tau)$, the moduli space $M^{\textnormal{st}}_{\alpha}(\tau)$ is a projective scheme. Therefore, Oh--Thomas \cite{OT} and Borisov--Joyce \cite{BJ} construct the virtual fundamental classes $[M^{\textnormal{st}}_\alpha(\tau)]^{\textnormal{vir}}\in H_{\chi(\mO_X)-\chi(\alpha,\alpha)}(M^{\textnormal{st}}_{\alpha}(\tau))$.
Thus we have $[M^{\textnormal{st}}_{\alpha}(\tau)]^{\vir}_{\mM}\in \hat{H}_0(\mathcal{M}^{\textnormal{pl}}_{\backslash 0})$. We 
make a choice of a class in $\big(\check{\Pi}_0\big)^{-1}\big([M^{\textnormal{st}}_\alpha(\tau)]^{\vir}_{\mM}\big)$ and denote it also by $[M^{\textnormal{st}}_\alpha(\tau)]^{\vir}_{\mM}$.

For $\mathcal{A} = \textnormal{Coh}(X)$ we now fix the data from Definition \ref{definition hall}.

\begin{definition}
\label{definition alling}
Define $(\mathcal{A},K(\mathcal{A}), \Phi,\mu,\Theta,\epsilon)$ as follows:
\begin{itemize}
    \item $\lambda: K_0(\mathcal{A})\xrightarrow{\lambda}K^0(X)=:K(\mathcal{A})$, $\Theta$ from Definition \ref{definition pairVA}.
    \item   For $\alpha,\beta\in K(\mathcal{A})$ define $\epsilon_{\alpha,\beta} = \epsilon_{\lambda(\alpha),\lambda(\beta)}$ using the orientations from Theorem \ref{theorem CGJ}.
\end{itemize}
\end{definition}
Moreover, use the fixed orientations above to construct the virtual fundamental classes $[M^{\textnormal{st}}_{\alpha}(\tau)]^\textnormal{vir}\in H_*(M^{\textnormal{st}}_{\alpha}(\tau))$ for all $\alpha,\tau$, such that  $\mathcal{M}^{\textnormal{st}}_\alpha(\tau)=\mathcal{M}^{\textnormal{ss}}_\alpha(\tau)$.
 Let $\tau^{\textnormal{pa}}$ denote the Joyce--Song stability condition on pairs (see Joyce--Song \cite[§5.4]{JoyceSong}), and let $M^{\textnormal{st}}_{(\alpha,1)}(\tau^{\textnormal{pa}})$ be the moduli space of $\tau^{\pa}$-stable pairs in $\mB_{q,k}$ with class $(\alpha,1)$, then \cite{BJ, OT} still give us $\big[M^{\textnormal{st}}_{(\alpha,1)}(\tau^{\textnormal{pa}})\big]^{\textnormal{vir}}\in H_{2-\chi^{\textnormal{pa}}((\alpha,1),(\alpha,1))}(M^{\textnormal{st}}_{(\alpha,1)}(\tau^{\textnormal{pa}}))$ using the usual traceless obstruction theory as in Remark \ref{rem:ob}. The chosen orientations are again used to determine orientations on $M^{\textnormal{st}}_{(\alpha,1)}(\tau^{\textnormal{pa}})$ under the morphism $M^{\textnormal{st}}_{(\alpha,1)}(\tau^{\pa})\to \mathcal{M}_X$ mapping each pair 
 $
 V\otimes \mO_X(-k)\to F
 $ to a two-term complex $[
 V\otimes \mO_X(-k)\to F
 ]$ in the derived category. There is also a natural map 
 $$
M^{\textnormal{st}}_{(\alpha,1)}(\tau^{\pa})\to \mN^{\pl}_{q,k} 
 $$
 for an appropriate choice of $q,k$, and we will write $[M^{\textnormal{st}}_{(\alpha,1)}(\tau^{\textnormal{pa}})]^{\vir}_{\mN}$ for the pushforward of the virtual fundamental class along it. 
\begin{conjecture}[Gross--Joyce--Tanaka {\cite[Def. 4.3]{GJT}}]
\label{conjecture WC}
  Let $\tau$ be a Gieseker stability, then there are unique classes $[\mathcal{M}^{\textnormal{ss}}_{\alpha}(\tau)]^{\inv}\in \check{H}_0(\mathcal{M}_\alpha)$ for all $\alpha\in K(\mathcal{A})$ satisfying:
  \begin{enumerate}[label=\roman*)]
      \item If $\mathcal{M}^{\textnormal{ss}}_\alpha(\tau)=\mathcal{M}^{\textnormal{st}}_{\alpha}(\tau)$ then $[\mathcal{M}^ {\textnormal{ss}}_{\alpha}(\tau)]^{\inv} = [M^{\textnormal{st}}_{\alpha}(\tau)]^{\vir}_{\mM}$\,. 
      \item If $\tilde{\tau}$ is another Gieseker stability condition then these classes satisfy the wall-crossing formula \cite[eq. (4.1)]{GJT} in $\check{H}_*(\mathcal{M})$.
 If $\mathcal{M}^{\textnormal{ss}}_{\beta}(\tau) = \mathcal{M}^ {\textnormal{ss}}_{\beta}(\tilde{\tau})$ then $[\mathcal{M}^ {\textnormal{ss}}_{\beta}(\tau)]^{\inv}=[\mathcal{M}^ {\textnormal{ss}}_{\beta}(\tilde{\tau})]^{\inv}$\,.
  \item We have the formula in $\check{H}_*(\mathcal{N}_{q,k})$:
  \begin{align*}
\label{wcfpairs}
 &[M^{\textnormal{st}}_{(\alpha,1)}(\tau^{\textnormal{pa}})]^{\vir}_{\mN}= \\
 &\sum_{\begin{subarray}a
l\geq 1,\alpha_1,\ldots,\alpha_l\in C(\mathcal{A})\\
\alpha_1+\cdots+\alpha_l = \alpha,\tau(\alpha)=\tau(\alpha_i)
\end{subarray}}\frac{(-1)^l}{l!}\big[\big[\ldots \big[[\mathcal{M}_{(0,1)}]^{\inv},[\mathcal{M}_{\alpha_1}^{\textnormal{ss}}(\tau)]^{\inv}],\ldots ],[\mathcal{M}_{\alpha_l}^{\textnormal{ss}}(\tau)]^{\inv}]\,,
\end{align*}
where $[\mathcal{M}_{(0,1)}]^{\inv}\in \check{H}_0(\mathcal{N}^{0,1}_{q,k})$ is the point class up to a sign determined by the choice of orientation $o_{\llbracket \mathcal{O}_X\rrbracket}$.
  \end{enumerate}
\end{conjecture}

One can define invariants by evaluating on $[M^{\textnormal{ss}}_\alpha(\tau)]^{\inv}$ the cohomology classes from Definition \ref{def weight0} below called \textit{weight 0 insertions}. We will reformulate the above vertex algebras and their associated Lie algebras purely in terms of topology to compute many such invariants for $\alpha = np$, in which case 
$$
M^{\textnormal{st}}_{(\alpha,1)}(\tau^{\textnormal{pa}}) = \Hilb\,.
$$

\label{secwcins}
\subsection{Topological construction of Joyce's vertex algebras}
Because the vertex algebra construction in Definition \ref{definition hall} only uses the underlying topological information of the stack $\mM_{\mA}$ after acting on it with $(-)^{\ttop}$, we extract from the definition the necessary conditions on $(\mM_{\mA})^{\ttop}$. We then extend the definition to any topological space and forget that we originally started from some category $\mA$. This perspective is not entirely new as it can be extracted from a more general formula for generalized complex cohomology theories in Gross \cite[Prop. 5.3.8]{grossdphil}.
\begin{definition}
		\label{DeftopVA}
		Let $(\mathcal{C},\mu, 0)$ be an H-space (see Hatcher \cite{Hatcher})with a $\mathbb{C}\mathbb{P}^{\infty}$ action $\Phi:\mathbb{C}\mathbb{P}^{\infty}\times \mathcal{C}\to \mathcal{C}$ which is an $H$-map with respect to the multiplication $\mu$ an identity $0$. Let $\theta\in K^0(\mathcal{C}\times \mC)  =[\mathcal{C}\times \mC,BU\times\ZZ]$ be a K-theory class satisfying $\sigma^*(\theta) = \theta^\vee$, where $\sigma:\mC\times\mC\to \mC\times \mC$ permutes the two factors, together with
		\begin{align*}
		(\mu\times \textnormal{id}_{\mathcal{C}})^*(\theta)= (\theta)_{1,3}+ (\theta)_{2,3}\,,&\qquad 
		(\textnormal{id}_{\mathcal{C}}\times \mu)^*(\theta)=(\theta)_{1,2}+ (\theta)_{1,3}\,,\\
		(\Phi\times \textnormal{id}_{\mathcal{C}})^*(\theta) = \mV_1\boxtimes \theta\,,&\qquad
		(\textnormal{id}_{\mathcal{C}}\times \Phi)^*(\theta) = \mV_1 ^*\boxtimes \theta\,,
		\end{align*}
		where $\mV_1\to \mathbb{C}\mathbb{P}^{\infty}$ is the universal line bundle.
		
		Let $\pi_0(\mathcal{C})\to K$ be a morphism of commutative monoids. Denote $\mathcal{C}_\alpha$ to be the open and closed subset of $\mathcal{C}$ which is the union of connected components of $\mathcal{C}$ mapped to $\alpha\in K$. We write  $\theta_{\alpha,\beta} = \theta|_{\mathcal{C}_\alpha\times \mathcal{C}_\beta}$, and $\chi^+(\alpha,\beta) = \textnormal{rk}(\theta_{\alpha,\beta})$ must be a symmetric bi-additive form on $K$. Let $\epsilon: K\times K\to  \{-1,1\}$ satisfying
		\begin{align*}
		\epsilon_{\alpha,\beta} &= (-1)^{\chi^+(\alpha,\beta)+\chi^+(\alpha,\alpha)\chi^+(\beta,\beta)}\epsilon_{\beta,\alpha}   \,,\quad  \forall \,\,\alpha,\beta\in A^+\\
		\epsilon_{\alpha,0} &= \epsilon_{0,\alpha}=1\,,\qquad
\epsilon_{\alpha,\beta}\epsilon_{\alpha+\beta,\gamma}=\epsilon_{\beta,\gamma}\epsilon_{\alpha,\beta+\gamma}\,,
		\numberthis
		\end{align*}
		be a group 2-cocycle and $\hat{H}_a(\mathcal{C}_\alpha) = H_{a-\chi(\alpha,\alpha)}(\mathcal{C}_\alpha)$. Then we denote by $$(\hat{H}_*(\mathcal{C}), \ket{0},T, Y)$$ the vertex algebra on the graded $\Q$-vector space $\hat H_*(\mathcal{C}) =\bigoplus_{\alpha\in K}\hat H_*(\mathcal{C}_{\alpha})$ defined from the data $$(\mathcal{C}, K(\mathcal{C}),\Phi,\mu,0,\theta, \epsilon)$$ by 
		\begin{itemize}
			\item $\ket{0} =0_*(*)$, where $*$ denotes the generator of $H_0(\textnormal{pt})$ and $T(u) = \Phi_*(t\boxtimes u)$, where $t$ is the dual of $c_1(\mV_1)$,
			\item the state to field correspondence $Y$ is given by
			\begin{align*}
			Y(u,z)v = \epsilon_{\alpha,\beta} (-1)^{a\chi^+(\beta,\beta)}z^{\chi^+(\alpha,\beta)}\mu_*(e^{zT}\otimes \textnormal{id})\big((u\boxtimes v)\cap c_{z^{-1}}(\theta_{\alpha,\beta})\big)\,,
			\end{align*}
			for all $u\in \hat{H}_a(\mathcal{C}_\alpha)$, $v\in \hat{H}_b(\mathcal{C}_\beta)$. 
		\end{itemize}
		\label{definition haltop}
	\end{definition}
\begin{remark}
We can assign to $(\mathcal{A},K(\mathcal{A}),\mathcal{M}, \Phi, \mu, \Theta,\epsilon)$ from Definition \ref{definition hall} the data
$$(\mathcal{M}^{\textnormal{top}},C_0(\mathcal{A}),\Phi^{\textnormal{top}},\mu^{\textnormal{top}}, 0^{\textnormal{top}}, \theta, \epsilon)$$
from Definition \ref{definition haltop}, where $C_0(\mathcal{A})\subset K(\mA)$ is the cone of all $\llbracket E\rrbracket\in K(\mA)$, $\Phi^{\textnormal{top}}$, $\mu^{\textnormal{top}}$, $0^{\textnormal{top}}$  are maps in $\textbf{Ho}(\textbf{Top})$ and $\theta:=\llbracket \Theta \rrbracket$. The two vertex algebras obtained on $\hat{H}_*(\mathcal{M})$ are clearly the same. 
\end{remark}
\subsection{Insertions}
\label{sectinsertions}
To compute invariants using the homology classes of Conjecture \ref{conjecture WC}, we need to consider elements in the dual of $\check{H}_0(\mathcal{M}_{\mA})$ or $\check{H}_0(\mathcal{N}_{q,k})$ (see Definition \ref{definition pairs}). We do so in the algebraic topological language, as it is more general and closer to the computations that follow. This leads to the notion of weigh 0 insertions which are formally dual to the elements of the Lie algebras. At the end of the section, we also state a topological version of \cite[Def. 2.11]{GJT} which allows us to twist vertex algebra from Definition \ref{DeftopVA} by the top Chern class of a vector bundle. 

\begin{definition}
\label{def weight0}
Let $(\mathcal{C}, K(\mathcal{C}), \Phi,\mu, 0,\theta, \epsilon)$ be the data as in Definition \ref{definition haltop}, then a \textit{weight 0 insertion}  is a cohomology class $ I\in H^*(\mathcal{C})$ satisfying
$$\Phi^*( I) = 1\boxtimes  I\,. $$
\end{definition}

 \begin{lemma}
 \label{lemma weight 0}
Let $ I\in H^*(\mathcal{C})$ be a weight 0 insertion, then $I_m\in H^m(\mathcal{C})$ induces a well defined map $\check{I}_{-2+\chi(\alpha,\alpha)+m}: \check{H}_{-2+\chi(\alpha,\alpha)+m}(\mathcal{C}_{\alpha})\to \Q$ for all $m\geq 0$. 
 \end{lemma}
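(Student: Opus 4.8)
The plan is to produce the map $\check{I}_{-2+\chi(\alpha,\alpha)+m}$ by pairing a homology class with the cohomology class $I_m$ and then checking that this pairing descends to the quotient $\check H_*(\mathcal{C}_\alpha) = \hat H_{*+2}(\mathcal{C}_\alpha)/T(\hat H_*(\mathcal{C}_\alpha))$. Concretely, given $\gamma \in \hat H_{-2+\chi(\alpha,\alpha)+m}(\mathcal{C}_\alpha)$, which by the definition of the shifted grading is the same as a class $\gamma \in H_{-2+m}(\mathcal{C}_\alpha)$, I would set $\check I_{-2+\chi(\alpha,\alpha)+m}(\gamma) = \langle I_m|_{\mathcal{C}_\alpha}, \gamma\rangle$, the Kronecker pairing between $H^m(\mathcal{C}_\alpha)$ and $H_{-2+m}(\mathcal{C}_\alpha)$. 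Wait—this pairing is only nonzero when $-2+m = m$, which is impossible; the correct reading is that $I_m \in H^m$ pairs with $H_m$, so in fact the relevant homological degree before the shift is $m$, and after the shift it is $m + \chi(\alpha,\alpha)$; the ``$-2$'' appears because $\check H$ lives in the image of $\Pi_{*+2}$, i.e. $\check H_k(\mathcal{C}_\alpha)$ is a quotient of $\hat H_{k+2}(\mathcal{C}_\alpha) = H_{k+2-\chi(\alpha,\alpha)}(\mathcal{C}_\alpha)$, and setting $k+2-\chi(\alpha,\alpha) = m$ gives $k = -2+\chi(\alpha,\alpha)+m$. So the pairing $\langle I_m, -\rangle$ is defined on $\hat H_{\,\chi(\alpha,\alpha)+m}(\mathcal{C}_\alpha) = H_m(\mathcal{C}_\alpha)$, which is the degree-$(-2+\chi(\alpha,\alpha)+m+2)$ piece, i.e. on representatives of classes in $\check H_{-2+\chi(\alpha,\alpha)+m}(\mathcal{C}_\alpha)$.

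The substantive step is then to check that $\langle I_m, -\rangle$ kills the image of $T$. Recall $T(u) = \Phi_*(t\boxtimes u)$ where $t \in H_2(\mathbb{CP}^\infty)$ is the fundamental class of $\mathbb{CP}^1 \subset \mathbb{CP}^\infty$. So for any $u \in \hat H_*(\mathcal{C}_\alpha)$, I would compute
\[
\langle I_m, \Phi_*(t\boxtimes u)\rangle = \langle \Phi^*(I_m), t\boxtimes u\rangle
\]
by the projection formula / adjunction between $\Phi_*$ and $\Phi^*$ in the pairing. Now the weight-$0$ hypothesis $\Phi^*(I) = 1\boxtimes I$ gives, taking the degree-$m$ component, $\Phi^*(I_m) = 1\boxtimes I_m$ (here $1 \in H^0(\mathbb{CP}^\infty)$). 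Therefore
\[
\langle I_m, \Phi_*(t\boxtimes u)\rangle = \langle 1\boxtimes I_m, t\boxtimes u\rangle = \langle 1, t\rangle\,\langle I_m, u\rangle = 0,
\]
since $\langle 1, t\rangle = 0$ because $t$ has homological degree $2 > 0$ and $1 \in H^0$. Hence the pairing descends to the quotient, defining $\check I_{-2+\chi(\alpha,\alpha)+m}$.

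There are two points of care rather than real obstacles. First, I must make sure the Künneth decomposition and the behaviour of the cap/Kronecker pairing under $\boxtimes$ are being used correctly over $\mathbb{Q}$ with $H^*$ taken as a product over degrees and $H_*$ as a direct sum, as stipulated in the paper; since $\mathbb{CP}^\infty$ has finitely generated homology in each degree this is unproblematic, and the pairing of a cross product with a cross product splits as the product of pairings with the sign and degree bookkeeping being trivial here because $1$ sits in degree $0$. Second, I should note that $\Phi^*$ applied to a class in $H^*(\mathcal{C})$ lands in $H^*(\mathbb{CP}^\infty \times \mathcal{C})$ and restricts compatibly to each $\mathcal{C}_\alpha$ (the $\mathbb{CP}^\infty$-action preserves the connected-component decomposition indexed by $K$, since $\Phi$ is an $H$-map over the monoid map $\pi_0(\mathcal{C}) \to K$), so the restriction $I_m|_{\mathcal{C}_\alpha}$ is again weight $0$ and the argument localises to each $\alpha$. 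I expect the only thing that needs a sentence of justification is the adjunction $\langle I_m, \Phi_* x\rangle = \langle \Phi^* I_m, x\rangle$, which is the standard compatibility of pushforward in homology with pullback in cohomology under the cap product, valid for the continuous map $\Phi$ of topological spaces.
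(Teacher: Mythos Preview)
Your proposal is correct and follows essentially the same route as the paper: both arguments reduce to the projection formula/push-pull $\langle I_m, \Phi_*(t\boxtimes u)\rangle = \langle \Phi^*(I_m), t\boxtimes u\rangle$, invoke the weight-$0$ hypothesis $\Phi^*(I_m)=1\boxtimes I_m$, and conclude from $\langle 1,t\rangle=0$. The paper phrases it via cap products and then pairs with $1\in H^0$, which is the same computation in slightly different notation; your degree bookkeeping, after the self-correction, matches the paper's conventions.
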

  \begin{proof}
Suppose that we have $V,V'\in H_m(\mathcal{C})$ such that $V-V'= T(W)$ for $W\in H_{m-2}(\mathcal{C})$. As $T(W) = \Phi_*(t\boxtimes W)$, using the push-pull formula in (co)homology we see
 \begin{align*}
    &T(W)\cap I_m = \Phi_*(t\boxtimes W)\cap I_m = \Phi_*\big(t\boxtimes W\cap\Phi^*(I_m)\big) \\
    &=\Phi_*\big((t\boxtimes W)\cap(1\boxtimes I_m)\big)= \Phi_*\big(t\boxtimes (W\cap I_m)\big) =0\,.
 \end{align*}
Acting with the unit cohomology class $1\in H^0(\mathcal{C})$ on both sides shows that $ I_m(V-V')=0$.
  \end{proof}
   Let $[M]\in \check{H}_m(\mathcal{C})$ and $I$ a weight zero insertion. Then we will use the notation
\begin{equation}
\label{integral}
  \int_{[M]}I= \check{I}_{m}([M])\,.   
\end{equation}

\begin{example}
\label{exampleins}
Suppose that $\mathcal{J}\in K^0(\mathcal{C}\times \mathcal{C})$  satisfies 
$$
(\Phi\times \textnormal{id}_{\mathcal{C}})^*(\mathcal{J})= \mathcal{V}^*_1\boxtimes \mathcal{J}\,,\qquad (\textnormal{id}_{\mathcal{C}}\times \Phi )^*(\mathcal{J})= \mathcal{V}_1\boxtimes \mathcal{J}\,,
$$
then $\mathcal{I} =\Delta^*(\mathcal{J})$ satisfies $\Phi^*(\mathcal{I}) =1\boxtimes \mathcal{I}$. In particular if $p(x_1t,x_2t^2,\ldots)$ is a power series in infinitely many variables then $I=p(\textnormal{ch}_1(\mathcal{I}),\textnormal{ch}_2(\mathcal{I}),\ldots)$ is a weight zero insertion. 
\end{example}
Often times insertions behave well under direct sums. In the algebraic setting the following definition has been stated more generally in \cite[Def. 2.11]{GJT}. 
\begin{definition}
\label{E-twisted}
Let $(\hat{H}_*(\mathcal{C}), \ket{0},e^{zT}, Y)$ be a vertex algebra for the data in Definition \ref{definition haltop}. Let $F\to \mathcal{C}\times \mathcal{C}$ be a vector bundle satisfying
\begin{align*}
\label{vectconditions}
    (\mu\times \textnormal{id}_{\mathcal{C}})^*(F)\cong  \pi_{1,3}^*(F)\oplus \pi_{23}^*(F)\,,&\qquad
    (\textnormal{id}_{\mathcal{C}}\times \mu)^*(F)\cong \pi_{1,2}^*(F)\oplus \pi_{1,3}^*(F)\\
    (\Phi\times \textnormal{id}_{\mathcal{C}})^*(F) \cong V_1^*\boxtimes F\,,&\qquad
    (\textnormal{id}_{\mathcal{C}}\times \Phi)^*(F) \cong  V_1 \boxtimes F\,,
    \numberthis
    \end{align*}
such that $\xi(\alpha,\beta) :=\textnormal{rk}(F|_{\mathcal{C}_\alpha\times \mathcal{C}_\beta})$ is constant for all $\alpha,\beta\in K(\mathcal{C})$. Then define the $F$\textit{-twisted vertex algebra} associated to $(\hat{H}_*(\mathcal{C}), \ket{0},e^{zT}, Y)$ to be the vertex algebra given by Definition \ref{definition haltop} for the data $(\mathcal{C},K(\mathcal{C}), \Phi,\mu, 0,\theta^{F},\epsilon^{\xi})$, where
\begin{align*}
    \theta^{F}&=\theta +\llbracket F^*\rrbracket + \llbracket \sigma^*(F)\rrbracket\,,\\
    \epsilon^{\xi}_{\alpha,\beta}&=(-1)^{\xi(\alpha,\beta)}\epsilon_{\alpha,\beta}\qquad \forall\,\,\alpha,\beta\in K(\mathcal{C})\,.
\end{align*}
We denote this vertex algebra by $\big(\hat{H}^F_*(\mathcal{C}), \ket{0},e^{zT}, Y^{F}\big)$ and the associated Lie algebra by $\big(\check{H}^F_*(\mC), [-,-]^F\big)$ where $\check{H}^F_*(\mC) = \hat{H}^F_{*+2}(\mathcal{C})/T\big(\hat{H}^F_*(\mathcal{C})\big)$.
\end{definition}

One can then conclude by the same arguments as in the proof of \cite[Thm. 2.12]{GJT} (the proof appears in an update to the online article \cite{Joycehall} that has not yet been made publically available) that:
\begin{proposition}
\label{propmorphismtopchern}
In the situation of Definition \ref{E-twisted} let $E =\Delta^*(F)$ and consider the morphism of graded $\Q$-vector spaces
$
(-)\cap c_{\xi}(E): \hat{H}_*(\mathcal{C})\to \hat{H}^F_*(\mathcal{C})\,,
$
such that on $\hat{H}_*(\mathcal{C}_\alpha)$ it acts by $u\mapsto u\cap c_{\xi(\alpha,\alpha)}(E)$. Then it induces a morphism of vertex algebras
\begin{equation}
\label{topcherncap}
    (-)\cap c_{\xi}(E): (\hat{H}_*(\mathcal{C}), \ket{0},e^{zT}, Y)\longrightarrow (\hat{H}^F_*(\mathcal{C}), \ket{0},e^{zT}, Y^{F})\,.
\end{equation}
Moreover, the above induces a well-defined map of Lie algebras 
$$(-)\cap c_{\xi}(E):(\check{H}_*(\mathcal{C}),[-,-])\longrightarrow (\check{H}^F_*(\mathcal{C}),[-,-]^{F})\,.$$
\end{proposition}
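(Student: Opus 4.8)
The plan is to reduce the statement to the analogous algebraic-geometric result of \cite[Thm.~2.12]{GJT} by transporting everything to the topological category, exactly as the paragraph preceding the proposition suggests. The heart of the matter is the map $(-)\cap c_\xi(E)\colon \hat H_*(\mathcal{C})\to \tilde H_*(\mathcal{C})$ being a vertex algebra morphism; once that is in hand, the Lie algebra statement is a formal consequence of \eqref{lie algebra} together with the fact that a vertex algebra morphism commutes with $T$ and hence descends to the quotients $\check H_*(\mathcal{C})$ and $\mathring H_*(\mathcal{C})$.

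First I would verify that $(-)\cap c_\xi(E)$ intertwines the translation operators and the vacua. For the vacuum, $E$ restricted to $\mathcal{C}_0$ has rank $\xi(0,0)$, and $c_{\xi(0,0)}(E)$ pulls back trivially under $0\colon *\to\mathcal{C}$, so $\ket{0}\cap c_\xi(E)=\ket{0}$; this uses that $E=\Delta^*F$ and the additivity conditions \eqref{vectconditions} force $F|_{\mathcal{C}_0\times\mathcal{C}_0}$, hence $E|_{\mathcal{C}_0}$, to be trivial up to the rank. For $T$, I would use that $\Phi^*(E)$ restricted to $\mathbb{CP}^\infty\times\mathcal{C}_\alpha$ equals $V_1^*\boxtimes E|_{\mathcal{C}_\alpha}\oplus V_1\boxtimes E|_{\mathcal{C}_\alpha}$ — combining the two $\Phi$-conditions in \eqref{vectconditions} after pulling back along the diagonal — but on $\mathbb{CP}^\infty\times\mathcal{C}_\alpha$ the class $c_1(V_1)$ is nilpotent modulo the relation needed, so $c_{2\xi(\alpha,\alpha)}(\Phi^*E)=c_{\xi(\alpha,\alpha)}(E)$ after applying $\Phi_*$ against $t\in H_2(\mathbb{CP}^\infty)$ (the $t$-insertion kills the $c_1(V_1)$-dependent correction terms). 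Then the projection formula $\Phi_*(t\boxtimes u)\cap c_\xi(E)=\Phi_*\big((t\boxtimes u)\cap\Phi^*c_\xi(E)\big)=\Phi_*\big(t\boxtimes(u\cap c_\xi(E))\big)$ gives $T(u)\cap c_\xi(E)=T(u\cap c_\xi(E))$, modulo the bookkeeping of the shifted gradings $\hat H_a(\mathcal{C}_\alpha)=H_{a-\chi(\alpha,\alpha)}$ versus $\tilde H_a(\mathcal{C}_\alpha)=H_{a-\chi(\alpha,\alpha)-\xi(\alpha,\alpha)+\dots}$ coming from $\theta^F$; one checks these shifts are compatible with $\cap c_{\xi(\alpha,\alpha)}(E)$ lowering degree by $2\xi(\alpha,\alpha)$.

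The main work is the compatibility with $Y$: one must show, for $u\in\hat H_a(\mathcal{C}_\alpha)$, $v\in\hat H_b(\mathcal{C}_\beta)$, that
\[
\big(Y(u,z)v\big)\cap c_\xi(E)=Y^F\big(u\cap c_\xi(E),z\big)\big(v\cap c_\xi(E)\big).
\]
Writing out both sides using the formula in Definition~\ref{definition haltop}, the left side is $\epsilon_{\alpha,\beta}(-1)^{a\chi(\beta,\beta)}z^{\chi(\alpha,\beta)}\mu_*(e^{zT}\otimes\mathrm{id})\big((u\boxtimes v)\cap c_{z^{-1}}(\theta_{\alpha,\beta})\big)\cap c_\xi(E)$, and I would move $c_\xi(E)$ past $\mu_*$ and $e^{zT}$ via the projection formula, landing $\mu^*(E)=\mu^*\Delta^*F$. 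The key geometric identity is that $\Delta\circ\mu=(\mu\times\mathrm{id})\circ(\mathrm{id}\times\Delta)\circ\Delta$-type manipulation shows $\mu^*\Delta^*F\cong \pi_1^*E\oplus\pi_2^*E\oplus \Delta_{12}^*F\oplus \Delta_{12}^*\sigma^*F$ on $\mathcal{C}_\alpha\times\mathcal{C}_\beta$ — this is where the summation conditions \eqref{vectconditions} on $F$ get used in full. The term $\pi_1^*E\oplus\pi_2^*E$ produces exactly the $c_\xi(E)\boxtimes c_\xi(E)$ needed to turn $u\boxtimes v$ into $(u\cap c_\xi(E))\boxtimes(v\cap c_\xi(E))$, while $\Delta_{12}^*(F\oplus\sigma^*F)$ combines with $c_{z^{-1}}(\theta_{\alpha,\beta})$ to give $c_{z^{-1}}(\theta^F_{\alpha,\beta})$ (recalling $\theta^F=\theta+\llbracket F^*\rrbracket+\llbracket\sigma^*F\rrbracket$, and $c_{z^{-1}}(F^*)$ versus $c_{z^{-1}}(F)$ differ by the sign $(-1)^{\mathrm{rk}}$ absorbed into $\epsilon^\xi$). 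The sign $(-1)^{\xi(\alpha,\beta)}$ in $\epsilon^\xi$ and the parity shift $(-1)^{a\chi(\beta,\beta)}\to(-1)^{(a-2\xi(\alpha,\alpha))\chi(\beta,\beta)}$ must be reconciled — but $2\xi(\alpha,\alpha)\chi(\beta,\beta)$ is even, so the parity prefactor is unchanged, and the $\epsilon$-discrepancy is precisely the dual-Chern-class sign. The hard part will be this sign and grading bookkeeping in the $Y$-identity, since every factor — $\epsilon$, the $(-1)^{a\chi(\beta,\beta)}$, the shifted degrees, and the $c_{z^{-1}}(F^*)$ versus $c_{z^{-1}}(\sigma^*F)$ asymmetry — must conspire correctly; the underlying geometry is a direct transcription of Joyce's and Gross--Joyce--Tanaka's argument, so I would invoke \cite[Thm.~2.12]{GJT} and \cite{Joycehall} for the algebraic skeleton and only spell out the translation to $\mathbf{Ho}(\mathbf{Top})$ and the topological K-theory conventions of Section~\ref{VAinAG}. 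Finally, descending to Lie algebras: since $(-)\cap c_\xi(E)$ is a vertex algebra morphism it commutes with $T$, hence sends $T(\hat H_*(\mathcal{C}))$ into $T(\tilde H_*(\mathcal{C}))$ and induces $\check H_*(\mathcal{C})\to\mathring H_*(\mathcal{C})$; that this respects the brackets $[\bar u,\bar v]=[z^{-1}]Y(u,z)v$ is immediate from the $Y$-compatibility by extracting the $z^{-1}$-coefficient.
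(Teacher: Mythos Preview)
Your approach is the paper's: the paper gives no proof beyond the sentence ``One can then conclude by the same arguments as in the proof of \cite[Thm.~2.12]{GJT} (see Joyce \cite{Joycehall})'', and you likewise reduce to that reference while sketching the translation to the topological setting.

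One slip to fix: your formula $\Phi^*(E)\cong V_1^*\boxtimes E\oplus V_1\boxtimes E$ is wrong (it does not even have the right rank). The $\Phi$-conditions in \eqref{vectconditions} are tensor, not direct-sum, so pulling $E=\Delta^*F$ back along $\Phi$ gives, in $K^0(\mathbb{CP}^\infty\times\mathcal{C})$, the class $\llbracket V_1^*\rrbracket\cdot\llbracket V_1\rrbracket\boxtimes\llbracket E\rrbracket=1\boxtimes\llbracket E\rrbracket$; hence $c_{\xi(\alpha,\alpha)}(E)$ is weight~$0$ in the sense of Definition~\ref{def weight0} (compare Example~\ref{exampleins}), and the projection-formula step $T(u)\cap c_\xi(E)=T(u\cap c_\xi(E))$ follows immediately without your nilpotency detour. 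Your decomposition $\mu^*E\cong\pi_1^*E\oplus\pi_2^*E\oplus F\oplus\sigma^*F$ and the sign bookkeeping are correct; the remaining point---that the $F\oplus\sigma^*F$ contribution, which is \emph{not} weight~$0$ in the first factor, interacts correctly with $e^{zT}\otimes\mathrm{id}$ to produce $c_{z^{-1}}(\theta^F)$ with the shifted $z^{\chi^F(\alpha,\beta)}$---is exactly what \cite[Thm.~2.12]{GJT} handles, so your deferral there is appropriate.
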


\subsection{Vertex algebra of topological pairs}
\label{sec:VAtoppa}
In this subsection, we continue where we left off in §\ref{sec:VApa}. There it was promised, that we will prove that $(\hat{H}_*(\mathcal{N}_{q,k}), \ket{0}, e^{zT},Y)$ is a vertex algebra. We do so by showing that the data
\begin{equation}
\label{pairtopdata}
\big((\mathcal{N}_{q,k})^{\textnormal{top}},C_0(\mathcal{B}_{q,k}),\Phi_{\mathcal{N}_{q,k}}^{\textnormal{top}},\mu_{\mathcal{N}_{q,k}}^{\textnormal{top}}, 0^{\textnormal{top}}, \llbracket \Theta^{\textnormal{pa}}\rrbracket, \epsilon^{\textnormal{pa}}\big)
\end{equation}
from Definition \ref{definition pairs}
satisfy the assumptions of Definition \ref{DeftopVA}.

Later, we also construct the appropriate vertex algebra on $H_*(\mP_X)$ where $\mP_X =\mC\times BU\times \ZZ$, such that the natural map $\Omega:\mN_{q,k}\to \mP_X$ (see \eqref{omega}) induces an isomorphism of vertex algebras.  
\begin{lemma}
\label{lemmapaircon}
The data \eqref{pairtopdata} satisfies the conditions in \ref{definition haltop}. Denote by  the corresponding vertex algebra. 
\end{lemma}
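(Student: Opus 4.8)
The plan is to verify, one condition at a time, that the K-theory class $\llbracket \Theta^{\textnormal{pa}}\rrbracket$ on $(\mathcal{N}_{q,n})^{\textnormal{top}}\times (\mathcal{N}_{q,n})^{\textnormal{top}}$ together with the monoid $C_0(\mathcal{B}_{q,n})$, the maps $\mu_{\mathcal{N}_{q,n}}^{\textnormal{top}}$, $0^{\textnormal{top}}$, the $\mathbb{C}\mathbb{P}^\infty$-action $\Phi_{\mathcal{N}_{q,n}}^{\textnormal{top}}$ and the signs $\epsilon^{\textnormal{pa}}$ satisfy the hypotheses of Definition \ref{DeftopVA}. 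Since $\mathcal{N}_{q,n}$ is a disjoint union of total spaces of vector bundles over products $\mathcal{M}_{q,n}^\alpha\times[*/\textnormal{GL}(d,\CC)]$, the H-space and $\mathbb{C}\mathbb{P}^\infty$-action structures are inherited from those on $\mathcal{M}_X$ and on $[*/\GG_m]$ acting diagonally; here I would use that $\mu_{\mathcal{N}_{q,n}}$ is induced by direct sum of triples $(E,V,\phi)\oplus(E',V',\phi')$ and that $\Phi_{\mathcal{N}_{q,n}}$ lifts the diagonal scaling, so the H-map and associativity axioms reduce to the corresponding already-known facts for $\mathcal{M}_X$. The monoid morphism $\pi_0(\mathcal{N}_{q,n}^{\textnormal{top}})\to C_0(\mathcal{B}_{q,n})$ is $K(\Omega)$ from \eqref{komega}, which is manifestly additive.

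The core of the argument is checking the four pullback identities for $\theta^{\textnormal{pa}}=\llbracket\Theta^{\textnormal{pa}}\rrbracket$ and the symmetry $\sigma^*(\theta^{\textnormal{pa}})=(\theta^{\textnormal{pa}})^\vee$, together with $\textnormal{rk}(\theta^{\textnormal{pa}}|_{\mathcal{C}_\alpha\times\mathcal{C}_\beta})$ being the constant $\chi^{\textnormal{pa}}$ of \eqref{chip}. I would treat the four summands of \eqref{thetacom} separately. For the term $(\Theta_{\alpha_1,\alpha_2})_{1,3}$ one uses the corresponding properties of $\Theta$ on $\mathcal{M}_X\times\mathcal{M}_X$ already recorded in Definition \ref{definition hall} (the relations \eqref{sumcond} and $\sigma^*\Theta\cong\Theta^\vee[2n]$). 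The term $\chi\big((V_{d_1}\boxtimes V_{d_2}^*)^{\oplus 2}\big)_{2,4}$ is a pure $[*/\GG_m]$-computation: under $\mu$ on the $V$-factors it splits by bilinearity of tensor product in each variable, under $\Phi$ it acquires the expected $V_1^{\pm}$ twist since $V_{d}$ scales with weight one, and it is self-dual up to the swap because of the $\oplus 2$ and the sign $V_{d_1}\boxtimes V_{d_2}^*\mapsto (V_{d_2}\boxtimes V_{d_1}^*)$ dualizes correctly; its rank is $2d_1d_2$, and with the factor $\chi$ this contributes $\chi d_1 d_2$ after halving from ``half the obstruction theory.'' The two mixed terms $\big(\mathcal{V}_{d_1}\boxtimes\pi_{2*}(\mathcal{F}_{q,n}^{\alpha_2})^\vee\big)_{2,3}[1]$ and $\big(\pi_{2*}(\mathcal{F}_{q,n}^{\alpha_1})\boxtimes\mathcal{V}_{d_2}^*\big)_{1,4}[1]$ are swapped into each other by $\sigma$ (up to the $[1]$ which produces the sign in $(-)^\vee$), their $\mu$-behaviour on the sheaf factor comes from $\pi_{2*}$ being additive in $\mathcal{E}_{q,n}^\alpha$ and on the $V$-factor from bilinearity, and the $\Phi$-twist is again forced by the weight-one scaling; their ranks are $-d_1\chi(\alpha_2(n))$ and $-d_2\chi(\alpha_1(n))$ by Riemann–Roch and the cohomology-vanishing built into $\mathcal{B}_{q,n}$. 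Summing gives exactly \eqref{chip}. Finally the cocycle condition \eqref{signs} for $\epsilon^{\textnormal{pa}}$ follows from \eqref{signsp}, the fact that $\epsilon$ is a cocycle for $\chi$ (Theorem \ref{theorem CGJ}), and a short check that the sign discrepancy $(-1)^{\chi^{\textnormal{pa}}+\cdots}$ matches after substituting $\chi^{\textnormal{pa}}$ in terms of $\chi$ and the correction terms; the key point is that the shift $\alpha_i\mapsto \lambda(\alpha_i-d_i\llbracket\mathcal{O}_X(-n)\rrbracket)$ is additive so composes correctly.

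The main obstacle, as the excerpt foreshadows (``only in K-theory''), is that the summands in \eqref{thetacom} are genuinely perfect complexes with shifts, so one should not expect the additivity and $\Phi$-equivariance relations to hold on the nose at the level of perfect complexes on $\mathcal{N}_{q,n}\times\mathcal{N}_{q,n}$ — only their classes in $K^0$ satisfy the required identities, and even there one must be careful that the isomorphisms of \eqref{sumcond} are available only after applying $\llbracket-\rrbracket$. Concretely, $\pi_{2*}$ of a family of sheaves need not be a vector bundle, and the derived pushforward only makes the class well-defined; so throughout I would phrase every manipulation in $K^0$ of the relevant topological space, invoking Blanc's topological K-theory and the functoriality of $\llbracket-\rrbracket$ under the (derived) operations, rather than attempting sheaf-level isomorphisms. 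Once everything is written additively in $K^0$, the verification is a bookkeeping exercise: expand each of the four pullbacks, match summand-by-summand, and read off that the total rank is the promised $\chi^{\textnormal{pa}}$. The vertex algebra $(\hat H(\mathcal{N}_{q,n}),\ket{0},e^{zT},Y)$ is then produced by direct appeal to Definition \ref{definition haltop}.
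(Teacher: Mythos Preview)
Your approach is essentially the same as the paper's: verify $\sigma^*(\theta^{\textnormal{pa}})=(\theta^{\textnormal{pa}})^\vee$ summand-by-summand, then note that the $\mu$- and $\Phi$-compatibilities follow because $\mathcal{V}_d$ and $\pi_{2*}(\mathcal{F}^{\alpha}_{q,n})$ are weight~$1$ and additive under sums, and finally reduce the cocycle condition for $\epsilon^{\textnormal{pa}}$ to that for $\epsilon$ via the group homomorphism $(\alpha,d)\mapsto \lambda(\alpha)-d\llbracket\mathcal{O}_X(-n)\rrbracket$. The paper makes this last point sharper: it observes directly that $\chi\circ(\tau\times\tau)=\chi^{\textnormal{pa}}$ (using $\chi(\mathcal{O}_X(-n),\mathcal{O}_X(-n))=\chi(\mathcal{O}_X)=\chi$), which immediately transports the cocycle identity; you gesture at this but do not state it as cleanly.

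Two small corrections. First, your rank computation for the $(V_{d_1}\boxtimes V_{d_2}^*)$-term invokes a spurious ``halving from half the obstruction theory'': there is no such halving in the definition of $\chi^{\textnormal{pa}}=\textnormal{rk}(\theta^{\textnormal{pa}})$ --- the rank is read off directly, and the agreement with \eqref{chip} comes from $\chi(\mathcal{O}_X(-n),\mathcal{O}_X(-n))=\chi$. Second, the monoid morphism $\pi_0(\mathcal{N}_{q,n}^{\textnormal{top}})\to C_0(\mathcal{B}_{q,n})$ is not $K(\Omega)$ from \eqref{komega} (that map has the wrong target); it is simply the labeling of connected components by $(\alpha,d)$. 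Neither point affects the validity of the overall argument.
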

\begin{proof}
To show that $\llbracket\Theta^{\textnormal{pa}}\rrbracket$ satisfies $\sigma^*(\llbracket\Theta^{\textnormal{pa}}\rrbracket)= \llbracket\Theta^{\textnormal{pa}}\rrbracket^\vee$ we note that
\begin{align*}
\sigma^*_{\alpha_1,\alpha_2}(\Theta_{\alpha_2,\alpha_1}) &\cong \Theta_{\alpha_1,\alpha_2}^\vee[-4]\,,\\
\sigma^*_{(\alpha_1,d_1),(\alpha_2,d_2)}\big(\mV_{d_2}\boxtimes \mV^*_{d_1})_{2,4} &= (\mV^*_{d_1}\boxtimes \mV_{d_2})_{2,4} =(\mV_{d_1}\boxtimes \mV^*_{d_2})^*\,,\\
\sigma^*_{(\alpha_1,d_1),(\alpha_2,d_2)}\big((\mV_{d_2}\boxtimes \pi_{2\,*}(\mathcal{F}_{q,k}^{\alpha_1})^\vee)_{2,3}\big) &= (\pi_{2\,*}(\mathcal{F}_{q,k}^{\alpha_1})^\vee\boxtimes \mV_{d_2})_{1,4} =(\pi_{2\,*}(\mathcal{F}_{q,k}^{\alpha_1})\boxtimes \mV_{d_2}^*)^\vee_{1,4} \,.
\end{align*}
The rest of the properties for $\llbracket \Theta^{\textnormal{pa}} \rrbracket$ follow immediately, because $\mV_d$ and $\pi_{2\,*}(\mathcal{F}^{\alpha,d}_{q,k})$ are weight $1$ (see Joyce \cite{Joycehall}) with respect to the $[*/\GG_m]$ action and they are additive under sums.
The signs $\epsilon_{(\alpha_1,d_1),(\alpha_2,d_2)}$ satisfy \eqref{signs} for $\chi^{\textnormal{pa}}$ because the map $\tau: K(\mathcal{A}_{q,k})\times \ZZ\to K(X)$ given by $\tau(\alpha,d)= \lambda(\alpha) -d\llbracket\mathcal{O}_X(-k)\rrbracket$ is a group homomorphism satisfying $\chi\circ(\tau\times\tau) = \chi^{\textnormal{pa}}$. 
\end{proof}
To be able to do computations, we will map the vertex algebra $\hat{H}_*(\mN_{q,k})$ to one that can be explicitly described.  We use the map $\Sigma:\mathcal{N}_{q,k}\to \mathcal{M}_X\times \textnormal{Perf}_{\CC}$ where $\mathcal{M}_X$ is the higher moduli stack of perfect complexes on $X$. For each $(\alpha,d)\in C_0(\mathcal{N}_{q,k})$, the restriction $\Sigma_{(\alpha,d)}=\Sigma|_{\mathcal{N}^{\alpha,d}_{q,k}}$ can be expressed as 
$
\Sigma_{(\alpha,d)} = (\iota^{\alpha}_{q,k}\times \iota_d)\circ\pi_{\alpha,d}\,,
$
where $\iota^{\alpha}_{q,k}:\mathcal{M}^{\alpha}_{q,k}\to \mathcal{M}_X$  and $\iota_d: [*/\textnormal{GL}(d,\CC)]\to \textnormal{Perf}_{\CC}$ are the natural inclusions. As $\pi_{\alpha,d}: \mathcal{N}^{\alpha,d}_{q,k}\to \mathcal{M}^{\alpha}_{q,k}\times [*/\textnormal{GL}(d,\CC)]$ is an $\A^1$-homotopy equivalence, it induces isomorphisms on $K^0$ and $H_*$, so we do not lose any information. 

While there is an explicit description of $H_*(\mathcal{M}_X)$ (see Gross \cite[Thm. 4.7]{gross}) in terms of the semi-topological K-theory groups $K^*_{\textnormal{sst}}(X)$ of Friedlander--Walker \cite{FriWal}, we will not use it because these can be complicated for general Calabi--Yau fourfolds. Instead, we transfer the problem into a completely topological one by using the natural map $\Gamma: \mM_X^{\ttop}\to \mC_X$ (defined above Theorem \ref{theorem CGJ}) and by introducing 
\begin{equation}
\label{omega}
    \Omega:=( \Gamma\times\textnormal{id})\circ \Sigma^{\textnormal{top}}:\mathcal{N}_{q,k}^{\textnormal{top}}\longrightarrow \mathcal{M}_X^{\textnormal{top}}\times BU\times\ZZ\longrightarrow \mathcal{C}_X \times BU\times\ZZ
\end{equation}
which relies on \eqref{eq:PerfBUZ}.
This will induce a morphism of vertex algebras when using the correct data on 
 $$\mathcal{P}_X:=\mathcal{C}_X\times BU\times \ZZ\,.$$ 
 Denote by $\mathfrak{U}$ and $\mathfrak{E}$ the universal K-theory classes on $BU\times \ZZ$, respectively $X\times \mathcal{C}_X$. We will also use the notation $\mathfrak{F}_k = \pi^*_{1}(\llbracket\mathcal{O}_X(k)\rrbracket)\cdot\mathfrak{E}$.
\begin{definition}
\label{definitionpairtop}
 Define $\theta_{\mathcal{P}}\in K^0(\mathcal{P}_X\times \mathcal{P}_X)$ by 
$$\theta_{\mathcal{P}}=(\theta_{\mathcal{C}})_{1,3}+\chi(\mathfrak{U}\boxtimes \mathfrak{U}^\vee)_{2,4} - \big(\mathfrak{U}\boxtimes \pi_{2\,*}(\mathfrak{F}_k)^\vee\big)_{2,3} - \big(\pi_{2\,*}(\mathfrak{F}_k)\boxtimes \mathfrak{U}^{\vee}\big)_{1,4}\,,$$
where 
$\theta_{\mathcal{C}} = \pi_{2,3\,*}\big(\pi^*_{1,2}(\mathfrak{E})\cdot\pi^*_{1,3}(\mathfrak{E})^\vee\big)$.

\setcounter{footnote}{0}
 Let $\Phi_{\mathcal{P}}$ be given by the diagonal action\footnote{Using the left-multiplication on $U(m)$ by $U(1)$ we get the action of $\mathbb{C}\mathbb{P}^{\infty}$ on $BU(m)$. Taking a union over all $n$ we get a $\mathbb{C}\mathbb{P}^{\infty}$ action on $\bigsqcup_m BU(m)$. As $\bigsqcup BU(m)\to BU\times\ZZ$ is a homotopy theoretic group completion, using \protect\cite[Prop. 1.2]{CCMT} we can extend to an action on $BU\times \ZZ$} on $\mathcal{C}_X\times (BU\times\ZZ)$. We use the natural H-space structure $(\mathcal{P}_X,\mu,0)$. Choosing $K(\mathcal{P}_X) = K^0(X)\oplus \ZZ$ we set for all $(\alpha_i,d_i)\in K^0(X)\oplus \ZZ$:
\begin{align*}
\label{ptop}
\tilde{\chi}\big((\alpha_1,d_1),(\alpha_2,d_2)\big) &:= \chi(\alpha_1,\alpha_2) + \chi d_1d_2 -d_1\chi\big(\alpha_1(k)\big)-d_2\chi\big(\alpha_2(k)\big) \,,\\
\tilde{\epsilon}_{(\alpha_1,d_1),(\alpha_2,d_2)} &:= \epsilon_{\alpha_1-d_1\llbracket\mathcal{O}_X(-k)\rrbracket,\alpha_2-d_2\llbracket\mathcal{O}_X(-k)\rrbracket}\,,
\numberthis{}
\end{align*}
where $\epsilon$ is from Theorem \ref{theorem CGJ}.
We construct the vertex algebra $(\hat{H}_*(\mathcal{P}_X),\ket{0},e^{zT}, Y)$.
\end{definition}

\begin{proposition}
\label{propmorphism}
\begin{enumerate}[label=\roman*)]
    \item The data of Definition \ref{definitionpairtop} satisfies the assumptions of Definition \ref{DeftopVA} implying that $(\hat{H}_*(\mathcal{P}_X),\ket{0},e^{zT}, Y)$ is a vertex algebra. 
    \item The map $\Omega_*: H_*(\mathcal{N}_{q,k})\to H_*(\mathcal{P}_X)$ induces a morphism of graded vertex algebras $$(\hat{H}_*(\mathcal{N}_{q,k}),\ket{0},e^{zT}, Y)\to (\hat{H}_*(\mathcal{P}_X),\ket{0},e^{zT}, Y)$$ leading to the morphism of Lie algebras
$$
\bar{\Omega}_*:(\check{H}_*(\mathcal{N}_{q,k}),[-,-])\longrightarrow (\check{H}_*(\mathcal{P}_X),[-,-])\,.
$$
\end{enumerate}
\end{proposition}
\begin{proof}
1) Using arguments from the proof of Lemma \ref{lemmapaircon} and Gross \cite[Prop. 5.3.12]{grossdphil}, we reduce it to showing that $\sigma^*(\theta_{\mathcal{C}}) = \theta_{\mathcal{C}}^\vee$. Using the universal space
$$\mV_X = \textnormal{Map}_{C^0}\big(X, \bigsqcup_{m\geq 0}BU(m)\big) $$
of unitary vector bundles on $X$, recall from \cite[Ex. 3.1.8]{grossdphil} that there is a natural homotopy theoretic group completion $\gamma:\mV_X\to\mathcal{C}_X$ . Using universality of homotopy theoretic group-completions proved by Caruso--Cohen--May--Taylor {\cite[Prop. 1.2]{CCMT}}, it is sufficient to show that
\begin{equation}
\label{serredualityk-theory}
  \sigma^*((\gamma\times \gamma)^*(\theta_{\mathcal{C}}))=(\gamma\times \gamma)^*(\theta_{\mathcal{C}})^\vee  .
\end{equation}
Two compact families with maps $K,L\to \mathcal{V}_X$ are equivalent to two families of vector bundles $V_K\to X\times K$, $V_L\to X\times L$ which we can assume to be smooth along $X$ so we choose partial connections $\nabla_{V_K},\nabla_{V_L}$ in the $X$ direction for both of them. The pullback of the class $\gamma^*(\theta_{\mathcal{C}})$ to $K\times L$ is the index of the family of operators $$(\bar{\partial}+\bar{\partial}^*)^{\nabla_{V_L^*\otimes V_K}}: \Gamma^{\infty}(\mathcal{A}^{0,\textnormal{even}}\otimes V_L^*\otimes V_K)\to \Gamma^{\infty}(\mathcal{A}^{0,\textnormal{odd}}\otimes V_L^*\otimes V_K)$$
obtained by rolling up $\bar{\partial}: \mA^{0,i}\to \mA^{0,i+1}$ and twisting by $\nabla_{V^*_L\otimes V_K}$ just as in \cite[Rem. 1.3]{CGJ}. Recall from \cite{AS4} that the index is defined by a more precise version of 
$$
\textnormal{ind}\Big((\bar{\partial}+\bar{\partial}^*)^{\nabla_{V_L^*\otimes V_K}}\Big) = \ker\Big((\bar{\partial}+\bar{\partial}^*)^{\nabla_{V_L^*\otimes V_K}}\Big)-\coker\Big((\bar{\partial}+\bar{\partial}^*)^{\nabla_{V_L^*\otimes V_K}}\Big)
$$
that holds in this exact form only at any given point of the family. The usual Hodge-star combined with a trivializing section of $\mA^{0,4}\cong \ov{\mA^{4,0}}$ induces the complex analytic version of the Serre duality (see \cite[p. 102]{GriHar}) which translates into
$$
\textnormal{ind}\Big((\bar{\partial}+\bar{\partial}^*)^{\nabla_{V_L^*\otimes V_K}}\Big)=\textnormal{ind}\Big((\bar{\partial}+\bar{\partial}^*)^{\nabla_{V_L\otimes V_K^*}}\Big)^\vee\in K^0(K\times L)\,.$$  This is precisely \eqref{serredualityk-theory} when restricted to $K\times L$.

To show that $\Omega_*$ induces a morphism of vertex algebras we note that in $\textbf{Ho}(\textbf{Top})$, $\Omega: (\mathcal{N}_{q,k})^{\textnormal{top}}\to \mathcal{P}_X$ is a morphism of monoids with $\mathbb{C}\mathbb{P}^{\infty}$ action. The pullback $\Omega^*(\theta_{\mathcal{P}})$ is equal to $\llbracket \Theta^{\textnormal{pa}}\rrbracket$ by construction and arguments in the proofs of \cite[Prop. 5.12, Lem. 6.2]{gross}. By considering the action of $\Omega$ on connected components, we get precisely $K(\Omega):C_0(\mathcal{B}_{q,k})\to K^0(X)\oplus \ZZ$ from \eqref{komega} which satisfies
\begin{align*}
    \tilde{\chi}\circ\big(K(\Omega)\times K(\Omega)\big) &= \chi^{\textnormal{pa}} : C_0(\mathcal{B}_{q,k})\times C_0(\mathcal{B}_{q,k})\longrightarrow \ZZ\,,\\
    \tilde{\epsilon}_{K(\Omega)(\alpha_1,d_1), K(\Omega)(\alpha_2,d_2)}&= \epsilon^{\textnormal{pa}}_{(\alpha_1,d_1), (\alpha_2,d_2)}
\end{align*}
for the same choices of $\epsilon_{\alpha,\beta}$ in \eqref{signsp} and \eqref{ptop}. Therefore $\Omega_*:\hat{H}_*(\mathcal{N}_{q,k})\to \hat{H}_*(\mathcal{P}_X)$ is a degree 0 graded morphism compatible with the vertex algebra structure.
\end{proof}

\begin{remark}
We will only use the case when $k$=0, as we will be working with 0-dimensional sheaves only in the following sections.
\end{remark}

\subsection{Explicit description of the vertex algebra of topological pairs}
\label{sec:ecplicitVApa}
We give here an explicit description of the vertex algebra $(\hat{H}_*(\mathcal{P}_X),\ket{0},e^{zT}, Y)$ which will apply some of the work of Joyce \cite{Joycehall} and Gross \cite{gross}. We also set some notations, and conventions and write down some useful identities. All of this will later be used to compute explicitly the classes
$$\bar{\Omega}_*\big([\Hilb]^{\vir}_{\mN}\big)\in \check{H}_*(\mP_X)\,.$$

In the following, $X$ is any connected smooth projective variety of dimension $n$ unless specified.

\begin{definition}
Let us write $(0,1)$ for the generator of $\ZZ$ in $K^0(X)\oplus \ZZ$. Let $\textnormal{ch}: K^*(X,\Q) :=K^*(X)\otimes \Q \to H^*(X)$ be the Chern character isomorphism. For each $0\leq i\leq 2n$ choose a subset $B_i\subset K^0(X,\Q)\oplus K^1(X,\Q)$ such that $\textnormal{ch}(B_i)$ is a basis of $H^{i}(X)$. We take $B_0 = \{\llbracket\mathcal{O}_X\rrbracket\}$ and $B_{2n} = \{p\}$.  Then we write $B = \bigsqcup_i B_i $ and $\mathbb{B} = B\sqcup\{(0,1)\}$. Let $K_*(X, \Q)$ be defined as the dual of $K^*(X,\Q)$, and define $\textnormal{ch}^\vee:K_*(X,\Q)\to H_*(X)$ by the commutativity of the following diagram
\begin{equation*}
    \begin{tikzcd}
      \arrow[d] K_*(X,\Q) \otimes_{\Q} K^*(X, \Q)\arrow[r,"\textnormal{ch}^\vee\otimes \textnormal{ch}"]&H_*(X)\otimes H^*(X)\arrow[d]\\
      \Q\arrow[r,"\textnormal{id}"]&\Q
    \end{tikzcd}\,,
\end{equation*}
 then choose 
$B^\vee\subset K_*(X, \Q)$ such that $B^\vee$ is a dual basis of $B$. We also write $\mathbb{B}^\vee = B^\vee\sqcup \{(0,1)\}$, where $(0,1)$ is the natural generator of $\Q$ in $K_*(X)\oplus \Q$. The dual of $\sigma\in \mathbb{B}$ will be denoted by $\sigma^\vee\in \mathbb{B}^\vee$. Let $\mathfrak{E}_{\mP}$ be the universal object  on $(X\sqcup *)\times \mathcal{P}_X$ and $\mathfrak{E}_{(\alpha,d)}$ its restriction to $(X\sqcup *)\times \mathcal{P}_{(\alpha,d)}$ where $\mathcal{P}_{(\alpha,d)}$ is the connected component of $\mP_X$ labelled by $(\alpha,d)\in K^0(X)\oplus \ZZ$. For each $\sigma\in \mathbb{B}$and $(\alpha,d)\in K^0(X)\oplus \ZZ$, we define the slant product\footnote{There is a direct way to define K-homology and slant products on K-theory without using Chern characters, but there is no benefit of introducing this additional difficulty in the present work.} $\sigma^\vee\backslash \mathfrak{E}_{(\alpha,d)}\in K^*(\mP_X,\Q)$ by
$$
\ch(\sigma^\vee\backslash\mathfrak{E}_{(\alpha,d)})= \ch^\vee(\sigma^\vee)\backslash\ch(\mathfrak{E}_{(\alpha,d)})\,.
$$
We used the slant product on cohomology $-\backslash - :H_j(Y)\times H^i(Y\times Z)\to H^{i-j}(Z)$ (see e.g. \cite[p. 280]{Hatcher})\footnote{We act with the slant product from the left because of our convention to write $X\times \mC_X$ in this order. This is different from \cite{Hatcher} and other standard textbooks precisely by this reordering in the definition of the slant product. The two definitions would differ by a sign corresponding to swapping between the two orders but this does not play a role in the computations in the later sections.}. 
Following \cite[(4.6)]{gross}, we then set the notation 
\begin{equation}
\label{mudef}
   e^{(\alpha,d)}\otimes \mu_{\sigma,i} =\ch_i\big(\sigma^\vee\backslash\mathfrak{E}_{(\alpha,d)}\big)
\end{equation}
where $e^{(\alpha,d)}$ keeps track of the connected component of $\mP_X$.

We have the natural inclusion $\iota_{\mathcal{C},\mathcal{P}}: \mathcal{C}_X\to \mathcal{P}_X$: $x\mapsto (x,1,0)\in \mathcal{C}_X\times  BU\times \ZZ$, so we identify $H_*(\mathcal{C}_X)$ with the image of $(\iota_{\mathcal{C},\mathcal{P}})_*$, which in turn corresponds to $H_*(\mathcal{C}_X)\boxtimes 1\subset H_*(\mathcal{C}_X)\boxtimes H_*(BU\times \ZZ) =H_*(\mathcal{P}_X)$. Similarly, there is a surjective map $\iota_{\mC,\mP}^*: H^*(\mP_X)\to H^*(\mC_X)$ that we describe below Lemma \ref{linear exp}.
\end{definition}

The next proposition follows the arguments in the proof of Gross \cite[Thm. 4.7, Thm. 4.15]{gross} and the remark below it. 

\begin{proposition}
The cohomology ring $H^*(\mathcal{P}_X)$ is generated by $$\{e^{(\alpha,d)}\otimes \mu_{\sigma,i}\}_{(\alpha,d)\in K^0(X)\oplus \ZZ, \sigma\in \mathbb{B},i\geq 1}\,.$$
Moreover, there is a natural isomorphism of rings
\begin{equation}
\label{explcitcoh}
   H^*(\mathcal{P}_X)\cong \Q[K^0(X)\oplus\ZZ]\otimes_{\Q}\textnormal{SSym}_{\Q}\llbracket\mu_{\sigma,i}, \sigma\in \mathbb{B},i>0\rrbracket\,.
\end{equation}
\end{proposition}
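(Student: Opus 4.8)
The plan is to deduce the structure of $H^*(\mathcal{P}_X)$ from the known descriptions of the cohomology of $\mathcal{C}_X$ (via Gross \cite[Thm. 4.15]{gross}, or rather the arguments therein adapted to our topological moduli stack) and of $BU\times\Z$ (which is classical), using the product decomposition $\mathcal{P}_X=\mathcal{C}_X\times BU\times\Z$ together with a K\"unneth argument. First I would recall from the discussion preceding the proposition that $H^*(\mathcal{C}_X)$ is generated by the classes $e^{\alpha}\otimes\mu_{\sigma,i}$ for $\alpha\in K^0(X)$, $\sigma\in B$, $i\ge 1$, with a ring isomorphism $H^*(\mathcal{C}_X)\cong\Q[K^0(X)]\otimes_\Q\mathrm{SSym}_\Q\llbracket\mu_{\sigma,i},\sigma\in B,i>0\rrbracket$; this is precisely the content extracted from Gross's theorem, where the group-ring factor records the components $\pi_0(\mathcal{C}_X)=K^0(X)$ and the (super)symmetric algebra records the Chern characters of the universal class slanted against the chosen K-homology basis. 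Next I would record the analogous statement for $BU\times\Z$: $H^*(BU\times\Z)\cong\Q[\Z]\otimes_\Q\Q\llbracket\beta_1,\beta_2,\dots\rrbracket$ with $\beta_i=\mathrm{ch}_i(\mathfrak{U})$, which is the isomorphism already quoted in the excerpt, and observe that under the identification $\mathbb{B}=B\sqcup\{(0,1)\}$ the generator $\mu_{(0,1),i}$ is by definition (via \eqref{mudef}, applied to the $*$-factor on which $\mathfrak{E}_\mathcal{P}$ restricts to $1\boxtimes\mathfrak{U}$) exactly $\mathrm{ch}_i(\mathfrak{U})=\beta_i$, while the group-ring generator $(0,1)\in K^0(X)\oplus\Z$ corresponds to the generator of $\Q[\Z]$.

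The second step is the K\"unneth identification. Since $H^*(\mathcal{C}_X)$ and $H^*(BU\times\Z)$ are both (completed, in each graded piece finite-dimensional) graded $\Q$-algebras, one has a natural isomorphism of graded rings $H^*(\mathcal{P}_X)=H^*(\mathcal{C}_X\times BU\times\Z)\cong H^*(\mathcal{C}_X)\mathbin{\hat\otimes}_\Q H^*(BU\times\Z)$, where the completed tensor product is the appropriate one given that we treat $H^*$ as a product over degrees (as stated in the conventions of the excerpt). Under this isomorphism $e^{\alpha}\otimes\mu_{\sigma,i}$ (pulled back from the $\mathcal{C}_X$ factor via $\pi_{2,3}$, in the notation where $\mathfrak{E}_\mathcal{P}$ restricts to $\mathfrak{E}\boxtimes 1$ there) maps to $(e^\alpha\otimes\mu_{\sigma,i})\otimes 1$, and $\beta_i$ maps to $1\otimes\beta_i$; the last sentence of the preceding definition — that $\mathfrak{E}_\mathcal{P}$ restricts to $\mathfrak{E}\boxtimes 1$ on $(X\times\mathcal{C}_X)\times BU\times\Z$ and to $1\boxtimes\mathfrak{U}$ on $*\times\mathcal{C}_X\times(BU\times\Z)$ — is exactly what guarantees that the generators $\{e^{(\alpha,d)}\otimes\mu_{\sigma,i}\}$, for $\sigma$ ranging over all of $\mathbb{B}$, are precisely the union of the two sets of generators coming from the two factors. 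Combining: $H^*(\mathcal{P}_X)\cong\big(\Q[K^0(X)]\otimes\mathrm{SSym}_\Q\llbracket\mu_{\sigma,i},\sigma\in B\rrbracket\big)\mathbin{\hat\otimes}\big(\Q[\Z]\otimes\Q\llbracket\beta_i\rrbracket\big)\cong\Q[K^0(X)\oplus\Z]\otimes_\Q\mathrm{SSym}_\Q\llbracket\mu_{\sigma,i},\sigma\in\mathbb{B},i>0\rrbracket$, which is \eqref{explcitcoh}. The claim that these classes \emph{generate} the ring follows because generators of each tensor factor generate the tensor product.

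The main obstacle — and the only genuinely non-formal input — is the description of $H^*(\mathcal{C}_X)$ itself, i.e.\ the fact that these slant-product Chern character classes generate freely (subject only to the super-commutativity forced by the odd degrees coming from $K^1(X)$). This is exactly where the reference to Gross \cite[Thm. 4.15]{gross} and the remark below it does the work: one must know that $\mathcal{C}_X^{\mathrm{top}}=\mathrm{Map}_{C^0}(X^{\mathrm{an}},BU\times\Z)$ has the homotopy type making its rational cohomology a free (super)commutative algebra on the obvious generators, which ultimately rests on the splitting principle / rational formality of $BU$ together with the mapping-space spectral sequence. Everything else in the proof is bookkeeping: tracking the K\"unneth decomposition, matching $\mu_{(0,1),i}$ with $\beta_i$, and matching the two group-ring factors with $K^0(X)\oplus\Z$. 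I would therefore structure the written proof as: (1) quote the $\mathcal{C}_X$ result and the $BU\times\Z$ result; (2) apply K\"unneth; (3) identify generators via the restriction behaviour of $\mathfrak{E}_\mathcal{P}$; (4) conclude the ring isomorphism and the generation statement.
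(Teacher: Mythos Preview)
Your proposal is correct and takes essentially the same approach as the paper, which simply states that the proposition follows from the arguments in Gross \cite[Thm.~4.15]{gross} and the remark below it, without giving further details. You have unpacked precisely what that citation entails: the K\"unneth decomposition $\mathcal{P}_X=\mathcal{C}_X\times(BU\times\Z)$ together with the known cohomology of each factor and the identification of generators via the restriction behaviour of $\mathfrak{E}_{\mathcal{P}}$.
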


From now on, when we compute explicitly with $H^*(\mathcal{P}_X)$, we replace it using this isomorphism.  The dual of \eqref{explcitcoh} gives us an isomorphism 
\begin{equation}
\label{isohomol}
   H_*(\mathcal{P}_X)\cong \Q[K^0(X)\oplus\ZZ]\otimes_{\Q}\textnormal{SSym}_{\Q}[ u_{\sigma,i}, \sigma\in \mathbb{B},i>0] \,, 
\end{equation}
where we use the normalization
\begin{equation}
\label{normalization}
 e^{(\alpha,d)}\otimes \prod_{\begin{subarray}a \sigma\in \mathbb{B}\\
 i\geq 1\end{subarray}}\mu_{\sigma,i}^{m_{\sigma,i}}\Big(e^{(\beta,e)}\otimes \prod_{\begin{subarray}a \tau\in \mathbb{B}\\
 j\geq 1\end{subarray}}u_{\tau,j}^{n_{\tau,j}}\Big)= \begin{cases}
 \prod_{\begin{subarray}a \sigma\in \BB\\
 i\geq 1\end{subarray}}\frac{m_{\sigma,i}!}{(i-1)!^{m_{\sigma,i}}} &\textnormal{ if }\begin{subarray}a
 (\alpha,d)=(\beta,e), m_{\sigma,i}=n_{\sigma,i}\\\forall\, \sigma\in\mathbb{B},i\geq 1
 \end{subarray} \\
 0&\textnormal{otherwise.}
 \end{cases}
\end{equation}
We will be using the following simple result in computations later.
\begin{lemma}
\label{linear exp}
Let $f(x_1,x_2,\ldots)$ be a power-series, then for any set of coefficients $a_{\sigma,j}$ we have
\begin{align*}
&e^{(\alpha,d)}\otimes\textnormal{exp}\Big(\sum_{\begin{subarray}a j>0 \\
\tau\in \mathbb{B}\end{subarray}}a_{\tau,j}\mu_{\tau,j}q^j\Big)\Big(e^{(\beta,e)}\otimes f(u_{\sigma,1},u_{\sigma,2},\ldots)\Big)
\\&= \delta_{\alpha,\beta}\delta_{d,e}f(a_{\sigma,1}q,a_{\sigma,2}q^2,\ldots,\frac{a_{\sigma,k}}{(k-1)!}q^k,\ldots)\,.
\end{align*}
\end{lemma}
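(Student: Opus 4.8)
The plan is to identify the pairing $H^*(\mathcal{P}_X)\times H_*(\mathcal{P}_X)\to\Q$ recorded in \eqref{normalization}, under the isomorphisms \eqref{explcitcoh} and \eqref{isohomol}, with the standard Fock-space pairing between the super-symmetric algebras $\textnormal{SSym}_\Q\llbracket\mu_{\sigma,i}\rrbracket$ and $\textnormal{SSym}_\Q\llbracket u_{\sigma,i}\rrbracket$ (twisted by the Kronecker delta coming from the group-algebra factor), and then to run the usual argument that the exponential of a constant-coefficient derivation acts as a translation operator. First I would dispose of the group-algebra part: \eqref{normalization} forces the pairing to vanish unless $(\alpha,d)=(\beta,e)$, which produces the factor $\delta_{\alpha,\beta}\delta_{d,e}$, so from now on one works inside a single connected component, i.e.\ purely with $\textnormal{SSym}_\Q\llbracket\mu_{\sigma,i}\rrbracket$ paired against $\textnormal{SSym}_\Q\llbracket u_{\sigma,i}\rrbracket$, and with $\langle 1,g\rangle$ equal to the constant term of $g$ (again read off \eqref{normalization}).

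The key intermediate step is the adjunction statement: for every $\tau\in\mathbb{B}$ and $j\geq 1$, multiplication by $\mu_{\tau,j}$ in the first factor is adjoint, with respect to the pairing, to the graded derivation $\tfrac{1}{(j-1)!}\,\partial/\partial u_{\tau,j}$ of the second, i.e.\ $\langle\mu_{\tau,j}P,\,Q\rangle=\big\langle P,\,\tfrac{1}{(j-1)!}\,\partial Q/\partial u_{\tau,j}\big\rangle$. I would verify this directly on monomials using the closed form in \eqref{normalization}: applying $\tfrac{1}{(j-1)!}\,\partial/\partial u_{\tau,j}$ to $u_{\tau,j}^{\,n}$ lowers the exponent by one and introduces the factor $n/(j-1)!$, which is exactly the ratio of the normalization constants $\tfrac{m!}{(i-1)!^{m}}$ attached to $m$ and $m-1$ copies of $\mu_{\tau,j}$. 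Here one must carry the Koszul signs coming from the odd generators (those $\tau\in B_i$ with $i$ odd, for which $u_{\tau,j}$ is a Grassmann variable and $\partial/\partial u_{\tau,j}$ an odd derivation squaring to zero); these signs are internal to \eqref{normalization} and cancel consistently, so they do not affect the final formula.

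Granting the adjunction, I would expand $\textnormal{exp}\!\big(\sum_{j>0,\,\tau\in\mathbb{B}}a_{\tau,j}\mu_{\tau,j}q^j\big)$ as a formal power series in $q$; since each term $a_{\tau,j}\mu_{\tau,j}q^j$ has positive $q$-degree, the coefficient of each $q^N$ is a finite sum in $\textnormal{SSym}_\Q\llbracket\mu_{\sigma,i}\rrbracket$, so pairing it against the fixed class $f(u_{\sigma,1},u_{\sigma,2},\dots)$ is well defined degree by degree and no convergence issue arises. Moving the whole exponential across the pairing by iterating the adjunction converts the left-hand side into $\big[\textnormal{exp}\!\big(\sum_{j>0,\,\tau}\tfrac{a_{\tau,j}q^j}{(j-1)!}\,\partial/\partial u_{\tau,j}\big)\,f(u_{\sigma,1},u_{\sigma,2},\dots)\big]\big|_{u=0}$. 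The standard identity that the exponential of the constant-coefficient (super-)derivation $\sum_\tau c_{\tau,j}\,\partial/\partial u_{\tau,j}$ is the substitution $u_{\tau,j}\mapsto u_{\tau,j}+c_{\tau,j}$, followed by setting all $u$ to zero, then evaluates $f$ at $u_{\sigma,k}=\tfrac{a_{\sigma,k}}{(k-1)!}q^k$; using $(k-1)!=1$ for $k=1,2$ this is precisely the claimed right-hand side.

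The only genuinely delicate point is the bookkeeping: pinning down the adjunction with the correct constant $\tfrac{1}{(j-1)!}$ and tracking the Koszul signs on the odd generators. Once that is done, the rest is the formal behaviour of a translation operator on a super-polynomial ring, so I expect the verification of $\langle\mu_{\tau,j}P,Q\rangle=\langle P,\tfrac{1}{(j-1)!}\partial_{u_{\tau,j}}Q\rangle$ from \eqref{normalization} to be the main obstacle.
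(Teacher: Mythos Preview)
Your proposal is correct and follows essentially the same route as the paper: both identify, via the normalization \eqref{normalization}, the action of a monomial in the $\mu_{\sigma,i}$ on the homology side with the corresponding product of scaled derivatives $\tfrac{1}{(i-1)!}\partial/\partial u_{\sigma,i}$ followed by evaluation at zero, and then use the standard fact that the exponential of a constant-coefficient derivation is the translation operator. You are somewhat more explicit than the paper about the Koszul signs for odd generators and the $q$-adic well-definedness of the exponential, but these are refinements of the same argument rather than a different strategy.
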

\begin{proof}
Notice that acting with $e^{(\alpha,d)}\otimes \prod_{\begin{subarray}
a i\geq 1\\
\sigma\in \mathbb{B}
\end{subarray}}\mu_{\sigma,i}^{m_{\sigma,i}}$ corresponds to acting with $$\delta_{\alpha,\beta}\delta_{d,e}\prod_{\begin{subarray}
a i\geq 1\\
\sigma\in \mathbb{B}
\end{subarray}}\Big(\frac{1}{(i-1)!}\frac{d}{du_{\sigma,i}}\Big)^{m_{\sigma,i}}$$ and then evaluating at $u_{\sigma,i}=0$. As a result, we obtain
\begin{align*}
& e^{(\alpha,d)}\otimes \textnormal{exp}\Big(\sum_{\begin{subarray}a i\geq 1\\ \sigma \in \BB\\\end{subarray}}a_{\sigma,i}\frac{d}{du_{\sigma,i}}q^i\Big)\Big(e^{(\beta,e)}\otimes f(u_{\sigma,1},u_{\sigma,2},\ldots)\Big)|_{u_{\sigma,i}=0}
\\&= \delta_{\alpha,\beta}\delta_{d,e}f(a_{\sigma,1}q,a_{\sigma,2}q^2,\ldots,\frac{a_{\sigma,k}}{(k-1)!}q^k,\ldots)\,,
\end{align*}
by a standard computation. 
\end{proof}
We now describe the maps $\iota_{\mC,\mP}^*: H^*(\mP_X)\to H^*(\mC_X)$ and $(\iota_{\mC,\mP})_*: H_*(\mC_X)\to H_*(\mP_X)$. It follows from the arguments of \cite[Thm. 4.7, Thm. 4.15]{gross} that
\begin{align*}
\label{isohomolC}
  H^*(\mathcal{C}_X)\cong \Q[K^0(X)]\otimes_{\Q}\textnormal{SSym}_{\Q}\llbracket \mu_{v,i}, v\in B,i>0\rrbracket\,,\\
   H_*(\mathcal{C}_X)\cong \Q[K^0(X)]\otimes_{\Q}\textnormal{SSym}_{\Q}[ u_{v,i}, v\in B,i>0]\,,
   \numberthis
\end{align*}
with the same normalization convention on the variables $u$ with respect to $\mu$ as in \eqref{normalization}. We also know that the universal K-theory class $\mathfrak{E}_{\mathcal{P}}$ restricts to $\mathfrak{E}\boxtimes 1$ on $(X\times \mathcal{C}_X)\times BU\times \ZZ$ and   $1\boxtimes \mathfrak{U}$ on $*\times \mathcal{C}_X\times (BU\times \ZZ)$.  
Writing
 $$\mu_{\sigma,i} =\mu_{v,i}\,,\quad u_{\sigma,i} = u_{v,i}\quad \textnormal{or}\quad\mu_{\sigma,i} = \beta_{i} \,,\quad u_{\sigma,i} = b_{i}\,.$$
 when $\sigma =(v,0)$ or $\sigma = (0,1)$ even for the classes of $H^*(\mP_X), H_*(\mP_X)$, we see from the above that $\iota^*_{\mC,\mP}$ acts by setting $\beta_i$ and $e^{(0,1)}$ to zero. This means that for each connected component, $(\iota_{\mC,\mP})_*$ is just the usual inclusion into a polynomial ring with extra variables.  
 
Writing  $\mathfrak{E}_{\alpha} = \sum_{v\in B}v\boxtimes C_{\alpha, v}$ for some $C_{\alpha,v}\in K^0(\mC_X,\Q)$, we have
$$
w^\vee\backslash\mathfrak{E}_{\alpha}=\sum_{v\in B} w^\vee\backslash(v\boxtimes C_{\alpha, v}) =C_{\alpha,w}
$$
where the last step follows from the definition of the slant product as in \cite[p. 280]{Hatcher}. This implies that 
\begin{equation}
\label{chernofE}
    \textnormal{ch}(\mathfrak{E}_{\alpha}) =\ch\Big(\sum_{v\in B}v\boxtimes (v^\vee\backslash\mathfrak{E}_{\alpha})\Big)= \sum_{v\in B}\textnormal{ch}(v)\boxtimes (\sum_{i\geq 0}e^{\alpha}\otimes\mu_{v,i})\,.
\end{equation}
Let $X$ now be a Calabi--Yau fourfold. The following theorem is the topological version of \cite[Thm. 1.1]{gross}, \cite[Thm. 5.19]{Joycehall}\footnote{Joyce's work is currently unfinished so we expect that the label of the theorem in the final version will change.} extending it also to pairs. 
\begin{proposition}
\label{theorem VA}
Let $\Q[K^0(X)\oplus \ZZ]\otimes_{\Q}\textnormal{SSym}_{\Q}[ u_{\sigma,i}, \sigma\in \mathbb{B},i>0]$ be the generalized super-lattice vertex algebra associated to $\big((K^0(X)\oplus\ZZ)\oplus K^1(X),\tilde{\chi}^\bullet\big)$, where $\tilde{\chi}^\bullet = \tilde{\chi}\oplus \chi^-$  for $\tilde{\chi}$ from \eqref{ptop} and 
\begin{equation}
\label{chi-}
\chi^-: K^1(X)\oplus K^1(X)\longrightarrow \ZZ\,,\quad \chi^-(\alpha,\beta) = \int_X\textnormal{ch}(\alpha)^\vee\textnormal{ch}(\beta)\textnormal{Td}(X)\,.    
\end{equation}
Notice that $\tilde{\chi}$ is symmetric while $\chi^-$ is anti-symmetric. The isomorphism \eqref{isohomol} induces a graded isomorphism of vertex algebras
$$
\hat{H}_*(\mathcal{P}_X)\cong \Q[K^0(X)\oplus \ZZ]\otimes_{\Q}\textnormal{SSym}_{\Q}[ u_{\sigma,i}, \sigma\in \mathbb{B},i>0]\,,
$$
if the same signs $\tilde{\epsilon}_{(\alpha,d),(\beta,e)}$ from \eqref{ptop} are used for constructing the vertex algebras on both sides. 
On the right-hand side, the degrees are given by 
\begin{align*}
&\textnormal{deg}\Big(e^{(\alpha,d)}\otimes \prod_{\begin{subarray}a
\sigma\in B_{\textnormal{even}}\sqcup\{(0,1)\}\,,
i>0
\end{subarray}}u_{\sigma,i}^{m_{\sigma,i}}\otimes \prod_{\begin{subarray}a
v \in B_{\textnormal{odd}}\\
j>0
\end{subarray}}u_{v,j}^{m_{v,j}}\Big)\\
&= \sum_{\begin{subarray}a \sigma\in B_{\textnormal{even}}\sqcup\{(0,1)\}\\
i>0\end{subarray}}m_{\sigma,i}2i+\sum_{\begin{subarray}a
v \in B_{\textnormal{odd}}\\
j>0
\end{subarray}}m_{v,j}(2j-1)-\tilde{\chi}\Big((\alpha,d),(\alpha,d)\Big)\,.
\end{align*}
\end{proposition}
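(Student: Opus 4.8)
The plan is to check, under the ring isomorphism \eqref{isohomol} dual to \eqref{explcitcoh}, that the three ingredients $\big(\ket{0},\,e^{zT},\,Y\big)$ of the vertex algebra produced by Definition \ref{definition haltop} for the data \eqref{pairtopdata} coincide with those of the generalized super-lattice vertex algebra attached to $\big((K^0(X)\oplus\Z)\oplus K^1(X),\tilde{\chi}^\bullet\big)$. This is exactly the argument of Gross \cite[Thm.\ 1.1, Thm.\ 4.15]{gross} and Joyce \cite{Joycehall} for the space $\mathcal{C}_X$ (i.e.\ without the pair coordinate); the new input is the extra $BU\times\Z$ factor of $\mathcal{P}_X$ together with the mixed terms it contributes to $\theta_{\mathcal{P}}$, so most of the work is to run the cited computation in the presence of this extra coordinate.

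First I would dispose of the vacuum and the translation operator. The vacuum $\ket{0}=0_*(\ast)$ generates $H_0(\mathcal{P}_{(0,0)})$, which is $e^{(0,0)}\otimes 1$ under \eqref{isohomol}, matching the super-lattice vacuum. For $T=(\Phi_{\mathcal{P}})_*(t\boxtimes -)$ with $t\in H_2(\mathbb{C}\mathbb{P}^{\infty})$ the generator, one computes $(\Phi_{\mathcal{P}})^*$ on the generators $\mu_{\sigma,i}$ using that $\mathfrak{E}_{\mathcal{P}}$ is of weight $1$ for the diagonal $\mathbb{C}\mathbb{P}^{\infty}$-action (as in the proof of Lemma \ref{lemmapaircon}); pushing forward then shows $T$ acts on the right-hand side as the derivation $u_{\sigma,i}\mapsto i\,u_{\sigma,i+1}$ together with $e^{(\alpha,d)}\otimes 1\mapsto e^{(\alpha,d)}\otimes\sum_{\sigma\in\mathbb{B}}c_\sigma u_{\sigma,1}$, with $c_\sigma$ the rational coordinates of $(\alpha,d)$ in the basis $\mathbb{B}$ --- i.e.\ precisely the $T$ generating $e^{zT}$ in the lattice vertex algebra.

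The substance is the state-to-field map, where I would invoke the reconstruction lemma (cited after \eqref{fields}): it suffices to match $Y(u,z)v$ for the generating states $u=e^{(\alpha,d)}\otimes 1$ and $u=e^{(0,0)}\otimes u_{\sigma,1}$ against an arbitrary $v=e^{(\beta,e)}\otimes K$. For these I would proceed in three steps:
\begin{enumerate}[label=(\roman*)]
\item expand $c_{z^{-1}}(\theta_{\mathcal{P}})$ through the Newton identities \eqref{chern classes} in terms of $\textnormal{ch}(\theta_{\mathcal{P}})$; substituting the formula for $\theta_{\mathcal{P}}$ from Definition \ref{definitionpairtop} and using \eqref{chernofE}, the Künneth decomposition of $\textnormal{ch}(\theta_{\mathcal{P}})$ on $\mathcal{P}_X\times\mathcal{P}_X$ becomes a finite sum of products of the generators $\mu_{\sigma,i}$ on the two factors together with the numerical forms $\tilde{\chi}$ and $\chi^-$;
\item form $(u\boxtimes v)\cap c_{z^{-1}}(\theta_{\mathcal{P}})$ and apply $(\mu_{\mathcal{P}})_*\circ(e^{zT}\otimes\textnormal{id})$ via the projection and push--pull formulas; exactly as in \cite[Thm.\ 4.15]{gross}, $(\mu_{\mathcal{P}})_*$ turns the two Künneth slots into the operators ``multiply by $u_{\sigma,k}$'' and ``$\tfrac{1}{(k-1)!}\tfrac{d}{du_{\sigma,k}}$'' on $\textnormal{SSym}_{\Q}\llbracket u_{\sigma,i}\rrbracket$, while the $\Q[K^0(X)\times\Z]$ factor is translated by $e^{(\alpha,d)}$;
\item collect powers of $z$: the prefactor $z^{\tilde{\chi}((\alpha,d),(\beta,e))}$ together with the $c_{z^{-1}}$-expansion reassemble into the $z^{\chi^+}$-factor and the two exponentials of \eqref{fields}, recognized using Lemma \ref{linear exp}.
\end{enumerate}
The signs are then automatic: both sides use the \emph{same} cocycle $\tilde{\epsilon}$ of \eqref{ptop}, the prefactor $(-1)^{a\tilde{\chi}((\beta,e),(\beta,e))}$ is absorbed just as in \cite{gross}, and $\tilde{\chi}$ was defined so that its even block is the symmetric form $\chi^+$ on $K^0(X)\oplus\Z$ while the $K^1(X)$-block contributes $\chi^-$ through Serre duality.

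\textbf{The main obstacle.} Two points go beyond simply quoting \cite{gross}. The first is the self-duality $\sigma^*(\theta_{\mathcal{P}})\cong\theta_{\mathcal{P}}^\vee$ demanded by Definition \ref{definition haltop}: for the $\mathcal{C}_X$-block this is the index-theoretic Serre duality $\sigma^*(\theta_{\mathcal{C}})=\theta_{\mathcal{C}}^\vee$ proved inside Proposition \ref{propmorphism}, while for the block $\chi(\mathfrak{U}\boxtimes\mathfrak{U}^\vee)$ and the mixed terms it is the elementary identity from the proof of Lemma \ref{lemmapaircon}; these have to be assembled compatibly with the Künneth bookkeeping. The second, and genuinely delicate, point is that $\theta_{\mathcal{P}}$ does \emph{not} split as a tensor product over $\mathcal{C}_X\times(BU\times\Z)$ because of the cross-terms $\big(\mathfrak{U}\boxtimes\pi_{2\,*}(\mathfrak{F}_n)^\vee\big)_{2,3}$ and $\big(\pi_{2\,*}(\mathfrak{F}_n)\boxtimes\mathfrak{U}^\vee\big)_{1,4}$; hence the super-lattice form $\tilde{\chi}$ really couples the $\Z$-coordinate to the $K^0(X)$-coordinates, and one must verify that the fields \eqref{fields} evaluated on the mixed generators $e^{(0,1)}\otimes 1$ and $e^{(0,0)}\otimes b_1$ reproduce the mixed terms of $\tilde{\chi}$ in \eqref{ptop}. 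Once this is in place the grading statement is immediate: $\mu_{\sigma,i}$ has cohomological degree $2i$ for $\sigma\in B_{\text{even}}\sqcup\{(0,1)\}$ and $2i-1$ for $\sigma\in B_{\text{odd}}$ --- directly from the slant-product definition \eqref{mudef} --- and the shift by $-\tilde{\chi}\big((\alpha,d),(\alpha,d)\big)$ is built into $\hat{H}_n(\mathcal{P}_{(\alpha,d)})=H_{n-\tilde{\chi}((\alpha,d),(\alpha,d))}(\mathcal{P}_{(\alpha,d)})$.
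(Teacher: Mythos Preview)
Your proposal is correct and takes essentially the same approach as the paper. The paper's proof is a two-line pointer: run the argument of \cite[Thm.\ 1.1]{gross} and \cite{Joycehall} verbatim, replacing the Chern-character input \cite[Prop.\ 5.2]{gross} by the explicit formula for $\textnormal{ch}_k(\theta_{\mathcal{P}})$, which the paper isolates as Lemma~\ref{lemma chern}. Your steps (i)--(iii) are precisely an unpacking of what that cited argument does, and your ``main obstacle'' about the mixed terms is exactly what Lemma~\ref{lemma chern} records --- the cross-terms in $\theta_{\mathcal{P}}$ contribute the off-diagonal entries of $\tilde{\chi}$ in its Künneth expansion. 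Two minor remarks: the self-duality $\sigma^*(\theta_{\mathcal{P}})=\theta_{\mathcal{P}}^\vee$ is already established in the proof of Proposition~\ref{propmorphism}, so it is a prerequisite rather than an obstacle here; and Lemma~\ref{linear exp} is not really the tool for recognising the exponentials of \eqref{fields} --- that identification is the direct translation between cap-with-$\mu_{\sigma,k}$ and $\tfrac{d}{du_{\sigma,k}}$ under the duality \eqref{normalization}, as in \cite{gross}.
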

\begin{proof}
The proof is nearly identical to \cite[Thm. 1.1]{gross}, \cite[Thm. 5.19]{Joycehall}. We need an explicit expression for $\textnormal{ch}_k(\theta_{\mathcal{P}})$ replacing Proposition \cite[Prop. 5.2]{gross} and a similar result in \cite[Prop. 5.15]{Joycehall} for quivers. This is given in Lemma \ref{lemma chern}. 
\end{proof}

Before we move on to the applications, let us write down some identities we will need later on. From now on, we always fix a point-canonical orientation of Definition \ref{definition canonicalor}, the associated signs of $\epsilon_{\alpha,\beta}$ of Theorem \ref{theorem CGJ} and the corresponding $\tilde{\epsilon}_{(\alpha,d),(\beta,e)}$ from \eqref{ptop}.

\begin{lemma}
\label{lemma identities}
\label{lemmasigns}
Consider the vertex algebra $(\hat{H}_*(\mathcal{P}_X),\ket{0},e^{zT},Y)$, then
\begin{enumerate}[label=\roman*)]
    \item $\textnormal{rk}(\sigma^\vee\backslash\mathfrak{E}_{(\alpha,d)}) = (\alpha,d)(\sigma^\vee)$
    \item Let $v_1,\ldots v_k\in B_{\textnormal{even}}$ and $i_1,\ldots, i_k\geq 1$, then
    \begin{align*}T\big(e^{(\alpha,d)}\otimes u_{v_1,i_1}\cdots u_{v_k,i_k}\big)& = e^{(\alpha,d)}\sum_{\sigma\in B_{\textnormal{even}}\sqcup\{(0,1)\}}(\alpha,d)(\sigma^{\vee})u_{\sigma,1}u_{v_1,i_1}\cdots u_{v_k,i_k}\\
    &+\sum_{l=1}^ki_lu_{v_1,i_1}\cdots u_{v_l,i_{l-1}}u_{v_l,i_l+1}u_{v_l,i_{l+1}}\cdots u_{v_k,i_k}\,,
    \end{align*}
 \item  For all $k,l,M,N\geq 0$ we have 
$\tilde{\epsilon}_{(kp,N),(lp,M)} = (-1)^{Mk}$.
\end{enumerate}
\end{lemma}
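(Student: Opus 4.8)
Parts (i) and (ii) are immediate: (i) is just the rank of a slant product ($\mathrm{rk}(\mathfrak{E}_{\alpha,d}/\sigma^\vee)=\llbracket\mathfrak{E}_{\alpha,d}\rrbracket$ paired with $\sigma^\vee$, which by definition is $(\alpha,d)(\sigma^\vee)$), and (ii) is nothing but the action of the translation operator $T$ in the generalized super-lattice vertex algebra of Proposition~\ref{theorem VA} on a monomial $e^{(\alpha,d)}\otimes\prod u_{v_l,i_l}$, read off from the state-to-field formulae \eqref{fields} (it is the derivation $u_{v,k}\mapsto ku_{v,k+1}$ together with the $e^{(\alpha,d)}$-term $\sum_{\sigma}(\alpha,d)(\sigma^\vee)u_{\sigma,1}$). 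So I will concentrate on (iii), whose content is a computation of the sign cocycle $\epsilon$ of Theorem~\ref{theorem CGJ} on the rank-two sublattice $L:=\Z\llbracket\mathcal{O}_X\rrbracket\oplus\Z p\subseteq K^0(X)$. Since we work throughout with $n=0$ (the Remark after Proposition~\ref{propmorphism}), $\llbracket\mathcal{O}_X(-n)\rrbracket=\llbracket\mathcal{O}_X\rrbracket$, and \eqref{ptop} gives $\tilde\epsilon_{(kp,N),(lp,M)}=\epsilon_{\,kp-N\llbracket\mathcal{O}_X\rrbracket,\,lp-M\llbracket\mathcal{O}_X\rrbracket}$ with both arguments in $L$; thus it suffices to determine $\epsilon|_{L\times L}$ for the point-canonical orientations of Definition~\ref{definition canonicalor}.

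Writing $e_1=\llbracket\mathcal{O}_X\rrbracket$, I would first record the Euler pairings on $L$: $\chi(e_1,e_1)=\chi(\mathcal{O}_X)=\chi$, $\chi(e_1,p)=\chi(p,e_1)=\dim H^0(X,\mathcal{O}_x)=1$, and $\chi(p,p)=\sum_{i=0}^{4}(-1)^i\binom{4}{i}=0$. Then, using only $\chi^2\equiv\chi\pmod 2$ and $\chi(p,p)=0$, one checks that the map $(\alpha,\beta)\mapsto(-1)^{a_2b_1}$ on $L$ (for $\alpha=a_1e_1+a_2p$, $\beta=b_1e_1+b_2p$) is a group $2$-cocycle obeying \eqref{signs} with respect to $\chi$: the cocycle identity is the bilinearity of $(a,b)\mapsto a_2b_1$, the normalization $\epsilon_{\gamma,0}=1$ is clear, and the antisymmetrization $(-1)^{a_2b_1+a_1b_2}$ matches $(-1)^{\chi(\alpha,\beta)+\chi(\alpha,\alpha)\chi(\beta,\beta)}$ because the $a_1b_1\chi$ in $\chi(\alpha,\beta)$ is cancelled by $\chi(\alpha,\alpha)\chi(\beta,\beta)\equiv a_1b_1\chi\pmod 2$. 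So $(-1)^{a_2b_1}$ is a valid candidate for $\epsilon|_{L\times L}$.

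Next I would show the point-canonical orientations realise exactly this candidate. By Definition~\ref{definition canonicalor} they are the extension of $o^{\textnormal{can}}_{\llbracket\mathcal{O}_X\rrbracket}$ and $o^{\textnormal{can}}_{p}$ along the ordering $\llbracket\mathcal{O}_X\rrbracket<p$ via the procedure of Joyce--Tanaka--Upmeier \cite{JTU} (see also \cite[Thm.~5.4]{bojko}): for $\gamma=a_1e_1+a_2p$ with $a_1,a_2\ge0$, $o^{\textnormal{can}}_\gamma$ is built from $a_1$ copies of $o^{\textnormal{can}}_{e_1}$ followed by $a_2$ copies of $o^{\textnormal{can}}_p$ by iterating the sum isomorphisms $\phi^\omega$ of Theorem~\ref{theorem CGJ}, all $e_1$-factors grouped before the $p$-factors. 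Comparing $\phi^\omega(o^{\textnormal{can}}_\alpha\boxtimes o^{\textnormal{can}}_\beta)$ with $o^{\textnormal{can}}_{\alpha+\beta}$ for $\alpha,\beta$ in the non-negative cone: rearranging $[a_1\,e_1\mid a_2\,p\mid b_1\,e_1\mid b_2\,p]$ into $[(a_1{+}b_1)\,e_1\mid(a_2{+}b_2)\,p]$ only requires commuting the block of $b_1$ copies of $e_1$ leftwards past the block of $a_2$ copies of $p$ (no reordering inside a block of equal generators is needed, by the cocycle identity for $\phi^\omega$), i.e.\ $a_2b_1$ elementary transpositions, and by \eqref{signs} each transposition of an $e_1$ past a $p$ contributes $(-1)^{\chi(e_1,p)+\chi(e_1,e_1)\chi(p,p)}=(-1)^{1+0}=-1$. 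Hence $\epsilon_{\alpha,\beta}=(-1)^{a_2b_1}$ on the non-negative cone; extending to negative coefficients by tracking the normalization $\epsilon_{\gamma,-\gamma}=1$ built into \cite{JTU}, \cite{bojko} (the identification $O^\omega_{-\gamma}\cong O^\omega_\gamma$ comes from $\mathcal{E}\mapsto\mathcal{E}[1]$, which shifts $\mathcal{E}\textnormal{xt}$ by $[2]$ and so preserves $O^\omega$) gives $\epsilon_{a_1e_1+a_2p,\,b_1e_1+b_2p}=(-1)^{a_2b_1}$ for all integers $a_i,b_i$. Plugging in $\alpha=kp-Ne_1$ (so $a_1=-N$, $a_2=k$) and $\beta=lp-Me_1$ (so $b_1=-M$) yields $\tilde\epsilon_{(kp,N),(lp,M)}=(-1)^{k(-M)}=(-1)^{Mk}$.

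The routine parts are the three pairing computations and the transposition count. The real obstacle is the sign bookkeeping internal to \cite{JTU}, \cite{bojko}: one must check that the ordered-direct-sum construction introduces no spurious sign when merging copies of a single generator (so that $\epsilon_{\ell e_1,\ell'e_1}=\epsilon_{\ell p,\ell'p}=1$), and — the genuinely non-formal step — that the clean formula $\epsilon_{\alpha,\beta}=(-1)^{a_2b_1}$ valid on $\N^2$ persists on all of $L\cong\Z^2$. The latter does not follow from $2$-cocycle formalism alone, since a $2$-cocycle on $\Z^2$ is not pinned down by its restriction to $\N^2$; it requires unwinding how the duality/shift identifications $O^\omega_{-\gamma}\cong O^\omega_\gamma$ are incorporated in the extension and how they interact with the sum isomorphisms $\phi^\omega$, the decisive consequence being $\epsilon_{\llbracket\mathcal{O}_X\rrbracket,\,-\llbracket\mathcal{O}_X\rrbracket}=1$.
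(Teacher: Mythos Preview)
Your treatment of (i) and (ii) matches the paper's: the paper proves (i) by the same one-line functoriality-of-slant-product computation and for (ii) simply points to \cite[Lemma~5.5]{gross} and \cite{Joycehall}, exactly the lattice-vertex-algebra translation formula you invoke.

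For (iii) the paper does not argue at all: it just writes ``follows from \cite[Thm.~5.5]{bojko} together with Definition~\ref{definition canonicalor} and \eqref{ptop}.'' What you have done is essentially reconstruct the content of that citation on the rank-two sublattice $L=\Z\llbracket\mathcal{O}_X\rrbracket\oplus\Z p$. Your verification that $(-1)^{a_2b_1}$ satisfies \eqref{signs} on $L$ is correct, and your block-transposition count for the ordered JTU extension on the positive cone is the right mechanism. You are also right to flag the two genuinely non-formal points --- that merging copies of a single generator contributes no sign, and that the extension from $\N^2$ to $\Z^2$ via the duality identification $O^\omega_{-\gamma}\cong O^\omega_\gamma$ preserves the formula --- and right that these are where the work in \cite{bojko} lives rather than being derivable from the cocycle axioms alone. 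So your route is not different from the paper's, it is an unpacking of the black-box citation; the honest caveats you raise are precisely the statements one must extract from \cite[Thm.~5.4,~5.5]{bojko} to complete the argument.
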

\begin{proof}
 i. To see this, we use the functoriality of the slant product:
\begin{align*}
&\textnormal{rk}(\sigma^\vee\backslash\mathfrak{E}_{(\alpha,d)}) = \textnormal{rk}\big(i_{c,b}^*(\sigma^\vee\backslash\mathfrak{E}_{(\alpha,d)})\big) \\&=
\textnormal{rk}\big((\textnormal{id}\times i_{c,b})^*(\sigma^\vee\backslash\mathfrak{E}_{(\alpha,d)})\big) = \sigma^\vee\backslash \big((\alpha,d)\boxtimes 1\big) =(\alpha,d)(\sigma^\vee)\,,
\end{align*}
where $i_{c,b}$ is an inclusion of a point into $\mathcal{P}_{(\alpha,d)}$.
The second statement  is a generalization of \cite[Lem. 5.5]{gross} using i. A similar formula has been shown in \cite{Joycehall} for quivers. 
The last statement follows from \cite[Thm. 5.5]{bojko} together with Definition \ref{definition canonicalor} and \eqref{ptop}.
\end{proof}
We will often avoid specifying the connected component where the (co)homology class sits by simply omitting $e^{(\alpha,d)}$, $e^\alpha$ where it is obvious.

\section{Cao--Kool conjecture}
\label{cao--kool section}
In this section, we work with the the pair moduli stack $\mN_1 := \mN_{1,0}$ for $q=1,k=0$. We first twist the vertex algebra $\hat{H}(\mN_1)$ by $c_n(L^{n})$ for a line bundle $L$ using Definition \ref{E-twisted}. This way, we may use Conjecture \ref{conjecture WC} to wall-cross from the invariants \eqref{DT4defform} to $[\mM^{\ses}_{mp}]^{\inv}$ where $m\in \ZZ$. To be able to do explicit computations, we also modify the vertex algebra $\check{H}(\mP_X)$ so that $\Omega_*$ from Proposition \ref{propmorphism} remains a vertex algebra morphisms after twisting. 

In this paragraph, we assume that $X$ satisfies $H^1(\mO_X)=0$. Because Park \cite[Cor. 0.3]{Huy}, Cao-Qu \cite[Thm. 1.2]{CaoQu} compute \eqref{DT4defform} only when $L=\mO_X(D)$ for a smooth effective divisor $D$, we need to first use Bertini's theorem to obtain a basis of $H^2(X)$ given by very ample divisors that determine line bundles $L$ satisfying Conjecture \ref{conjecture CK}. Due to the linearity of the wall-crossing formulae, we conclude that the orientations used by \cite{Huy, CaoQu} coincide with our point-canonical ones from Definition \ref{definition canonicalor}. Finally, by wall-crossing back we can then extend the result to any line bundle $L$ thus proving Conjecture \ref{conjecture CK}.  

\subsection{L-twisted vertex algebras}
\label{secttauinshilbschem}
From now on, we assume that our Calabi--Yau fourfold $X$ satisfies $H^2(\mO_X)=0$ just like we did in §\ref{sec:intro}. Let $\Hilb$ be the Hilbert scheme of $n$ points on $X$ and $\big[\Hilb\big]^{\textnormal{vir}}\in H_{2n}\big(\Hilb\big)$ the virtual fundamental class defined by Oh-Thomas \cite[Thm. 4.6]{OT} using the orientations in Definition \ref{definition canonicalor}.  We consider the vector bundle $L^{[n]}\to \Hilb$ given by \eqref{Ln}. The real rank of $L^{[n]}$ is $2n$, so Cao--Kool \cite{CK1} define
\begin{equation}
\label{DT4defform}
   I_{n}(L)= \int_{[\Hilb]^{\textnormal{vir}}}c_{n}\big(L^{[n]}\big)\,.  
\end{equation}

The proof of Conjecture \ref{conjecture CK} will be given at the end of subsection \ref{computing zero-dim} in the following form. 
\begin{theorem}
\label{theorem bojko}
Let $X$ be a smooth projective Calabi--Yau fourfold for which Conjecture \ref{conjecture WC} holds, and $L$ a line bundle on $X$. Then 
$$
I(L;q)=1+ \sum_{n=1}^\infty I_{n}(L)q^n =M(-q)^{c_1(L)\cdot c_3(X)}
$$
for the point-canonical orientations of Definition \ref{definition canonicalor}. 
\end{theorem}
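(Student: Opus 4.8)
The plan is to prove Theorem \ref{theorem bojko} by combining three ingredients: the wall-crossing formula for Joyce--Song pairs (Conjecture \ref{conjecture WC}(iii)), the explicit structure of the topological vertex algebra $(\hat H_*(\mathcal P_X),\ket 0,e^{zT},Y)$ from Proposition \ref{theorem VA}, and the known case of Cao--Qu \cite{CaoQu}. First I would recast $I_n(L)$ as an integral of a weight $0$ insertion (built from $L^{[n]}$ via the slant-product construction of Example \ref{exampleins}, which produces $c_n(L^{[n]})$ as a polynomial in the $\mu_{\sigma,i}$'s) against the invariant class $[\mathcal M^{\textnormal{ss}}_{np}(\tau)]_{\textnormal{inv}}\in\check H_0(\mathcal C_X)$. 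Since $\Hilb=M^{\textnormal{st}}_{(np,1)}(\tau^{\textnormal{pa}})$ for the Joyce--Song stability $\tau^{\textnormal{pa}}$ and $0$-dimensional sheaves of length $n$ are all semistable with the same reduced Hilbert polynomial, the wall-crossing formula \eqref{wcfpairs} expresses $[M^{\textnormal{st}}_{(np,1)}(\tau^{\textnormal{pa}})]_{\textnormal{vir}}$ as a sum of iterated Lie brackets of $[\mathcal M_{(0,1)}]_{\textnormal{inv}}$ with the $[\mathcal M^{\textnormal{ss}}_{n_ip}(\tau)]_{\textnormal{inv}}$, where $n_1+\cdots+n_k=n$.

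The core computation is then to determine the classes $[\mathcal M^{\textnormal{ss}}_{np}(\tau)]_{\textnormal{inv}}\in\check H_0(\mathcal C_X)$ themselves. I would use \emph{two} stability conditions: the one computing $I_n(L)$, and a Gieseker stability twisted by $L$ in the sense of an $L$-twisted vertex algebra (Definition \ref{def twistedbyl}, referenced in the text), for which Cao--Qu's theorem \cite[Thm.~1.2]{CaoQu} gives the answer on one side. Concretely, the strategy is: (i) by part (ii) of Conjecture \ref{conjecture WC} the invariant classes are independent of the Gieseker stability chosen, so one can compute $[\mathcal M^{\textnormal{ss}}_{np}(\tau)]_{\textnormal{inv}}$ once and for all; (ii) using Lemma \ref{lemma identities} (the explicit form of $T$, the sign $\tilde\epsilon_{(kp,N),(lp,M)}=(-1)^{Mk}$, and the ranks $(\alpha,d)(\sigma^\vee)$) one evaluates the iterated brackets in $\hat H_*(\mathcal P_X)$ explicitly; (iii) matching against the case $L=\mathcal O(D)$ for a smooth connected divisor, where Cao--Qu prove $I(L;q)=M(-q)^{D\cdot c_3(X)}$ and where the point-canonical orientations coincide with theirs (this is asserted in the introduction right after Conjecture \ref{conjecture CK}), pins down the universal class. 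Since $[\mathcal M^{\textnormal{ss}}_{np}(\tau)]_{\textnormal{inv}}$ lives on $\mathcal C_X$ and depends on $X$ only through pairings of Chern characters, and since $D\cdot c_3(X)$ can take any value as $D$ varies (or by additivity one reduces to a universal statement depending linearly on $c_1(L)\cdot c_3(X)$), the general-$L$ formula follows by linearity/universality from the divisor case.

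More precisely, I expect the argument to show that, as a class in $\check H_0(\mathcal C_X)$ after the identification $\hat H_*(\mathcal C_X)\cong\Q[K^0(X)]\otimes\textnormal{SSym}$, the sum $\sum_n [\mathcal M^{\textnormal{ss}}_{np}(\tau)]_{\textnormal{inv}}q^n$ has a closed form of the type $\sum_n (\textnormal{log of }M(-q))$-coefficients times explicit vertex-algebra monomials, and that pairing with the insertion for $c_n(L^{[n]})$ and resumming via the universal transformation $U$ or directly via the bracket combinatorics reproduces $M(-q)^{c_1(L)\cdot c_3(X)}$; this is the specialization $N=1$, $\alpha_1=L$ (rank $0$) of Theorem \ref{theoremsegreintro}/Theorem \ref{theorem nekrasov}, but here it must be done unconditionally-on-$L$ using only the rank-$1$ divisor input of Cao--Qu as the seed. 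The main obstacle, as I see it, is step (ii): the Lie bracket in $\hat H_*(\mathcal P_X)$ is given by the residue $[z^{-1}]Y(u,z)v$ with $Y$ involving $c_{z^{-1}}(\theta_{\mathcal P})$, the exponential operators in \eqref{fields}, and the pairing normalization \eqref{normalization}, so organizing the $n!$-fold nested brackets of \eqref{wcfpairs} into a generating-series identity — and checking that the exponential/plethystic combinatorics indeed produces the MacMahon function rather than some other universal series — is the delicate bookkeeping. A secondary subtlety is making sure the orientation signs $\tilde\epsilon_{(kp,N),(lp,M)}=(-1)^{Mk}$ from Lemma \ref{lemma identities}(iii) feed correctly through the pair wall-crossing so that the output carries $(-q)$ rather than $q$; this is exactly where the point-canonical choice (Definition \ref{definition canonicalor}, normalizing $[M_p]^{\textnormal{vir}}=\textnormal{Pd}(c_3(X))$ with a plus sign) is used, matching the sign in Cao--Qu's \cite[Prop.~1.6]{CaoQu}.
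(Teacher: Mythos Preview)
Your overall strategy is the same as the paper's: use Conjecture \ref{conjecture WC}(iii) to express $[\Hilb]_{\textnormal{vir}}$ via iterated brackets of the $[\mathcal M^{\textnormal{ss}}_{np}]_{\textnormal{inv}}$, pin these classes down using Cao--Qu's divisor case as seed input, then wall-cross back for arbitrary $L$. Two points deserve sharpening.

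First, a terminological slip: you do not use ``two stability conditions''. There is one Gieseker stability throughout; what changes is the \emph{vertex algebra structure} on the target. The insertion $c_n(L^{[n]})$ is absorbed by capping with $c_{\textnormal{top}}(\mathcal L)$, which by Proposition \ref{propmorphismtopchern} is a morphism from the untwisted Lie algebra $(\check H_*(\mathcal P_X),[-,-])$ to the $L$-twisted one $(\mathring H_*(\mathcal P_X),[-,-]^L)$. So the same wall-crossing identity is pushed through this morphism and becomes \eqref{wcfhilb2twistL}, with the \emph{same} classes $\mathscr M_{np}$ but a different bracket $[-,-]^L$.

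Second, and more importantly, you are missing the structural step that dissolves what you call the ``main obstacle''. Before any bracket is taken, the paper shows (Lemma \ref{lemma0dimhom} and Proposition \ref{proposition 68}) that the image $\mathscr M_{np}$ in $\check H_0(\mathcal C_X)$ has the extremely constrained form
\[
\mathscr M_{np}=e^{np}\otimes\sum_{v\in B_6}a_v(n)\,u_{v,1}\ +\ \Q\,T(e^{np}\otimes 1),
\]
i.e.\ it is \emph{linear} in the degree-two generators $u_{v,1}$ with $v\in B_6$ only. This comes from a dimension count on the universal sheaf of points ($\textnormal{ch}_i(\mathcal E_{np})=0$ for $i<4$) together with $T(e^{np}\otimes 1)=n\,e^{np}\otimes u_{p,1}$. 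Once you know this, a single $L$-twisted bracket collapses to a number,
\[
[e^{(mp,1)}\otimes 1,\,\mathscr M_{np}]^L=-(-1)^n\sum_{v\in B_6}\Big(\!\int_X c_1(L)\,\textnormal{ch}(v)\Big)\,a_v(n)\cdot e^{((m+n)p,1)}\otimes 1,
\]
so the $k$-fold nested bracket in \eqref{wcfhilb2twistL} is just a product of $k$ scalars and the combinatorics is the elementary exponential one giving \eqref{coefficient}. There is no plethystic or $U$-transform bookkeeping needed at this stage; that machinery only enters later (Section \ref{VFC section}) for general insertions.

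With this in hand, the paper runs an induction on $n$ (Theorem \ref{theorem workhorse}): assume the formula $a_v(k)=\sum_{l|k}\tfrac{k}{l^2}\,c_3(X)_v$ for $k\le n$, feed very ample $L$ with $c_1(L)\in B_2$ into the scalar wall-crossing identity, and use Cao--Qu to read off $a_v(n+1)$; along the way a contradiction argument with $L\mapsto L^{\otimes N}$ forces the Cao--Qu orientations $o_n(L)$ to agree with the point-canonical ones for every $n$. Your sketch gestures at ``linearity/universality'' for this step, but the actual mechanism is this finite-dimensional induction on the unknowns $a_v(n)$, which is where the choice of $B_2\subset\textnormal{Amp}(X)$ and its dual $B_6$ is used. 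The final proof of Theorem \ref{theorem bojko} is then one line: plug \eqref{Nnpform} into \eqref{liebrackettwL} and \eqref{wcfhilb2twistL} for an arbitrary $L$.
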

For the invariants $I_n(L)$ this is equivalent to
\begin{align*}
\label{coefficient}
&I_n(L)=  \sum_{j\geq 1}d_j(n)I(L)^j\,,\textnormal{ where } \\&
d_j(n)=\sum_{\begin{subarray}a
n_1,\ldots,n_j\\
\sum_i n_i = n
\end{subarray}}\frac{1}{j!}\prod_{i=1}^j\sum_{l|n_i}(-1)^{n_i}\frac{n_i}{l^2},\quad I(L)=c_1(L)\cdot c_3(X) \,.
\numberthis
\end{align*}
Let us interpret this in the language of §\ref{secwallcr} by twisting the vertex algebra of pairs by the top Chern class of a vector bundle extending $L^{[n]}$. Take $\mathcal{A}_q=\mA_1$ to be the abelian category of sheaves with 0-dimensional support. Let $\mathcal{B}_q =\mB_1$ be the corresponding category of pairs from Definition \ref{definition pairs} and $\mN_1$ its moduli stack with $k=0$. We have the identification
$\Hilb = N^{\textnormal{st}}_{(np,1)}(\tau^{\textnormal{pa}})$ after
noting that $q(F)=1$ for any zero-dimensional sheaf $F$. This leads to 
$$
\big[\Hilb\big]^{\vir} = \big[N^{\textnormal{st}}_{(np,1)}(\tau^{\textnormal{pa}})\big]^{\vir}\,.
$$

As $X\times\Hilb$ carries a universal family $$\mathscr{I}_n = (\mO\to \mF_n)$$
where $\mF_n$ is the universal zero-dimensional sheaf, there exists a natural lift of the open embedding $\iota^{\textnormal{pl}}_n: \textnormal{Hilb}^n(X)\to \mathcal{N}^{\textnormal{pl}}_0$
\begin{equation}
\label{inlift}
    \begin{tikzcd}
       &\mN_1\arrow[d,"\Pi^{\textnormal{pl}}"]\\
       \Hilb\arrow[ur,"\iota_{n}"]\arrow[r,"\iota^{\textnormal{pl}}_n"]&\mathcal{N}^{\textnormal{pl}}_0
    \end{tikzcd}\,.
\end{equation}
We use $\iota_n$ to express \eqref{DT4defform} in terms of insertions on $\mN_1$. 
\begin{definition}
Here, we will continue using the notation from Definition \ref{definition pairs} 
and Definition \ref{definition pairVA}. For all $(np,d)\in C_0(\mB_1)$, we will also write $\mathcal{N}_{n,d} = \mathcal{N}_{np,d}$. Then define 
$
\mathcal{L}^{[n_1,n_2]}_{d_1,d_2}\to \mathcal{N}_{n_1,d_1}\times \mathcal{N}_{n_2,d_2}
$
by
$$
\mathcal{L}^{[n_1,n_2]}_{d_1,d_2}= (\pi_{n_1,d_1}\times \pi_{n_2,d_2})^*\Big(\mathcal{V}^*_{d_1}\boxtimes {\pi_2}_*\big(\pi_X^*(L)\otimes \mE_1\big)\Big)_{2,3}\,,
$$
where $\mE_1$ is the universal sheaf on $\mM_1$ the moduli stack of $\mA_1$. It is a vector bundle of rank $d_1n_2$. We define
$$
\mathcal{L}^{[-,-]}|_{\mathcal{N}_{n_1,d_1}\times \mathcal{N}_{n_2,d_2}} = \mathcal{L}^{[n_1,n_2]}_{d_1,d_2}\,.
$$
\end{definition}
Set $\mathcal{L} = \Delta^*\big(\mathcal{L}^{[-,-]}\big)$. From Example \ref{exampleins}, we know that $c_i(\mathcal{L})$ is a weight 0 insertion and 
from definition it follows that $\iota^*_n(\mathcal{L}) = L^{[n]}$. Using this together with  $\big[\Hilb\big]^\textnormal{vir}_{\mN} = \Big(\Pi^{\textnormal{pl}}\circ\iota_n\Big)_*\Big(\big[\Hilb\big]^{\textnormal{vir}}\Big)$, we see that
\begin{equation}
I_n(L) = \int_{[\Hilb]^{\vir}_{\mN}}c_{n}(\mathcal{L})\,.
\end{equation}

The following is clear from the construction. 
\begin{lemma}
\label{lemma Lconditions}
The vector bundle $\mathcal{L}^{[-,-]}\to \mN_1\times \mN_1$ satisfies the conditions of Definition \ref{E-twisted}. Let $(\hat{H}^L_*(\mN_1),\ket{0},e^{zT}, Y^{L})$ be the $\mathcal{L}^{[-,-]}$-twisted vertex algebra, $ \big(\check{H}^L_*(\mN_1), [-,-]^L\big)$ the associated Lie algebra. By Proposition \ref{propmorphismtopchern} we have the morphism $(-)\cap c_{\textnormal{top}}\big(\mathcal{L}\big):\big(\check{H}_*(\mN_1), [-,-]\big)\to \big(\check{H}^L_*(\mN_1), [-,-]^L\big)$\,.
\end{lemma}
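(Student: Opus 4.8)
The plan is to check that $F=\mathcal{L}^{[-,-]}$ satisfies the hypotheses of Definition \ref{E-twisted}: that it is a vector bundle of rank locally constant on the components $\mathcal{N}_{n_1,d_1}\times\mathcal{N}_{n_2,d_2}$, that it is additive under $\mu_{\mathcal{N}_0}$ in each slot, and that it transforms under the $\GG_m$-action $\Phi_{\mathcal{N}_0}$ with weight $-1$ in the first slot and $+1$ in the second. Once these are verified, the $\mathcal{L}^{[-,-]}$-twisted vertex algebra $(\tilde{H}_*(\mathcal{N}_0),\ket{0},e^{zT},Y^{L})$, the Lie algebra $(\mathring{H}_*(\mathcal{N}_0),[-,-]^{L})$, and the capping morphism are produced verbatim by Definition \ref{E-twisted} and Proposition \ref{propmorphismtopchern}, so there is nothing further to prove. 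The verification is the same bookkeeping already carried out in the proof of Lemma \ref{lemmapaircon} for the two mixed summands $\bigl(\mathcal{V}_{d_1}\boxtimes\pi_{2*}(\mathcal{F}^{\alpha_2}_{q,0})^\vee\bigr)_{2,3}$ and $\bigl(\pi_{2*}(\mathcal{F}^{\alpha_1}_{q,0})\boxtimes\mathcal{V}^*_{d_2}\bigr)_{1,4}$ of $\Theta^{\textnormal{pa}}$, since by construction $\mathcal{L}^{[n_1,n_2]}_{d_1,d_2}$ is of exactly this type; the $L$-twist, the absence of the dual, and the absence of the shift $[1]$ are immaterial for these particular checks.

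Concretely I would argue as follows. First, $\mathcal{L}^{[-,-]}$ is an honest vector bundle: on $\mathcal{N}_{n_1,d_1}\times\mathcal{N}_{n_2,d_2}$ it is pulled back along $\pi_{n_1,d_1}\times\pi_{n_2,d_2}$ from the external product of $\mathcal{V}^*_{d_1}$ on $[*/\textnormal{GL}(d_1,\CC)]$ with $L^{[n_2]}_\ast:=\pi_{2*}\bigl(\pi_X^*(L)\otimes\mathcal{E}_0\bigr)$ on $\mathcal{M}_{n_2}$, and since $\pi_X^*(L)\otimes\mathcal{E}_0$ is flat over $\mathcal{M}_0$ with $0$-dimensional fibres, $\pi_{2*}$ has no higher direct images and $L^{[n]}_\ast$ is locally free of rank $n$. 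Second, the sum map $\mu_{\mathcal{N}_0}$ is the block-diagonal direct sum of triples, under which $\mathcal{E}_0$, hence $L^{[n]}_\ast$, as well as $\mathcal{V}_d$ split as $\pi_1^*\oplus\pi_2^*$, yielding $(\mu\times\textnormal{id})^*\mathcal{L}^{[-,-]}\cong\pi_{13}^*\mathcal{L}^{[-,-]}\oplus\pi_{23}^*\mathcal{L}^{[-,-]}$ and the symmetric identity for $\textnormal{id}\times\mu$. Third, $\mathcal{E}_0$ and $\mathcal{V}_d$ are weight $1$ for $\Phi_{\mathcal{N}_0}$ (as recorded in the proof of Lemma \ref{lemmapaircon}, following Joyce \cite{Joycehall}), so $\mathcal{V}^*_{d_1}$ is weight $-1$ and $L^{[n_2]}_\ast$ is weight $+1$, giving $(\Phi\times\textnormal{id})^*\mathcal{L}^{[-,-]}\cong V_1^*\boxtimes\mathcal{L}^{[-,-]}$ and $(\textnormal{id}\times\Phi)^*\mathcal{L}^{[-,-]}\cong V_1\boxtimes\mathcal{L}^{[-,-]}$, exactly the two identities in Definition \ref{E-twisted}. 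Fourth, $\textnormal{rk}\,\mathcal{L}^{[n_1,n_2]}_{d_1,d_2}=d_1 n_2$ depends only on the indices, so $\xi\bigl((n_1p,d_1),(n_2p,d_2)\bigr)=d_1 n_2$ is well defined; on the components with $d_1=0$ or $n_2=0$ the bundle is the zero bundle and all four conditions hold trivially.

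With Definition \ref{E-twisted} in force, Proposition \ref{propmorphismtopchern} applied to $F=\mathcal{L}^{[-,-]}$ and $E=\Delta^*(\mathcal{L}^{[-,-]})=\mathcal{L}$ yields the morphism of vertex algebras $\hat{H}_*(\mathcal{N}_0)\to\tilde{H}_*(\mathcal{N}_0)$ and the induced Lie algebra map; since $\xi\bigl((np,d),(np,d)\bigr)=dn$ equals the rank of $\mathcal{L}$ on $\mathcal{N}_{n,d}$, the cap is with the top Chern class $c_{dn}(\mathcal{L})=c_{\textnormal{top}}(\mathcal{L})$, which is precisely the stated map $(-)\cap c_{\textnormal{top}}(\mathcal{L})\colon\bigl(\check{H}_*(\mathcal{N}_0),[-,-]\bigr)\to\bigl(\mathring{H}_*(\mathcal{N}_0),[-,-]^{L}\bigr)$. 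The only point requiring genuine care — and the reason the definition of $\mathcal{L}^{[-,-]}$ carries $\mathcal{V}^*_{d_1}$ rather than $\mathcal{V}_{d_1}$ in the first slot — is that Definition \ref{E-twisted} demands weight $-1$ (i.e. $V_1^*$) in the first slot, opposite to the convention for $\Theta$ in Definition \ref{definition hall}; keeping this straight is what makes the two $\Phi$-equivariance isomorphisms come out with $V_1^*$ and $V_1$ on the correct sides. Everything else is formal.
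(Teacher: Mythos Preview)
Your proof is correct and is exactly the verification the paper has in mind: the paper itself simply prefaces the lemma with ``The following is clear from the construction'' and gives no further argument, so you have written out the routine checks (vector-bundle property, additivity under $\mu$, weight $\pm 1$ under $\Phi$, constancy of the rank $d_1n_2$) that the paper leaves implicit, invoking the same weight-$1$ observation for $\mathcal{V}_d$ and $\pi_{2*}(\mathcal{F}^{\alpha}_{q,0})$ already used in the proof of Lemma~\ref{lemmapaircon}.
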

 We construct its topological counterpart.

\begin{definition}
\label{def twistedbyl}
Define the data $\big(\mathcal{P}_X, K(\mathcal{P}_X), \Phi_{\mathcal{P}}, \mu_{\mathcal{P}}, 0, \theta^L_{\mathcal{P}}, \tilde{\epsilon}^L\big)$ as follows:

\begin{itemize}
    \item $K(\mathcal{P}_X) = K^0(X)\oplus \ZZ$.
    \item Set $\mathfrak{L} = \pi_{2\,*}\big(\pi_X^*(L)\otimes  \mathfrak{E}\big)\in K^0(\mathcal{C}_X)$.  Then on $\mathcal{P}_X\times \mathcal{P}_X$ we define
$$
 \theta^L_{\mathcal{P}}= (\theta)_{1,3} + \chi(\mathfrak{U}\boxtimes \mathfrak{U}^\vee)_{2,4} - \Big(\mathfrak{U}\boxtimes \big(\pi_{2\,*}(\mathfrak{E})-\mathfrak{L}\big)^\vee\Big)_{2,3} - \Big(\big(\pi_{2\,*}(\mathfrak{E}) - \mathfrak{L}\big)\boxtimes \mathfrak{U}^\vee\Big)_{1,4}\,.
$$
\item The symmetric form $\tilde{\chi}^L: (K^0(X)\oplus \ZZ)\times (K^0(X)\oplus \ZZ)\to \ZZ $ is given by 
\begin{align*}
\label{chiLtil}
&\tilde{\chi}^L\big((\alpha,d),(\beta,e)\big) = \textnormal{rk}\big(\theta^L_{(\alpha,d),(\beta,e)}\big)\\
=&\chi(\alpha,\beta) + \chi de -d\big(\chi(\beta) - \chi(\beta\cdot L)\big)-e\big(\chi(\alpha) - \chi(\alpha\cdot L)\big)\,.
\numberthis
\end{align*}
\item The signs are defined by
\begin{equation}
\label{signstwisted}
\tilde{\epsilon}^L_{(\alpha,d),(\beta,e)} =(-1)^{d\chi(L\cdot\beta)} \tilde{\epsilon}_{(\alpha,d),(\beta,e)}\,,
\end{equation}
in terms of $\tilde{\epsilon}_{\alpha,\beta}$ from \eqref{ptop}. 
\end{itemize}
\sloppy We denote by $(\hat{H}^L_*(\mathcal{P}_X),\ket{0}, e^{zT}, Y^L)$ the vertex algebra associated to this data and $(\check{H}^L_*(\mathcal{P}_X),[-,-]^L)$ the corresponding Lie algebra. 
\end{definition}
We are unable to use Proposition \ref{propmorphismtopchern} directly because $\mathfrak{U}^\vee\boxtimes\mathfrak{L}$ is not a vector bundle. However, one can easily show the following result similar to Proposition \ref{theorem VA} and Proposition \ref{propmorphism}.
\begin{proposition}
\label{propmorphismL}
Let $\Q[K^0(X)\oplus \ZZ]\otimes_{\Q}\textnormal{SSym}_{\Q}[u_{\sigma,i}, \sigma\in \mathbb{B},i>0]$ be the generalized super-lattice vertex algebra associated to $\big((K^0(X)\oplus\ZZ)\oplus K^1(X),(\tilde{\chi}^L)^\bullet\big)$, where $(\tilde{\chi}^L)^\bullet = \tilde{\chi}^L\oplus \chi^-$  for $\tilde{\chi}^L$ from \eqref{chiLtil} and $\chi^-$ from \eqref{chi-}. The isomorphism \eqref{isohomol} induces an isomorphism of graded vertex algebras
$$
\hat{H}^L_*(\mathcal{P}_X)\cong \Q[K^0(X)\oplus \ZZ]\otimes_{\Q}\textnormal{SSym}_{\Q}[u_{\sigma,i}, \sigma\in \mathbb{B},i>0]\,,
$$
if the same signs $\tilde{\epsilon}^L_{(\alpha,d),(\beta,e)}$ from \eqref{signstwisted} are used for constructing the vertex algebras on both sides. On the right-hand side the degrees are given by 
\begin{align*}
&\textnormal{deg}\Big(e^{(\alpha,d)}\otimes \prod_{\begin{subarray}a
\sigma\in B_{\textnormal{even}}\sqcup\{(0,1)\}\,,
i>0
\end{subarray}}u_{\sigma,i}^{m_{\sigma,i}}\otimes \prod_{\begin{subarray}a
v \in B_{\textnormal{odd}}\\
j>0
\end{subarray}}u_{v,j}^{m_{v,j}}\Big)\\
&= \sum_{\begin{subarray}a \sigma\in B_{\textnormal{even}}\sqcup\{(0,1)\}\\
i>0\end{subarray}}m_{\sigma,i}2i+\sum_{\begin{subarray}a
v \in B_{\textnormal{odd}}\\
j>0
\end{subarray}}m_{v,j}(2j-1)-\tilde{\chi}^L\Big((\alpha,d),(\alpha,d)\Big)\,.
\end{align*}
\sloppy The map $\Omega_*: H_*(\mN_1)\to H_*(\mathcal{P}_X)$ induces a morphism of graded vertex algebras $(\hat{H}^L_*(\mN_1),\ket{0},e^{zT}, Y^L)\to (\hat{H}^L_*(\mathcal{P}_X),\ket{0},e^{zT}, Y^L)$ and of graded Lie algebras
$$
\bar{\Omega}_* :\big(\check{H}^L_*(\mathcal{N}_1),[-,-]^L\big)\longrightarrow \big(\check{H}^L_*(\mathcal{P}_X),[-,-]^L\big)\,.
$$
\end{proposition}
\begin{proof}
Using Lemma \ref{lemma chern} for $\alpha=\llbracket L\rrbracket$, we see that
$$
 \textnormal{ch}_k(\mathfrak{U}^\vee\boxtimes \mathfrak{L})=\sum_{\begin{subarray}a v\in B_{\textnormal{even}}\\
  j=l+k
  \end{subarray}}(-1)^l\chi(L^*,v)\beta_l\boxtimes \mu_{v,k}\,.
$$
Using $\chi(L^*,v) = \chi(v\cdot L)$, one can prove the first part of the theorem by following the proof of  \cite[Thm. 1.1]{gross} or \cite[Thm. 5.19]{Joycehall}. To show the second part, note that $(\Omega\times \Omega)^*(\mathfrak{U}^\vee\boxtimes \mathfrak{L})=\mathcal{L}^{[-,-]}$ and 
$$\xi^L\big((n_1p,d_1),(n_2p,d_2)\big):=\textnormal{rk}\Big(\mathcal{L}^{[n_1,n_2]}_{d_1,d_2}\Big) = d_1n_2=d_1\chi(n_2p\cdot L)\,.$$
The statement then follows from Definition \ref{E-twisted} and Definition \ref{def twistedbyl} by the same arguments as in the proof of Proposition \ref{propmorphism}.
\end{proof}
This completes the following diagram of morphisms of Lie algebras:
$$
\begin{tikzcd}
\arrow[d,"\cap \,c_{\textnormal{top}}(\mathcal{L})"']\big(\check{H}_*(\mathcal{N}_1),[-,-]\big)\arrow[r,"\bar{\Omega}_*"]&\check{H}_*(\mathcal{P}_X,[-,-])\\
\big(\check{H}^L_*(\mathcal{N}_1),[-,-]^L\big)\arrow[r,"\bar{\Omega}_*"]&\check{H}^L_*(\mathcal{P}_X,[-,-]^L)
\end{tikzcd}\,.
$$


\subsection{Computing virtual fundamental cycles of 0-dimensional sheaves}
\label{computing zero-dim}
Let us now summarize the different wall-crossing formulae at our disposal. These include the usual wall-crossing without insertions and the twisted version from Lemma \ref{lemma Lconditions}.

Applying part iii. of Conjecture \ref{conjecture WC}  to $\big[\Hilb\big]^{\vir}_{\mN}$ we obtain in $\check{H}_*(\mN_1)$
\begin{equation}
\label{wcfhilb1}
 \big[\Hilb\big]^{\vir}_{\mN}=  \sum_{\begin{subarray}a
k\geq 1,n_1,\ldots,n_k>0\\
n_1+\cdots+n_k = n
\end{subarray}}\frac{(-1)^k}{k!}\big[\big[\ldots \big[[\mathcal{M}_{(0,1)}]^{\inv},[\mathcal{M}_{n_1p}]^{\inv}],\ldots ],[\mathcal{M}_{n_kp}]^{\inv}]\,,
\end{equation}
\sloppy where we used part ii. of Conjecture \ref{conjecture WC} to conclude that $[\mathcal{M}_{n_ip}]^{\inv} = [\mathcal{M}^{\textnormal{ss}}_{n_ip}(\tau)]^{\inv}$ are independent of stability conditions\footnote{We also start using $k$ as a dummy index in the summation instead of the sufficiently large $k$ as in Definition \ref{definition pairs} which has now been set to be 0.}.
To make the notation simpler, we write 
$$
\mathscr{H}_n=\bar{\Omega}_*\Big(\big[\Hilb\big]^{\vir}_{\mN}\Big)\,,\quad\textnormal{and}\quad  \mathscr{M}_{np}= \bar{\Omega}_*\big([\mathcal{M}_{np}]^{\inv}\big)\,.
$$
Using Lemma \ref{lemma Lconditions}, we can apply Proposition \ref{propmorphismtopchern} together with \eqref{wcfhilb1} to get 
\begin{equation}
\label{wcfhilbtwistL}
 I_n(L)1_{(n,1)}=  \sum_{\begin{subarray}a
k\geq 1,n_1,\ldots,n_k>0\\
n_1+\cdots+n_k = n
\end{subarray}}\frac{(-1)^k}{k!}\big[\big[\ldots \big[[\mathcal{M}_{(0,1)}]^{\inv},[\mathcal{M}_{n_1p}]^{\inv}\big]^L,\ldots \big]^L,[\mathcal{M}_{n_kp}\big]^{\inv}\big]^L\,,
\end{equation}
where $1_{(n,1)}\in \check{H}_0(\mathcal{N}_{(n,1)})$ denotes the natural generator. 

Applying Proposition \ref{propmorphism} to \eqref{wcfhilb1}, we get a wall-crossing formula in $\check{H}_*(\mathcal{P}_X)$
\begin{equation}
\label{wcfhilb2}
 \mathscr{H}_n=  \sum_{\begin{subarray}a
k\geq 1,n_1,\ldots,n_k>0\\
n_1+\cdots+n_k = n
\end{subarray}}\frac{(-1)^k}{k!}\big[\big[\ldots \big[e^{(0,1)}\otimes 1,\mathscr{M}_{n_1p}\big],\ldots \big],\mathscr{M}_{n_kp}\big]\,.
\end{equation}
Applying Proposition \ref{propmorphismL} to \eqref{wcfhilbtwistL} we obtain a wall-crossing formula in  $\check{H}^L_*(\mathcal{P}_X)$:
\begin{equation}
\label{wcfhilb2twistL}
 I_n(L)e^{(np,1)}\otimes 1=  \sum_{\begin{subarray}a
k\geq 1,n_1,\ldots,n_k>0\\
n_1+\cdots+n_k = n
\end{subarray}}\frac{(-1)^k}{k!}\big[\big[\ldots \big[e^{(0,1)}\otimes 1,\mathscr{M}_{n_1p}\big]^L,\ldots \big]^L,\mathscr{M}_{n_kp}\big]^L\,.
\end{equation}
We will first use \eqref{wcfhilb2twistL} to compute the classes $\mathscr{M}_{np}$ from the data provided to us by Theorem \ref{thm:park}. Using the same formula to wall-cross back, we will be able to prove Theorem \ref{theorem bojko}.

Let $\Omega_{np}=\Omega|_{(\mathcal{N}_{(np,0)})^{\textnormal{top}}}: (\mathcal{N}_{(np,0)})^{\textnormal{top}}\to \mathcal{C}_X\subset \mathcal{P}_X$, then we will describe  the image of $H_2(\mathcal{N}_{(np,0)})$ under $(\Omega_{np})_*$. For this, note that it follows from \eqref{isohomolC} that there is a natural isomorphism for all $\alpha\in K^0(X)$
\begin{equation}
\label{H-2}
   H_2(\mathcal{C}_\alpha)\cong H^{\textnormal{even}}(X)\oplus \Lambda^2 H^{\textnormal{odd}}(X)\,.
\end{equation}

We will for now work with the following assumption: 
\begin{equation}
\label{eqAsH}
    H^1(X,\mathcal{O}_X) =0= H^1(X,\mathbb{Q}) \,.
\end{equation}
This will be dropped in Remark \ref{remark H}, before we compute any integrals on $\big[\Hilb\big]^{\textnormal{vir}}$.
\begin{lemma}
\label{lemma0dimhom}
If \eqref{eqAsH} holds, the image of $(\Omega_{np})_*:H_2(\mathcal{N}_{(np,0)})\to H_2(\mathcal{C}_{np})$ is contained in $H^6(X)\oplus H^8(X)$ under the isomorphism \eqref{H-2}.  
\end{lemma}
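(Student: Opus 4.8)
The plan is to argue by duality on the single connected component. Since $H_2(\mathcal{N}_{(np,0)})$ and $H_2(\mathcal{C}_{np})$ are finite dimensional, it suffices to show that the pullback $(\Omega_{np})^*\colon H^2(\mathcal{C}_{np})\to H^2(\mathcal{N}_{(np,0)})$ kills every cohomology class which, under the dual of \eqref{H-2}, is dual to one of the summands $H^0(X)$, $H^2(X)$, $H^4(X)$, $\Lambda^2 H^{\textnormal{odd}}(X)$ of $H_2(\mathcal{C}_{np})$. First I would unwind what $(\Omega_{np})^*$ does to generators: by the construction of $\Omega$ (see \eqref{omega} and the proof of Proposition \ref{propmorphism}) one has $(\textnormal{id}_X\times\Omega_{np})^*(\mathfrak{E})=\llbracket\mathcal{E}_0\rrbracket$ in $K^0\big(X\times\mathcal{N}_{(np,0)}\big)$, where $\mathcal{E}_0$ is the universal sheaf on the moduli stack $\mathcal{M}_0$ of $0$-dimensional sheaves from Definition \ref{definition pairs}, restricted to the length-$n$ component. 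Using the presentation \eqref{isohomol} of $H_*(\mathcal{C}_X)$ together with \eqref{chernofE}, the cohomology generator $\mu_{v,i}$ dual to $u_{v,i}$ is the Künneth component of $\textnormal{ch}(\mathfrak{E})$ along $\textnormal{ch}(v)\in H^{\deg v}(X)$; a degree count with the grading formula of Proposition \ref{theorem VA} identifies $\mu_{v,i}$ as a component of $\textnormal{ch}_{a+i}(\mathfrak{E})$ with $a=\lfloor\tfrac12\deg v\rfloor$. Hence $(\Omega_{np})^*\mu_{v,i}$ is the corresponding Künneth component of $\textnormal{ch}_{a+i}(\mathcal{E}_0)\in H^*\big(X\times\mathcal{N}_{(np,0)}\big)$.

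Next I would record the vanishing that does the work. The sheaves parametrised by $\mathcal{M}_0$ are $0$-dimensional on the smooth fourfold $X$, so the scheme-theoretic support of $\mathcal{E}_0$ is closed, proper and quasi-finite over the base, hence of codimension at least $4$ in $X\times\mathcal{M}_0$; since $\mathcal{E}_0$ is a flat family of coherent sheaves on a smooth variety it is perfect on $X\times U$ for any smooth atlas $U$ of $\mathcal{M}_0$, so its Chern character is defined, and the standard fact that the Chern character of a coherent sheaf whose support has codimension $\geq c$ vanishes in degrees below $2c$ gives $\textnormal{ch}_i(\mathcal{E}_0)=0$ for all $i\leq 3$.

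Then I would combine the two. Under \eqref{H-2} the summand $H^{2k}(X)$ of $H_2(\mathcal{C}_{np})$ is dual to the degree-$2$ classes $\mu_{v,1}$ with $v\in B_{2k}$, and by the first paragraph $\mu_{v,1}$ sits inside $\textnormal{ch}_{k+1}(\mathfrak{E})$; for $k\in\{0,1,2\}$ it therefore pulls back to a component of $\textnormal{ch}_{\leq 3}(\mathcal{E}_0)=0$, so the image of $(\Omega_{np})_*$ has no $H^0(X)$-, $H^2(X)$- or $H^4(X)$-component. For the odd part, assumption \eqref{eqAsH} gives $H^1(X,\Q)=0$, hence $H^7(X,\Q)=0$ by Poincar\'e duality on the real $8$-manifold $X$, so $B_{\textnormal{odd}}=B_3\sqcup B_5$; the degree-$1$ class $\mu_{v,1}$ for $v\in B_{2a+1}$ lies in $\textnormal{ch}_{a+1}(\mathfrak{E})$, so for $v\in B_3\sqcup B_5$ (i.e. $a\in\{1,2\}$) it again pulls back into $\textnormal{ch}_{\leq 3}(\mathcal{E}_0)=0$; consequently every product $\mu_{v,1}\mu_{w,1}$ with $v,w\in B_{\textnormal{odd}}$ — which is exactly what \eqref{H-2} uses to present the $\Lambda^2 H^{\textnormal{odd}}(X)$-summand — pulls back to $0$. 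What survives is precisely $H^6(X)\oplus H^8(X)$, which is the claim.

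The main obstacle I anticipate is bookkeeping rather than anything deep: making \eqref{H-2} and the index shift $i\mapsto a+i$ completely precise, i.e.\ tracking which Chern-character degree each vertex-algebra generator $\mu_{v,i}$ occupies, and checking that $\textnormal{ch}_{\leq 3}(\mathcal{E}_0)=0$ really holds on the topological realisation of the Artin stack $\mathcal{M}_0$. Both points are dealt with by passing to a smooth atlas $U$, over which $\mathcal{E}_0$ is an ordinary perfect complex with support quasi-finite over $U$, so the classical codimension estimate for Chern characters applies verbatim.
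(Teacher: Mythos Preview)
Your proposal is correct and follows essentially the same approach as the paper: both arguments reduce to showing $(\Omega_{np})^*\mu_{v,1}=0$ for $v\notin B_6\cup B_8$ by identifying these pullbacks as K\"unneth components of $\textnormal{ch}(\mathcal{E}_{np})$ via \eqref{chernofE} and then using the vanishing $\textnormal{ch}_i(\mathcal{E}_{np})=0$ for $i<4$ together with $H^1(X)=H^7(X)=0$. Your write-up is slightly more explicit about the index shift and about why the codimension estimate holds on the Artin stack, but the substance is identical.
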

\begin{proof}
We show that $\Omega_{(np,0)}^*(e^{np}\otimes \mu_{v,1})=0$  whenever $v\notin B_6\cup B_8$. Then for any class $U\in H_*(\mathcal{N}_{(np,0)})$ we get $$e^{np}\otimes \mu_{v,1}\big((\Omega_{np})_*(U)\big)= \Omega_{np}^*(e^{np}\otimes\mu_{v,1})(U)=0$$
for $v\in B_{\textnormal{even}}\backslash (B_6\cup B_8)$ and $$e^{np}\otimes\mu_{v,1}\mu_{w,1}\big((\Omega_{np})_*(U)\big)=\Omega_{np}^*(e^{np}\otimes\mu_{v,1}\mu_{w,1})(U)=0$$ 
for $v,w\in B_{\textnormal{odd}}$.
The conclusion then follows from \eqref{normalization}.

The K-theory  class $\llbracket\mathcal{E}_{np}\rrbracket$ of the universal sheaf of points on $\mathcal{N}_{(np,0)}$ is given by $(\textnormal{id}_X\times\Omega_{np})^*(\mathfrak{E}_{np})$. Then from \eqref{chernofE} we see
\begin{equation}
\label{pullchernofE}
   \textnormal{ch}(\mathcal{E}_{np})=\sum_{\begin{subarray}a v\in B\\
   i\geq 0\end{subarray}}\ch(v)\boxtimes \Omega_{np}^*(e^{np}\otimes\mu_{v,i})\,. 
\end{equation}
We also know that $\textnormal{ch}_i(\mathcal{E}_{np})=0$ for $i<4$ by dimension arguments. By assumption \eqref{eqAsH} and Poincaré duality, we have $H^7(X)$ =0. We thus only need to consider $v\in B_j$ for $j<6$. Then from looking at \eqref{pullchernofE} we see $v\boxtimes \Omega_{np}^*(e^{np}\otimes \mu_{v,1})=0$ because it is in degree $2+j<8$ or $1+j<8$ and $B$ is a basis. Therefore $\Omega_{np}^*(e^{np}\otimes\mu_{v,1})=0$.
\end{proof}

For degree reasons, we may write 
\begin{equation}
\label{Nnpdef}
    \mathscr{M}_{np} = e^{(np,0)}\otimes 1\cdot \mathscr{N}_{np} + \Q T(e^{(np,0)}\otimes 1)\in \check{H}_*(\mP_X).
\end{equation}
  where $\mathscr{N}_{np}$ is a polynomial in the variables $u_{v,k}$ describing a choice of a lift of $\mathscr{M}_{np}$. 
  \begin{proposition}
  \label{proposition 68}
Assuming \eqref{eqAsH}, there is a unique $\mathscr{N}_{np}$ from \eqref{Nnpdef}, such that for some $a_v(n)\in \Q$, $v\in B_6$ we have
$$
  \mathscr{N}_{np} = \sum_{v\in B_{6}}a_v(n)u_{v,1}\,.
  $$
  \end{proposition}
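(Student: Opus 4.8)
The plan is to realize $\mathscr{M}_{np}$ as the class of an explicit element of $H_2(\mathcal{C}_{np})$, cut it down to $H^6(X)\oplus H^8(X)$ by Lemma \ref{lemma0dimhom}, and then remove the $H^8$-part using the operator $T$. First I would record that $\textnormal{ch}(np)$ lies in $H^8(X)$, so $\textnormal{ch}(np)^\vee\,\textnormal{ch}(np)\,\textnormal{Td}(X)$ has no part in positive degree and $\tilde\chi\big((np,0),(np,0)\big)=\chi(np,np)=0$. Hence in this component there is no grading shift: $\hat H_2(\mathcal{P}_{(np,0)})=H_2(\mathcal{P}_{(np,0)})$, $\hat H_0(\mathcal{P}_{(np,0)})=H_0(\mathcal{P}_{(np,0)})=\Q\,e^{(np,0)}\otimes 1$, and $\check H_0(\mathcal{P}_{(np,0)})=H_2(\mathcal{P}_{(np,0)})/T\big(H_0(\mathcal{P}_{(np,0)})\big)$; the same holds on $\mathcal{N}_{(np,0)}=\mathcal{M}_{np}$. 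Granting Conjecture \ref{conjecture WC}, the class $[\mathcal{M}_{np}]_{\textnormal{inv}}\in\check H_0(\mathcal{M}_{np})$ is defined and admits a representative in $H_2(\mathcal{M}_{np})$, and since pushforward descends to the quotient by the $T$-image, its image under $(\Omega_{np})_*$ represents $\mathscr{M}_{np}$. By Lemma \ref{lemma0dimhom} this representative lies in the image of $(\Omega_{np})_*\colon H_2(\mathcal{N}_{(np,0)})\to H_2(\mathcal{C}_{np})$, which under \eqref{isohomol}--\eqref{H-2} is the $\Q$-span of $\{e^{np}\otimes u_{v,1}:v\in B_6\}$ together with $\Q\,e^{np}\otimes u_{p,1}$, using $B_8=\{p\}$. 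So $\mathscr{M}_{np}$ is represented by $e^{np}\otimes\big(\sum_{v\in B_6}a_v(n)u_{v,1}+a_p(n)u_{p,1}\big)$ for some rationals $a_v(n)$, $a_p(n)$.

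Next I would compute the $T$-orbit of the vacuum. By Lemma \ref{lemma identities}.ii with $k=0$, $T(e^{(np,0)}\otimes 1)=\sum_{\sigma\in B_{\textnormal{even}}\sqcup\{(0,1)\}}(np,0)(\sigma^\vee)\,e^{(np,0)}\otimes u_{\sigma,1}$. For $\sigma\in B_{\textnormal{even}}$ with $\sigma\neq p$ one has $\textnormal{ch}(\sigma)\in H^{\le 6}(X)$, hence $\textnormal{ch}^\vee(\sigma^\vee)\in H_{\le 6}(X)$, which pairs trivially with $\textnormal{Td}(X)\,\textnormal{ch}(np)\in H^8(X)$, so $(np,0)(\sigma^\vee)=0$; likewise $(np,0)\big((0,1)^\vee\big)=0$ since the $\Z$-component of $(np,0)$ vanishes; and $(np,0)(p^\vee)=n$ (in particular nonzero, as $n\ge 1$). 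Therefore $T\big(H_0(\mathcal{P}_{(np,0)})\big)=\Q\,e^{np}\otimes u_{p,1}$, and subtracting $\tfrac{a_p(n)}{n}\,T(e^{(np,0)}\otimes 1)$ from the representative above produces a representative $e^{(np,0)}\otimes\mathscr{N}_{np}$ of $\mathscr{M}_{np}$ in the sense of \eqref{Nnpdef}, with $\mathscr{N}_{np}=\sum_{v\in B_6}a_v(n)u_{v,1}$, as required. Uniqueness is then immediate: two representatives of this special form differ by an element of $\Q\,e^{np}\otimes u_{p,1}$ that also lies in the $\Q$-span of $\{e^{np}\otimes u_{v,1}:v\in B_6\}$, hence is zero.

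I expect the only point needing care to be the identity $T(e^{(np,0)}\otimes 1)=n\,e^{(np,0)}\otimes u_{p,1}$: one must check that the $T$-orbit of the vacuum is exactly the $u_{p,1}$-line, so that it cancels precisely the spurious $B_8=\{p\}$-contribution permitted by Lemma \ref{lemma0dimhom} and nothing in $B_6$. This rests on $\textnormal{ch}(np)$ being purely of top degree, which kills all pairings $(np,0)(\sigma^\vee)$ with $\sigma\neq p$, and on $n\ge 1$, which also makes $T\big(H_0(\mathcal{P}_{(np,0)})\big)$ one-dimensional and hence gives the uniqueness. Everything else is bookkeeping within the explicit model of Proposition \ref{propmorphismL} (equivalently Proposition \ref{theorem VA}) together with the already-established Lemma \ref{lemma0dimhom}.
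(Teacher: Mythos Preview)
Your proposal is correct and follows essentially the same route as the paper's proof: use Lemma \ref{lemma0dimhom} to constrain any lift to the span of $u_{v,1}$ with $v\in B_6\cup B_8$, then invoke $T(e^{(np,0)}\otimes 1)=n\,e^{(np,0)}\otimes u_{p,1}$ from Lemma \ref{lemma identities} to absorb the $B_8$-part and obtain uniqueness. One cosmetic point: the pairing $(np,0)(\sigma^\vee)$ does not involve $\textnormal{Td}(X)$ in the paper's convention (the dual $\textnormal{ch}^\vee$ is defined so that the $K^*$--$K_*$ pairing matches the plain $H^*$--$H_*$ pairing), so the cleanest justification is simply that $B^\vee$ is the dual basis to $B$, whence $(np,0)(\sigma^\vee)=n\,\delta_{\sigma,p}$; your degree argument still yields the right conclusion since $\textnormal{ch}(np)$ is top-degree anyway.
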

  \begin{proof}
 As 
  $\mathscr{M}_{np} =(\Omega_{np})_*\big([\mathcal{M}_{np}]^{\inv}\big)$, 
  by Lemma \ref{lemma0dimhom}, we have 
  $$\mathscr{N}_{np} = \sum_{v\in B_{6}\cup B_8}a_v(n)u_{v,1}\,.$$
  From Lemma \ref{lemma identities}, we see that $T(e^{np}\otimes 1) = e^{np}\otimes nu_{p,1}$. Therefore, we get  $H^8(X) = T\big(H_0(\mathcal{C}_{np})\big)$ which concludes the proof.
  \end{proof}
 
 \begin{lemma}
\label{lemma caoqu}
For each very ample line bundle $L$ such that $c_1(L)\cdot c_3(X)\neq 0$, there exist unique orientations $o_n(L)$ on $\Hilb$ such that Conjecture \ref{conjecture CK} holds for $L$.
\end{lemma}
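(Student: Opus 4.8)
The plan is to deduce \emph{existence} of the $o_n(L)$ from Bertini's theorem together with the result of Cao--Qu, and \emph{uniqueness} from the connectedness of $\Hilb$, the non-vanishing of the degree-one invariant, and the compatibility of orientations under direct sums.

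First I would reduce to the case that $L$ is very ample. By Grothendieck--Riemann--Roch, $\textnormal{ch}(L^{[n]}) = \pi_{2\,*}\big(\textnormal{ch}(\mathcal{F}_n)\cdot \pi_X^*(e^{c_1(L)}\textnormal{Td}(X))\big)$, so the invariant $I_n(L)=\int_{[\Hilb]^{\textnormal{vir}}}c_n(L^{[n]})$ depends only on $c_1(L)$; since $B_2$ was chosen to consist of first Chern classes of very ample line bundles, we may thus assume $L$ is very ample. By Bertini a general $D\in |L|$ is a smooth divisor, and it is connected because it is ample on the connected smooth projective fourfold $X$, so $h^0(\mathcal{O}_D)=1$. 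As $D\cdot c_3(X)=c_1(L)\cdot c_3(X)\neq 0$ by hypothesis, Cao--Qu \cite[Thm.~1.2]{CaoQu} provide, for each $n\ge 1$, an orientation on $\Hilb$ --- the one pulled back from the Behrend--Fantechi canonical orientation of $\textnormal{Hilb}^n(D)$ along the virtual pullback of \cite[Prop.~1.6]{CaoQu} with the $+$ sign --- for which $I(L;q)=M(-q)^{c_1(L)\cdot c_3(X)}$. Declaring $o_n(L)$ to be these orientations establishes existence.

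For uniqueness I would use that $\Hilb$ is connected, so $H^0(\Hilb;\Z_2)=\Z_2$ and the Oh--Thomas obstruction theory of $\Hilb$ carries exactly two orientations, related by an overall sign; passing between them reverses the sign of $[\Hilb]^{\textnormal{vir}}\in H_{2n}(\Hilb)$, hence of $I_n(L)$. Therefore, in any degree $n$ for which $[q^n]M(-q)^{c_1(L)\cdot c_3(X)}\neq 0$, the orientation making Conjecture~\ref{conjecture CK} hold is unique; this already covers $n=1$, where $\textnormal{Hilb}^1(X)\cong X$ and \eqref{Mpclass} give $I_1(L)=\pm\int_X c_1(L)\cup c_3(X)$ while the required value is $-c_1(L)\cdot c_3(X)\neq 0$. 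To rigidify the choice in all remaining degrees simultaneously I would take the $o_n(L)$ to be the pullbacks along $\iota_n:\Hilb\hookrightarrow\mathcal{N}_0$ of a single orientation of $\mathcal{N}_0$ --- equivalently, of a sum-compatible orientation of $\mathcal{M}_X$ in the sense of Theorem~\ref{theorem CGJ}, normalized canonically at $\llbracket\mathcal{O}_X\rrbracket$. Such a family is determined by its value at the generator $p$ (Definition~\ref{definition canonicalor}), and that value is fixed by the $n=1$ constraint; since the Cao--Qu orientations assemble into such a coherent family, $\{o_n(L)\}$ is unique.

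The hard part is precisely this coherence step. The per-degree sign argument by itself only gives uniqueness in the degrees where the coefficient of $M(-q)^{c_1(L)\cdot c_3(X)}$ is non-zero, and these coefficients really can vanish --- for instance the $q^4$-coefficient of $M(-q)^{-1}$ is $0$ --- so one genuinely has to organise the $o_n(L)$ into one sum-compatible system and transport the rigidity at $n=1$ to all $n$ through Theorem~\ref{theorem CGJ}. A secondary, more bookkeeping-type point is checking that the orientation Cao--Qu construct by virtual pullback from $\textnormal{Hilb}^n(D)$ is indeed the restriction of such a sum-compatible orientation of $\mathcal{M}_X$; this uses \cite[Prop.~1.6]{CaoQu} together with the behaviour of Behrend--Fantechi orientations under passage to a smooth ample divisor, and is exactly what makes the later identification of the $o_n(L)$ with the point-canonical orientations legitimate.
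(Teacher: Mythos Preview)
Your existence argument is essentially the paper's: since $B_2$ was chosen to consist of first Chern classes of very ample line bundles, Bertini gives a smooth connected divisor $D\in|L|$, and then Cao--Qu \cite[Thm.~1.2]{CaoQu} supplies the orientations. The paper states this in two lines and invokes nothing further.

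For uniqueness the paper's argument is much shorter than yours: it simply says that $\Hilb$ is connected for all $n$ (citing Hartshorne \cite{Har66}), so there are exactly two orientations and they differ by a global sign on $[\Hilb]^{\textnormal{vir}}$; hence $I_n(L)$ is determined up to sign, and the orientation is forced. You are right that this per-degree argument only pins down $o_n(L)$ when $[q^n]M(-q)^{c_1(L)\cdot c_3(X)}\neq 0$, and that this coefficient can vanish. The paper does not address this; it takes the connectedness-plus-sign-flip argument as sufficient and moves on. In practice the lemma is only used as input to Theorem~\ref{theorem workhorse}, whose induction uses the sign-flip argument degree by degree together with wall-crossing, so the imprecision is harmless for the paper's purposes.

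Your attempt to close the gap by forcing the $o_n(L)$ into a single sum-compatible family via Theorem~\ref{theorem CGJ} goes well beyond what the lemma claims and what the paper proves here. The statement asks for orientations on each $\Hilb$ separately, with no coherence condition, so imposing sum-compatibility changes the problem. Moreover, your assertion that ``the Cao--Qu orientations assemble into such a coherent family'' is exactly the content established \emph{later} (Theorem~\ref{theorem workhorse}, part~i) by comparing with point-canonical orientations through wall-crossing; it is not available as an input to this lemma. So your ``hard part'' is real, but it is not part of the proof of this lemma --- it is the subject of the next theorem.
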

\begin{proof}
Since $\textnormal{dim}(X)>1$, Bertini's theorem \cite[Thm. 8.18]{hartshorne} tells us that there exists a smooth connected divisor $D$ such that $L=\mathcal{O}_X(D)$. The lemma then follows from 
\begin{theorem}[Park {\cite[Cor. 0.3]{Huy}, Cao--Qu \cite[Thm. 1.2]{CaoQu}}]
\label{thm:park}
Conjecture \ref{conjecture CK} holds for any $X$, $L\cong \mathcal{O}_X(D)$ for a smooth connected divisor $D$, and some choices of orientations $o_n(L)$, $n\geq 1$.
\end{theorem}
 The uniqueness of $o_n(L)$ in the case $c_1(L)\cdot c_3(X)\neq 0$ follows because changing orientations changes the sign by the connectedness of $\Hilb$ for all $n$ (see Hartshorne \cite{HarConHilb}).
\end{proof}
   Let $\textnormal{Amp}(X)\subset H^2(X)$ be the image of the ample cone under the natural map $A^1(X,\Q)\to H^2(X,\Q)$. Let us choose $B_2$ such that its elements are $c_1(L)$ for very ample line bundles $L$. This is possible: We assumed $H^2(\mathcal{O}_X)=0$ for $X$ a Calabi--Yau fourfold, so $H^2(X) =H^{1,1}(X)$. Thus every element in $H^2(X)$ is obtained as $\frac{m}{n}[D]$ for an algebraic divisor $D\subset X$ and $m,n\in \ZZ$. On the other hand $[D]+n[H]$ is very ample if $n\gg0$ and $H$ very ample so $[D]=([D]+n[H])-n[H]$, where both terms are very ample. Using the Poincaré pairing on $H^2(X)\times H^6(X)\to \Q$ we choose a basis $B_6$ of $H^6(X)$ which is dual to $B_2$. 

Let us denote $o(n)$ the orientations on $\Hilb$ induced by the point-canonical orientations. We will see that the orientations $o_n(L)=o(n)$ for all $L$ with $c_1(L)\cdot c_3(X)\neq 0$. In stating the results below, we will also use that 
$$
\Q[K^0(X)]\otimes_{\Q}\textnormal{SSym}_{\Q}[u_{v,i}, v\in B,i>0]
$$
inherits a natural algebra structure as the tensor product of a group algebra and a polynomial algebra.
  \begin{theorem}
  \label{theorem workhorse}
If Conjecture \ref{conjecture WC} holds for $X$ and $H^1(\mO_X)=0$, then the following is true:
\begin{enumerate}[label=\roman*)]
    \item For all $L$ from Lemma \ref{lemma caoqu} with $c_1(L)\cdot c_3(X)\neq 0$ the orientation $o_n(L)$ coincides with the ones obtained from the point-canonical orientations in Definition \ref{definition canonicalor}.
    \item Let  $$\mathscr{N}(q)=\sum_{n>0}e^{np}\otimes \mathscr{N}_{np}\,q^n\in \Q[K^0(X)]\otimes_{\Q}\textnormal{SSym}_{\Q}[ u_{v,i}, v\in B,i>0]\llbracket q\rrbracket $$
    be the generating series of $\mathscr{N}_{np}$.  then we can express its exponential as
\begin{equation}
\label{expNq}
     \exp\big( \mathscr{N}(q) \big)= 
     M(e^pq)^{\Big(\sum_{v\in B_6}c_3(X)_vu_{v,1}\Big)}\,,    
\end{equation}
    where $c_3(X)_v= c_3(X)\big(\textnormal{ch}^{\vee}(v^\vee)\big)$\footnote{The right hand side should be evaluated as $\exp\Big[\sum_{v\in B_6}c_3(X)_vu_{v,1}\log \Big(M(e^pq) \Big)\Big]$.}.  Equivalently, we can write this as 
    \begin{equation}
\label{Nnpform}
  \mathscr{N}_{np} = \sum_{l|n}\frac{n}{l^2}\sum_{\begin{subarray}a v\in B_6 \end{subarray}}c_3(X)_v u_{v,1}\,.  
\end{equation}
  \end{enumerate}
  \end{theorem}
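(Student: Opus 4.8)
The plan is to run the wall-crossing formula \eqref{wcfhilb2twistL} in the $L$-twisted Lie algebra $(\mathring{H}_*(\mathcal{P}_X),[-,-]^L)$, using the explicit vertex-algebra description of Proposition \ref{propmorphismL} together with the structural input from Proposition \ref{proposition 68} (so that each $\mathscr{N}_{np}$ is an unknown linear combination $\sum_{v\in B_6}a_v(n)u_{v,1}$), and then match the answer against the known result of Cao--Qu (Lemma \ref{lemma caoqu}) to pin down both the orientations and the coefficients $a_v(n)$ simultaneously. Concretely, first I would compute the iterated bracket $\bigl[\cdots[e^{(0,1)}\otimes 1,\mathscr{M}_{n_1p}]^L,\dots,\mathscr{M}_{n_kp}\bigr]^L$ appearing on the right-hand side of \eqref{wcfhilb2twistL}. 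Because $T(e^{(np,0)}\otimes 1)=e^{(np,0)}\otimes n u_{p,1}$ by Lemma \ref{lemma identities}(ii), and because the Lie bracket is taken in the quotient by the image of $T$, only the $\mathscr{N}_{np}$-part of $\mathscr{M}_{np}$ contributes; so I may replace $\mathscr{M}_{n_ip}$ by $e^{(n_ip,0)}\otimes \mathscr{N}_{n_ip}$ throughout. Then I apply the explicit field formulae \eqref{fields} (in the form specialized by Proposition \ref{propmorphismL}) to evaluate $[z^{-1}]Y^L(-,z)(-)$ step by step.

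The key computation is the structure of the bracket $[e^{(0,1)}\otimes 1, -]^L$ and of $[e^{(n_ip,0)}\otimes \mathscr{N}_{n_ip},-]^L$. Here the relevant pairings are the twisted form $\tilde\chi^L$ from \eqref{chiLtil} and the signs $\tilde\epsilon^L$ from \eqref{signstwisted}; one needs the values $\tilde\chi^L\bigl((kp,N),(lp,M)\bigr)$ and $\tilde\epsilon^L_{(kp,N),(lp,M)}$, which reduce via Lemma \ref{lemma identities}(iii) and \eqref{chiLtil} to elementary expressions in $k,l,M,N$ and in $\chi(L\cdot lp)=l\,(c_1(L)\cdot \ldots)$; the point is that $\chi(kp\cdot L)=0$ since $kp$ is zero-dimensional and $L$ a line bundle, so $c_1(L)$ enters only through the pairing of $u_{p,1}$-type generators with $B_6$-classes via the cohomology pairing $H^2\times H^6\to\Q$. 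Carrying the bracket through, the exponential generating series $\exp(\mathscr{N}(q))$ should collapse — exactly as in Joyce's and Gross's computations and as in the surface case of \cite{AJLOP,OP1} — to a product of the shape $\prod_{m\ge 1}(1-(e^pq)^m)^{-m\,(\sum_v c_3(X)_v u_{v,1})}$, i.e. $M(e^pq)^{\sum_v c_3(X)_v u_{v,1}}$, provided the coefficients $a_v(n)$ take the claimed value $\sum_{l\mid n}\frac{n}{l^2}c_3(X)_v$. I would set up the matching by pairing \eqref{wcfhilb2twistL} against the weight-zero insertion $c_n(\mathcal L)$ via \eqref{integral}, specializing to a line bundle $L$ with $c_1(L)\in B_2$ and $c_1(L)\cdot c_3(X)\ne 0$: the left-hand side becomes $I_n(L)$, the right-hand side becomes a universal polynomial in the numbers $c_3(X)_v$ paired with $c_1(L)$, i.e. a polynomial in $c_1(L)\cdot c_3(X)$ with coefficients built from the (still unknown) $a_v(n)$ and $c_1(L)\cdot c_3(X)$; comparing with $M(-q)^{c_1(L)\cdot c_3(X)}$ from Cao--Qu and using that $c_1(L)\cdot c_3(X)$ ranges over infinitely many values (so the polynomial identity forces equality of coefficients) will both force the sign bookkeeping to be consistent — giving part (i), $o_n(L)=o(n)$ — and determine the $a_v(n)$, giving \eqref{expNq} and \eqref{Nnpform}. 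The equivalence of \eqref{expNq} and \eqref{Nnpform} is the routine logarithmic identity $\log M(tq)=\sum_{n>0}\bigl(\sum_{l\mid n}\tfrac{n}{l^2}\bigr)\tfrac{t^nq^n}{n}$ read off coefficient-wise; I would just note it.

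The main obstacle I expect is the bookkeeping in the iterated bracket: tracking the powers of $z$ (coming from $z^{\tilde\chi^L}$ and from the $c_{z^{-1}}(\theta^L)$ insertion), the signs $\tilde\epsilon^L$ and the $(-1)^{a\chi(\beta,\beta)}$ prefactor in $Y^L$, and the combinatorial coefficients $\frac{(-1)^k}{k!}$, and checking that everything lands in $H_2(\mathcal P_X)$ so that only the linear term $\sum_v c_3(X)_v u_{v,1}$ survives in the exponent — higher $u_{v,i}$ and products must cancel. This is exactly the kind of cancellation that Joyce \cite{Joycehall} and Gross \cite{gross} establish in their setting, and the argument should be a direct adaptation; the new feature is only the twisting by $L$, which changes $\tilde\chi$ to $\tilde\chi^L$ and $\tilde\epsilon$ to $\tilde\epsilon^L$ in a controlled way, so I would organize the computation to isolate the $L$-dependence into a single factor $(1-\cdots)^{c_1(L)\cdot c_3(X)}$ and invoke the surface-case precedent for the rest.
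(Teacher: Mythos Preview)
Your overall strategy—compute the iterated $L$-twisted bracket, express $I_n(L)$ as an explicit expression in the unknowns $a_v(k)$ and the pairing with $c_1(L)$, then compare to Cao--Qu—is the same as the paper's. But there is a genuine gap in how you plan to resolve the orientation ambiguity, and this is exactly the delicate part of the argument.

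The problem is that Lemma~\ref{lemma caoqu} only gives, for each very ample $L$ with $c_1(L)\cdot c_3(X)\ne 0$, \emph{some} orientation $o_n(L)$ for which the Cao--Qu formula holds; a priori the sign $\epsilon_n(L)\in\{\pm 1\}$ relating $o_n(L)$ to the point-canonical $o(n)$ depends on both $n$ \emph{and} $L$. So when you write ``using that $c_1(L)\cdot c_3(X)$ ranges over infinitely many values (so the polynomial identity forces equality of coefficients)'', you are implicitly assuming $\epsilon_n(L)$ is constant in $L$, which is precisely part~(i) of the theorem. The paper resolves this by an induction on $n$: once the bracket formula \eqref{liebrackettwL} is in hand, the $k\ge 2$ terms of \eqref{wcfhilb2twistL} at step $n{+}1$ are known from the inductive hypothesis, and only the $k=1$ term (linear in $c_1(L)\cdot c_3(X)$) contains $a_v(n{+}1)$. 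If $\epsilon_{n+1}(L_1)=-1$, one derives a contradiction by substituting $L=L_1^{\otimes N}$ and comparing coefficients of powers of $N$; similarly one rules out $\epsilon_{n+1}(L_1)\ne\epsilon_{n+1}(L_2)$ using $L_1^{\otimes A}\otimes L_2^{\otimes B}$. Your proposal does not contain this step, and without it the matching is circular.

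Two smaller points. First, $\chi(kp\cdot L)=k$, not $0$ (tensoring a skyscraper by a line bundle does nothing); the $c_1(L)$-dependence enters instead via $\tilde\chi^L\bigl((mp,1),(v,0)\bigr)=\int_X c_1(L)\,\mathrm{ch}(v)$ for $v\in B_6$, which is the derivative term in \eqref{fields} acting on $\mathscr{N}_{np}$. Second, your concern about ``higher $u_{v,i}$ and products cancelling'' is misplaced here: the bracket \eqref{liebrackettwL} already outputs a scalar multiple of $e^{((m+n)p,1)}\otimes 1$, so the iterated bracket in \eqref{wcfhilb2twistL} is a product of scalars and no further cancellation is needed (that issue arises only later, in the untwisted computation of $\mathscr{H}_n$ in Theorem~\ref{theoremhilb}). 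Finally, you do not address the last piece of part~(i): showing that the $o_p$ determined by the induction is actually $o_p^{\mathrm{can}}$, which the paper does by computing $\int_{[M_p]^{\mathrm{vir}}}c_1(L)$ directly on $M_p\cong X$ and comparing.
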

  \begin{proof}
  We prove the theorem by induction on $n$. We begin by giving an explicit formula for the brackets in \eqref{wcfhilb2twistL}. Using \eqref{fields} together with Lemma \ref{lemmasigns} iii., we have:
  \begin{align*}
&Y^L(e^{(mp,1)}\otimes 1,z)(e^{(np,0)}\otimes \mathscr{N}_{np}) \\
&={\scriptstyle (-1)^ne^{\big((m+n)p,1\big)} \textnormal{exp}\bigg[\sum_{j>0}\frac{b_j+my_j}{j}z^j\bigg]\bigg[1-z^{-1}\sum_{v\in B_{\textnormal{even}}}\tilde{\chi}^L((mp,1),(v,0))\frac{d}{du_{v, 1}}\bigg]\mathscr{N}_{np}}\,,
\end{align*}
where we used that $\mathscr{N}_{np}$ is linear in $u_{v,1}$.
Using \eqref{chiLtil} together with Proposition \ref{proposition 68} we get the following after taking $[z^{-1}](-)$ of the last formula:
\begin{equation}
\label{liebrackettwL}
    [e^{(mp,1)}\otimes 1, e^{(np,0)}\otimes \mathscr{N}_{np}]^L = -(-1)^ne^{\big((m+n)p,1\big)}\otimes \sum_{v\in B_6}\int_Xc_1(L)\textnormal{ch}(v)a_{v}(n)\,.
\end{equation}
 Let $L_1$ be such that $c_1(L_1)\in B_2$ and $v_1\in B_6$ its dual. For now let us not fix the orientation $o_p = o^{\textnormal{can}}_p$, but fix  $o_{\llbracket\mathcal{O}_X \rrbracket} = o^{\textnormal{can}}_{\llbracket \mathcal{O}_X\rrbracket}$ and use the rest of Definition \ref{definition canonicalor}. 

For $n=1$
, we can choose $o_p$ so that $o(1) = o_1(L_1)$ , then  $I_{1}(L_1) =  -I(L_1)$. Using \eqref{liebrackettwL} together with \eqref{wcfhilbtwistL} and \eqref{coefficient}, we get
$
\int_Xc_1(L_1)\textnormal{ch}(v_1)a_{v_1}(1) = I(L_1)\,.
$
Therefore $a_{v_1}(1)= c_3(X)_{v_1}$. Suppose that $L_2$ is a line bundle with $c_1(L_2)\in B_2$ different from $c_1(L_1)$ and $I(L_i) \neq 0$ for $i=1,2$. If $o_1(L_2) = -o_1(L_1)$, then $a_{v_2}(1)= -c_3(X)_{v_2}$ for $v_2\in B_6$ the dual of $c_1(L_2)$ and this contradicts Lemma \ref{lemma caoqu}: For any $A,B\in \ZZ_{>0}$ we know that $L=L_1^{\otimes A}\otimes L_2^{\otimes B}$ is very ample, then  from \eqref{liebrackettwL} we get
$$
-\Big[AI(L_1)+BI(L_2) \Big] = I_{1}(L) = - AI(L_1)+ BI(L_2)
$$
which can not be true for all $A,B$. For any $v\in B_6$, we then have $a_{v_1}(1)= c_3(X)_{v_1}$.

This shows i. and ii. for $n=1$ except $o_p = o^{\textnormal{can}}_p$. Let us now assume that i. and ii. hold for all $1\leq k\leq n$ except $o_p = o^{\textnormal{can}}_p$. If $o_{n+1}(L_1) = -o(n+1)$ then $I_{n+1}(L_1) = -[q^{n+1}]\{M(-q)^{c_1(L_1)\cdot c_3(X)}\}$\footnote{Note that, we assume that $I_{n+1}(L_1)\neq 0$. We can do this because otherwise, we would obtain a contradiction using $I(L_1)\neq 0$.}. Using the assumption together with \eqref{liebrackettwL} we get using the notation of \eqref{coefficient}
\begin{align*}
&\sum_{\begin{subarray}a
k>1,n_1,\ldots,n_k>0\\
n_1+\cdots+n_k = n+1
\end{subarray}}\frac{(-1)^k}{k!}\big[\big[\ldots \big[e^{(0,1)}\otimes 1,\mathscr{M}_{n_1p}\big]^{L_1},\ldots \big]^{L_1},\mathscr{M}_{n_kp}\big]^{L_1}
=  \sum_{k>1}d_k(n+1)I(L_1)^k\,. 
\end{align*}
Subtracting this from $I_{n+1}(L_1)$ and using \eqref{coefficient} expresses $a_{v}(n+1)$ as
$$
a_{v}(n+1) =-d_1(n+1)I(L_1)-2d_2(n+1)I(L_1)^2 - \cdots -2d_{n+1}(n+1)I(L_1)^{n+1}\,,
\quad 
$$
Let $L=L_1^{\otimes N}$ for $N>0$, then wall-crossing and using \eqref{liebrackettwL} with \eqref{wcfhilb2twistL} gives
\begin{align*}
   I_{n+1}(L) &=\sum^{n+1}_{k=2}d_k(n+1)\big(NI(L_1)\big)^k-d_1(n+1)NI(L_1) - 2N\sum^{n+1}_{k=2}d_k(n+1) I(L_1)^k \,.
\end{align*}
By comparing the coefficients of different powers of $N$ with $\pm I_{n+1}(L)$, we obtain a contradiction.  This also shows $o_{n+1}(L_1)=o(n+1)=o_{n+1}(L_2)$ for any $L_2$ with $I(L_2)\neq 0$. Assuming ii. holds for coefficients $k<n$ and using \eqref{wcfhilbtwistL},\eqref{coefficient} gives us 
\begin{align*}
  (-1)^{n+1} a_{v_1}(n+1) &= \sum_{k\geq 1}d_k(n+1)I(L_1)^k-\sum_{k>1}d_k(n+1)I(L_1)^k \\
  &= (-1)^{n+1}\sum_{l|n+1}\frac{n+1}{l^2}c_3(X)_{v_1}\,.
\end{align*}
This holds for all $L_i\in B_2$ with their duals $v_i\in B_6$, so part ii. follows as we obtain \eqref{Nnpform}.\\

To finish the proof of part i., we only need to show that $o_p = o^{\textnormal{can}}_p$. For this, choose $L$ such that $I(L)\neq 0$. Using Lemma \ref{lemma chern}, we see $c_1(\mathfrak{L})= \sum_{v\in B_{\textnormal{even}}}\chi(L^\vee,v)\mu_{v,1}$. Using $X=M_p$ and \eqref{Mpclass}, we see that $\int_{[M_p]^{\textnormal{vir}}}c_1(L) =\pm I(L)$\,.  By \eqref{Nnpdef} this is equal to 
$$
\int_{\mathscr{M}_p}\sum_{v\in B_{\textnormal{even}}}\chi(L^\vee,v)\mu_{v,1}\,,\qquad \mathscr{M}_p = \sum_{v\in B_6}c_3(X)_vu_{v,1} + c_pu_{p,1}\,,
$$
which gives $I(L) + c_p$. As $c_p$ does not depend on $L$ it has to be 0. Therefore for the invariants to coincide, we need $o_p = o^{\textnormal{can}}_p$\,.
  \end{proof}

 \begin{remark}
Changing orientation $o_p\mapsto -o_p$ changes $o_{np}\mapsto (-1)^no_{np}$, so if the classes $[\mathcal{M}_{np}]^{\inv}$ were constructed using Borisov--Joyce \cite{BJ} or Oh--Thomas \cite{OT}, then we would get
$$
  \mathscr{N}_{np} = (-1)^n\sum_{l|n}\frac{n}{l^2}\sum_{\begin{subarray}a v\in \Lambda \end{subarray}}c_3(X)_vu_{v,1}
$$
However, as these are obtained indirectly through wall-crossing, we should check this is satisfied. Choosing $o_p$ such that $o(1)=-o_1(L_1)$ in the proof of Theorem \ref{theorem workhorse} does indeed give this formula. Similarly, switching to $-o^{\textnormal{can}}_{\llbracket\mathcal{O}_X\rrbracket}$ does not change the result as it should not. 
 \end{remark}
 
 The following is shown just for completeness, as we will prove a much more general statement for tautological insertions using any K-theory class in §\ref{sectalltaut}\\
 
 \textit{Proof of Theorem \ref{theorem bojko}}
 Using \eqref{liebrackettwL}, \eqref{Nnpform} and \eqref{wcfhilb2twistL} we obtain for any line bundle $L$ that \eqref{coefficient} holds. 
\qed

\section{Virtual classes of Hilbert schemes of points and invariants}
\label{VFC section}
In this section, we use the results of Theorem \ref{theorem workhorse} (relying on Conjecture \ref{conjecture WC}) to compute $\mathscr{H}_n$. Knowing $\mathscr{N}_{np}$ and the explicit vertex algebra structure on $\hat{H}_*(\mP_X)$ described in §\ref{sec:ecplicitVApa}, we may use the wall-crossing formula \eqref{wcfhilb2} to do so. The classes $\mathscr{H}_n\in \check{H}_{0}(\mathcal{P}_X)$ that we compute could be considered as the final product in describing the topological data of virtual fundamental classes of Hilbert schemes of points, but compared to integrals of appropriate insertions, they lack enlightening structure. This is why we explicitly determine the values of integrals of multiplicative genera of tautological classes paired with multiplicative genera of virtual tangent bundles. This result follows from a more general one that computes integrals of cohomology classes of the form $\textnormal{exp}\big[F(\mu_{v,k})\Big]$, where $F(\mu_{v,k})$ is linear in $\mu_{v,k}$, so we can use Lemma \ref{linear exp}.

Section \ref{sec:invariants} will focus on particular choices of the multiplicative genera, and it will rely on the results presented here.
\subsection{Virtual fundamental cycle of Hilbert schemes}
\label{sec:vfcHn}
The following is the main technical result of this section. It computes the generating series of classes $\mathscr{H}_n$ by using \eqref{wcfhilb2} and the previously obtained description of $\mathscr{M}_{np}$. 
  \begin{theorem}
  \label{theoremhilb}
  If Conjecture \ref{conjecture WC} holds for $X$ and $H^1(\mO_X)=0$, then the generating series
  $
  \mathscr{H}(q) = 1+\sum_{n>0}\frac{\mathscr{H}_n}{e^{(np,1)}}q^n
  $ for point-canonical orientations
   is given by
\begin{equation}
\label{Hq}
\mathscr{H}(q)= \exp\bigg[\sum_{n>0}\sum_{l|n,v\in B_6}(-1)^n\frac{n}{l^2}c_3(X)_v [z^n]\Big\{U_v(z)\textnormal{exp}\Big[\sum_{j>0}\frac{ny_j}{j}z^j\Big]\Big\}q^n\bigg]\,,
\end{equation}
where we fix the notation $y_j=u_{p,j}$ and  $U_v(z)=\sum_{k>0}u_{v,k}z^k$. 
  \end{theorem}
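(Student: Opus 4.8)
The plan is to evaluate the iterated Lie brackets in \eqref{wcfhilb2} directly inside the explicit lattice–vertex–algebra model of Proposition \ref{theorem VA}, via the field formulas \eqref{fields} and the orientation signs of Lemma \ref{lemmasigns}. First I would collapse the combinatorial sum to an exponential. By \eqref{Nnpdef} the class $\mathscr{M}_{np}$ equals $\mathbf{M}_n:=e^{(np,0)}\otimes\mathscr{N}_{np}$ modulo $T\bigl(\hat H_*(\mathcal{P}_X)\bigr)$, and $T$-exact classes are killed by every Lie bracket (since $[z^{-1}]Y(Tu,z)v=[z^{-1}]\partial_zY(u,z)v=0$, and likewise on the right by graded antisymmetry), so in \eqref{wcfhilb2} we may replace $\mathscr{M}_{np}$ by $\mathbf{M}_n$. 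The classes $e^{(0,1)}\otimes1$ and the $\mathbf{M}_n$ all lie in the even part of $\hat H_*$ (the generators $u_{v,1}$ with $v\in B_6$ have even degree), so $[x,y]=-[y,x]$ and hence $[[\cdots[e^{(0,1)}\otimes1,\mathbf{M}_{n_1}],\cdots],\mathbf{M}_{n_k}]=(-1)^k\textnormal{ad}_{\mathbf{M}_{n_k}}\!\cdots\textnormal{ad}_{\mathbf{M}_{n_1}}(e^{(0,1)}\otimes1)$, where $\textnormal{ad}_c(x):=[c,x]$. Summing over compositions of $n$ and over $k$, the $(-1)^k$'s cancel those in \eqref{wcfhilb2} and one obtains
\[\sum_{n\ge0}\mathscr{H}_nq^n=\exp\big(\textnormal{ad}_{\mathscr{N}(q)}\big)\big(e^{(0,1)}\otimes1\big),\qquad \mathscr{N}(q):=\sum_{n>0}\mathbf{M}_nq^n.\]

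The core of the proof is to show that $\textnormal{ad}_{\mathscr{N}(q)}$ acts, on the subspace that matters, simply as multiplication by a power series. Write $N_n=\sum_{l\mid n}n/l^2$, so $\mathscr{N}_{np}=N_n\sum_{v\in B_6}c_3(X)_vu_{v,1}$ by Theorem \ref{theorem workhorse}; the crucial point is that this is \emph{linear} in the generators $u_{v,1}$, $v\in B_6$. Together with the Calabi–Yau vanishings $H^{>8}(X)=0$, $c_1(X)=0$ (hence $\chi(\mathcal{O}_X,v)=0$ for $v\in B_6$) and $\chi(\mathcal{O}_x,\mathcal{O}_x)=0$ for the point class $p$, the formulas \eqref{fields} show: $Y(e^{(np,0)}\otimes1,z)$ has creation part $\exp[\sum_{j>0}\frac{ny_j}{j}z^j]$ (only $y_j=u_{p,j}$ occurs, since $np$ has no component along $B_6$, the lower classes, or the auxiliary direction $(0,1)$), lattice exponent $z^{\tilde\chi((np,0),(mp,1))}=z^{-n}$, sign $\tilde\epsilon_{(np,0),(mp,1)}=(-1)^n$, and contraction part differentiating only the variables $b_j=u_{(0,1),j}$ and $u_{\llbracket\mathcal{O}_X\rrbracket,j}$; and the $(-1)$-product formula gives $Y(\mathbf{M}_n,z)=N_n\sum_{v\in B_6}c_3(X)_vz^{-1}U_v(z)\cdot Y(e^{(np,0)}\otimes1,z)$ with the factor $z^{-1}U_v(z)$ acting purely by multiplication (its own contraction part vanishes because $\chi(v,w)=0$ unless $\textnormal{ch}(w)$ meets $H^2(X)$, which the $B_6$- and $y$-variables never do). Hence the subspace $W$ spanned by $e^{(mp,1)}\otimes P$ with $P$ a polynomial in $\{u_{v,j}:v\in B_6\}\cup\{y_j\}$ contains $e^{(0,1)}\otimes1$, is preserved by $\textnormal{ad}_{\mathscr{N}(q)}$, and on it every contraction term acts trivially, giving
\[[\mathbf{M}_n,\,e^{(mp,1)}\otimes P]=(-1)^nN_n\sum_{v\in B_6}c_3(X)_v\,[z^n]\Big\{U_v(z)\exp\Big[\textstyle\sum_{j>0}\frac{ny_j}{j}z^j\Big]\Big\}\cdot e^{((m+n)p,1)}\otimes P,\]
using that $[z^n]$ of a $z$-series times the $z$-independent $P$ is multiplicative. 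Identifying $e^{(mp,1)}$ with $q^m$ for bookkeeping, this means exactly that $\textnormal{ad}_{\mathscr{N}(q)}$ is multiplication by $\mathscr{P}(q):=\sum_{n>0}(-1)^nN_n\sum_{v\in B_6}c_3(X)_v[z^n]\{U_v(z)\exp[\sum_{j>0}\frac{ny_j}{j}z^j]\}q^n$.

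Since $\textnormal{SSym}$ is commutative in even degree, multiplication operators commute, so $\exp(\textnormal{ad}_{\mathscr{N}(q)})(e^{(0,1)}\otimes1)=\exp(\mathscr{P}(q))\cdot(e^{(0,1)}\otimes1)$; dividing by $e^{(np,1)}$ gives $\mathscr{H}(q)=\exp(\mathscr{P}(q))$, which is \eqref{Hq}. The step needing the most care is the middle one: one must check that $W$ is genuinely invariant and that \emph{no} contraction term ever contributes — equivalently, that bracketing with $\mathbf{M}_n$ never produces a variable $b_j$ or a $B_2$-variable — which relies precisely on the linearity of $\mathscr{N}_{np}$ in the $B_6$-generators together with the three dimension vanishings above. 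It is worth noting that the resulting $b_j$-independence is exactly as expected, since the universal framing line bundle on $\Hilb$ is trivial and so $\mathscr{H}_n$ cannot depend on the variables $b_j$ coming from the rank of $BU\times\Z$. One should also keep careful track of the signs $\tilde\epsilon$ of Lemma \ref{lemmasigns} at each use of \eqref{fields}, and of the fact (used in the first paragraph) that the reordering of the brackets is legitimate only because the relevant classes are even and $T$-exact terms drop out of brackets.
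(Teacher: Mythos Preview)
Your proof is correct and follows essentially the same approach as the paper's own argument. Both use the reconstruction lemma to decompose $Y(e^{(np,0)}\otimes u_{v,1},z)$ as a normal-ordered product, verify that all contraction terms vanish on the relevant subspace (using $\chi(v,w)=0$ for $v,w\in B_{6,8}$, $\tilde\chi((v,0),(mp,1))=0$ from $\textnormal{td}_1(X)=0$, and the absence of $b_j$, $u_{\llbracket\mathcal{O}_X\rrbracket,j}$ variables), track the sign $(-1)^n$ via Lemma~\ref{lemmasigns}, and reorder the iterated brackets by antisymmetry to absorb the $(-1)^k$; the only difference is that the paper presents the factorisation of the iterated brackets as an explicit induction on $r$, while you phrase the same computation as ``$\textnormal{ad}_{\mathscr{N}(q)}$ acts by multiplication on $W$'' and then exponentiate.
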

  \begin{remark}
    \label{remark K1}
  \label{remark nob}
  Notice that the only $u_{\sigma,k}$ that appear in \eqref{Hq} are for $\sigma = (v,0)$, $v\in B_6\cup B_8=:B_{6,8}$. We may therefore assume $K^1(X)=0$ from now on when $\eqref{eqAsH}$ holds (see also Remark \ref{remark H} for the general case).  As there is no contribution of $b_j$, this is the unique representation of $\mathscr{H}_n$ without terms with $b_j$ as can be seen from Lemma \ref{lemma identities}. Using \eqref{inlift}, we have a class $\tilde{\mathscr{H}}_n = \Omega_*\circ \iota_{n\,*}\Big(\big[\Hilb\big]^\textnormal{vir}\Big)$ which satisfies $\Pi_{0}(\tilde{\mathscr{H}}_n) = \mathscr{H}_n$. There will also be no terms containing $b_j$ in $\tilde{\mathscr{H}}_n$, thus $[q^n]\big(\mathscr{H}(q)\big)$ describe this canonical lift.
  \end{remark}
  \begin{proof}
  We begin by using the reconstruction lemma for vertex algebras
 to write the field $ Y(e^{(np,0)}\otimes u_{v,1})=: Y(u_{v,1},z) Y(e^{(np,0)}\otimes 1,z) :$, where $:-:$ denotes the normal ordered product for fields of vertex algebras (see \cite[§3.8]{LLVA}, \cite[Def. 2.2.2]{BZVA}, \cite[§3.1]{KacVA}) which acts on $e^{(mp,1)}\otimes U$ as 
 \begin{align*}
 \label{acting}
 &: Y(u_{v,1},z) Y(e^{(np,0)}\otimes 1,z) : (e^{(mp,1)}\otimes U)=\\
     &(-1)^nz^{-n}e^{((n+m)p,1)}
   \otimes\bigg\{\big(\sum_{k>0} u_{v,k}z^{k-1}\big)\textnormal{exp}\Big[\sum_{i>0}\frac{ny_i}{i}z^{i}\Big]\textnormal{exp}\Big[-n\sum_{i>0}\frac{d}{du_{\llbracket\mathcal{O}_X\rrbracket,i}}z^{-i}\Big]\\
   &\textnormal{exp}\Big[n\sum_{i>0}\frac{d}{db_{i}}z^{-i}\Big]
   +\textnormal{exp}\Big[\sum_{i>0}\frac{ny_i}{i}z^{i}\Big]\textnormal{exp}\Big[-n\sum_{i>0}\frac{d}{du_{\llbracket\mathcal{O}_X\rrbracket,i}}z^{-i}\Big]\\
   &\textnormal{exp}\Big[n\sum_{i>0}\frac{d}{db_{i}}z^{i}\Big]\Big[\tilde{\chi}((v,0),(mp,1))z^{-1}
   +\sum_{k>0,\sigma\in \mathbb{B}}k\tilde{\chi}((v,0),\sigma)\frac{d}{du_{\sigma,k}}z^{-k-1}\Big]\bigg\}U
   \numberthis{}
 \end{align*}
Where we used \eqref{ptop} to get $\tilde{\chi}((np,0),\sigma) = n$ if $\sigma = (\llbracket \mathcal{O}_X\rrbracket,0)$, $\tilde{\chi}((np,0),\sigma)=-n$ if $\sigma =(0,1)$ and 0 otherwise together with part iii. of Lemma \ref{lemmasigns}.

We claim that for any $r>0$, $n_1,\ldots, n_r>0$ and $\sum_{i=1}^rn_i=n$, we have the following :
\begin{align*}
 &\big[e^{(n_1p,0)}\otimes \mathscr{N}(n_1p), \big[e^{(n_2p,0)}\otimes \mathscr{N}(n_2p),\big[\ldots ,\big[e^{(n_rp,0)}\otimes \mathscr{N}(n_rp),e^{(0,1)}\otimes 1\big]\ldots\big]\big]\\& = e^{(np,1)}\otimes\prod^r_{i=1} \mathscr{H}_{n_i}\,   
\end{align*}
in $\check{H}(\mathcal{P}_X)$, where 
$$
\mathscr{H}_n= \sum_{l|n,v\in B_6}(-1)^n\frac{n}{l^2}c_3(X)_v [z^n]\Big\{U_v(z)\textnormal{exp}\Big[\sum_{j>0}\frac{ny_j}{j}z^j\Big]\Big\}\,.
$$
We show this by induction on $r$, where for $r=0$ it is obvious. Assuming that the statement holds for $r-1$, we need to compute
\begin{align*}
  &[e^{(n_1p,0)}\otimes \mathscr{N}(n_1p), e^{((n-n_1)p,1)}\otimes \prod_{i=2}^r\mathscr{H}_{n_i}]\\
  &=[z^{-1}]\Big\{:Y(u_{v,1},z)Y(e^{(n_1p,0)}\otimes 1,z):e^{((n-n_1)p,1)}\otimes \prod_{i=2}^r\mathscr{H}_{n_i}\Big\}
\end{align*}
 From Remark \ref{remark nob}, we see that we can replace all $\textnormal{exp}\Big[n\sum_{i>0}\frac{d}{db_i}\Big]$ and $\textnormal{exp}\Big[-n\sum\frac{d}{du_{\llbracket\mathcal{O}_X\rrbracket,i}}\Big]$ by 1 in \eqref{acting}. The second term under the curly bracket in \eqref{acting} vanishes, because it contains $\tilde{\chi}((v,0),(mp,1))z^{-1} = \chi(v,mp)z^{-1} -\chi(v)z^{-1}$
\sloppy where the result is zero for degree reasons and because $\textnormal{td}_1(X)=0$. In the term with 
$\sum_{k>0,\sigma\in \mathbb{B}}k\tilde{\chi}((v,0),\sigma)\frac{d}{du_{\sigma,k}}z^{-k-1}$ the sum can be taken over all $\sigma=(w,0)$, $w\in B_{6,8}$ by Remark \ref{remark nob} so it vanishes
because $\chi(v,w) = 0$ whenever $v,w\in B_{6,8}$. 
We are therefore left with 
\begin{align*}
  &[z^{-1}]\Big\{(-1)^{n_1}z^{-n_1}e^{(np,1)}
   \otimes\big(\sum_{k>0} u_{v,k}z^{k-1}\big)\textnormal{exp}\Big[\sum_{i>0}\frac{n_1y_i}{i}z^{i}\Big]\Big\}\prod_{i=2}^r\mathscr{H}_{n_i}\\
   &= (-1)^ne^{(np,1)}[z^{n_1}]\Big\{U_v(z)\textnormal{exp}\Big[\sum_{j>0}\frac{n_1y_j}{j}z^j\Big]\Big\} \prod_{i=2}^r\mathscr{H}_{n_i}\,.  
\end{align*}
Multiplying with the coefficients $\sum_{l|n}\frac{n}{l^2}\sum_{v\in B_6}c_3(X)_v$ of $\mu_{v,1}$ in \eqref{Nnpform} and summing over all $v\in B_6$, we obtain the result as we are able to rewrite \eqref{wcfhilb2} by reordering the terms keeping track of the signs as
$$
\mathscr{H}_n =  \sum_{\begin{subarray}a
k\geq 1,n_1,\ldots,n_k\\
\sum n_i = n
\end{subarray}}\frac{1}{k!}\big[e^{(n_1p,0)}\otimes \mathscr{N}(n_1p) ,\big[\ldots,\big[e^{(n_kp,0)}\otimes \mathscr{N}(n_kp),e^{(0,1)}\otimes 1\big]\ldots\big]\big]\,.
$$
 
  \end{proof}
  
  We now describe a general formula for integrating topological insertions over $\mathscr{H}_n$ which will be applied in the following two examples. Note that we no longer need \eqref{eqAsH} to state it.
 \begin{proposition}
 \label{theorem big}
Let $X$ be a Calabi--Yau fourfold satisfying Conjecture \ref{conjecture WC} (and $H^2(\mO_X) = 0$), and
 let $\mathscr{A}\subset K^0(X)\backslash \{0\}$ be a finite subset. For each $\alpha\in \mathscr{A}$, take a generating series
 $$
 A_\alpha(z,\mathfrak{p}) = \sum_{n\geq 0}a_\alpha(n,\mathfrak{p})\frac{z^n}{n!}\,,
 $$
 where $\mathfrak{p}=(p_1,p_2,\ldots)$ are additional variables and $a_\alpha(n,\mathfrak{p})\in \Q\llbracket\mathfrak{p} \rrbracket$, s.t. $a_\alpha(0,0)=0$. \sloppy If $\mathcal{I}_n\in H^*\big(\Hilb\big)$ is obtained as $\mathcal{I}_n = (\Omega\circ \iota_n)^*(\mathcal{T})$ for a weight 0 insertion $\mathcal{T}\in H^*(\mathcal{P}_X)$ such that 
 $$
 \int_{\mathscr{H}_n}\mathcal{T} = \int_{\mathscr{H}_n}e^{(np,1)}\otimes \textnormal{exp}\Big[\sum_{\alpha\in \mathscr{A}}\sum_{\begin{subarray}a k\geq 0\\
 v\in B_{6,8}\end{subarray}}a_\alpha(k,\mathfrak{p})\chi(\alpha^\vee,v)\mu_{v,k}\Big]\,,
 $$
 then the generating series $\textnormal{Inv}(q) =1+\sum_{n>0}\int_{[\Hilb]^{\textnormal{vir}}}\mathcal{I}_nq^n$ is given by
\begin{align*}
\label{explinearhilbform}
  &\prod_{\alpha\in \mathscr{A}}\textnormal{exp}\Big\{\sum_{n>0}(-1)^n\sum_{l|n}\frac{n}{l^2}[z^n]\Big[z\frac{d}{dz}\big(A_{\alpha}(z,\mathfrak{p})-A_\alpha(0,\mathfrak{p})\big)\\
 &\textnormal{exp}\Big(\sum_{\beta\in \mathscr{A}}\textnormal{rk}(\beta)A_\beta(z,\mathfrak{p})\Big)^n\Big]q^n\Big\}^{c_1(\alpha)\cdot c_3(X)}\,. 
 \numberthis
\end{align*}
 \end{proposition}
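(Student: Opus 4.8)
The plan is to substitute the explicit formula \eqref{Hq} for $\mathscr{H}(q)$ into the integral $\int_{\mathscr{H}_n}\mathcal{T}$ and evaluate using Lemma \ref{linear exp}. First I would recall from Theorem \ref{theoremhilb} that
$$
\mathscr{H}(q) = \exp\Big[\sum_{n>0} e^{(np,1)}\otimes \mathcal{H}_n\, q^n\Big], \qquad \mathcal{H}_n = \sum_{l\mid n, v\in B_6}(-1)^n\frac{n}{l^2}c_3(X)_v\,[z^n]\Big\{U_v(z)\exp\Big[\sum_{j>0}\tfrac{ny_j}{j}z^j\Big]\Big\},
$$
with $y_j = u_{p,j}$ and $U_v(z) = \sum_{k>0}u_{v,k}z^k$. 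So $\mathscr{H}_n/e^{(np,1)}$ is a polynomial in the variables $u_{v,k}$, $v\in B_{6,8}$, and $y_j$, and pairing against the cohomology class $\exp[\sum_\alpha \sum_{k,v} a_\alpha(k,\mathfrak p)\chi(\alpha^\vee,v)\mu_{v,k}]$ via the normalization \eqref{normalization} amounts, by Lemma \ref{linear exp}, to substituting $u_{v,k}\mapsto \sum_{\alpha}a_\alpha(k,\mathfrak p)\chi(\alpha^\vee,v)/(k-1)!$ (and the $y_j$ variables, which come from $\llbracket\mathcal{O}_X\rrbracket$-insertions with coefficient $a_{\llbracket\mathcal O_X\rrbracket}$, analogously — but note the insertion as stated ranges only over $\mathscr A\subset K^0(X)\setminus\{0\}$, so one must keep track of whether $\llbracket\mathcal{O}_X\rrbracket\in\mathscr{A}$; in the applications the $y_j$ appear through $\textnormal{exp}[\sum_j \tfrac{ny_j}{j}z^j]$ and get replaced by the corresponding values, producing the factor $\exp(\sum_\alpha \textnormal{rk}(\alpha)A_\alpha(z,\mathfrak p))^n$ after using $\chi(\alpha^\vee,\llbracket\mathcal{O}_X\rrbracket)$ relating to $\textnormal{rk}(\alpha)$).

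Concretely, I would proceed as follows. Step 1: expand $\exp[\sum_{n>0}e^{(np,1)}\otimes\mathcal{H}_n q^n]$ and note that, because each $\mathcal H_n$ is \emph{linear} in the $u_{v,k}$ with $v\in B_6$ (the $U_v(z)$ factor) times a power series in the $y_j$, integrating a product $\prod_i \mathcal H_{n_i}$ against $\exp[\text{linear}]$ factorizes: each linear factor $U_v(z_i)$ contributes, after the substitution $u_{v,k}\mapsto \sum_\alpha a_\alpha(k,\mathfrak p)\chi(\alpha^\vee,v)/(k-1)!$ and using $c_3(X)_v = c_3(X)(\textnormal{ch}(v^\vee))$ together with $\sum_{v\in B_6}\chi(\alpha^\vee,v)\,c_3(X)_v = c_1(\alpha)\cdot c_3(X)$ (Poincaré duality of the chosen bases $B_2, B_6$ and $\textnormal{td}_1(X)=0$ so that $\chi(\alpha^\vee,v) = \int_X\textnormal{ch}(\alpha)\textnormal{ch}(v)$ in the relevant degree), a factor $c_1(\alpha)\cdot c_3(X)$ times $z_i\frac{d}{dz_i}(A_\alpha(z_i,\mathfrak p)-A_\alpha(0,\mathfrak p))$; here the $z\frac{d}{dz}$ and the shift by $A_\alpha(0,\mathfrak p)$ come from $U_v(z) = \sum_{k>0}u_{v,k}z^k$ having no constant term and the extra $k$ from $\textnormal{ch}_k$ versus the $1/(k-1)!$ normalization giving $\sum_k a_\alpha(k,\mathfrak p)\frac{z^k}{(k-1)!} = z\frac{d}{dz}(A_\alpha(z,\mathfrak p)-A_\alpha(0,\mathfrak p))$. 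Step 2: the $y_j$-dependent factor $\exp[\sum_j\tfrac{n_i y_j}{j}z_i^j]$ becomes, after substitution, $\exp(\sum_\alpha \textnormal{rk}(\alpha)A_\alpha(z_i,\mathfrak p))^{n_i}$. Step 3: assemble: $\mathscr H_n/e^{(np,1)} = \sum_{k,\,n_1+\cdots+n_k=n}\frac1{k!}\prod_i\mathcal H_{n_i}$, so
$$
\textnormal{Inv}(q) = 1+\sum_{n>0}q^n\sum_{k\geq 1}\frac1{k!}\sum_{\substack{n_1,\dots,n_k>0\\ \sum n_i=n}}\prod_{i=1}^k\Big((-1)^{n_i}\sum_{l\mid n_i}\tfrac{n_i}{l^2}\sum_{\alpha\in\mathscr A}(c_1(\alpha)\cdot c_3(X))\,[z^{n_i}]\big\{z\tfrac{d}{dz}(A_\alpha-A_\alpha(0))\,\exp(\textstyle\sum_\beta\textnormal{rk}(\beta)A_\beta)^{n_i}\big\}\Big),
$$
which is exactly $\exp$ of the sum over $n_i$, i.e.\ the product over $\alpha\in\mathscr{A}$ of $\exp\{\cdots\}^{c_1(\alpha)\cdot c_3(X)}$ as claimed in \eqref{explinearhilbform} — the exponential of a single-index sum appearing because a product of single-index contributions summed with $1/k!$ over ordered compositions is the plethystic/ordinary exponential.

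The step I expect to be the main obstacle is the careful bookkeeping in Step 1: namely verifying that the contribution of each $\mathcal{H}_{n_i}$-factor genuinely \emph{decouples} into a product indexed by $\alpha\in\mathscr A$ with the right powers, and that the substitution dictated by \eqref{normalization}/Lemma \ref{linear exp} converts $U_v(z)$ precisely into $z\frac{d}{dz}(A_\alpha(z,\mathfrak p)-A_\alpha(0,\mathfrak p))$ with the sum over $v\in B_6$ collapsing via $\sum_{v\in B_6}\chi(\alpha^\vee,v)c_3(X)_v = c_1(\alpha)\cdot c_3(X)$. One must also check the sign $(-1)^{n_i}$ propagates correctly through the vertex-algebra bracket reordering (already handled in the proof of Theorem \ref{theoremhilb}) and that terms with $v\in B_8$ drop out — they lie in $T(H_0(\mathcal C_{np}))$ by Proposition \ref{proposition 68} and contribute nothing to $\check H_0$ after $\Pi_0$, so only $v\in B_6$ survives in $\mathcal H_n$. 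Once these identifications are in place, the passage from the sum over ordered compositions with $1/k!$ to the exponential, and then to the $\alpha$-indexed product of $c_1(\alpha)\cdot c_3(X)$-th powers, is purely formal.
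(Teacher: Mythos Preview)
Your approach is essentially the same as the paper's: apply Lemma \ref{linear exp} to the explicit formula \eqref{Hq} for $\mathscr{H}(q)$ from Theorem \ref{theoremhilb}, substituting $u_{v,k}\mapsto \tfrac{1}{(k-1)!}\sum_{\alpha}a_\alpha(k,\mathfrak p)\chi(\alpha^\vee,v)$, and then collapse the $B_6$-sum via $\sum_{v\in B_6}\chi(\alpha^\vee,v)c_3(X)_v=c_1(\alpha)\cdot c_3(X)$ to obtain \eqref{explinearhilbform}. One small slip: the $y_j=u_{p,j}$ contribution comes from $v=p\in B_8$, and the relevant identity is $\chi(\alpha^\vee,p)=\textnormal{rk}(\alpha)$, not $\chi(\alpha^\vee,\llbracket\mathcal{O}_X\rrbracket)$; with that corrected, your Step~2 goes through exactly as you describe, and the constant term $A_\alpha(0,\mathfrak p)$ enters through $\mu_{p,0}=n$.
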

  \begin{proof}
   Using Lemma \ref{linear exp} to act on the homology classes $\mathscr{H}_n$ from Theorem \ref{theoremhilb}, we obtain
   \begin{align*}
  &\textnormal{Inv}(q)\\
  &= \textnormal{exp}\bigg[\sum_{n>0}\sum_{\begin{subarray}a l|n\\v\in B_6\end{subarray}}(-1)^n\frac{n}{l^2}c_3(X)_v[z^n]\Big\{\sum_{k>0}\sum_{\alpha\in \mathscr{A}}\chi(\alpha^\vee,v)\frac{a_{\alpha}(k,\mathfrak{p})}{(k-1)!}z^k\\ &\cdot \textnormal{exp}\Big[\sum_{j\geq 0}\sum_{\alpha\in \mathscr{A}}n\frac{\textnormal{rk}(\alpha)a_{\alpha}(j,\mathfrak{p})}{j!}z^j\Big]\Big\}q^n\bigg]
\end{align*}
which can be seen to be equal to \eqref{explinearhilbform}.
  \end{proof}
  We get the following simple consequence of the above results. To state the second part of it, we use the notation 
  $$
  \mu^{\mF_n}_{v,k} = (\Omega\circ \iota_n)^*\mu_{v,k}
  $$
  which implies that
  $$
  \mu^{\mF_n}_{v,k} =\big(\ch^\vee(v^\vee)\backslash\ch(\mF_n)\big)_k \in H^*(\Hilb)
  $$
  are the usual descendent classes on $\Hilb$ constructed using the universal 0-dimensional sheaf $\mF_n$. 
  \begin{corollary}
  \label{cordependence}
   With the notation and assumptions from Proposition \ref{theorem big}, it follows that $\textnormal{Inv}(q)$ depends only on $c_1(\alpha)\cdot c_3(X)$ and $\textnormal{rk}(\alpha)$ for all $\alpha\in \mathscr{A}$. 

If additionally $H^1(\mO_X)=0$, then for any polynomial $p(\mu_{v,k})$ in $\mu_{v,k}$, the descendent integral 
$$
\int_{[\Hilb]^{\vir}}p(\mu^{\mF_n}_{v,k})
$$
depends only on $\int_Xc_3(X)\cdot(-): H^2(X) \to \ZZ$.
  \end{corollary}

  \begin{remark}
  \label{noinv}
    For the classes $[\mathcal{M}_{np}]^{\inv}\in \check{H}_2(\mathcal{N}_X)$, we did not find any interesting non-zero invariants of the form $\int_{[\mathcal{M}_{np}]^{\inv}}\nu$ for some weight 0 insertions $\nu$ on $\mathcal{N}_X$. We already know that $\mathcal{L}^{[-,-]}|_{ \mathcal{N}_{n,0}}=0$, and this will be also true for insertions correcting $T_0(\alpha)=\pi_{2\,*}(\pi_X^*(\alpha)\otimes \mE_1)$ to be weight 0. Moreover, consider the weight 0 complex $\mathbb{E}_0=\pi_*\big(\underline{\textnormal{Hom}}_{\mM_1}(\mE_1,\mE_1)\big)^\vee$ on $\mM_1$ and set $\nu = p(\textnormal{ch}_1(\mathbb{E}_0),\textnormal{ch}_2(\mathbb{E}_0),\ldots)$ for some polynomial $p(x_1,x_2,\cdots)$. Then we get
    $$
    \int_{[\mathcal{M}_{np}]^{\inv}}\nu=\int_{e^{(0,np)}\otimes \mathscr{N}_{np}}p(\textnormal{ch}_1(\Delta^*(\theta)),\textnormal{ch}_2(\Delta^*(\theta),\ldots)\,,
    $$
    which can be shown to be always zero.
    \end{remark}
 \subsection{Multiplicative genera as insertions}
 \label{multigen paragraph}
The main examples of insertions we want to address are \textit{multiplicative genera} of \textit{tautological classes} below. The results in this and the following sections are stated for a general Calabi--Yau fourfold $X$ without requiring the condition $H^1(\mO_X)=0$ (recall that we still need the vanishing of $H^2(\mO_X)=0$ due to the arguments leading up to Theorem \ref{theorem workhorse}). Furthermore, all the results specific to $[\Hilb]^{\vir}$ are assuming Conjecture \ref{conjecture WC} because they are relying on the computations in §\ref{sec:vfcHn}. \\
 
 For a scheme $S$, let $G^0(S)$ and $G_0(S)$ denote its Grothendieck groups of vector bundles and coherent sheaves respectively. We have the map $\lambda:G^0(S)\to K^0(S)$ which we often neglect to write, i.e. $\lambda(\alpha) = \alpha$. We have the Chern-character $\textnormal{ch}:G^0(S)\to A^*(S,\Q)$ which under the natural maps to $K^0(S)$ and $H^*(S,\Q)$ corresponds to the topological Chern-character $\textnormal{ch}:K^0(S)\to H^{\textnormal{even}}(X,\Q)$.

  Set 
  $$R=\Q\llbracket \mathfrak{p}\rrbracket$$
  to be the ring of formal power-series in the variables $\mathfrak{p}=(p_1,\ldots,p_k)$, and let $f(\mathfrak{p},z) =\sum_{n\geq 0}f_n(\mathfrak{p})z^n\in R\llbracket z\rrbracket$ be a formal power-series satisfying $f(0,0)=1$. For another non-negatively graded ring $Q^*$ write $\big(Q^*\llbracket \mathfrak{p}\rrbracket\big)_1$ to denote the multiplicative group of elements of the ring $Q^*\llbracket\mathfrak{p}\rrbracket$ with the part of their constant term in the variables $\mathfrak{p}$ that is contained in $Q^0$ being equal to 1. The \textit{multiplicative Hirzebruch-genus} $\mathscr{G}_{f(\mathfrak{p},\cdot)}$ of \cite[§4]{Hirzebruch} associated to $f$ is a group homomorphism
\begin{equation}
\label{genus}
  \mathscr{G}_{f}: G^0(X)\longrightarrow \big(A^*(X,\Q)\llbracket \mathfrak{p}\rrbracket\big)_1\,.  
\end{equation} For each vector bundle $E\to X$ of $\textnormal{rk}(E) =a$, let $x_1,\ldots,x_a$ be its Chern roots. The map $\mathscr{G}_{f}$ is given by
$$
\mathscr{G}_{f}(E) =\prod_{i=1}^af(\mathfrak{p},x_i)\,.
$$
 Define $\Lambda^\bullet_t: G^0(S)\to \Big(G^0(S)\llbracket t\rrbracket\Big)_1$ to be the group homomorphism acting on each vector bundle $E$ by
 $$
[E]\mapsto \sum_{i=0}^\infty [\Lambda^iE](-t)^i\,,
 $$
 where $G^0(S)$ is a ring with the multiplication induced by the tensor product. We also have $\textnormal{Sym}^\bullet_t: G^0(S)\to \Big(G^0(S)\llbracket t\rrbracket\Big)_1$, where $\textnormal{Sym}^\bullet_{-t}(\alpha) = \Big(\Lambda^\bullet_t(\alpha)\Big)^{-1}$ for all $\alpha\in G^0(S)$.
 On $\Hilb$, we will consider the classes
\begin{equation}
\label{alphan}
  \alpha^{[n]}= \pi_{2\, *}\big(\pi^*_{X}(\alpha)\otimes \mathcal{F}_n\big)\,,\quad \alpha\in G^0(X)  \,,\qquad
    T^{\textnormal{vir}}_n =\underline{\textnormal{Hom}}_{\Hilb}\big(\mathscr{I}_n,\mathscr{I}_n\big)_0[1]\,,  
\end{equation}
where $\mathscr{I}_n =\big(\mathcal{O}_X\to \mathcal{F}_n\big)$ is the universal complex on $\Hilb$ and $(-)_0$ denotes the trace-less part.  The corresponding topological analogs are 
$$
\mathfrak{T}(\alpha) =  \mathfrak{U}^\vee\boxtimes \pi_{2\,*}\big(\pi_X^*(\alpha)\otimes  \mathfrak{E}\big)\qquad 
\textnormal{and}\qquad -\theta^\vee_{\mathcal{P}}\in  K^0(\mathcal{P}_X\times \mathcal{P}_X)\,.$$

\begin{lemma}
\label{lemma chern}
In $H^*(\mathcal{P}_X\times\mathcal{P}_X)$ the following holds for all $\alpha \in K^0(X)$:
\begin{align*}
  \textnormal{ch}_k\big(\mathfrak{T}(\alpha) \big) &= \sum_{\begin{subarray}a v\in B_{\textnormal{even}}\\
  i+j=k
  \end{subarray}}(-1)^j\chi(\alpha^\vee,v)\beta_i\boxtimes \mu_{v,j}\,,\\
 \textnormal{ch}_k(\theta_{\mathcal{P}}) &=\sum_{\begin{subarray}a i+j=k
  \\
  \sigma,\tau\in\mathbb{B}\backslash B_{\textnormal{odd}} \end{subarray}}(-1)^j\tilde{\chi}(\sigma,\tau)\mu_{\sigma,i}\boxtimes \mu_{\tau,j}   +\sum_{\begin{subarray}a i+j=k+1
  \\
v,w\in  B_{\textnormal{odd}} \end{subarray}}(-1)^i\chi^-(v,w)\mu_{v,i}\boxtimes \mu_{w,j}\,.
  \end{align*}
  We also have the identity 
  $$\textnormal{ch}(T^{\textnormal{vir}}_n)=-(\Omega\circ \iota_n)^*\big(\textnormal{ch}(\Delta^*\theta^\vee_{\mathcal{P}})\big) +\chi(\mO_X)$$
  in $H^*\big(\Hilb\big)$. 
\end{lemma}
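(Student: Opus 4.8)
The plan is to compute each Chern character by reducing to the universal classes on $BU\times\Z$ and $X\times\mathcal{C}_X$, where everything is explicit via \eqref{chernofE}, and then to track how the slant/pushforward operations act on the generators $\mu_{\sigma,i}$ and $\beta_i$. First I would treat $\textnormal{ch}_k(\mathfrak{T}(\alpha))$. By definition $\mathfrak{T}(\alpha)=\mathfrak{U}^\vee\boxtimes \pi_{2\,*}(\pi_X^*(\alpha)\otimes\mathfrak{E})$, so its Chern character is the external product of $\textnormal{ch}(\mathfrak{U}^\vee)$ on the $BU\times\Z$ factor and $\textnormal{ch}\big(\pi_{2\,*}(\pi_X^*(\alpha)\otimes\mathfrak{E})\big)$ on the $\mathcal{C}_X$ factor. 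For the first factor, $\textnormal{ch}_i(\mathfrak{U}^\vee)=(-1)^i\beta_i$ by the definition of the $\beta_i$ as the Chern character components of $\mathfrak{U}$ and the behaviour of $\textnormal{ch}$ under $(-)^\vee$. For the second, I apply Grothendieck--Riemann--Roch along $\pi_2\colon X\times\mathcal{C}_X\to\mathcal{C}_X$ (in the topological guise, i.e. the Atiyah--Hirzebruch/family-index computation referenced around \eqref{serredualityk-theory}): $\textnormal{ch}\big(\pi_{2\,*}(\pi_X^*(\alpha)\otimes\mathfrak{E})\big)=\pi_{2\,*}\big(\pi_X^*(\textnormal{ch}(\alpha)\textnormal{Td}(X))\cdot\textnormal{ch}(\mathfrak{E})\big)$. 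Expanding $\textnormal{ch}(\mathfrak{E})=\sum_{v\in B}\textnormal{ch}(v)\boxtimes\big(\sum_j e^{\alpha}\otimes\mu_{v,j}\big)$ from \eqref{chernofE} and integrating $\pi_X^*(\textnormal{ch}(\alpha)\textnormal{Td}(X))\cdot\textnormal{ch}(v)$ over $X$, one recognizes $\int_X\textnormal{ch}(\alpha)\textnormal{ch}(v)\textnormal{Td}(X)$; matching this against the bilinear form $\chi$ via $\chi(\alpha^\vee,v)=\int_X\textnormal{ch}(\alpha^\vee)^\vee\textnormal{ch}(v)\textnormal{Td}(X)=\int_X\textnormal{ch}(\alpha)\textnormal{ch}(v)\textnormal{Td}(X)$ gives exactly the coefficient $\chi(\alpha^\vee,v)$ for the $\mu_{v,j}$ term. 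Multiplying the two factors and collecting degree $k$ yields $\textnormal{ch}_k(\mathfrak{T}(\alpha))=\sum_{v\in B_{\textnormal{even}},\,i+j=k}(-1)^j\chi(\alpha^\vee,v)\beta_i\boxtimes\mu_{v,j}$, where only $v\in B_{\textnormal{even}}$ survive because $\chi(\alpha^\vee,v)$ pairs an even class $\alpha$ with $v$ and the odd part of $\mathfrak{E}$ contributes nothing to this particular external product (the $\beta_i$ are even).

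Next, for $\textnormal{ch}_k(\theta_{\mathcal{P}})$ I would go term by term through the defining formula $\theta_{\mathcal{P}}=(\theta_{\mathcal{C}})_{1,3}+\chi(\mathfrak{U}\boxtimes\mathfrak{U}^\vee)_{2,4}-(\mathfrak{U}\boxtimes\pi_{2\,*}(\mathfrak{F}_n)^\vee)_{2,3}-(\pi_{2\,*}(\mathfrak{F}_n)\boxtimes\mathfrak{U}^\vee)_{1,4}$. The term $\theta_{\mathcal{C}}=\pi_{2,3\,*}(\pi_{1,2}^*\mathfrak{E}\cdot\pi_{1,3}^*\mathfrak{E}^\vee)$ is the genuinely two-sided one: applying GRR along $X$ and using \eqref{chernofE} for both copies of $\mathfrak{E}$, one gets $\textnormal{ch}(\theta_{\mathcal{C}})=\sum_{v,w\in B}\big(\int_X\textnormal{ch}(v)\textnormal{ch}(w)^\vee\textnormal{Td}(X)\big)\,\mu_{v,\bullet}\boxtimes\mu_{w,\bullet}$; here the pairing $\int_X\textnormal{ch}(v)^\vee\textnormal{ch}(w)\textnormal{Td}(X)$ equals $\chi(v,w)$ when $v,w$ are both even and equals $-\chi^-(v,w)$ (with the sign twist $(-1)^i$ coming from the $(-)^\vee$ on an odd-degree class, since $\textnormal{ch}(w)^\vee$ flips signs of odd components) when $v,w$ are both odd — this is precisely the content of \eqref{chi-}, and it explains the two separate sums in the statement with the shift $i+j=k$ versus $i+j=k+1$, the latter arising because odd classes start in cohomological degree $1$. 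The remaining three terms are all external products with $\mathfrak{U}$ or $\mathfrak{U}^\vee$, hence contribute $\mu_{(0,1),\bullet}=b_\bullet$ against some combination of $\mu_{\llbracket\mathcal{O}_X\rrbracket,\bullet}$ and ranks; since $n=0$ in our setting (Remark after Proposition \ref{propmorphism}), $\mathfrak{F}_0=\mathfrak{E}$, and one checks these contribute exactly the terms of $\tilde\chi$ involving $\chi d_1d_2$ and the $-d_i\chi(\alpha_j(n))$ pieces, which together with the $\theta_{\mathcal{C}}$ contribution assemble into the single sum $\sum_{i+j=k,\ \sigma,\tau\in\mathbb{B}\setminus B_{\textnormal{odd}}}(-1)^j\tilde\chi(\sigma,\tau)\mu_{\sigma,i}\boxtimes\mu_{\tau,j}$. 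The only subtlety is bookkeeping the $(-1)^j$ versus $(-1)^i$ conventions (which side carries the dual), and that is dictated by where the $(-)^\vee$ sits in each summand of $\theta_{\mathcal{P}}$.

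Finally, for the identity $\textnormal{ch}(T^{\textnormal{vir}}_n)=-(\Omega\circ\iota_n)^*\textnormal{ch}(\Delta^*\theta_{\mathcal{P}}^\vee)+2$, I would argue as follows. By definition $T^{\textnormal{vir}}_n=\underline{\textnormal{Hom}}_{\Hilb}(\mathscr{I}_n,\mathscr{I}_n)_0[1]$ where $\mathscr{I}_n=(\mathcal{O}_X\to\mathcal{F}_n)$, so in K-theory $\llbracket T^{\textnormal{vir}}_n\rrbracket=-\big(\llbracket\underline{\textnormal{Hom}}(\mathscr{I}_n,\mathscr{I}_n)\rrbracket-1\big)$ — the $[1]$ gives the overall minus, the trace-free part removes the identity summand $\mathcal{O}$ contributing $+1$ to $\textnormal{ch}_0$, hence the $+2$ once one also accounts for the shift convention. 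It therefore suffices to identify $\llbracket\underline{\textnormal{Hom}}(\mathscr{I}_n,\mathscr{I}_n)\rrbracket$ on $\Hilb=N^{\textnormal{st}}_{(np,1)}(\tau^{\textnormal{pa}})$ with $(\Omega\circ\iota_n)^*\Delta^*\llbracket\theta_{\mathcal{P}}^\vee\rrbracket$; but this is exactly what was established in the proof of Proposition \ref{propmorphism} (the statement that $\Omega^*(\theta_{\mathcal{P}})=\llbracket\Theta^{\textnormal{pa}}\rrbracket$, together with the identification of $\Theta^{\textnormal{pa}}$ restricted to the pair moduli space with $\underline{\textnormal{Hom}}$ of the universal pair complex), specialized to $n=0$ and pulled back along $\iota_n$. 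Applying $\textnormal{ch}$ to this K-theory identity and comparing degree by degree — noting $\Delta^*\theta_{\mathcal{P}}^\vee$ is pulled back and dualized, so the sign on odd components works out — gives the claim.

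I expect the main obstacle to be purely organizational rather than conceptual: correctly propagating the $(-1)^i$ versus $(-1)^j$ signs and the even/odd splitting through the four summands of $\theta_{\mathcal{P}}$ and through the dualities $(-)^\vee$ and $\sigma^*$, while keeping the identification $\chi(v,w)$ versus $-\chi^-(v,w)$ straight — in other words, making sure the topological GRR/family-index computation reproduces the same bilinear forms $\tilde\chi$ and $\chi^-$ that were built into the vertex-algebra data in Definitions \ref{definitionpairtop} and Proposition \ref{theorem VA}. None of the individual steps is hard, but the sign conventions are where an error would hide.
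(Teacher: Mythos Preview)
Your treatment of the first two identities is essentially the paper's own approach: apply Atiyah--Hirzebruch--Riemann--Roch along the $X$-direction, expand $\textnormal{ch}(\mathfrak{E})$ via \eqref{chernofE}, and recognise the integrals over $X$ as the bilinear forms $\chi$, $\tilde\chi$, $\chi^-$. The paper states this tersely (``a similar explicit computation''), and your more detailed bookkeeping of the even/odd split and the $(-1)^i$ versus $(-1)^j$ signs is correct and useful.

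For the last identity, however, there are two points where your argument is not quite complete. First, Proposition \ref{propmorphism} only gives $\Omega^*\theta_{\mathcal{P}}=\llbracket\Theta^{\textnormal{pa}}\rrbracket$; it does \emph{not} identify $\iota_n^*\Delta^*\Theta^{\textnormal{pa}}$ with $\underline{\textnormal{Hom}}(\mathscr{I}_n,\mathscr{I}_n)^\vee$. By its very definition \eqref{thetacom}, $\Theta^{\textnormal{pa}}$ is built from the \emph{split} object $\mathcal{O}_X[1]\oplus F$, not from the two-term complex $\mathscr{I}_n=(\mathcal{O}_X\to F)$. The paper bridges this by introducing the map $\mathscr{P}\colon\Hilb\to\mathcal{M}_X$, $[\mathcal{O}_X\to F]\mapsto[\mathcal{O}_X[1]\oplus F]$, observing that $\iota_n^*\Delta^*\Theta^{\textnormal{pa}}=\mathscr{P}^*\mathcal{E}\textnormal{xt}^\vee$, and then using that $\mathscr{P}$ and the map classifying $\mathscr{I}_n$ are $\mathbb{A}^1$-homotopic (scale the section to zero), so their pullbacks agree in $K$-theory. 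This step is the actual content; your appeal to Proposition \ref{propmorphism} does not supply it. Second, your explanation of the ``$+2$'' is off: the trace part of $\underline{\textnormal{Hom}}_{\Hilb}(\mathscr{I}_n,\mathscr{I}_n)$ is $R\pi_{2*}\mathcal{O}_{X\times\Hilb}$, which is trivial of rank $\chi(\mathcal{O}_X)=2$ on a Calabi--Yau fourfold, not ``the identity summand $\mathcal{O}$ contributing $+1$''. This is exactly the paper's ``$\textnormal{rk}\big((\mathcal{E}\textnormal{xt}_n)_0\big)=\textnormal{rk}(\mathcal{E}\textnormal{xt}_n)-2$'', and it is why the constant is $+2$ rather than $+1$.
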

\begin{proof}
 Using Atiyah--Hirzebruch--Riemann--Roch \cite{dold}, we get 
\begin{align*}
      \textnormal{ch}_i\big(\pi_{2\,*}(\pi^*_X(\alpha)\otimes \mathfrak{E})\big) = \sum_{v\in B_{\textnormal{even}}}\int_X \textnormal{ch}(\alpha)\textnormal{ch}(v)\textnormal{Td}(X)\boxtimes \mu_{v,i} = \sum_{v\in B_{\textnormal{even}}}\chi(\alpha^\vee,v)\mu_{v,i}\,.
\end{align*}
Taking a product with $\textnormal{ch}(\mathfrak{U}^\vee)$ and using $\beta_j=\textnormal{ch}_j(\mathfrak{U})$, we get
\begin{equation}
  \textnormal{ch}_j\big(\mathscr{T}(\alpha)\big) = \sum_{\begin{subarray}a v\in B_{\textnormal{even}}\\
  j=l+k
  \end{subarray}}(-1)^l\chi(\alpha^\vee,v)\beta_l\boxtimes\mu_{v,k}\,.  
\end{equation}
A similar explicit computation leads to the second formula. Let us, therefore, address the final statement.

\setcounter{footnote}{0}
Let $\mathscr{P}:\Hilb\to \mathcal{M}_X$ map $[\mathcal{O}_X\to F]$ to $[\mathcal{O}_X[1]\oplus F]$ and $\mathcal{E}\textnormal{xt}_n=\underline{\textnormal{Hom}}_{\Hilb}\big(\mathscr{I}_n,\mathscr{I}_n\Big)$. We have the following $\A^1$-homotopy commutative diagram\footnote{See \protect\cite[§3.2]{Blanc} for the notion of $\A^1$-homotopy for simplicial presheaves, but here we only mean that there is an $\A^1$-family of maps containing both paths of the diagram.}:
\begin{equation}
\label{A^1commutative}
    \begin{tikzcd}
    \Hilb\arrow[rd,"\mathcal{E}\textnormal{xt}_n"] \arrow[r,"\mathscr{P}"]&\mathcal{M}_X\arrow[d
    ,"\mathcal{E}\textnormal{xt}"]\\
    &\textnormal{Perf}_{\CC}
    \end{tikzcd}\,,
\end{equation}
where $\mathcal{E}\textnormal{xt},  \mathcal{E}{\textnormal{xt}}_{n}$ are the maps associated to the perfect complexes of the same name and $\mathcal{E}\textnormal{xt} = \underline{\textnormal{Hom}}_{\mathcal{M}_X}(\mathcal{U},\mathcal{U})$ for the universal perfect complex $\mathcal{U}\to X\times \mathcal{M}_X$. From Definition \ref{thetacom}, we easily deduce $\iota_n^*\circ\Delta^*\big(\Theta^{\textnormal{pa}}\big) =\mathscr{P}^*\mathcal{E}\textnormal{xt}^\vee$. Taking topological realization of \eqref{A^1commutative}, we obtain that 
$$\llbracket\mathcal{E}\textnormal{xt}_n\rrbracket =(\mathscr{P}^{\textnormal{top}})^*\llbracket\mathcal{E}\textnormal{xt}\rrbracket =\iota_n^*\llbracket\Delta^*\big(\Theta^{\textnormal{pa}}\big)^\vee\rrbracket = (\Omega\circ \iota_n)^*\big(\Delta^*(\theta_{\mathcal{P}})\big)^\vee \,.$$
Finally, we use $\textnormal{rk}\big((\mathcal{E}\textnormal{xt}_n)_0\big) = \textnormal{rk}(\mathcal{E}\textnormal{xt}_n)-2$.
\end{proof}

\begin{remark}
\label{remark H}
We explain now how to drop the condition \eqref{eqAsH}.
Going through the arguments in Theorem \ref{theorem workhorse} and Theorem \ref{theoremhilb} without the assumption \eqref{eqAsH}, one can check that under the projection $\Pi_{\textnormal{even}}:\check{H}_*(\mathcal{P}_X)\to \check{H}_{\textnormal{even}}(\mathcal{P}_X)$ we still obtain the same results. This is sufficient for us, because we never integrate odd cohomology classes, except when integrating polynomials in $\textnormal{ch}_k(T^{\textnormal{vir}}_n)$, but as the only terms $\mu_{v,k}$ contained in $\textnormal{ch}_k(T^{\textnormal{vir}})$  for $v\in B_{\textnormal{odd}}$ are given for $v\in B_7$, each such integral will contain a factor of $\chi^-(v,w)=0$ for $v,w\in B_7$. Using \ref{remark b1}, we may therefore now assume $K^1(X)=0$ in computations.
\end{remark}

To simplify notation, we will not write $\mathfrak{p}$ unless necessary, and we will use $f(\alpha^{[n]})$ and $f(T^{\textnormal{vir}}_n)$ to denote $\mathscr{G}_f(\alpha^{[n]})$ and $\mathscr{G}_f(T^{\vir}_n)$.

\begin{lemma}
\label{propmain}
 Let $f\in \Q\llbracket \mathfrak{p}, z\rrbracket$ be a power-series satisfying $f(0,0)=1$, then
 \begin{align*}
    \int_{[\textnormal{Hilb}^n(X)]^{\textnormal{vir}}} f(\alpha^{[n]}) &= \int_{\mathscr{H}_n} \textnormal{exp}\Big[\sum_{\begin{subarray}a k\geq 0\\
 v\in B_{6,8}\end{subarray}}a_\alpha(k)\chi(\alpha^\vee,v)\mu_{v,k}\Big]\,,\quad \textnormal{where}\\
 A_\alpha(z) &=\sum_{k\geq 0}a_\alpha(k)\frac{z^k}{k!} = \textnormal{log}\big(f(z)\big)\,, \\
    \int_{[\textnormal{Hilb}^n(X)]^{\textnormal{vir}}} f(T^{\textnormal{vir}}_n) &= \int_{\mathscr{H}_n} \textnormal{exp}\Big[\sum_{\begin{subarray}a k\geq 0\\
 v\in B_{6,8}\end{subarray}}a_{\llbracket \mathcal{O}_X\rrbracket}(k)\chi(v)\mu_{v,k}\Big]\,,\quad \textnormal{where}\\
 A_{\llbracket \mathcal{O}_X\rrbracket}(z) &=\sum_{k\geq 0}a_{\llbracket \mathcal{O}_X\rrbracket}(k)\frac{z^k}{k!} = \textnormal{log}\big(f(z)f(-z)\big)\,.
 \end{align*}
\end{lemma}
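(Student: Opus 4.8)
The plan is to realize each multiplicative genus as $(\Omega\circ\iota_n)^*$ of a weight $0$ insertion on $\mathcal{P}_X$, use Lemma \ref{lemma weight 0} and \eqref{integral} to rewrite the virtual integral as a pairing $\int_{\mathscr{H}_n}(-)$, and then compute that insertion modulo the large kernel of pairing with $\mathscr{H}_n$, where it collapses to the stated linear‑exponent form.

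\textbf{Step 1 (reduction to $\mathscr{H}_n$).} The class $\mathfrak{T}(\alpha)=\mathfrak{U}^\vee\boxtimes\pi_{2\,*}(\pi_X^*\alpha\otimes\mathfrak{E})$ satisfies the $\Phi_{\mathcal{P}}$‑equivariance required in Example \ref{exampleins} (exactly as $\mathcal{L}^{[-,-]}$ does for $L$), so $\mathscr{G}_f(\Delta^*\mathfrak{T}(\alpha))$ is a weight $0$ insertion; since along $\Omega\circ\iota_n$ the universal class $\mathfrak{U}$ restricts to the trivial line bundle and $\mathfrak{E}$ to $\llbracket\mathcal{F}_n\rrbracket$, one has $(\Omega\circ\iota_n)^*\Delta^*\mathfrak{T}(\alpha)=\alpha^{[n]}$, hence $\mathscr{G}_f(\alpha^{[n]})=(\Omega\circ\iota_n)^*\mathscr{G}_f(\Delta^*\mathfrak{T}(\alpha))$. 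For $T^{\textnormal{vir}}_n$, the last identity of Lemma \ref{lemma chern} gives $\textnormal{ch}(T^{\textnormal{vir}}_n)=-(\Omega\circ\iota_n)^*\textnormal{ch}(\Delta^*\theta_{\mathcal{P}}^\vee)+2$, with $-\Delta^*\theta_{\mathcal{P}}^\vee$ again weight $0$ by Example \ref{exampleins} (using $\sigma^*\theta_{\mathcal{P}}=\theta_{\mathcal{P}}^\vee$) and the trivial summand $2$ contributing $f(0)^2=1$ to the genus, so $\mathscr{G}_f(T^{\textnormal{vir}}_n)=(\Omega\circ\iota_n)^*\mathscr{G}_f(-\Delta^*\theta_{\mathcal{P}}^\vee)$. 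In both cases, since $\Hilb=N^{\textnormal{st}}_{(np,1)}(\tau^{\textnormal{pa}})$, using the canonical lift of Remark \ref{remark K1} and naturality of $\int$ under $\bar{\Omega}_*$ and $\iota_{n\,*}$, the virtual integral becomes $\int_{\mathscr{H}_n}$ of the corresponding weight $0$ insertion.

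\textbf{Step 2 (computing the insertion against $\mathscr{H}_n$).} Write $\log f(z)=\sum_{k\geq0}b(k)\,z^k/k!$; by the splitting principle and Newton's identities $\log\mathscr{G}_f(V)=\sum_k b(k)\,\textnormal{ch}_k(V)$. By Theorem \ref{theoremhilb} (see Remark \ref{remark K1}), $\mathscr{H}_n\in\check{H}_0(\mathcal{P}_{(np,1)})$ is a polynomial in the variables $u_{v,k}$ with $v\in B_{6,8}$ only, and in particular involves no $b_j=u_{(0,1),j}$. Consequently, in $\textnormal{ch}_k(\Delta^*\mathfrak{T}(\alpha))$ and $\textnormal{ch}_k(\Delta^*\theta_{\mathcal{P}})$ computed via Lemma \ref{lemma chern}: every monomial containing a factor $\beta_i$, $i\geq1$, pairs to zero, so $\mathfrak{U}$ may be replaced by its rank $1$ on $\mathcal{P}_{(np,1)}$, and the only terms that can survive the pairing are built from $\mu_{v,k}$ with $v\in B_{6,8}$. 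For $\theta_{\mathcal{P}}$ these look a priori quadratic, but such surviving products carry the coefficient $\chi(v,w)=\int_X\textnormal{ch}(v)^\vee\textnormal{ch}(w)\textnormal{Td}(X)$, which vanishes for $v,w\in B_{6,8}$ by degree on a fourfold — the same vanishing used in Theorem \ref{theorem workhorse} — while $\textnormal{td}_1(X)=0$ and $\mu_{v,0}=np(v^\vee)=0$ for $v\in B_6$ dispose of the residual mixed contributions. One finds, modulo classes annihilated by $\mathscr{H}_n$,
$$\textnormal{ch}_k\big(\Delta^*\mathfrak{T}(\alpha)\big)\;\equiv\;\sum_{v\in B_{6,8}}\chi(\alpha^\vee,v)\,\mu_{v,k}\qquad(k\geq0),$$
which after exponentiating is the first assertion with $A_\alpha(z)=\log f(z)$. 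Moreover, reading off the explicit form of $\theta_{\mathcal{P}}$ at $n=0$ and pulling back, $T^{\textnormal{vir}}_n\equiv\mathcal{O}_X^{[n]}+(\mathcal{O}_X^{[n]})^\vee$ modulo the same kernel, so $\mathscr{G}_f(T^{\textnormal{vir}}_n)\equiv\mathscr{G}_f(\mathcal{O}_X^{[n]})\,\mathscr{G}_f((\mathcal{O}_X^{[n]})^\vee)=\mathscr{G}_{\tilde f}(\mathcal{O}_X^{[n]})$ with $\tilde f(z)=f(z)f(-z)$; applying the already‑proved first assertion with $\alpha=\llbracket\mathcal{O}_X\rrbracket$ and $\tilde f$ in place of $f$ (and $\chi(v)=\chi(\llbracket\mathcal{O}_X\rrbracket^\vee,v)$) yields the second assertion with $A_{\llbracket\mathcal{O}_X\rrbracket}(z)=\log(f(z)f(-z))$. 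Finally, the hypothesis \eqref{eqAsH} is removed via Remark \ref{remark H}.

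\textbf{Main obstacle.} I expect the crux to be the assertion in Step 2 that the genus of $T^{\textnormal{vir}}_n$, which is a priori a polynomial of unbounded degree in the $\mu_{v,k}$, is \emph{effectively linear} after pairing with $\mathscr{H}_n$. This combines two independent inputs — the precise shape of $\mathscr{H}_n$ from Theorem \ref{theoremhilb} (only $u_{v,k}$ with $v\in B_{6,8}$, no $b_j$), and the degree vanishing $\chi(v,w)=0$ for $v,w\in B_{6,8}$ on a Calabi--Yau fourfold. The reduction $T^{\textnormal{vir}}_n\equiv\mathcal{O}_X^{[n]}+(\mathcal{O}_X^{[n]})^\vee$ — reflecting the self‑duality $\Delta^*\theta_{\mathcal{P}}\cong(\Delta^*\theta_{\mathcal{P}})^\vee$ and forcing the symmetrization $f(z)\mapsto f(z)f(-z)$ — together with the sign bookkeeping in Lemma \ref{lemma chern} and the matching of the $k=0$ constants are then routine.
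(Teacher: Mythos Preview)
Your proof is correct and follows essentially the same route as the paper: reduce to a weight $0$ insertion on $\mathcal{P}_X$, pair with $\mathscr{H}_n$, and use that $\mathscr{H}_n$ involves only $u_{v,k}$ for $v\in B_{6,8}$ (Remark \ref{remark nob}) together with the degree vanishing $\chi(v,w)=0$ for $v,w\in B_{6,8}$ to collapse the exponent to a linear expression. Your phrasing of the $T^{\textnormal{vir}}_n$ case via $T^{\textnormal{vir}}_n\equiv\mathcal{O}_X^{[n]}+(\mathcal{O}_X^{[n]})^\vee$ modulo the kernel, and then invoking the first assertion with $\tilde f(z)=f(z)f(-z)$, is a clean conceptual repackaging of the paper's direct computation \eqref{reducedchtheta}; the two are equivalent and neither is materially shorter.
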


 \begin{proof}
We show that in the action of $\textnormal{ch}_k(\theta_{\mathcal{P}}^\vee)$ from Lemma \ref{lemma chern} on $\mathscr{H}_n$ only terms linear in $\mu_{v,k}$, $k>0$ will have non-trivial contributions.
\sloppy In Remark \ref{remark H}, we set $K^1(X)=0$, thus we only need to look at $\sum_{\begin{subarray}a i+j=k
  \\
  \sigma,\tau\in\mathbb{B}\backslash B_{\textnormal{odd}} \end{subarray}}(-1)^i\tilde{\chi}(\sigma,\tau)\mu_{\sigma,i}\boxtimes \mu_{\tau,j}$ and we claim it reduces to the action by
  \begin{equation}
\label{reducedchtheta}
  -\sum_{v\in B_{6,8}}\big(1+(-1)^k\big)\chi(v)\mu_{v,k} =  -\big(1+(-1)^k\big)\mu_{p,k}
\end{equation}
For $i,j>0$ if $\sigma = (0,1)$ or $\tau = (0,1)$, then due to Remark \ref{remark nob} this term vanishes. If $\sigma=(v,0)$, $\tau = (w,0)$ then $v,w\in B_{6,8}$ and $\tilde{\chi}(\sigma,\tau) = \chi(v,w) = 0$. So consider the case $i=0$, then $j=k>0$. If $\sigma=(v,0)$, $\tau = (w,0)$ or $\tau =(0,1)$ then the term is again 0, because $\mu_{v,0} = np(v^\vee) = 0$ unless $v=p$ in which case $\chi(v,w)=0$ for each $w\in B_{6,8}$. However, if $\sigma=(0,1)$, $\tau = (v,0)$, then $\mu_{\sigma,0}= 1$ and $\tilde{\chi}((0,1),(v,0))= -\chi(v)$. If $j=0$, then the same applies, thus the statement follows.

Let $E$ be a vector bundle  with $c(E) = \prod_{i=1}^a(1+x_i)$, then we write 
$$
f(E) = \prod_{i=1}^af(x_i) =  \exp\Big[\sum_{n>0}g_n\sum_{i=1}^a\frac{x_i^n}{n!}\Big]\,,
$$
where $\sum_{n>0}\frac{g_n}{n!}x^n =\log \big(f(x)\big) $ and $\sum_{i=1}^a\frac{x_i^n}{n!} = \textnormal{ch}_i(E)$. This extends to any class $\alpha\in G^0\big(\Hilb,\Q\big)$, so we get after using Remark \ref{remark nob}, \eqref{reducedchtheta} and Lemma \eqref{lemma chern} that
\begin{align*}
  \int_{\big[\textnormal{Hilb}^n(X)\big]^{\textnormal{vir}}}f(\alpha^{[n]}) &=\int_{\mathscr{H}_n} \textnormal{exp}\Big[\sum_{\begin{subarray}ak\geq 0\\ v\in B_{6,8}\end{subarray}}g_k\chi(\alpha^\vee,v)\mu_{v,k}\Big]\,, \\
    \int_{\big[\textnormal{Hilb}^n(X)\big]^{\textnormal{vir}}}f(T^{\textnormal{vir}}_n) &=\int_{\mathscr{H}_n} \textnormal{exp}\Big[\sum_{\begin{subarray}a k\geq 0\\ v\in B_{6,8}\end{subarray}}g_k\big(1+(-1)^k\big)\chi(v)\mu_{v,k}\Big]\,,
\end{align*}
where, we use $\mu_{p,0}= n$ and $\textnormal{rk}(T^{\textnormal{vir}}_n) = 2n$.
\sloppy From this we immediately see $A_{\alpha}(z)= \textnormal{log}\big(f(z)\big)$ and $A_{\llbracket\mathcal{O}_X\rrbracket}(z)=\textnormal{log}\big(f(z)f(-z)\big)$.
\end{proof}
As an immediate Corollary of \eqref{reducedchtheta}, we obtain the following:
\begin{corollary}
\label{cor vanishing}
For each $n>0$ let  $p_n(x_1t,x_2t^2,\ldots)$ be a formal power-series in infinitely many variables, then
$$
\int_{[\Hilb]^\textnormal{vir}}p_n(\textnormal{ch}_1(T^{\textnormal{vir}}_n),\textnormal{ch}_2(T^{\textnormal{vir}}_n),\ldots) = 0\,.
$$
\end{corollary}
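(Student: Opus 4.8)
The plan is to turn the left-hand side into a pairing of an explicit weight $0$ insertion against the class $\mathscr{H}_n$ of Theorem~\ref{theoremhilb}, and then to observe that this pairing vanishes for purely combinatorial reasons.

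First I would apply the last identity of Lemma~\ref{lemma chern}: since $\textnormal{ch}(T^{\textnormal{vir}}_n)=-(\Omega\circ\iota_n)^*\big(\textnormal{ch}(\Delta^*\theta^\vee_{\mathcal{P}})\big)+2$, we have $\textnormal{ch}_k(T^{\textnormal{vir}}_n)=-(\Omega\circ\iota_n)^*\big(\textnormal{ch}_k(\Delta^*\theta^\vee_{\mathcal{P}})\big)$ for every $k\geq 1$, and therefore
\[
p_n\big(\textnormal{ch}_1(T^{\textnormal{vir}}_n),\textnormal{ch}_2(T^{\textnormal{vir}}_n),\ldots\big)=(\Omega\circ\iota_n)^*(\mathcal{T}_n),\quad \mathcal{T}_n:=p_n\big(-\textnormal{ch}_1(\Delta^*\theta^\vee_{\mathcal{P}}),-\textnormal{ch}_2(\Delta^*\theta^\vee_{\mathcal{P}}),\ldots\big).
\]
Because $\theta^\vee_{\mathcal{P}}$ satisfies the equivariance hypotheses of Example~\ref{exampleins}, each $\textnormal{ch}_k(\Delta^*\theta^\vee_{\mathcal{P}})$ is a weight $0$ insertion, hence so is $\mathcal{T}_n$. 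Using $\big[\Hilb\big]_{\textnormal{vir}}=(\Pi^{\textnormal{pl}}\circ\iota_n)_*\big([\Hilb]^{\textnormal{vir}}\big)$, $\mathscr{H}_n=\bar{\Omega}_*\big([\Hilb]_{\textnormal{vir}}\big)$ together with Lemma~\ref{lemma weight 0} and \eqref{integral}, exactly as in the proof of Lemma~\ref{propmain}, this reduces the statement to $\int_{\mathscr{H}_n}\mathcal{T}_n=0$ for $n\geq 1$ (the case $n=0$ being trivial).

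Next I would reduce the insertion modulo the class against which it is integrated. By \eqref{reducedchtheta}, capping $\mathscr{H}_n$ with $\textnormal{ch}_k(\Delta^*\theta^\vee_{\mathcal{P}})$ has the same effect as capping with $-(1+(-1)^k)\mu_{p,k}$; and the derivation of \eqref{reducedchtheta} uses only that the class being capped has no $b_j$-monomials and involves only the variables $u_{v,k}$ with $v\in B_{6,8}$ — two properties that are preserved under capping, since in the normalization \eqref{normalization} capping acts by differentiation and can only remove variables. Hence the argument iterates through an arbitrary polynomial or power series, and $\mathcal{T}_n\cap\mathscr{H}_n=P\cap\mathscr{H}_n$ for some power series $P$ in the variables $\{\mu_{p,k}\}_{k>0}$ alone (in fact only in the $\mu_{p,2m}$). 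The cap action of such a $P$ differentiates only in the variables $y_k=u_{p,k}$ and leaves every variable $u_{v,k}$ with $v\in B_6$ untouched.

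Finally I would read off the conclusion from the shape of $\mathscr{H}_n$. In formula \eqref{Hq} each term of the exponent is a single variable $u_{v,k}$ with $v\in B_6$ times a polynomial in the $y_j$; so after exponentiating, every monomial occurring in $\mathscr{H}_n$ (for $n\geq1$) contains at least one factor $u_{v,k}$ with $v\in B_6$. Since $P$ differentiates only in the $y_k$, the class $P\cap\mathscr{H}_n$ again has every monomial divisible by some $u_{v,k}$ with $v\in B_6$; in particular its degree-$0$ part (equivalently, its pairing with $1\in H^0$) vanishes, and therefore $\int_{\mathscr{H}_n}\mathcal{T}_n=0$, which is the claim. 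The one step requiring a little care is the iteration in the previous paragraph — checking that the replacement ``$\textnormal{ch}_k(\Delta^*\theta^\vee_{\mathcal{P}})\leadsto-(1+(-1)^k)\mu_{p,k}$'' is legitimate inside an arbitrary power series and not merely for a single insertion; but this is exactly the bookkeeping already carried out in the proof of Lemma~\ref{propmain} for multiplicative genera, so no new idea is needed.
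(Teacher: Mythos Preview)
Your proof is correct and follows essentially the same approach as the paper's own proof: reduce the integral over $[\Hilb]^{\textnormal{vir}}$ to a pairing of a power series in the $\mu_{p,k}$ against $\mathscr{H}_n$ via \eqref{reducedchtheta}, and then observe from \eqref{Hq} that every monomial in $\mathscr{H}_n$ carries a factor $u_{v,k}$ with $v\in B_6$, forcing the pairing to vanish by \eqref{normalization}. You are simply more explicit than the paper about why the substitution \eqref{reducedchtheta} propagates through an arbitrary power series rather than a single Chern character, which is a reasonable point of care.
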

\begin{proof}
We use
$$
\int_{\big[\Hilb\big]^\textnormal{vir}}p_n(\textnormal{ch}_1(T^{\textnormal{vir}}_n),\textnormal{ch}_2(T^{\textnormal{vir}}_n),\ldots) = \int_{\mathscr{H}_n}\tilde{p}_n(\mu_{p,1},\mu_{p,2},\ldots)\,, 
$$
where we use some new formal power-series $\tilde{p}(x_1t,x_2t^2\,,\ldots)$ given by \eqref{reducedchtheta}. Because each term in \eqref{Hq} contains at least one factor of the form $\mu_{v,k}$ for $v\in B_6$, $k>0$, the above integral is zero by \eqref{normalization}.
\end{proof}
We introduce the following definition as it will be also used in the sequels to the current paper \cite{bojkoquot}, \cite{ bojkosur}.
\begin{definition}
Let us define the \textit{universal transformation} $U$ of formal power-series  $U: \big(R\llbracket t\rrbracket\big)_1\to \big(R\llbracket t\rrbracket\big)_1$ by 
\begin{equation}
\label{Qmap}
 f(t)\mapsto \prod_{n>0}\prod_{k=1}^nf(-e^{\frac{2\pi i k}{n}}t)^{n}\,.
\end{equation}
Moreover, we will use the notation 
$$\{f\}(t) = f(t)f(-t)\,.$$
\end{definition}
\begin{remark}
\label{rem:Uandbracket}
\begin{enumerate}
    \item In fact, $U$ is a well-defined bijection. To see that it is well-defined on $\big(R\llbracket t\rrbracket\big)_1$, note that
$\textnormal{log}\Big(\prod_{k=1}^nf(-e^{\frac{2\pi i k}{n}}t)^{n}\Big) = \sum_{m>0}n^2f_{nm}t^{nm}$ by Knuth \cite[eq. (13), p. 89]{Knuth}  where $\textnormal{log}\big(f(t)\big) = \sum_{i>0}f_it^i$. Therefore $\prod_{k=1}^nf(-e^{\frac{2\pi i k}{n}}t)^{n} =1+O(t^n)$ which is precisely the condition necessary for the infinite product to be well-defined. Because of the above description, we also see that the different roots of unity cancel out so that the resulting power series has coefficients again in $R$. 

To construct the inverse, we observe that the logarithm of \eqref{Qmap} becomes
$\sum_{n>0}\sum_{m>0}n^2f_{nm}q^{nm}=\sum_{n>0}\sum_{l|n}\frac{n^2}{l^2}f_nq^n$. This corresponds to acting with a diagonal invertible matrix on the coefficients $f_n$, so we have an inverse. 
\item The definition of $\{f\}$ is deeply rooted in the origin of DT4 invariants and their definition. When $M$ is a moduli space of stable sheaves on a Calabi--Yau fourfold, its virtual tangent bundle $T^{\vir}_M$ (which for the present purposes should be thought of as just a complex K-theory class) is self-dual. Describing the background only approximately: Borisov--Joyce \cite{BJ} use this self-duality to construct a splitting 
$$
T^{\vir}_M = T^{\vir,+}_M + T^{\vir,-}_M
$$
where $T^{\vir,\pm}_M$ are real K-theory classes of equal rank. A choice of orientations from Theorem \ref{theorem CGJ} replaces the real K-theory $KO^0(M)$ by the real oriented K-theory $KSO^0(M)$.  Oh--Thomas \cite{OT} translated this into taking a purely complex splitting (after lifting $T^\vir_M$ to some isotropic flag bundle over $M$) 
$$
T^{\vir}_M = \Lambda +\Lambda^\vee
$$
where this time $\Lambda$ is in the complex K-theory $K^0(M)$. In fact, when viewed as classes in $KSO^0(M)$, $\Lambda = T^{\vir,+}_M$ if the orientations are chosen in a compatible way. This alternative splitting is what we observe on the level of cohomology when we are computing the second statement of Lemma \ref{propmain} because the contribution of $\ch(T^\vir)$ looks exactly like $\ch(\Lambda)+\ch(\Lambda^\vee)$ when we integrate it against $\mathscr{H}_n$ in \eqref{reducedchtheta}. From the perspective of the computation, it replaces $f$ by $\{f\}$ for the multiplicative genus acting on $T^\vir$. 

Coming back to the real oriented perspective, we will explain later in Remark \ref{rem:realgenus} that there is a notion of real oriented multiplicative genera and that 
$$
f\mapsto \{f\}
$$
is the most natural transformation mapping a complex multiplicative genus to a real oriented one. This will motivate working with real oriented genera over complex ones when defining generalizations of the K-theoretic invariants in §\ref{k-theoretic paragraph}.
\end{enumerate}
\end{remark}
\begin{example}
Acting with $U^{-1}$ on the MacMahon function $M(-q)$, we obtain $\frac{1}{(1+q)}$.  
\end{example}
We will need later the following generalization of the Lagrange inversion theorem:
\begin{lemma}
\label{Gessel-Lagrange}
Let $Q(t)\in R\llbracket t\rrbracket$  (with a non-vanishing constant term) and $g_i(x)$ for $i=1,\ldots,N$ be the different solutions to \begin{equation}
\label{solutionto}\big(g_i(x)\big)^N=xQ\big(g_i(x)\big)\end{equation}
then for any formal power-series $\phi(t)$, $\phi(0)=0$ we have
$$
\sum_{k=1}^N\phi\big(g_i(x)\big) =\sum_{n>0}\frac{1}{n} [t^{nN-1}]\Big(\phi'(t)Q(t)^n\Big)x^n\,.
$$
\end{lemma}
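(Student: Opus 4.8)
The plan is to reduce this to the classical Lagrange--B\"urmann formula by summing over the $N$ branches simultaneously. First I would rewrite \eqref{solutionto} as follows: set $w = g_i(x)$, so $w^N = xQ(w)$, i.e. $x = w^N/Q(w)$. This exhibits $x$ as a function of $w$ with a zero of order $N$ at $w=0$ (using that $Q(0)\neq 0$), so it is \emph{not} directly invertible in power series. The standard trick is to introduce $y = x^{1/N}$ (a formal $N$th root) and to note that the $N$ solution branches $g_1,\dots,g_N$ are precisely obtained by letting $y$ range over $\zeta y$ for $\zeta$ an $N$th root of unity: writing $w = h(y)$ where $h$ is the unique power series with $h(0)=0$, $h'(0)\neq 0$ solving $h(y)^N = y^N Q(h(y))$, i.e. $y = h(y)/Q(h(y))^{1/N}$, one has $g_i(x) = h(\zeta_i x^{1/N})$ for the $N$ distinct $N$th roots $\zeta_i$ of unity. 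The single-variable equation $y = h(y)\,Q(h(y))^{-1/N}$ is a genuine power-series inversion problem (the right side has a simple zero at $h=0$), so classical Lagrange inversion applies to it.

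Next I would apply Lagrange inversion to $h$. For any formal power series $\phi$ with $\phi(0)=0$, Lagrange--B\"urmann gives
\begin{equation*}
\phi\big(h(y)\big) = \sum_{m\geq 1} \frac{y^m}{m}\,[t^{m-1}]\Big(\phi'(t)\,Q(t)^{m/N}\Big)\,.
\end{equation*}
Now sum over the $N$ branches, i.e. replace $y$ by $\zeta_i x^{1/N}$ and sum over $i=1,\dots,N$:
\begin{equation*}
\sum_{i=1}^N \phi\big(g_i(x)\big) = \sum_{m\geq 1}\frac{1}{m}\Big(\sum_{i=1}^N \zeta_i^m\Big) x^{m/N}\,[t^{m-1}]\Big(\phi'(t)Q(t)^{m/N}\Big)\,.
\end{equation*}
The inner sum $\sum_{i=1}^N \zeta_i^m$ equals $N$ when $N\mid m$ and $0$ otherwise. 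So only $m = nN$ survives, contributing $N$, and the sum becomes
\begin{equation*}
\sum_{i=1}^N \phi\big(g_i(x)\big) = \sum_{n\geq 1}\frac{N}{nN}\,x^n\,[t^{nN-1}]\Big(\phi'(t)Q(t)^n\Big) = \sum_{n>0}\frac{1}{n}[t^{nN-1}]\big(\phi'(t)Q(t)^n\big)x^n\,,
\end{equation*}
which is exactly the claimed identity.

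The main thing to be careful about is the formal status of the fractional powers: $Q(t)^{m/N}$ and $x^{1/N}$ only make sense after adjoining an $N$th root, and one must check that the final answer, which involves only $Q(t)^n$ with $n$ integral and integer powers of $x$, lies back in $R\llbracket x\rrbracket$ (it manifestly does, since only $m = nN$ terms survive). A secondary point is justifying that the $N$ branches of \eqref{solutionto} are genuinely $h(\zeta_i x^{1/N})$ and are distinct; this follows because $h'(0)\neq 0$, so $g_i$ and $g_j$ differ already at order $x^{1/N}$ when $\zeta_i\neq\zeta_j$. I would also remark that the hypothesis $\phi(0)=0$ is what makes the $m=0$ term absent, so no constant-term ambiguity arises. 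Beyond these bookkeeping points, the argument is a direct application of Lagrange inversion, so I expect no serious obstacle; the one place demanding a line of care is the passage through the formal $N$th-root extension and the verification that the roots-of-unity sum collapses the series back to integral exponents.
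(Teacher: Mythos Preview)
Your proposal is correct and follows essentially the same route as the paper's own proof: both introduce a formal $N$th root $x^{1/N}$, apply the single-variable Lagrange inversion with $Q^{1/N}$ to obtain $\phi(h(y))=\sum_{m\ge1}\frac{y^m}{m}[t^{m-1}]\phi'(t)Q(t)^{m/N}$, and then sum over the $N$ branches $h(\zeta_i x^{1/N})$ so that the roots-of-unity orthogonality kills all terms except $m=nN$. The only cosmetic difference is that the paper invokes Weierstrass preparation and the Newton--Puiseux theorem to justify that the $g_i$ are exactly the $h(\zeta_i x^{1/N})$, whereas you argue this directly from $h'(0)\neq 0$.
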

\begin{proof}
The usual Lagrange formula (see e.g. Gessel \cite[Thm. 2.1.1]{Gessel}) tells us that for $h(x) = xQ\big(h(x)\big)$, we have $
[x^n]\phi\big(h(x)\big) =\frac{1}{n} [t^{n-1}]\phi'(t)Q(t)^n\quad  \textnormal{for}\quad n>0\,.
$
Taking the unique Newton--Puiseux series satisfying $g(x^{\frac{1}{N}})=x^{\frac{1}{N}}Q^{\frac{1}{N}}\big(g(x^{\frac{1}{N}})\big)$ for a fixed $N$'th root of $Q$, we can write by Weierstrass preparation theorem together with the Newton--Puiseux theorem (see e.g. \cite[Chap. 3.2, Chap. 5.1,]{JohnPfi} every solution of \eqref{solutionto} by $g_k(x)=g(e^{\frac{2\pi ki}{N}}x^{\frac{1}{N}})$.  We obtain \begin{align*} \sum_{k=1}^N\phi\big(g_k(x)\big)&=\sum_{k=1}^N\sum_{n>0}\frac{1}{n}[t^{n-1}]\big(\phi'(t)Q^{\frac{n}{N}}(t)\big)\Big(e^{\frac{2\pi ik}{N}}x^{\frac{1}{N}}\Big)^n \\&=\sum_{n>0}\frac{1}{n}[t^{nN-1}]\Big(\phi'(t)Q^n(t)\Big)x^n\,.\end{align*}
\end{proof}

We prove now the main result that we will use throughout the next section.

\begin{proposition}
\label{mega theorem}
Let $f_0(\mathfrak{p},\cdot), f_1(\mathfrak{p},\cdot),\ldots, f_M(\mathfrak{p},\cdot)$ be power-series with $f(0,0)=1$, then define 
\begin{align*}
   \textnormal{Inv}(\vec{f}, \vec{\alpha},q)=1+\sum_{n>0}\int_{\big[\Hilb\big]^{\textnormal{vir}}}f_0(T^{\textnormal{vir}}_n)f_1(\alpha^{[n]}_1)\cdots f_M(\alpha^{[n]}_M) q^n\,,
\end{align*}
where $\vec{(-)}$ is meant to represent a vector, and we omit the additional variables. Then setting $\textnormal{rk}(\alpha_i)=a_i$, we have
\begin{align*}
\label{generalformula}
      \textnormal{Inv}(\vec{f},\vec{a},q)& =U\bigg\{ \bigg[\prod_{i=1}^M\frac{f_i( H(q))}{f_i(0)}\bigg]^{c_1(\alpha_i)\cdot c_3(X)}\bigg\}\,,
      \numberthis
\end{align*}
where $H(q)$ is the unique solution for

$$
      q=\frac{H(q)}{\prod_{j=1}^M f_j^{a_j}\big(H(q)\big)\,\{f_0\}\big(H(q)\big)}\,.
$$
\end{proposition}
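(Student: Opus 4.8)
The plan is to reduce the multi-insertion statement to the single-variable formula of Proposition~\ref{theorem big} and then perform a change of variables via the Lagrange inversion of Lemma~\ref{Gessel-Lagrange}. First I would apply Lemma~\ref{propmain}: writing $A_0(z) = \log\{f_0\}(z) = \log\big(f_0(z)f_0(-z)\big)$ for the virtual tangent insertion and $A_i(z) = \log f_i(z)$ for each $i = 1,\dots,M$, the integral $\int_{[\Hilb]^{\textnormal{vir}}} f_0(T^{\textnormal{vir}}_n)\prod_i f_i(\alpha_i^{[n]})$ becomes $\int_{\mathscr{H}_n}$ of $\exp\big[\sum_{k,v} a_{\llbracket\mathcal O_X\rrbracket}(k)\chi(v)\mu_{v,k} + \sum_i\sum_{k,v} a_{\alpha_i}(k)\chi(\alpha_i^\vee,v)\mu_{v,k}\big]$. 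This is exactly of the shape required by Proposition~\ref{theorem big}, with the index set $\mathscr{A} = \{\llbracket\mathcal O_X\rrbracket\}\cup\{\alpha_1,\dots,\alpha_M\}$ (treating repetitions or the trivial case appropriately), the generating series $A_{\llbracket\mathcal O_X\rrbracket}(z) = \log\{f_0\}(z)$ and $A_{\alpha_i}(z) = \log f_i(z)$. Note $A_\alpha(0) = \log f_\alpha(0) = 0$ in each case, so the normalization $a_\alpha(0,0)=0$ holds, and $z\frac{d}{dz}\big(A_\alpha(z) - A_\alpha(0)\big) = z\frac{d}{dz}A_\alpha(z)$.

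Next I would substitute into formula \eqref{explinearhilbform}. Writing $\mathrm{rk}(\llbracket\mathcal O_X\rrbracket) = 1$ and $\mathrm{rk}(\alpha_i) = a_i$, the exponent of the product over $\mathscr{A}$ carries $c_1(\llbracket\mathcal O_X\rrbracket)\cdot c_3(X) = 0$, so the $\llbracket\mathcal O_X\rrbracket$ factor drops out of the outer product entirely and contributes only through $\mathrm{rk}$ inside the exponential $\exp\big(\sum_\alpha \mathrm{rk}(\alpha)A_\alpha(z)\big)$. Thus Proposition~\ref{theorem big} gives
\begin{align*}
\textnormal{Inv}(\vec f,\vec\alpha,q) = \prod_{i=1}^M\exp\Big\{&\sum_{n>0}(-1)^n\sum_{l\mid n}\frac{n}{l^2}[z^n]\Big[z\tfrac{d}{dz}A_i(z)\,\cdot\\
&\exp\big(A_0(z) + \textstyle\sum_{j=1}^M a_j A_j(z)\big)^n\Big]q^n\Big\}^{c_1(\alpha_i)\cdot c_3(X)}.
\end{align*}
Set $W(z) = \exp\big(A_0(z) + \sum_j a_j A_j(z)\big) = \{f_0\}(z)\prod_j f_j(z)^{a_j}$ and let $\phi_i(z)$ be a power series with $\phi_i(0)=0$ and $\phi_i'(z) = z\frac{d}{dz}A_i(z) = z f_i'(z)/f_i(z)$; concretely $\phi_i(z) = \log f_i(z) - (\text{correction})$, but what matters is only $\phi_i' \cdot W^n$. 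Applying Lemma~\ref{Gessel-Lagrange} with $N=1$, $Q = W$, recognizing $[z^{n-1}](\phi_i'(z)W(z)^n) = n\,[x^n]\phi_i(h(x))$ where $h(x) = xW(h(x))$, and then resumming $\sum_{n}(-1)^n\sum_{l\mid n}\frac{n}{l^2}(\cdots)q^n$ against $\sum_n\frac{1}{n}[z^{n-1}](\phi_i' W^n)x^n$, the divisor sum $\sum_{l\mid n}\frac{1}{l^2}$ is precisely what the inverse of the universal transformation $U$ undoes (as spelled out in the discussion after \eqref{Qmap}, $U$ acts on logarithms by $f_n \mapsto \sum_{l\mid n}\frac{n^2}{l^2}f_n$). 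Tracking the sign $(-1)^n$, which amounts to replacing $q \mapsto -q$ inside, I would conclude that each inner exponential equals $U$ applied to $\log\big(f_i(H(q))/f_i(0)\big)$, where $H(q)$ is the solution to $H(q) = -q\,W(H(q))$... more carefully $q = H(q)/W(H(q))$ after absorbing the sign, i.e.\ $q = H(q)\big/\big(\{f_0\}(H(q))\prod_j f_j(H(q))^{a_j}\big)$, which is the stated equation. Taking the product over $i$ and raising to $c_1(\alpha_i)\cdot c_3(X)$ yields \eqref{generalformula}.

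The main obstacle I anticipate is the bookkeeping in the Lagrange inversion step: matching $z\frac{d}{dz}A_i(z)$ against $\phi_i'(z)$ requires care because $A_i$ need not itself be the $\phi_i$ that Lemma~\ref{Gessel-Lagrange} wants (the lemma integrates $\phi'$, and one must check $\phi_i(g(x))$ reassembles into $\log f_i(g(x))$ up to the constant $\log f_i(0)=0$), and the interplay of the three resummations — the divisor sum producing $U^{-1}$, the Lagrange sum producing the change of variables $H(q)$, and the sign $(-1)^n$ producing the $q\mapsto -q$ — must be done in the right order so that $U$ lands on the outside. A secondary subtlety is verifying that the $f_0$ (virtual tangent) insertion genuinely contributes only via $\{f_0\}$ in the defining equation for $H$ and not to the exponent $c_1(\alpha_i)\cdot c_3(X)$; this follows from $c_1(\llbracket\mathcal O_X\rrbracket) = 0$ together with the identity $A_{\llbracket\mathcal O_X\rrbracket}(z) = \log\{f_0\}(z)$ from Lemma~\ref{propmain}, and from Corollary~\ref{cor vanishing}/\eqref{reducedchtheta} ensuring only the $B_{6,8}$ terms survive. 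Once these are pinned down the rest is the routine manipulation already rehearsed in the proof of Proposition~\ref{theorem big}.
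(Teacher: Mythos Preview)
Your approach is essentially the same as the paper's: combine Lemma~\ref{propmain} with Proposition~\ref{theorem big}, drop the $f_0$-factor from the outer product via $c_1(\llbracket\mathcal O_X\rrbracket)=0$, then apply Lemma~\ref{Gessel-Lagrange} with $N=1$ and recognise the divisor sum $\sum_{l\mid n}n^2/l^2$ as the action of $U$ on logarithms. The only slip is in your identification of $\phi_i'$: since $[z^n]\big(z\tfrac{d}{dz}A_i\cdot W^n\big)=[z^{n-1}]\big(A_i'\cdot W^n\big)$, the correct choice is $\phi_i=A_i-A_i(0)=\log f_i-\log f_i(0)$ (so $\phi_i'=f_i'/f_i$, not $zf_i'/f_i$), exactly as the paper takes it; you flagged this bookkeeping yourself, and once fixed the argument goes through verbatim.
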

\begin{proof}
Combining Lemma \ref{propmain} with Proposition \ref{theorem big}, we obtain 
\begin{align*}
 \textnormal{Inv}(q)=\prod_{i=1}^M\textnormal{exp}\Big\{\sum_{n>0}\sum_{l|n}\frac{n}{l^2}(-1)^n[z^{n-1}]\Big[\frac{d}{dz}\Big(\textnormal{log}\big(f_i(z)\big)-\textnormal{log}\big(f_i(0)\big)\Big)\\
 \prod_{j=1}^Mf_j(z)^{a_jn}\{f_0\}^n(z)\Big]q^n\Big\}^{c_1(\alpha_i)\cdot c_3(X)}    
\end{align*}
setting $\phi=\textnormal{log}(f_i) - \textnormal{log}\big(f_i(0)\big)$, $Q=\prod_{i=1}^M  f_i^{a_i}\{f_0\}$ and using Lemma \ref{Gessel-Lagrange}, this gives
\begin{align*}
  & \prod_{i=1}^M\textnormal{exp}\Big\{\sum_{n>0}\sum_{l|n}\frac{n^2}{l^2}[t^n]\Big[\textnormal{log}\,f_i\big(H(t)\big)-\textnormal{log}\,f_i(0)\Big](-q)^n\Big\}^{c_1(\alpha_i)\cdot c_3(X)}\\
 & =\prod_{i=1}^M\textnormal{exp}\Big\{\sum_{n,m>0}n\sum_{k=1}^n[t^{m}]\Big[\textnormal{log}\,f_i\big(H(t)\big)-\textnormal{log}\,f(0)\Big](-e^{\frac{2\pi i k}{n}}q)^m\Big\}^{c_1(\alpha_i)\cdot c_3(X)}
  \\
     &=\prod_{i=1}^M\prod_{n>0}\prod_{k=1}^n\Big[\frac{f_i\big(H(-e^{\frac{2\pi ik}{n}}q)\big)}{f_i(0)}\Big]^{n c_1(\alpha_i)\cdot c_3(X)} = \prod_{i=1}^MU\Big[\frac{f_i\big(H(q)\big)}{f_i(0)}\Big]^{c_1(\alpha_i)\cdot c_3(X)}\,,
\end{align*}
where $H(q)$ is the solutions of \eqref{generalformula}.
\end{proof}

 \section{Computing the generating series of newly defined invariants}
 \label{sec:invariants}
We define and compute many new invariants using the formula derived in the previous section. These include tautological series,  Nekrasov genera and virtual Verlinde numbers. The tautological series and the Verlinde series were inspired by their analogs on surfaces that we recalled in the introduction. Unlike the surface case, the definition of DT4 Verlinde invariants requires an additional twist by a square-root line bundle that we explain in §\ref{sect verlinde}. We also argue that the appropriate generalization of the Nekrasov genus is obtained in terms of the real-oriented Witten genus due to the connection of DT4 invariants to real geometry. For the above invariants, we additionally study their symmetries and their relation to lower-dimensional geometries.

In particular, we obtain an explicit correspondence via the universal transformation $U$ between virtual Donaldson invariants on surfaces satisfying $c_1(S)^2=0$ and DT4 invariants on projective Calabi--Yau fourfolds. The virtual invariants for surfaces are the integrals against the virtual fundamental classes of Quot schemes $[\textnormal{Quot}_S(\mathbb{C}^N,\beta,n)]^{\textnormal{vir}}$. These classes have been used by Marian--Oprea--Pandharipande \cite{MOP1} to prove Lehn's conjecture \cite{Lehn} for the generating series of tautological invariants on Hilbert schemes of points. More recently their virtual fundamental classes  were studied by AJLOP \cite{AJLOP}, Lim \cite{Lim}, Johnson--Oprea--Pandharipande \cite{JOP} and Oprea--Pandharipande \cite{OP1}.  Segre--Verlinde correspondence for the virtual geometry of Quot-schemes was investigated in Oprea--Pandharipande \cite{OP1} and AJLOP \cite{AJLOP}. Combined with the 4D-2D correspondence, their result could be used to deduce Segre--Verlinde correspondence for Calabi--Yau fourfolds which is an entirely new phenomenon. As we found the fourfold version independently and were only made aware of the similarity to the Quot-scheme case later, we prefer to present our original computation first. 

Our goal in the final subsection is to recover the formulae of \cite{AJLOP} for integrals over Quot schemes on surfaces $S$ satisfying $c_1^2(S)$ when $\beta=0$. We do this to explain the 4D-2D correspondence as a consequence of both theories being governed by the same wall-crossing formula. Further study of invariants on degree 0 Quot schemes for surfaces and Calabi--Yau fourfolds is left to \cite{bojkoquot} where we no longer restrict to trivial vector bundles for the Quot-schemes.
\subsection{Segre series}
\label{sectalltaut}
Setting $f_0=1$ and $f_i=(1+t_ix)^{-1}$ in Proposition \ref{mega theorem}, we obtain the \textit{generalized $\textnormal{DT}_4$-Segre series}
$$
R(\vec{\alpha},\vec{t}; q) =1+\sum_{n>0}q^n\int_{[\textnormal{Hilb}^n(X)]^{\textnormal{vir}}}s_{t_1}(\alpha_1^{[n]})\cdots s_{t_M}(\alpha_M^{[n]})\,.
$$
Recall our notation for generating series of  Fuss-Catalan numbers from \eqref{fusscatalan series}.
\begin{theorem}
\label{theorem segre}
Let $\alpha_1,\ldots,\alpha_M \in G^0(X)$, $a=\textnormal{rk}(\alpha)$, then assuming Conjecture \ref{conjecture WC} for point-canonical orientations we have
$$
R(\vec{\alpha},\vec{t}; q) = U\Big[(1+t_1z)^{-c_1(\alpha_1)\cdot c_3(X)}\cdots (1+t_Mz)^{c_1(\alpha_M)\cdot c_3(X)}\Big]^{-1}\,,
$$
where $z=z(q)$ is the solution to $z(1+t_1z)^{a_1}\cdots(1+t_Mz)^{a_M} = q$. Moreover, we have the explicit expression:
\begin{equation}
\label{segreseries}
R(\alpha;q) = 
\left\{\begin{array}{ll}
    \displaystyle  U\big[\mathscr{B}_{a+1}(-q)^{c_1(\alpha)\cdot c_3(X)}\big]  & \textnormal{for }a\geq 0 \\
    &\\
  \displaystyle    U\big[ \mathscr{B}_{-a}(q)^{-c_1(\alpha)\cdot c_3(X)}\big] &\textnormal{for }a< 0 \\
\end{array} 
\right.
  \end{equation}
  where $\mB_a(q)$ are the generating series of Fuss--Catalan numbers \eqref{fusscatalan}. 
\end{theorem}
\begin{proof}
The first statement follows immediately from Proposition \ref{mega theorem}.
Specializing to the \textit{$DT_4$ Segre series}
$$R(\alpha;q)=\int_{[\textnormal{Hilb}^n(X)]^{\textnormal{vir}}}s_n(\alpha^{[n]})$$
we obtain for $a=\textnormal{rk}(\alpha)$
$$
R(\alpha;q)=U\big[(1+z)\big]^{-c_1(\alpha)\cdot c_3(X)}\,,\quad \textnormal{where}\quad q=z(1+z)^{a}\,.
$$
The theorem then follows from the following lemma.
\begin{lemma}
\label{lemma change}
Let $y$ be the solutions of $y(y+1)^{a}=q$ for $a>0$, then 
\begin{equation}
  \frac{1}{1+y}=\left\{\begin{array}{ll}
    \displaystyle  \mathscr{B}_{a+1}(-q)  & \textnormal{for }a\geq 0 \\
    &\\
  \displaystyle     \mathscr{B}_{-a}(q)^{-1} &\textnormal{for }a< 0 \\
\end{array} 
\right.
 \,.  
\end{equation}

\end{lemma}
\begin{proof}
We use Lemma \ref{Gessel-Lagrange}. For $a\geq 0$ we change variables $z=1/(1+y)$ this implies $g(z):= (1-z)/z^{a+1} =q$. Then the statement follows from
\begin{align*}
  [(z-1)^{n-1}]\Big(\frac{(z-1)}{g(z)}\Big)^n &= [(z-1)^{n-1}](-1)^n\big(1+(z-1)\big)^{(a+1)n} \\&= \frac{(-1)^n}{n}{(a+1)n \choose n-1}= \frac{(-1)^n}{(a+1)n+1}{(a+1)n+1\choose n}\\&=(-1)^nC_{n,a}\,,
\end{align*}
where we used the notation from \eqref{fusscatalan}.
When $a<0$, then change variables by $(u+1)=(z+1)^{-1}$ to get $z=-u(u+1)^{-1}$ and thus $-u(u+1)^{-a-1} = q$. Then $(1+z)^{-1} =(1+u)=\mathscr{B}_{-a}(q)^{-1}$ by the above.
\end{proof}
Using this, we obtain \eqref{segreseries}.
\end{proof}

\label{sectnewins}
\subsection{K-theoretic insertions}
\label{k-theoretic paragraph}
In this section, we use the Riemann--Roch formula for the twisted virtual structure sheaf of Oh--Thomas \cite{OT}.
\begin{theorem}[Oh--Thomas {\cite[Thm. 6.1]{OT}}]
Let $M$ be projective with a fixed choice of orientation and let $V\in G^0\big(M\big)$. Then 
\begin{equation}
\label{VRROT}
 \hat{\chi}^{\textnormal{vir}}\big(M,V) = \int_{[M]^{\textnormal{vir}}}\sqrt{\textnormal{Td}}(\mathbb{E})\textnormal{ch}(V)\,.
\end{equation}
\end{theorem}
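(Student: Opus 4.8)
This is the virtual Riemann--Roch theorem of Oh--Thomas, and the plan is to deduce it from their construction of $[M]^{\textnormal{vir}}$ and $\hat{\mathcal{O}}^{\textnormal{vir}}$ together with ordinary Grothendieck--Riemann--Roch. One first checks that $\sqrt{\textnormal{Td}}(\mathbb{E})$ is well defined: since $\mathbb{E}\cong\mathbb{E}^\vee[2]$ one has $\textnormal{ch}_{2k+1}(\mathbb{E})=0$, and $\textnormal{Td}(\mathbb{E})$ is a square in $H^{\textnormal{even}}(M,\Q)$ --- concretely $\textnormal{Td}(G)\textnormal{Td}(G^\vee)=\widehat{A}(G)^2$ for any $G$ --- the chosen orientation of $M$ selecting one of the two square roots. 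Since $M$ is projective, Grothendieck--Riemann--Roch for $\pi\colon M\to\textnormal{pt}$ (in the Baum--Fulton--MacPherson form, valid for singular $M$) gives $\hat{\chi}^{\textnormal{vir}}(V)=\chi(M,\hat{\mathcal{O}}^{\textnormal{vir}}\otimes V)=\deg\!\big(\textnormal{ch}(V)\cdot\textnormal{ch}(\hat{\mathcal{O}}^{\textnormal{vir}})\cap\tau_M(\mathcal{O}_M)\big)$, so it suffices to prove the refined identity $\textnormal{ch}(\hat{\mathcal{O}}^{\textnormal{vir}})\cap\tau_M(\mathcal{O}_M)=\sqrt{\textnormal{Td}}(\mathbb{E})\cap[M]^{\textnormal{vir}}$ in $A_*(M)_\Q$; here the singular-scheme contributions to $\tau_M(\mathcal{O}_M)$ must be absorbed by $\textnormal{ch}(\hat{\mathcal{O}}^{\textnormal{vir}})$, which is the point.

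For this refined statement I would pass to the local models underlying the Oh--Thomas construction: Zariski-locally $M$ is the zero scheme $Z(s)$ of an isotropic section $s$ of an oriented non-degenerate quadratic bundle $(E,q)$ of rank $2n$ on a smooth scheme $A$, with $\mathbb{E}|_M$ the induced self-dual complex and $[M]^{\textnormal{vir}}$, $\hat{\mathcal{O}}^{\textnormal{vir}}$ the associated square-root classes in Chow and in K-theory. By homotopy-invariance of the square-root localization one may, étale-locally, deform $s$ so that $(E,q)$ becomes split hyperbolic, $E=F\oplus F^\vee$ with $s=(t,0)$ and the orientation given by $F$; then $M=Z(t)$ is cut out by a \emph{regular} section of the rank-$n$ bundle $F$, the class $[M]^{\textnormal{vir}}$ localizes $c_n(F)\cap[A]$, and $\hat{\mathcal{O}}^{\textnormal{vir}}$ is the Koszul class $\lambda_{-1}F^\vee$ twisted by the square root of the orientation/virtual-canonical line bundle. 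Now ordinary GRR on the smooth $A$ applies, and the Koszul identity $\textnormal{ch}(\lambda_{-1}F^\vee)\,\textnormal{Td}(F)=c_n(F)$ combined with $\textnormal{Td}(F)\,e^{-c_1(F)/2}=\widehat{A}(F)=\sqrt{\textnormal{Td}(F)\textnormal{Td}(F^\vee)}$ turns the Todd corrections into $\sqrt{\textnormal{Td}}$ of the split self-dual complex, the twist supplying exactly the exponential factor that upgrades $\textnormal{Td}$ to $\widehat{A}$. This proves the identity in the local model; functoriality of $\tau_M$ and the cohomological nature of $\sqrt{\textnormal{Td}}(\mathbb{E})$ then let one patch the local equalities into the global one.

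\textbf{The main obstacle.} The hard part is not the cohomological bookkeeping but the construction itself: one must show that the K-theoretic ``square root'' $\hat{\mathcal{O}}^{\textnormal{vir}}$ --- which globally is \emph{not} the class of a Koszul complex of a regular section, since isotropic sections are never regular --- nonetheless has, after gluing, the Chern character demanded by the split local model, and that the orientation twist and the square root of $\textnormal{Td}(\mathbb{E})$ are matched on the nose (including the $\Z[2^{-1}]$ signs inherent to the $SO(2n)$-reduction). This is precisely where Oh--Thomas deploy their functoriality and cosection-localization machinery: a deformation to the normal cone \emph{inside the isotropic cone} of $(E,q)$, refined virtual pullbacks, and a compatibility of the square-root operations in $A_*$ and in K-theory. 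I would follow that route rather than attempting a shortcut, reducing the global compatibility to the explicit local computation above.
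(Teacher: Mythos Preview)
The paper does not prove this statement at all: its ``proof'' consists of a single sentence observing that this is \cite[Thm.~6.1]{OT} rewritten in the notation $\hat{\chi}^{\textnormal{vir}}(V)=\chi(M,\hat{\mathcal{O}}^{\textnormal{vir}}\otimes V)$ of \eqref{chivir}. In other words, the theorem is imported as a black box from Oh--Thomas, and the only content is a notational translation.

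Your proposal, by contrast, is a sketch of how one would actually \emph{prove} the Oh--Thomas virtual Riemann--Roch theorem from first principles: reducing via Baum--Fulton--MacPherson GRR to the refined identity $\textnormal{ch}(\hat{\mathcal{O}}^{\textnormal{vir}})\cap\tau_M(\mathcal{O}_M)=\sqrt{\textnormal{Td}}(\mathbb{E})\cap[M]^{\textnormal{vir}}$, passing to local quadratic-bundle models, deforming to the split hyperbolic case, and invoking the Koszul/Borel--Serre identity together with the $\widehat{A}$-vs-$\textnormal{Td}$ bookkeeping. As a high-level outline of the Oh--Thomas argument this is broadly reasonable, and you correctly identify the genuine difficulty (gluing the local square-root classes and matching orientations). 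But none of this is what the paper does or needs: for the purposes of this paper the appropriate ``proof'' is simply to cite \cite{OT} and note that the formula there becomes \eqref{VRROT} once one unwinds the definition of $\hat{\chi}^{\textnormal{vir}}$.
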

\begin{proof}
This is just Theorem \cite[Thm. 6.1]{OT} stated in terms of $\hat{\chi}^{\textnormal{vir}}(-)$ and using the notation of \eqref{chivir}.
\end{proof}

 Recall that $\textnormal{Td} = \mathscr{G}_f$ for $
 f(x) = \frac{x}{1-e^{-x}}$ which satisfies
\begin{equation}
\label{squareroottoddgenus}
  \{\sqrt{f}\}(x) = \frac{x}{e^{\frac{x}{2}}-e^{-{\frac{x}{2}}}}\,.   
\end{equation}
The above is called the $\hat{A}$-genus. An immediate consequence of Corollary \ref{cor vanishing} is
\begin{corollary}
\label{vanishingprop}
  For all $n>0$, $\hat{\chi}^{\textnormal{vir}}\big(\Hilb\big)=0$\,.
\end{corollary}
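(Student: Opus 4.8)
The plan is to apply Corollary~\ref{cor vanishing} with the polynomial $p_n$ chosen to extract the relevant K-theoretic information. Concretely, I would first observe that by the Oh--Thomas Riemann--Roch formula \eqref{VRROT} applied to $V = \mathcal{O}_{\Hilb}$, we have
\begin{equation*}
\hat{\chi}^{\textnormal{vir}}\big(\Hilb\big) = \int_{[\Hilb]^{\textnormal{vir}}}\sqrt{\textnormal{Td}}\big(T^{\textnormal{vir}}_n\big)\,,
\end{equation*}
using that the half-Euler class $\mathbb{E}$ appearing in \eqref{VRROT} has Chern character matching $T^{\textnormal{vir}}_n$ in the relevant sense (indeed $T^{\textnormal{vir}}_n$ \emph{is} the virtual tangent bundle, so $\mathbb{E}$ is its ``square root'' in the Oh--Thomas formalism and $\sqrt{\textnormal{Td}}(\mathbb{E})$ is a polynomial in $\textnormal{ch}_k(T^{\textnormal{vir}}_n)$).

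The key step is then to recognize $\sqrt{\textnormal{Td}}(\mathbb{E})$ as a universal power-series in the Chern characters $\textnormal{ch}_1(T^{\textnormal{vir}}_n), \textnormal{ch}_2(T^{\textnormal{vir}}_n), \ldots$ — that is, of the form $p_n(\textnormal{ch}_1(T^{\textnormal{vir}}_n), \textnormal{ch}_2(T^{\textnormal{vir}}_n), \ldots)$ for a formal power-series $p_n$ in infinitely many variables. This is immediate because $\sqrt{\textnormal{Td}} = \mathscr{G}_{\sqrt{f}}$ is a multiplicative genus, hence expressible via the splitting principle as an exponential of a linear combination of the $\textnormal{ch}_k$, exactly the shape required by the hypothesis of Corollary~\ref{cor vanishing}. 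Once this identification is in place, Corollary~\ref{cor vanishing} directly gives
\begin{equation*}
\int_{[\Hilb]^{\textnormal{vir}}}p_n\big(\textnormal{ch}_1(T^{\textnormal{vir}}_n),\textnormal{ch}_2(T^{\textnormal{vir}}_n),\ldots\big) = 0\,,
\end{equation*}
and therefore $\hat{\chi}^{\textnormal{vir}}\big(\Hilb\big) = 0$ for all $n > 0$.

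The only genuine subtlety — and the step I would be most careful about — is the compatibility between the Oh--Thomas ``half'' object $\mathbb{E}$ and the virtual tangent complex $T^{\textnormal{vir}}_n$ used throughout this paper, including the precise normalization $\textnormal{ch}(T^{\textnormal{vir}}_n) = -(\Omega\circ\iota_n)^*\big(\textnormal{ch}(\Delta^*\theta^\vee_{\mathcal{P}})\big) + 2$ from Lemma~\ref{lemma chern}. One must check that $\sqrt{\textnormal{Td}}(\mathbb{E})$, as it enters \eqref{VRROT}, is indeed a genuine (genus-type) characteristic class of $T^{\textnormal{vir}}_n$ and not something involving a non-square-rootable piece; in the Oh--Thomas setup this holds precisely because the obstruction theory is self-dual and the orientation provides the square root, so $\sqrt{\textnormal{Td}}(\mathbb{E})$ is the multiplicative genus associated to \eqref{squareroottoddgenus} evaluated on $T^{\textnormal{vir}}_n$. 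Everything else is a formal consequence of Corollary~\ref{cor vanishing}, whose proof already handled the vanishing of all $b_j$- and $B_7$-contributions.
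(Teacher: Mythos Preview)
Your proposal is correct and follows exactly the paper's approach: apply the Oh--Thomas Riemann--Roch formula \eqref{VRROT} with $V=\mathcal{O}$ to write $\hat{\chi}^{\textnormal{vir}}(\Hilb)=\int_{[\Hilb]^{\textnormal{vir}}}\sqrt{\textnormal{Td}}(T^{\textnormal{vir}}_n)$, then invoke Corollary~\ref{cor vanishing}. The concern you raise about $\mathbb{E}$ versus $T^{\textnormal{vir}}_n$ is unnecessary here --- in this paper's conventions $\mathbb{E}$ \emph{is} the virtual tangent bundle $T^{\textnormal{vir}}_n$ (see e.g.\ the proof of Theorem~\ref{theorem nekrasov}, where \eqref{VRROT} is used directly with $\sqrt{\textnormal{Td}}$ contributing the genus \eqref{squareroottoddgenus} applied to $T^{\textnormal{vir}}_n$), so there is no square-root subtlety to resolve.
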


\textit{Nekrasov genus} 
\begin{equation}
\label{eq:Nekrasov}
\mathscr{N}_y\big(\alpha^{[n]}\big)=\Lambda^\bullet_{y^{-1}}\big(\alpha^{[n]}\big)\otimes \textnormal{det}^{-\frac{1}{2}}\big(\alpha^{[n]}\cdot y^{-1}\big) \in G^0(M_{\alpha,L},\Q)(\!(y^{-\frac{1}{2}})\!)
\end{equation}
determined by the power series
 $$\mathscr{N}_y(x)=(y^{\frac{1}{2}}e^{-\frac{x}{2}}-y^{-\frac{1}{2}}e^{\frac{x}{2}})$$
gives us refinements of invariants considered in §\ref{sectalltaut} as
$$
K_n(\vec{\alpha},\vec{y})=\hat{\chi}^{\textnormal{vir}}\Big(\mathscr{N}_{y_1}\big(\alpha_1^{[n]}\big)\cdots\mathscr{N}_{y_M}\big(\alpha_M^{[n]}\big)\Big)\,,\qquad K(\vec{\alpha},\vec{y};q) = \sum_{n>0}K_n(\vec{\alpha},\vec{y})q^n\,.
$$
 Note that $\mathscr{N}_0(0)\neq 1$, but we can write it as $\mathscr{N}_y(x)=(1-y^{-1}e^{x})e^{-\frac{x}{2}}y^{\frac{1}{2}}$ and simply keep track of $y^{\frac{1}{2}}$ separately.
The DT$_4$ Segre series of §\ref{sectalltaut} can be obtained as a \textit{classical limit} of these invariants. Explicitly this means the following:
\begin{proposition}
\label{proplimit}
For any $\alpha_1,\ldots,\alpha_M\in G^0(X)$ with $a_j=\textnormal{rk}(\alpha_j)$ and $i_j$, such that $\sum_{j}i_j=n$, we have
\begin{align*}
     \lim_{y_1\to 1^+}\cdots  \lim_{y_M\to 1^+}(1-y_1^{-1})^{i_1-a_1n}\cdots (1-y_M^{-1})^{i_M-a_Mn}K_n(\vec{\alpha},\vec{y}) =\\
   (-1)^n\int_{\big[\textnormal{Hilb}^n(X)\big]^{\textnormal{vir}}}c_{i_1}\big(\alpha^{[n]}_1\big)\cdots c_{i_M}\big(\alpha^{[n]}_M\big) \,.  
\end{align*}
\end{proposition}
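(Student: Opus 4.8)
The plan is to reduce the statement to a direct comparison of two generating series computed via Proposition \ref{mega theorem}: the $K$-theoretic one attached to the Nekrasov genus, and the cohomological one attached to the Segre/Chern classes. Concretely, recall that $\mathscr{N}_y(x) = (1-y^{-1}e^x)e^{-x/2}y^{1/2}$ as a power-series in $x$, with the stray $y^{1/2}$ to be bookkept separately. First I would apply the Oh--Thomas Riemann--Roch formula \eqref{VRROT} to rewrite $K_n(\vec{\alpha},\vec{y})$ as $\int_{[\Hilb]^{\text{vir}}}\sqrt{\text{Td}}(\mathbb{E})\,\text{ch}\big(\mathscr{N}_{y_1}(\alpha_1^{[n]})\cdots\mathscr{N}_{y_M}(\alpha_M^{[n]})\big)$, and then recognize this integrand as exactly the type of multiplicative-genus insertion treated in Lemma \ref{propmain}: the factor $\sqrt{\text{Td}}(\mathbb{E})$ plays the role of $f_0(T^{\text{vir}}_n)$ with $\{f_0\}(x)$ given by \eqref{squareroottoddgenus}, and each $\text{ch}(\mathscr{N}_{y_j}(\alpha_j^{[n]}))$ is $\mathscr{G}_{f_j}(\alpha_j^{[n]})$ with $f_j(x) = \mathscr{N}_{y_j}(x)$ (the prefactor $y_j^{1/2}$ contributing, via $\text{rk}(\alpha_j^{[n]}) = n a_j$, an overall monomial $y_j^{n a_j/2}$ in the $q^n$ coefficient, which I would absorb by rescaling $q$).

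The main computation is then to feed $f_0 = \sqrt{\text{Td}}$ and $f_j(x) = 1-y_j^{-1}e^x$ (times $e^{-x/2}$, which I would also fold into $f_0$ since it is of the form $g(x)$ with $\{g\}(x)\cdot(\cdots)$ combining cleanly) into Proposition \ref{mega theorem}, and separately to feed $f_0 = 1$, $f_j(x) = (1+t_j x)^{-1}$. The point is that under the substitution $y_j = 1 + \varepsilon_j$ and the scaling $(1-y_j^{-1}) \sim \varepsilon_j$, the series $\mathscr{N}_{y_j}(x)$ degenerates: $1 - y_j^{-1}e^x = (1-y_j^{-1}) - y_j^{-1}(e^x - 1) \to -(e^x-1)$ up to the scalar $(1-y_j^{-1})$, and after dividing by the appropriate power of $(1-y_j^{-1})$ one extracts precisely a Chern class $c_{i_j}$ by the usual expansion $\prod_k(1 - \text{something})$. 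So I would set up the limit on the level of the closed-form answer in Proposition \ref{mega theorem}: the implicit equation $q = H/\big(\prod_j f_j^{a_j}(H)\{f_0\}(H)\big)$ and the answer $U\big[\prod_j (f_j(H)/f_j(0))^{c_1(\alpha_j)\cdot c_3(X)}\big]$ both have well-defined limits as $y_j\to 1^+$ after the rescaling, and I would check that the limiting equation becomes $q = H/\big(\prod_j(1+t_j H)^{-a_j}\cdot 1\big)$ with $t_j$ the formal Segre variable, matching Theorem \ref{theorem segre}, while the sign $(-1)^n$ emerges from $\{\sqrt{\text{Td}}\}(x) = x/(e^{x/2}-e^{-x/2}) = 1 + O(x^2)$ contributing trivially together with the $e^{-x/2}$ pieces, and from the overall $(-1)$ in each $-(e^x-1)$ factor combined with the bookkeeping of how many $c_{i_j}$'s are extracted — this is where the $(-1)^n$ with $n = \sum_j i_j$ comes from.

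An alternative, perhaps cleaner, route avoids Proposition \ref{mega theorem} and works directly: use Lemma \ref{propmain} to write both sides as integrals $\int_{\mathscr{H}_n}(\cdots)$ over the explicit class $\mathscr{H}_n$ of Theorem \ref{theoremhilb}, and observe that the $y_j\to 1^+$ limit commutes with the (finite, for fixed $n$) extraction of the $q^n$-coefficient and with integration against $\mathscr{H}_n$; then it suffices to check the limit identity at the level of the power-series $A_{\alpha_j}(z) = \log f_j(z)$ and $A_{\llbracket\mathcal{O}_X\rrbracket}(z) = \log\{f_0\}(z)$, namely that $(1-y_j^{-1})^{i_j - a_j n}$ times the $z^k$-data of $\log\mathscr{N}_{y_j}(z)$ limits to the $z^k$-data of $\log(1+t_j z)^{-1}$ after identifying the power of $(e^z-1)$ picked out by the requirement $\sum_j i_j = n$. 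I would then match this against Theorem \ref{theorem segre} (or its defining integral) to conclude.

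\textbf{Main obstacle.} I expect the genuinely delicate point to be the bookkeeping of the half-integral powers $y_j^{1/2}$ and $e^{-x/2}$ (and the $\sqrt{\text{Td}}$) so that they combine into honest power-series with the claimed overall sign $(-1)^n$, and in particular verifying that the \emph{exact} normalization of the limit — dividing by $(1-y_j^{-1})^{i_j - a_j n}$ rather than some other power — is the one that makes the rescaled implicit equation converge to the Segre one with no leftover constant. A secondary subtlety is justifying that the limit commutes with the universal transformation $U$ and with the formal solution $H(q)$ of the implicit equation; since $U$ and the Lagrange-inversion solution are defined coefficientwise and each $q^n$-coefficient involves only finitely many of the $f_j$-coefficients, this should be a routine continuity argument in the $\varepsilon_j$-adic (or classical) topology on the coefficient ring, but it needs to be spelled out.
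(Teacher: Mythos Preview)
Your proposal takes a fundamentally different route from the paper, and in doing so it loses something important: the paper's proof is \emph{unconditional}, while both of your routes go through Proposition~\ref{mega theorem} (or Lemma~\ref{propmain} and Theorem~\ref{theoremhilb}), all of which rest on Conjecture~\ref{conjecture WC}. The statement of Proposition~\ref{proplimit} involves only the Oh--Thomas virtual class and the K-theoretic Euler characteristic, both of which exist without any wall-crossing input, so proving it conditionally would be a genuine weakening.

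The paper's argument is entirely elementary and local: it proves, for any scheme $S$ and any $\gamma\in G^0(S)$ of rank $a$, that in the truncation $A^{\le k}(S,\Q)$ one has
\[
\lim_{y\to 1^+}(1-y^{-1})^{k-a}\,\textnormal{ch}\big(\mathscr{N}_y(\gamma)\big)=(-1)^k c_k(\gamma),
\]
by writing $\gamma=[E]-[F]$, factoring into Chern roots, and computing $\lim_{\lambda\to 0}(1-e^{-\lambda})^{k-a}\prod_i(e^{(\lambda-x_i)/2}-e^{-(\lambda-x_i)/2})\cdot\prod_j(\cdots)^{-1}$ directly. One then applies this to each $\alpha_j^{[n]}$ separately, and the constraint $\sum_j i_j=n$ is exactly what forces $\sqrt{\textnormal{Td}}(T^{\textnormal{vir}}_n)$ to contribute only its constant term $1$ after capping with the virtual class. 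No generating series, no implicit functions, no $U$-transform.

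Your approach could perhaps be pushed through conditionally, but the obstacle you flag is real: the rescaling factor $(1-y_j^{-1})^{i_j-a_jn}$ depends on both $n$ and the individual $i_j$, so it does not act uniformly on the $q$-series, and you would need to absorb the $a_jn$ part into a substitution $q\mapsto q\prod_j(1-y_j^{-1})^{-a_j}$ while tracking the $i_j$ part through auxiliary $t_j$-variables. This is workable but circuitous. The Chern-root computation sidesteps all of it and gives the sign $(-1)^n=\prod_j(-1)^{i_j}$ for free.
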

\begin{proof}
We conclude it from a more general result for any scheme $S$.

Define  $A^{>m}=\bigoplus_{i> m}A^i(S,\Q)$ and $A^{\leq m}=A^*(S,\Q)/A^{>m}$. Let $\gamma\in G^0(S)$,  $a=\textnormal{rk}(\gamma)$ and $k\geq 0$, then we claim that in $ A^{\leq k}$ the following holds:
$$
\mathscr{L}_k(\gamma):=\lim_{y\to 1^+}(1-y^{-1})^{k-a}\Big[\textnormal{ch}\Big(\Lambda^{\bullet}_{y^{-1}}\gamma\cdot\textnormal{det}^{-\frac{1}{2}}(\gamma\cdot y^{-1})\Big)\Big] = (-1)^kc_k(\gamma) \,:
$$
Let $\gamma = \llbracket E\rrbracket - \llbracket F\rrbracket$ and $c(E) = \prod_{i=1}^e(1-x_i)$, $c(F) = \prod_{i=1}^f(1-z_i)$, then in $A^{\leq l}$, we have
\begin{align*}
  \mathscr{L}_l(E)& = \lim_{\lambda\to 0^+}\Big[(1-e^{-\lambda})^{l-e}\prod_{i=1}^e(e^{\frac{\lambda}{2}-\frac{x_i}{2}}-e^{-\frac{\lambda}{2}+\frac{x_i}{2}})\Big] \\
  &= \lim_{\lambda\to 0^+}\Big[(\lambda - O(\lambda^2))^{l-e}\prod_{i=1}^e((\lambda-x_i)+O\big((\lambda -x_i)^3\big)\Big]=[\lambda^{-l}]\prod_{i=1}^e(1-\lambda^{-1}x_i)\\
  &=(-1)^lc_l(\llbracket E\rrbracket) \,.
\end{align*}
Similarly, we obtain in $A^{\leq m}$
\begin{align*}
 &\mathscr{L}_m(-\llbracket F\rrbracket) = \lim_{\lambda\to 0^+}\Big[(1-e^{-\lambda})^{m+f}\prod_{j=1}^f(e^{\frac{\lambda}{2}-\frac{z_j}{2}}-e^{-\frac{\lambda}{2}+\frac{z_j}{2}})^{-1}\Big] = [\lambda^{-m}]\prod^f_{j=1}(1-\lambda^{-1} z_j)^{-1}\\&=(-1)^mc_m(-\llbracket F\rrbracket)\,.  
\end{align*}
 We combine these two to obtain $\mathscr{L}_k(\gamma) = \sum_{l+m=k}\mathscr{L}_l(E)\mathscr{L}_m(-\llbracket F\rrbracket) = (-1)^kc_k(\gamma)$, where both equalities are true only in $A^{\leq k}$. 

To conclude the proof, we apply this statement to each $\mathscr{N}_{y_i}\big(\alpha_i^{[n]}\big)$ separately. Then using $\sum_{j}i_j=n$ we see from Theorem \ref{VRROT} that we are integrating $c_{i_1}\big(\alpha_1^{[n]}\big)\ldots c_{i_M}\big(\alpha_M^{[n]}\big)\sqrt{\textnormal{Td}}_0\big(T^{\textnormal{vir}}_n\big)$.
\end{proof}

Only the case where $\sum_{j}\textnormal{rk}(\alpha_j)=2b+1$ for $b\in \ZZ$ is going to be addressed below, because, under the additional assumption that $c_1(\alpha_i)$ are divisible by two, integrality holds. It is clear from computations that when either of the two conditions is violated, one will in general obtain coefficients contained in $\ZZ[1/2]$. Moreover, we mostly focus on $M=1$ and $\textnormal{rk}(\alpha_1)=1$ which is motivated by the work of Nekrasov \cite{NMF}, Nekrasov--Piazzalunga \cite{NP} and Cao--Kool--Monavari \cite{CKM}. 
 
\begin{theorem}
\label{theorem nekrasov}
If Conjecture \ref{conjecture WC} holds, then for all $\alpha_1,\ldots,\alpha_M$ with $a_i=\textnormal{rk}(\alpha_i)$,  $\sum_{i}a_i=2b+1$ and point-canonical orientations, we have
\begin{align*}
K(\vec{\alpha},\vec{y};q)&=\prod_{i=1}^MU\bigg[\frac{(y_i-1)^2u}{(y_i-u)^2}\bigg]^{\frac{1}{2}c_1(\alpha_i)\cdot c_3(X)}\,,\textnormal{\quad where}\quad
q=\frac{(u-1)u^b}{\prod_{j=1}^M (y^{\frac{1}{2}}-y^{-\frac{1}{2}}u)^{a_i}}\,.
\end{align*}
When $M=1$, $\alpha_1=\alpha$, $y_1=y$, $a_1=1$, then
$$
 K(\alpha,y; q) =\textnormal{Exp}\bigg[\chi\Big(X,q\frac{(TX-T^*X)(\alpha^{\frac{1}{2}}y^{\frac{1}{2}}-\alpha^{-\frac{1}{2}}y^{-\frac{1}{2}}}{(1-q\alpha^{\frac{1}{2}}y^{\frac{1}{2}})(1-q\alpha^{-\frac{1}{2}}y^{-\frac{1}{2}})}\Big)\bigg]\,,
$$
where $\textnormal{Exp}[f(y,q)] = \textnormal{exp}\Big[\sum_{n>0}\frac{f(y^n,q^n)}{n}\Big]$. In particular, the coefficients of $K(\vec{\alpha},\vec{y};q)$ lie in $\ZZ[y^{\pm\frac{1}{2}}_1,\ldots, y^{\pm\frac{1}{2}}_M]$ if $\frac{c_1(\alpha_i)}{2}\in H_2(X,\ZZ)$.
\end{theorem}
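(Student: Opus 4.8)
The plan is to apply Proposition \ref{mega theorem} directly with the specific power-series coming from the Nekrasov genus, and then massage the resulting change-of-variables and universal transformation into the stated closed forms. First I would set $f_0(x) = \sqrt{\mathrm{Td}}(x)$ (appearing because of the Oh--Thomas Riemann--Roch formula \eqref{VRROT}, which converts $\hat\chi^{\mathrm{vir}}$ into an integral against $\sqrt{\mathrm{Td}}(\mathbb{E}) = f_0(T^{\mathrm{vir}}_n)$) and $f_i(x) = \mathscr{N}_{y_i}(x) = y_i^{1/2}e^{-x/2} - y_i^{-1/2}e^{x/2}$ for $i = 1,\dots,M$, keeping track of the normalization $\mathscr{N}_{y_i}(0) = y_i^{1/2} - y_i^{-1/2}$ and the overall $\det^{-1/2}$ twist separately as indicated in the text just before the theorem. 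By \eqref{squareroottoddgenus} we have $\{f_0\}(x) = \dfrac{x}{e^{x/2} - e^{-x/2}}$, and a short computation gives $\{f_i\}$ and $f_i^{a_i}$ in terms of $e^{x/2}$. Plugging into the change of variables of Proposition \ref{mega theorem}, namely $q = H(q)\big/\big(\prod_j f_j^{a_j}(H)\,\{f_0\}(H)\big)$, and substituting $u = e^{H}$ (so that the transcendental $H$ becomes an algebraic variable $u$), the factor $\{f_0\}(H) = H/(u^{1/2} - u^{-1/2})$ cancels the $H$ in the numerator, and one is left with $q = (u-1)u^b\big/\prod_j (y_j^{1/2} - y_j^{-1/2}u)^{a_j}$ after collecting powers of $u^{1/2}$ using $\sum_i a_i = 2b+1$; this is exactly the stated relation. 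The same substitution turns $f_i(H)/f_i(0)$ into $-\,\dfrac{(y_i - u)\,u^{-1/2}}{y_i - 1}$ up to the bookkeeping factor, whence $U[f_i(H)/f_i(0)]^{c_1(\alpha_i)\cdot c_3(X)}$ becomes $U\big[(y_i-1)^2 u/(y_i-u)^2\big]^{\frac12 c_1(\alpha_i)\cdot c_3(X)}$; here one must check that the $u^{-1/2}$ and sign contributions are killed precisely because they enter $U$ as constants (recall $U$ acts trivially on constants and the parity is absorbed into the exponent being $\tfrac12 c_1(\alpha_i)\cdot c_3(X)$, which is where divisibility of $c_1(\alpha_i)$ by $2$ enters).

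For the special case $M=1$, $a_1 = 1$, $b = 0$, I would then recognize the resulting product as a plethystic exponential. The universal transformation $U$ applied to a one-variable series, combined with the Lagrange-inversion identity of Lemma \ref{Gessel-Lagrange} used in the proof of Proposition \ref{mega theorem}, produces exactly a sum of the form $\sum_{n>0}\frac{1}{n}(\text{localized contribution})(q^n,y^n,\alpha^n)$, which is the definition of $\mathrm{Exp}[\,\cdot\,]$. Concretely, I would identify the coefficient extracted by the Lagrange formula with the equivariant Euler characteristic $\chi\big(X, (TX - T^*X)(\alpha^{1/2}y^{1/2} - \alpha^{-1/2}y^{-1/2})/((1-q\alpha^{1/2}y^{1/2})(1-q\alpha^{-1/2}y^{-1/2}))\big)$ by expanding the rational function in $q$ as a geometric series and matching with $c_1(\alpha)\cdot c_3(X)$, using that $\mathrm{ch}(TX - T^*X)\,\mathrm{Td}(X)$ integrated against powers of $c_1(\alpha)$ picks out $c_1(\alpha)\cdot c_3(X)$ by Hirzebruch--Riemann--Roch (the degree-$3$ part of $\mathrm{ch}(TX-T^*X)$ is $-\tfrac13 c_3(X)$-type, but the precise constant is absorbed correctly by the identity $\chi(\alpha(n)) - \chi(\alpha)$ bookkeeping already used in §\ref{cao--kool section}). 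Finally, the integrality statement follows because $U$ and $\mathrm{Exp}$ preserve integrality of coefficients (as noted after \eqref{Qmap}), the relation for $q$ has coefficients in $\mathbb{Z}[y_i^{\pm 1/2}]$, and the exponents $\tfrac12 c_1(\alpha_i)\cdot c_3(X)$ are integers precisely when $\tfrac{c_1(\alpha_i)}{2} \in H_2(X,\mathbb{Z})$.

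The main obstacle I anticipate is the careful handling of the half-integer powers: the Nekrasov series $\mathscr{N}_{y}(x)$ does not have constant term $1$, so one works instead with $(1 - y^{-1}e^x)e^{-x/2}y^{1/2}$, and the stray factors $y^{1/2}$, $e^{-x/2}$, $u^{-1/2}$ must be tracked through both the change-of-variables equation and the universal transformation $U$ without sign or normalization errors. The key facts making this work are that $U$ is the identity on constants (so $y^{1/2}$-type prefactors drop out) and that the exponent $\tfrac12 c_1(\alpha_i)\cdot c_3(X)$ correctly absorbs the residual sign $(-1)$ from $f_i(H)/f_i(0) = -\,(\dots)$; I would verify this last point by a parity check on a low-degree coefficient. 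A secondary technical point is justifying that the substitution $u = e^{H(q)}$ is a legitimate formal change of variable — this is fine because $H(q) = q + O(q^2)$ has no constant term, so $u = 1 + O(q)$ and all manipulations take place in $\mathbb{Q}\llbracket q\rrbracket$ (or its extension by the $y_i^{\pm 1/2}$). Once these bookkeeping issues are settled, the rest is a routine but lengthy rational-function simplification, which I would not grind through here.
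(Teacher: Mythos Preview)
Your overall strategy for the first assertion is the same as the paper's: apply Proposition \ref{mega theorem} with $f_0=\sqrt{\mathrm{Td}}$ and $f_i=\mathscr{N}_{y_i}$, then substitute $u=e^{H}$. One point is off, though: the stray $u^{-1/2}$ in $f_i(H)/f_i(0)$ is \emph{not} disposed of by ``$U$ acting trivially on constants'' --- $u$ depends on $q$, so $u^{-1/2}$ is not a constant. What actually happens (and what the paper does) is the algebraic identity
\[
\Big(\tfrac{f_i(H)}{f_i(0)}\Big)^{2}=\frac{(y_i-u)^2}{(y_i-1)^2\,u}=\Big[\frac{(y_i-1)^2u}{(y_i-u)^2}\Big]^{-1},
\]
so squaring removes the half-integer power of $u$ and simultaneously converts the exponent $c_1(\alpha_i)\cdot c_3(X)$ into $\tfrac12 c_1(\alpha_i)\cdot c_3(X)$. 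No parity or sign miracle is needed.

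For the $M=1$, $a_1=1$ case your proposed route via Lagrange inversion is more work than necessary. The paper observes that with $b=0$ the relation $q=(u-1)/(y^{1/2}-y^{-1/2}u)$ is \emph{linear} in $u$ and solves it explicitly:
\[
u=\frac{1+qy^{1/2}}{1+qy^{-1/2}}.
\]
Plugging this back gives $\tfrac{(y-1)^2u}{(y-u)^2}=(1+qy^{1/2})(1+qy^{-1/2})$, and then one recognises $U\big[(1+qy^{1/2})(1+qy^{-1/2})\big]=M(qy^{1/2})M(qy^{-1/2})$ directly from the example $U^{-1}\big(M(-q)\big)=\tfrac{1}{1+q}$ given after \eqref{Qmap}. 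The plethystic form then follows from $M(q)=\mathrm{Exp}\big[q/(1-q)^2\big]$ together with Grothendieck--Riemann--Roch for $\chi(X,TX-T^*X)$. This is shorter and more transparent than extracting coefficients via Lemma \ref{Gessel-Lagrange}.
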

\begin{proof}
Using Proposition \ref{mega theorem} together with \eqref{squareroottoddgenus} and Theorem \ref{VRROT}, we obtain
$$
K(\vec{\alpha},\vec{y};q) =\prod_{i=1}^MU\bigg[\frac{y^{\frac{1}{2}}-y^{-\frac{1}{2}}}{y_i^{\frac{1}{2}}e^{-\frac{z_i}{2}}-y^{\frac{1}{2}}_ie^{\frac{z_i}{2}}}\bigg]^{c_1(\alpha_i)\cdot c_3(X)}\,,\quad\textnormal{where}\,\, q=\frac{e^{\frac{z_i}{2}}-e^{-\frac{z_i}{2}}}{\prod_{i=1}^M(y^{\frac{1}{2}}e^{-\frac{z_i}{2}}-y^{-\frac{1}{2}}e^{\frac{z_i}{2}})}\,,
$$
setting $u=e^{z}$ and using
$$
\sqrt{\frac{(1-uy_i^{-1})^2}{(1-y_i^{-1})^2u}} = \frac{\big(y^{\frac{1}{2}}u^{-\frac{1}{2}}-y^{-\frac{1}{2}}u^{\frac{1}{2}}\big)}{y^{\frac{1}{2}}-y^{-\frac{1}{2}}}
$$
we obtain 
$$
K(\vec{\alpha},\vec{y};q)=\prod_{i=1}^MU\bigg[\frac{(y_i-1)^2u}{(y_i-u)^2}\bigg]^{\frac{1}{2}c_1(\alpha_i)\cdot c_3(X)}\,,\textnormal{\quad where}\quad
q=\frac{(u-1)u^b}{\prod_{j=1}^M (y^{\frac{1}{2}}-y^{-\frac{1}{2}}u)^{a_j}}\,.
$$
When $M=1$ and $a_1=1$ then this gives 
$$
u = \frac{1+qy^{\frac{1}{2}}}{1+qy^{-\frac{1}{2}}} 
$$
which after plugging into the above formula gives rise to
\begin{align*}
K(\alpha,y;q) &= U\Big[\big(1+qy^{\frac{1}{2}}\big)\big(1+qy^{-\frac{1}{2}}\big)\Big]^{\frac{1}{2}c_1(\alpha)\cdot c_3(X)} \\
&= \sqrt{M(qy^{\frac{1}{2}})M(qy^{-\frac{1}{2}})}^{c_1(\alpha)\cdot c_3(X)}\\
   &=\textnormal{Exp}\Big[\frac{qy^{\frac{1}{2}}}{(1-qy^{\frac{1}{2}})^2}-\frac{qy^{-\frac{1}{2}}}{(1-qy^{-\frac{1}{2}})^2}\Big]^{-\frac{1}{2}c_1(\alpha)\cdot c_3(X)}\\
   &=  \textnormal{Exp}\bigg[\chi\Big(X,q\frac{(TX-T^*X)(\alpha^{\frac{1}{2}}y^{\frac{1}{2}}-\alpha^{-\frac{1}{2}}y^{-\frac{1}{2}}}{(1-q\alpha^{\frac{1}{2}}y^{\frac{1}{2}})(1-q\alpha^{-\frac{1}{2}}y^{-\frac{1}{2}})}\Big)\bigg]\,,
\end{align*}
where the second equality uses $M(q)=\textnormal{Exp}[\frac{q}{(1-q)^2}]$ and the last equality uses Grothendieck--Riemann--Roch.
\end{proof}
Now that we have computed the pairing between the Nekrasov genus and the twisted virtual structure sheaf, we would like to generalize these invariants. This direction was motivated by Cao--Kool--Monavari \cite[Rem. 1.19]{CKM} who started studying this question. A naïve guess would be to study the $\chi_y$-genus or the elliptic genus of Fantechi--Göttsche \cite{GF}, as was attempted by the authors of loc. cit. in the case of the $\chi_y$-genus. We explain first why this is not the right choice in the next remark that expands on Remark \ref{rem:Uandbracket} and discusses multiplicative genera for different structure groups. 
\begin{remark}
\label{rem:realgenus}
The work of Borisov--Joyce \cite{BJ} formulated DT4 invariants as integrals over \underline{real oriented} derived manifolds where the data of orientations is equivalent to the notion recalled above Theorem \ref{theorem CGJ}. While the work of Oh--Thomas provides a far better language for algebraic geometers to work with these invariants, it hides the original perspective that was already present in the work of Donaldson--Thomas \cite[p. 5]{DT96} who worked with the \underline{real} $\pm 1$ eigenvalue subspaces of the Hodge star $*:\Omega^{0,2}\to \Omega^{0,2}$. 

Because the $\chi_y$ genus of some manifold $M$ requires the presence of Kähler structure on $M$ to be defined as 
$$
\chi_y(M) =\sum_{p,q}(-1)^{p+q} h^{p,q}(M)y^p\,,
$$
it would not make sense to define it for a general real oriented derived manifold. In the language of \cite[§2]{Hopkins} (which is only useful if the reader is already familiar with it), this states that $\chi_y$-genus is the multiplicative genus for the complex cobordism ring $MU_*$ where $U$ stands for the unitary groups $U(n)$. This issue is not fixed by introducing reasonable insertions as supported by multiple computations that I have done with different potential options. All of them produced invariants that were not integer and showed no clear pattern. 

At first sight, this may seem to contradict the correspondence to virtual invariants of Quot schemes on surfaces in §\ref{4d2d1dsection} where computing $\chi_y$ genus with insertions makes perfect sense. However, when going from surface invariants to Calabi--Yau fourfolds, we always replace $g(T^{\vir}_n)$ by $\{g\}(T^\vir)$ in Theorem \ref{4d2d1d}. 

A motivation for the operation $\{-\}$ was given in Remark \ref{rem:Uandbracket} on the level of K-theory/cohomology and we already alluded to its connection to different types of structure groups. Here we explain that such symmetrizations appear because we are transitioning from multiplicative genera for $MU_*$ to multiplicative genera for the real oriented cobordism ring $MSO_*$ (see again \cite[§2]{Hopkins}). The latter are determined by power series that satisfy
$$
f(z) = f(-z)\,,
$$
so $f=\{g\}$ presents a class of examples. This modification is a consequence of complexifying the real tangent bundle of a real manifold $M$: $TM\otimes \CC$ can naturally be split into real vector bundles as
$$
TM\otimes \CC = TM^+\oplus TM^{-}
$$
where $\pm$ symbolize the 
$\pm 1$ eigenvalues of the natural complex conjugation. Up to lifting the problem along isotropic flag bundles just like in \cite[3.1]{OT}, there is also a splitting 
$$
TM\otimes \CC =\Lambda\oplus \Lambda^*
$$
for an isotropic subbundle $\Lambda\subset TM\otimes \CC$. We then see that 
$$
f(TM\otimes \CC)= f(\Lambda)f(\Lambda^*)
$$
explaining the $\{-\}$ symmetrization operation. This is parallel to what happened in Remark \ref{rem:Uandbracket} in K-theory of $\Hilb$.
\end{remark}
Based on the above remark, we propose a more appropriate set of invariants to study that are determined by multiplicative genera of the oriented real cobordism ring $MSO_*$. Examples of these invariants are the $\hat{A}$-genus, the $L$-genus, and the Witten genus. We already noticed that the twisted virtual structure sheaf of Oh--Thomas leads to the $\hat{A}$-genus in \eqref{squareroottoddgenus}. As its generalization, we focus on the Witten genus which we will relate to previously studied invariants of 3-folds. 

On a real oriented manifold $M$ of real dimension $m$, the Witten genus paired with $V\in K^0(M)$ is defined as the integral
$$
W(M,V,q) = \int_{[M]}\hat{A}(M)\ch(V)\prod_{k\geq 1}\textnormal{ch}\big(\textnormal{Sym}_{-q^k}^\bullet(TM\otimes \CC - m\cdot 1)\big)\,.
$$
It was originally defined by Witten in \cite{WittenDirac}. From Remark \ref{rem:Uandbracket} and Remark \ref{rem:realgenus}, it is evident that when $M$ is replaced by a moduli space of sheaves on a Calabi--Yau fourfold with the virtual tangent bundle $T^{\vir}_M$, one should also use $T^\vir_M$ instead of $TM\otimes \CC$. This warrants the following definition.
\begin{definition}
We define the \textit{DT$_4$ Witten-genus} of $M$ paired with $V\in K^0(M)$ by
$$
W(M,V,q) = \chi\Big(\hat{\mathcal{O}}^{\textnormal{vir}}\otimes V\cdot\bigotimes_{k\geq 1} \textnormal{Sym}_{-q^k}^{\bullet}(T^\vir_M-\textnormal{rk}_{\CC}(T^\vir_M)\cdot 1)\Big)
$$
whenever $\hat{\mO}^{\vir}$ is defined and the above Euler-characteristic\footnote{For a general $V$ in the complex topological K-theory, it is defined in terms of Chern characters and the virtual Riemann--Roch \protect\eqref{VRROT}.} exists.
\end{definition}

\begin{example}
Let $M$ be a moduli scheme with a perfect obstruction theory $\mathbb{F}^\bullet \xrightarrow{\textnormal{At}} \mathbb{L}_M$ as in Behrend--Fantechi \cite{BF}, then \cite{OT, CL, DYS} consider the 3-term obstruction theory $\mathbb{E}^\bullet =\mathbb{F}^\bullet \oplus (\mathbb{F}^\bullet)^\vee[2]\xrightarrow{(\textnormal{At},0)}\mathbb{L}_M$ of the $-2$\textit{-shifted cotangent bundle} .  In this situation, Oh--Thomas \cite[§8]{OT} show that
$$
\hat{\mathcal{O}}^{\textnormal{vir}} = \mathcal{O}^{\textnormal{vir}}\sqrt{K^{\textnormal{vir}}}\,,
$$
where $\mathcal{O}^{\textnormal{vir}}$ is the virtual structure sheaf of Fantechi--Göttsche \cite{GF}, $K^{\textnormal{vir}}=\textnormal{det}(\mathbb{F}^\bullet)$ and the square root is taken in $G^0(M,\mathbb{Z}[2^{-1}])$, where it always exists (see Oh--Thomas \cite[Lem. 2.1]{OT}). The term on the right hand side is in fact the \textit{twisted virtual structure sheaf} $\hat{\mathcal{O}}^{\textnormal{vir}}_{NO}$ of Nekrasov--Okounkov \cite{NO}. If $\textnormal{rk}(\mathbb{F}^\bullet) = 0$, i.e. the virtual dimension of $M$ is 0, then  
$$
W(M,V,q) = \chi\Big(\hat{\mathcal{O}}_{NO}^{\textnormal{vir}}\otimes \bigotimes_{k\geq 1}\textnormal{Sym}^{\bullet}_{-q^k}\big(\mathbb{F}^\bullet\oplus (\mathbb{F}^\bullet)^\vee\big)\otimes V\Big)\,,
$$
which is the \textit{virtual chiral elliptic genus} of Fasola--Monavari--Ricolfi \cite[Def. 8.1]{FMR} motivated by the work of physicists Benini--Bonelli--Poggi--Tanzini  \cite{Benini}. The discrepancy in signs in front of $q^k$ compared to \cite{FMR} comes from different conventions for $\Sym^\bullet_{(-)}$, but the end result is the same.

In the perfect scenario, one should work equivariantly to avoid the simplicity coming from rank 0. Such equivariant dimensional reduction was already considered in \cite{CKM2, KRdraft}, where it is only applied to the $\hat{A}$-genus.

For Hilbert schemes, it seems natural to study the \textit{Nekrasov--Witten genus}
$$W(\Hilb, \mathcal{N}_y(L^{[n]}),q)\,.$$
which generalizes Nekrasov's genus.  Using Proposition \ref{mega theorem}, the corresponding generating series can be expressed as 
$$
1+\sum_{n>0}z^nW\Big(\Hilb, \mathcal{N}_y(L^{[n]}),q\Big) =U\Big[ \frac{(y_i-1)^2u}{(y_i-u)^2}\Big]\,,
$$
where 
$$
z = \frac{u-1}{y^{\frac{1}{2}}-y^{-\frac{1}{2}}u}\prod_{k>0}\frac{(1-q^ku)(1-q^ku^{-1})}{(1-q^k)^2}\,.
$$
\end{example}
We leave the study of its properties for the future. 
\subsection{Untwisted K-theoretic invariants}
\label{sect verlinde}
 We propose a version of \textit{DT$_4$ Verlinde numbers} for Calabi--Yau fourfolds as higher dimensional analogs of Verlinde numbers for surfaces studied in \cite{EGL, MOPhigher, GotVer}. After computing generating series for these invariants, we obtain a simple Segre--Verlinde correspondence. 

 Due to the inherent twist by a square root line bundle in the construction of $\hat{\mO}^{\vir}$ in \cite[Def. 5.3]{OT}, the twisted virtual structure sheaf may lie in $G_0\big(\Hilb, \ZZ[1/2]\big)$ because such square-roots may not exist as integral classes. After pairing it with integral K-theory classes on $\Hilb$, we saw in our computations that this is indeed the case. In the definition of the Nekrasov genus as recalled in \eqref{eq:Nekrasov}, the square root line bundle $\det^{\frac{1}{2}}(L^{[n]})$ was used to fix this. This was also observed in \cite[Prop. 1.15]{CKM} for DT/PT-stable pairs.
 
 We follow this idea when defining K-theoretic invariants based on lower-dimensional classical ones (like the Verlinde series) in which case we choose $L=\mO_X$.
 \begin{definition}
\label{defuntwisted}
Let $E = \textnormal{det}\big(\mathcal{O}_X^{[n]}\big)$, then the \textit{untwisted virtual structure sheaf} is defined by
$$
\mathcal{O}^{\textnormal{uvir}} = \hat{\mathcal{O}}^{\textnormal{vir}}\otimes E^{\frac{1}{2}}.
$$
We define the \textit{untwisted virtual characteristic}
$$
\chi^{\textnormal{uvir}}\Big(\Hilb,A\Big) = \hat{\chi}^{\textnormal{vir}}\Big(\Hilb,E^{\frac{1}{2}}\otimes A\Big) = \int_{[\Hilb]^{\textnormal{vir}}}\sqrt{\textnormal{Td}}\big(T^{\textnormal{vir}}_n\big)\textnormal{ch}\big(E^{\frac{1}{2}}\big)\textnormal{ch}(A)\,.
$$
\end{definition}
 From the perspective of Lemma \ref{propmain} and Proposition \ref{mega theorem}, describing a large class of invariants, this choice is the simplest and the most natural one. The above modification changes $A_{\llbracket \mathcal{O}_X\rrbracket}(z)=z/(e^{\frac{z}{2}}-e^{-\frac{z}{2}})$ from Lemma \ref{propmain} to $$A_{\llbracket \mathcal{O}_X\rrbracket}(z)=z/(1-e^{-z})$$
 which is the generating series for the usual Todd genus. The reason for calling the resulting class $\mO^{\uvir}$  untwisted is given in \cite[5.2]{BH} -- the most compelling argument supporting Definition \ref{defuntwisted} thus far. There we work equivariantly with $X=\CC^4$ and we use Kool--Rennemo's \cite{KRdraft} that describes $\hat{\mO}^\vir$ on $\textnormal{Hilb}^n(\CC^4)$ explicitly. We see then that the only non-integral term that appears in the construction of $\hat{\mO}^\vir$ is the square root $E^{-\frac{1}{2}}$. Multiplying by $E^{\frac{1}{2}}$ then corresponds to removing the non-integral contribution. In the compact setting, such a picture is difficult to replicate even in the simplest example of 1 point. It may have been therefore better to call $\mO^{^\uvir}$ the \textit{untwisted twisted virtual structure sheaf} pointing out that we are not introducing a new and better definition of a virtual structure sheaf on top of \cite{OT}. But this is somewhat of a mouthful. 

 Because $\Hilb$ admits a natural derived refinement, there exists a virtual structure sheaf $\mO^{\vir}$ on it in the sense of \cite{KCF}. Unlike the virtual fundamental class $[\Hilb]^{\vir}$ and $\hat{\mO}^{\vir}$, it is constructed using the full self-dual obstruction theory, and it is not clear how it is related to either of them. The notation $\mO^{\uvir}$ was used to make a clear distinction between the two classes. 

 Next, we define the Verlinde series for Calabi--Yau fourfolds and its square-root version that is related to the Nekrasov genus.
\begin{definition}
Let $X$ be a Calabi--Yau fourfold, then its \textit{square root DT$_4$ Verlinde series} are defined for all  $\alpha\in G^0(X)$ by
$$
V^{\frac{1}{2}}(\alpha;q) = 1+\sum_{n>0}V^{\frac{1}{2}}_n(\alpha) q^n=1+\sum_{n>0}\hat{\chi};\big(\Hilb, \textnormal{det}^{\frac{1}{2}}(L^{[n]}_\alpha)\otimes E^a \big)q^n\,,
$$
where $L_\alpha=\textnormal{det}(\alpha)$, $a=\textnormal{rk}(\alpha)$. The \textit{DT$_4$ Verlinde series} is defined by
$$
V(\alpha;q)= 1+\sum_{n>0}V_n(\alpha)q^n=1+\sum_{n>0}\chi^{\textnormal{uvir}}\big(\Hilb, \textnormal{det}(\alpha^{[n]})\big)q^n\,.
$$
\end{definition}
\begin{remark}
\label{Verlinde-Nekrasov remark}
Just for the purpose of this remark, let us define \textit{negative square root Verlinde series} by
$$
V^{-\frac{1}{2}}(\alpha;q) = 1+\sum_{n>0}V^{-\frac{1}{2}}_n(\alpha) q^n=1+\sum_{n>0}\hat{\chi}^{\textnormal{uvir}}\big(\Hilb, \textnormal{det}^{-\frac{1}{2}}(L^{[n]}_\alpha)\otimes E^{-a} \big)q^n\,,
$$
for each $\alpha\in G^0(X)$, where $a=\textnormal{rk}(\alpha)$. 
\begin{enumerate}
    \item When $\alpha=\llbracket V\rrbracket$ is a vector bundle of rank $a$, one can show  that 
$$
    [y^{\mp\frac{n}{2}(2a+1)}]\Big(K_n(L_\alpha\oplus E^{\oplus 2a},y)\Big)=V^{\pm \frac{1}{2}}_n(V)\,.
$$

\item 
From the expression $K(L,y;q)=\sqrt{M(qy^{\frac{1}{2}})M(qy^{-\frac{1}{2}})}^{c_1(\alpha)\cdot c_3(X)}$, we obtain that Nekrasov generating series decouples into the positive and negative square-root Verlinde series:
$$
K(L,y;q) = V^{ \frac{1}{2}}(\mu(L)     y^{-1};q)V^{- \frac{1}{2}}(\mu(L) y^{-1};q)\,,
$$
where $\mu(L) = L-\mathcal{O}_X$, as it can be written as a product of series only with positive or negative powers of $y^{\frac{1}{2}}$. Thus
$$
V^{\pm\frac{1}{2}}(\mu(L);q) = M(q)^{\frac{1}{2}c_1(L)\cdot c_3(X)}\,.
$$
\item By applying Proposition \ref{mega theorem}, one can show that
$
   V(\alpha;q) = \big(V^{\frac{1}{2}}(\alpha;q)\big)^2 
$.
 \end{enumerate}
\end{remark}

\begin{theorem}
\label{theorem Segre--Verlinde}
Assuming Conjecture \ref{conjecture WC} holds, we have the following Segre--Verlinde correspondence for any choice of orientations on $\textnormal{Hilb}^n(X)$:
$$
V(\alpha;q)=R(\alpha;-q)\,.
$$
\end{theorem}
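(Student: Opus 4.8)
The plan is to evaluate both $V(\alpha;q)$ and $R(\alpha;-q)$ by Proposition \ref{mega theorem} and to observe that they become one and the same universal $U$-expression, governed by the same functional equation, once one substitutes $q\mapsto -q$.

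For the Verlinde side, recall from Definition \ref{defuntwisted} and Theorem \ref{VRROT} that
\[
\chi^{\textnormal{vir}}\big(\Hilb,\textnormal{det}(\alpha^{[n]})\big)=\int_{[\Hilb]^{\textnormal{vir}}}\sqrt{\textnormal{Td}}\big(T^{\textnormal{vir}}_n\big)\,\textnormal{ch}\big(E^{\frac12}\big)\,\textnormal{ch}\big(\textnormal{det}(\alpha^{[n]})\big),\qquad E=\textnormal{det}\big(\mathcal O_X^{[n]}\big).
\]
Since $\textnormal{ch}\big(\textnormal{det}(\alpha^{[n]})\big)=\mathscr{G}_{e^{x}}(\alpha^{[n]})$, $\textnormal{ch}\big(E^{\frac12}\big)=\mathscr{G}_{e^{x/2}}(\mathcal O_X^{[n]})$, and $\mathcal O_X^{[n]}=(\llbracket\mathcal O_X\rrbracket)^{[n]}$ is a tautological insertion, the series $V(\alpha;q)$ is exactly $\textnormal{Inv}(\vec{f},\vec{\alpha},q)$ of Proposition \ref{mega theorem} with $M=2$, $f_0=\sqrt{\textnormal{Td}}$, $f_1(x)=e^{x}$, $\alpha_1=\alpha$, $a_1=a:=\textnormal{rk}(\alpha)$, and $f_2(x)=e^{x/2}$, $\alpha_2=\llbracket\mathcal O_X\rrbracket$, $a_2=1$ (all $f_i(0)=1$). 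As $c_1(\mathcal O_X)=0$, the $i=2$ factor of $U\!\big[\prod_i\big(f_i(H)/f_i(0)\big)^{c_1(\alpha_i)\cdot c_3(X)}\big]$ is trivial, so Proposition \ref{mega theorem} gives $V(\alpha;q)=U\!\big[e^{H(q)}\big]^{c_1(\alpha)\cdot c_3(X)}$, while, using $\{\sqrt{\textnormal{Td}}\}(z)=z/(e^{z/2}-e^{-z/2})$ from \eqref{squareroottoddgenus}, the defining relation collapses to
\[
q=\frac{H(q)}{e^{aH(q)}\,e^{H(q)/2}\,\{\sqrt{\textnormal{Td}}\}(H(q))}=\frac{1-e^{-H(q)}}{e^{aH(q)}}.
\]
Setting $w=e^{-H(q)}$ this is $q=(1-w)\,w^{a}$ and $V(\alpha;q)=U[w^{-1}]^{c_1(\alpha)\cdot c_3(X)}$.

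For the Segre side, Proposition \ref{mega theorem} with $f_0=1$, $f_1(x)=(1+x)^{-1}$, $a_1=a$ (the $N=1$, $t_1=1$ instance underlying Theorem \ref{theorem segre}) gives $R(\alpha;q)=U\!\big[(1+H)^{-1}\big]^{c_1(\alpha)\cdot c_3(X)}$ with $q=H(1+H)^{a}$; replacing $q$ by $-q$ and writing $H''$ for the new root, $R(\alpha;-q)=U\!\big[(1+H'')^{-1}\big]^{c_1(\alpha)\cdot c_3(X)}$ with $-q=H''(1+H'')^{a}$. Putting $1+H''=w$ yields $R(\alpha;-q)=U[w^{-1}]^{c_1(\alpha)\cdot c_3(X)}$ and $-q=(w-1)w^{a}$, i.e.\ $q=(1-w)\,w^{a}$. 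The two $U$-arguments coincide, and the two power series $w=w(q)$ with $w(0)=1$ solve the same equation, so $V(\alpha;q)=R(\alpha;-q)$ for point-canonical orientations; alternatively one reads off $R(\alpha;\cdot)$ from the Fuss--Catalan form \eqref{segreseries} and checks the same identity.

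Finally, the hypothesis of point-canonical orientations is dropped as follows: $\Hilb$ is connected (Hartshorne \cite{Har66}), so any two orientations differ by a global sign, hence passing to an arbitrary orientation multiplies $[\Hilb]^{\textnormal{vir}}$ — and therefore the $q^{n}$-coefficient of both $V(\alpha;q)$ and of $R(\alpha;-q)$ — by the same $\varepsilon_n\in\{\pm1\}$, so the asserted identity is orientation-independent. I expect the only delicate point to be the bookkeeping of the $E^{\frac12}$-twist: it is invisible in the $U[\cdots]$ factor because $c_1(\mathcal O_X)=0$, but it supplies the extra $e^{H/2}$ in the change of variables, and it is precisely this factor together with the half-Todd genus that converts the Verlinde functional equation into $q=(1-w)w^{a}$, matching the Segre one.
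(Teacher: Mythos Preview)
Your proof is correct and follows essentially the same strategy as the paper: both sides are evaluated via Proposition~\ref{mega theorem} and then matched after a change of variable. The paper proceeds slightly differently in the details. It absorbs the $E^{\frac12}$-twist into the Todd contribution (so that $\{\sqrt{\textnormal{Td}}\}$ becomes the full Todd genus $z/(1-e^{-z})$), whereas you keep it as an extra tautological factor $f_2(x)=e^{x/2}$ with $\alpha_2=\llbracket\mathcal O_X\rrbracket$ and then kill it in the $U$-factor via $c_1(\mathcal O_X)=0$; the two bookkeepings are equivalent and yield the same functional equation $q=(1-e^{-H})e^{-aH}$. After that, the paper passes through the Fuss--Catalan form (Lemma~\ref{lemma change} and \eqref{segreseries}) to identify $V(\alpha;q)$ with $R(\alpha;-q)$, while you bypass this entirely by matching the $U$-arguments and functional equations directly via $w=e^{-H}$, resp.\ $w=1+H''$. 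Your route is a bit cleaner and avoids the auxiliary lemma.

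Your final paragraph on orientation independence is a genuine addition: the theorem is stated ``for any choice of orientations'', and you justify this explicitly using connectedness of $\Hilb$, whereas the paper's proof does not spell this out.
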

\begin{proof}
From Proposition \ref{mega theorem} together with \eqref{squareroottoddgenus} and Definition \ref{defuntwisted}, we see after setting $a=\textnormal{rk}(\alpha)$ that
$$
V(\alpha;q) = U(e^z)^{c_1(\alpha)\cdot c_3(X)}\,,\quad \textnormal{where}\quad q=\frac{(1-e^{-z})}{e^{az}}\,.
$$
Changing variables to $t=e^{z}-1$ we obtain 
$$
V(\alpha;q) = U(1+t)^{c_1(\alpha)\cdot c_3(X)}\,,\quad \textnormal{where}\quad q=t(t+1)^{-(a+1)}\,.
$$
We, therefore, see from Lemma \ref{lemma change} that
\begin{equation}
V(\alpha;q) = 
\left\{\begin{array}{ll}
    \displaystyle  U\big[\mathscr{B}_{a+1}(q)^{-c_1(\alpha)\cdot c_3(X)}\big]  & \textnormal{for }a\geq 0 \\
    &\\
  \displaystyle    U\big[ \mathscr{B}_{-a}(-q)^{c_1(\alpha)\cdot c_3(X)}\big] &\textnormal{for }a< 0 \\
\end{array} 
\right.
 \,. 
  \end{equation}
Comparing with \eqref{segreseries} concludes the proof.
\end{proof}
We can also study the series:
$$
Z(\vec{\alpha},\vec{k};q)=1+\sum_{n>0}q^n\chi^{\textnormal{uvir}}\big(\wedge^{k_1}\alpha_1^{[n]}\otimes\ldots \otimes\wedge^{k_M}\alpha_M^{[n]}\big)\,.
$$
We show that they give rise to interesting formulae. This was motivated by investigating the rationality question as studied in \cite{AJLOP} and their example \cite[Ex. 7]{AJLOP}
\begin{example}
For $\alpha\in G^0(X)$, take the series
$
Z(\alpha;q) = \sum_{n>0}\chi^{\textnormal{uvir}}(\alpha^{[n]})\,,
$
then it can be expressed as 
$$Z(\alpha;q) = \frac{\partial}{\partial y}Z(\alpha,y;q)|_{y=0}\,, \quad \textnormal{where}\quad Z(\alpha,y;q)=1+\sum_{n>0}\chi^{\textnormal{uvir}}(\Lambda^{\bullet}_{-y}\alpha^{[n]})\,.$$
Using Proposition \ref{mega theorem}, we have
$$
Z(\alpha,y;q) =\Big[\prod_{n>0}\prod_{k=1}^n\frac{1+ye^{z(-\omega^k_nq)}}{1+y}\Big]^{c_1(\alpha)\cdot c_3(X)}\,,\quad\textnormal{where}\quad q=\frac{1-e^{-z}}{(1+ye^z)^a}\,.
$$
After changing variables $1+u=e^{z}$, this gives 
$$
Z(\alpha,y;q) =\Big[\prod_{n>0}\prod_{k=1}^n\frac{1+y\big(1+u(-\omega^k_nq)\big)}{1+y}\Big]^{c_1(\alpha)\cdot c_3(X)}\,,\quad\textnormal{where}\quad q=\frac{u}{(1+u)(1+y+yu)^a}
$$
Acting with $\partial/\partial y$ on the last formula, using that the terms under the product are equal to 1 for $y=0$ and that the derivative $(\partial/\partial y)u$ exist we obtain from a product rule for infinite products
$$
Z(\alpha;q) =c_1(\alpha)\cdot c_3(X)\sum_{n>0}\sum_{k=1}^n u(-\omega^k_nq)\quad \textnormal{where} \quad u= \frac{q}{1-q}
$$
We can write this as
\begin{align*}
  Z(\alpha;q) =c_1(\alpha)\cdot c_3(X) \sum_{n>0}\frac{(-q)^n}{1-(-q)^n} 
=c_1(\alpha)\cdot c_3(X) S(-q) ,,  
\end{align*}
where $S(q)$ is the Lambert series as considered by Lambert \cite{lambert}.
\end{example}

\subsection{4D-2D-1D correspondence}
\label{4d2d1dsection}
We obtain a one-to-one correspondence between invariants on compact Calabi--Yau fourfolds, surfaces satisfying $c_1(S)^2 = 0$ and elliptic curves by comparing the invariants that we have already computed in Theorem \ref{mega theorem} with the ones that were obtained in \cite{AJLOP} using an entirely different approach relying on equivariant localization.\\

Recall from \cite[Lem. 1]{MOP1} that the virtual obstruction theory on $\textnormal{Quot}_S(\mathbb{C}^N,n)$ is given by
\begin{equation}
\label{eqClF}
\mathbb{F}=\Big(\tau_{[0,1]}\underline{\textnormal{Hom}}_{\textnormal{Quot}_S(\mathbb{C},n)}(\mathcal{I},\mathcal{F})\Big)^\vee\,,
\end{equation}
where $\mathcal{I}=(\mathcal{O}\to \mathcal{F})$ is the universal complex on $\textnormal{Quot}_S(\mathbb{C},n)$.
 When $N=1$, we have  $\textnormal{Quot}_S(\mathbb{C}^1,n)= \textnormal{Hilb}^n(S)$ and the virtual fundamental classes get identified by Oprea--Pandharipande \cite[eq. (31) ]{OP1} with
$$
\big[\textnormal{Hilb}^n(S)\big]^{\textnormal{vir}} = \big[\textnormal{Hilb}^n(S)\big]\cap c_n(K_\textnormal{Hilb}^{[n]}(S)\,^\vee)\,.
$$
using that $\textnormal{Hilb}^n(S)$ is smooth. Here, $\chi^{\textnormal{vir}}(-)$ denotes the virtual Euler characteristic of Fantechi--Göttsche \cite{GF}.
\begin{theorem}
\label{4d2d1d}
 Let $X$ be a Calabi--Yau fourfold for which Conjecture \ref{conjecture WC} holds and $S$ a surface satisfying $c_1(S)^2=0$. Let $f_1,\ldots,f_M, g$ be power-series, $\alpha_{Y,1},\ldots,\alpha_{Y,M}$ in $G^0(Y)$ for $Y=X,S$ and $\textnormal{rk}(\alpha_{Y,i})=a_i$, then there exist universal series $A_1,\ldots,A_M$ depending on $f_i,\{g\}$ and $a_i$ such that
\begin{align*}
    1+\sum_{n>0}q^n\int_{[\textnormal{Hilb}^n(S)]^\textnormal{vir}}f_1\big(\alpha_{S,1}^{[n]}\big)\cdots f_M\big(\alpha_{S,M}^{[n]}\big)\{g\}\big(T^{\textnormal{vir}}_{\textnormal{Hilb}^n(S)}\big) &=\prod_{i=1}^MA_i^{c_1(\alpha_{S,i})\cdot c_1(S)}\,,  \\
  1+\sum_{n>0}q^n\int_{[\Hilb]^{\textnormal{vir}}}f_1\big(\alpha_{X,1}^{[n]}\big)\cdots f_M\big(\alpha_{X,M}^{[n]}\big)g\big(T^{\textnormal{vir}}_n\big) &=\prod_{i=1}^MU(A_i)^{c_1(\alpha_{X,i})\cdot c_3(X)}\,.
\end{align*}
 Moreover, there are universal generating series $B_i$ depending on $f_i$, $a_i$, such that
\begin{align*}
   1+\sum_{n>0} q^n\chi^{\textnormal{vir}}\Big(f_1\big(\alpha^{[n]}_{S,1}\big)\otimes\ldots \otimes f_M\big(\alpha^{[n]}_{S,M}\big)\Big)&=\prod_{i=1}^MB_i^{c_1(\alpha_{S,i})\cdot c_1(S)}\,,\\
   1+\sum_{n>0} q^n\chi^{\textnormal{uvir}}\Big(f_1\big(\alpha^{[n]}_{X,1}\big)\otimes\ldots \otimes f_M\big(\alpha^{[n]}_{X,M}\big)\Big) &= \prod_{i=1}^MU(B_i)^{c_1(\alpha_{X,i})\cdot c_3(X)}\,.
\end{align*}
where we abuse the notation by thinking of $\mathscr{G}_{f_i}$ as mapping to $G^0(-)\otimes \Q$.
\end{theorem}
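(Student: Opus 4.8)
## Proof proposal for Theorem 4.22 (the 4D-2D-1D correspondence)

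\textbf{Overview of strategy.} The proof has two halves, one for each pair of displayed identities, and each half has the same structure: first establish the \emph{existence} and \emph{universality} of the relevant power series on the surface side, then establish the analogous expression on the Calabi--Yau fourfold side, and finally recognize that the fourfold expression is obtained from the surface one by applying the universal transformation $U$. The surface statements are not new: they are the content of \cite{OP1} (for the cohomological, virtual-fundamental-class version) and \cite{AJLOP} (for the K-theoretic version), once one observes that an elliptic surface $S$ has $K_S$ supported on fibers, so the virtual class $[\text{Hilb}^n(S)]^{\textnormal{vir}} = [\text{Hilb}^n(S)]\cap c_n\big((K^{[n]}_S)^\vee\big)$ and all tautological integrals localize to a multiple of $c_1(S)$; the universal series $A_i$ and $B_i$ are precisely the ones extracted in loc.\ cit. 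I would cite these directly rather than reprove them, noting only that the multiplicativity over the divisor class (the exponents $c_1(\alpha_{S,i})\cdot c_1(S)$) is exactly the ``$A^{c_1(\beta)\cdot c_1(S)}$'' shape already recalled in the introduction.

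\textbf{The fourfold side.} For the first identity on $\Hilb$, I would invoke Proposition \ref{mega theorem} with the multiplicative genus data $f_0 = g$ and $f_1,\dots,f_M$ as given. That proposition yields immediately
$$
1+\sum_{n>0}q^n\int_{[\Hilb]^{\textnormal{vir}}}f_1\big(\alpha_{X,1}^{[n]}\big)\cdots f_M\big(\alpha_{X,M}^{[n]}\big)g\big(T^{\textnormal{vir}}_n\big) = U\bigg\{\prod_{i=1}^M\Big(\tfrac{f_i(H(q))}{f_i(0)}\Big)^{c_1(\alpha_{X,i})\cdot c_3(X)}\bigg\},
$$
where $H(q)$ solves $q = H(q)\big/\big(\prod_j f_j^{a_j}(H(q))\,\{g\}(H(q))\big)$. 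Because $U$ is a ring homomorphism on the multiplicative group of power series with unit constant term (this is part of its definition and the bijectivity discussion before Proposition \ref{mega theorem}), we may pull the product and the exponents outside: $U$ of a product is the product of the $U$'s, and $U\big(f^{c}\big) = U(f)^{c}$ for a scalar exponent $c$. Hence the right-hand side equals $\prod_{i=1}^M U\big(A_i\big)^{c_1(\alpha_{X,i})\cdot c_3(X)}$ \emph{provided} we define
$$
A_i(q) := \frac{f_i\big(H(q)\big)}{f_i(0)}.
$$
The remaining point is to check that this $A_i$ is \emph{the same} universal series appearing on the surface side. For that I would run the Oprea--Pandharipande/Arbesfeld et al.\ computation on an elliptic surface in parallel: the surface integral, by the same Lagrange-inversion mechanism (Lemma \ref{Gessel-Lagrange} is the relevant form, and it is what \cite{OP1} uses), produces exactly $\prod_i A_i^{c_1(\alpha_{S,i})\cdot c_1(S)}$ with $A_i(q) = f_i(H(q))/f_i(0)$ and the \emph{same} change of variables $q = H/(\prod_j f_j^{a_j}\{g\})$; the only difference between the two geometries is that on the surface one gets $A_i$ directly and on the fourfold one gets $U(A_i)$, the extra $U$ being the combinatorial trace of the factor $\sum_{l\mid n} n/l^2$ that is visible in Theorem \ref{theoremhilb} and Proposition \ref{theorem big} but absent in the surface wall-crossing. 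The second pair of identities (the untwisted K-theoretic ones, with $\chi^{\textnormal{vir}}$) is handled identically: here one takes $g = \sqrt{\textnormal{Td}}$ composed with the relevant square-root, uses \eqref{squareroottoddgenus}, and views $\mathscr{G}_{f_i}$ as landing in $G^0(-)\otimes\Q$ rather than $H^*$; the Oh--Thomas Riemann--Roch formula \eqref{VRROT} converts the K-theoretic Euler characteristic into a cohomological integral of the previous type, and $B_i$ is the resulting universal series, matching \cite[Thm.\ 13]{AJLOP}.

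\textbf{Main obstacle.} The genuinely delicate point is \emph{not} either individual computation but the assertion that the surface series and the fourfold series are literally governed by the \emph{same} universal power series $A_i$ (resp.\ $B_i$) up to the single transformation $U$ --- i.e.\ that the change of variables and the Lagrange-inversion kernel agree on the nose. The surface side has $Q = \prod_i f_i^{a_i}\{g\}$ with $\{g\}(z) = g(z)g(-z)$, and the fourfold side (via Proposition \ref{mega theorem}, whose $f_0$-dependence enters only through $\{f_0\}$) has the identical $Q$; this matching of the role of $g$ is exactly why the theorem is stated with $h(z) = g(z)g(-z)$ in the introduction. I would make this comparison explicit by writing both generating series in ``pre-$U$'' form --- the fourfold one as $\exp\big[\sum_{n>0}\sum_{l\mid n}\tfrac{n}{l^2}(-1)^n[z^n]\{\dots\}q^n\big]$ from Proposition \ref{theorem big}, the surface one as $\exp\big[\sum_{n>0} (\dots) q^n\big]$ from \cite{OP1}/\cite{AJLOP} --- and observing that the coefficient bracket $[z^n](\dots)$ is identical in the two, so that the surface exponent is $\log A_i$ and the fourfold exponent is $\log U(A_i) = \sum_{l\mid n}\tfrac{n^2}{l^2}[q^n]\log A_i$, which is precisely the definition of $U$ given just before Proposition \ref{mega theorem}. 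Beyond that, the only bookkeeping is to verify that the elliptic hypothesis on $S$ is used exactly once --- to reduce the surface integral to a power of $c_1(S)$ --- and that when $g$ (resp.\ the square root of $\textnormal{Td}$) is trivial one recovers the Segre series of Theorem \ref{theorem segre} and the Verlinde series of Theorem \ref{theorem Segre--Verlinde}, which is the consistency check advertised after the statement.
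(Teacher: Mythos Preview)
Your proposal is correct and follows essentially the same approach as the paper: cite \cite{AJLOP} (and \cite{OP1}) for the surface side with $K_S^2=0$ to obtain the explicit form $A_i(q)=f_i(H(q))/f_i(0)$ with $q=H/(\prod_j f_j^{a_j}\{g\})$, invoke Proposition \ref{mega theorem} for the fourfold side, and observe that the two changes of variables coincide so that the fourfold answer is $U$ applied to the surface answer; for the K-theoretic part, use the virtual Riemann--Roch formulae on each side together with \eqref{squareroottoddgenus} and Definition \ref{defuntwisted} to see that the $\sqrt{\textnormal{Td}}\cdot E^{1/2}$ contribution on the fourfold matches the full Todd genus on the surface. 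The paper's proof is slightly terser but structurally identical.
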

\begin{proof}
AJLOP \cite{AJLOP} prove general formulae for generating series
$$
\sum_{n\in \ZZ}q^n\int_{[\textnormal{Quot}_S(\CC^N,\beta,n)]^{\textnormal{vir}}}f_1\big(\alpha_1^{[n]}\big)\cdots f_M\big(\alpha_M^{[n]}\big)h\big(T^{\textnormal{vir}}_{\textnormal{Hilb}^n(S)}\big)\,.
$$
When $\beta=0$, $N=1$ and $K^2_S=0$ the results of \cite[§2.2 \& Eq. (14)]{AJLOP} imply
\begin{align*}
   &1+\sum_{n>0}q^n\int_{[\textnormal{Hilb}^n(S)]^\textnormal{vir}}f_1\big(\alpha_1^{[n]}\big)\cdots f_M\big(\alpha_M^{[n]}\big)h\big(T^{\textnormal{vir}}_{\textnormal{Hilb}^n(S)}\big)=\prod_{i=1}^M \bigg[\frac{f_i\big(H(q)\big)}{f_i(0)}\bigg]^{c_1(\alpha_i)\cdot c_1(S)}\,,\\
  &\textnormal{where}\quad    q=\frac{H}{\prod f_i^{a_i}(H)\,h(H)} \,.
\end{align*}
 Replacing $h$ with $\{g\}$, and comparing to the result of Proposition \ref{mega theorem}, we obtain the first two formulae. 

Using \eqref{squareroottoddgenus}, we see that $\big[\sqrt{Td}\big]\Big(T^{\textnormal{vir}}_{\textnormal{Hilb}^n(S)}\Big)E^{\frac{1}{2}}$ contributes $$
\frac{x}{1+e^{-x}}
$$
to the variable change above. This corresponds precisely to the Todd-genus $\textnormal{Td}\Big(T^{\textnormal{vir}}_{\textnormal{Hilb}^n(S)}\Big) = \frac{x}{1+e^{-x}}\Big(T^{\textnormal{vir}}_{\textnormal{Hilb}^n(S)}\Big)$. The second result for a surface $S$ satisfying $c_1(S)^2=0$ then follows from the virtual Riemann--Roch of Fantechi--Göttsche \cite{GF} together with the definition of $\chi^{\textnormal{uvir}}(-)$ in §\ref{k-theoretic paragraph}. 
\end{proof}
\begin{remark}
By the work of Oprea--Pandharipande \cite[Lem. 34]{OP1} there is a relation between integrals over $[\textnormal{Quot}_C(\mathbb{C}^N,n)]$ and $[\textnormal{Quot}_S(\mathbb{C}^N,n)]^{\textnormal{vir}}$, where the former is a smooth moduli space of dimension $nN$ and $C$ is a smooth anti-canonical curve in $S$ (if it exists). When $\alpha \in K^0(S)$ and $\Theta$ is the theta divisor on $C$, we obtain
$$
1+\sum_{n>0}\int_{C^{[n]}}f\big((\alpha|_C\big)^{[n]})g\big(TC^{[n]}+\big(\Theta^{[n]}\big)^\vee-\Theta^{[n]}\big)
$$
Interestingly, this gives a precise relation between the generating series of three sets of virtual invariants in 3 different dimensions. We will unify these results by applying similar arguments to the ones in §\ref{cao--kool section} and §\ref{VFC section} to extend the results of AJLOP \cite{AJLOP}, \cite{Lim}, and \cite{OP1} in the author's future work \cite{bojkoquot} where we replace $\mathbb{C}^N$ by a general torsion free sheaf $E$. In the Calabi--Yau case, we will assume $E$ additionally to be rigid and simple to construct virtual fundamental classes.
\end{remark}
Using that $[\textnormal{Hilb}^1(X)]^{\textnormal{vir}}=\textnormal{Pd}\big(c_3(x)\big)$ together with Theorem \ref{VRROT} and that we have natural isomorphisms $\Lambda^i(TX|_x)\cong Ext^i(\mathcal{O}_x,\mathcal{O}_x)$ which hold in a family one can show:
\begin{corollary}
\label{corollary n=1}
All of the results of this section hold $\textnormal{mod }q^2$.
\end{corollary}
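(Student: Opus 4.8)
The plan is to check every identity of this section one coefficient at a time up to order $q^{1}$; since $\textnormal{Hilb}^{0}(X)$ is a point and both sides have $q^{0}$-coefficient $1$, only the $q^{1}$-coefficient is at stake, and computing it uses only the geometry of $\textnormal{Hilb}^{1}(X)\cong X$, so no input from Conjecture \ref{conjecture WC} is needed.

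First I would unwind the universal-series side modulo $q^{2}$. For $f\in(R\llbracket t\rrbracket)_{1}$ only the $n=1$ factor of $U(f)$ affects the coefficient of $q^{1}$, because $\prod_{k=1}^{n}f(-e^{\frac{2\pi ik}{n}}q)^{\pm n}=1+O(q^{n})$ for $n\geq 2$ by the logarithmic identity recalled after \eqref{Qmap}; hence $U(f)(q)\equiv f(\mp q)^{\pm1}\pmod{q^{2}}$. In Theorem \ref{4d2d1d} and Proposition \ref{mega theorem} the auxiliary variable is fixed by $q=H/\big(\prod_{j}f_{j}^{a_{j}}(H)\,\{g\}(H)\big)$; since $\{g\}(x)=g(x)g(-x)$ is even it has no linear term and $\prod_{j}f_{j}^{a_{j}}(H)=1+O(H)$, so $H(q)=q+O(q^{2})$ and $A_{i}(q)=f_{i}(H(q))/f_{i}(0)=1+f_{i,1}\,q+O(q^{2})$ with $f_{i,1}=[x^{1}]f_{i}$; the analogous statement holds for the series $B_{i}$ of the $\chi^{\textnormal{vir}}$-formulas, with the Todd-type weight replacing $\{g\}$. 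Using multiplicativity of $U$, the right-hand side of each formula of Theorem \ref{4d2d1d} is therefore $1\mp q\sum_{i}f_{i,1}\big(c_{1}(\alpha_{X,i})\cdot c_{3}(X)\big)+O(q^{2})$.

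On the geometric side the $q^{1}$-coefficient is the integral of $\prod_{i}\mathscr{G}_{f_{i}}(\alpha_{i}^{[1]})\cdot\mathscr{G}_{g}(T^{\textnormal{vir}}_{1})$ (resp. of $\sqrt{\textnormal{Td}}(T^{\textnormal{vir}}_{1})\,\textnormal{ch}(E^{\frac{1}{2}})\,\textnormal{ch}(\prod_{i}f_{i}(\alpha_{i}^{[1]}))$) over $\big[\textnormal{Hilb}^{1}(X)\big]^{\textnormal{vir}}$. Here $\textnormal{Hilb}^{1}(X)\cong X$ and $\big[\textnormal{Hilb}^{1}(X)\big]^{\textnormal{vir}}=\pm\,\textnormal{Pd}(c_{3}(X))$ by \eqref{Mpclass}, the sign being the one already fixed by the point-canonical orientation (compare the proof of Theorem \ref{theorem workhorse}, where $I_{1}(L)=-I(L)$). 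The universal complex $\mathscr{I}_{1}$ is the class of the ideal sheaf of the diagonal, so $\alpha^{[1]}=\pi_{2\,*}(\pi_{X}^{*}\alpha\otimes\mathcal{O}_{\Delta})=\alpha$ and $\mathscr{G}_{f_{i}}(\alpha_{i}^{[1]})=1+f_{i,1}\,c_{1}(\alpha_{i})+\cdots$, the dots being classes of degree $\geq 4$. Moreover $T^{\textnormal{vir}}_{1}=\underline{\textnormal{Hom}}(\mathscr{I}_{1},\mathscr{I}_{1})_{0}[1]$ satisfies $[T^{\textnormal{vir}}_{1}]=[(T^{\textnormal{vir}}_{1})^{\vee}]$ in $K^{0}(X)$, since the Serre-duality isomorphism \eqref{serredualityext} restricts along $\mathscr{P}$ (exactly as in the proof of Lemma \ref{lemma chern}) to a self-duality of $\mathcal{E}\textnormal{xt}_{1}$ up to the even shift $[-4]$; in particular $\textnormal{ch}_{1}(T^{\textnormal{vir}}_{1})=0$, so $\mathscr{G}_{g}(T^{\textnormal{vir}}_{1})$, $\sqrt{\textnormal{Td}}(T^{\textnormal{vir}}_{1})$ and $\textnormal{ch}(E^{\frac{1}{2}})$ (with $E=\textnormal{det}(\mathcal{O}_{X}^{[1]})=\mathcal{O}_{X}$) all have trivial degree-$2$ part. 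Hence the degree-$2$ component of the integrand is $\sum_{i}f_{i,1}\,c_{1}(\alpha_{i})$ and the $q^{1}$-coefficient equals $\pm\int_{X}c_{3}(X)\cdot\sum_{i}f_{i,1}c_{1}(\alpha_{i})=\pm\sum_{i}f_{i,1}\big(c_{1}(\alpha_{i})\cdot c_{3}(X)\big)$, matching the universal-series side; the identical computation with the $\sqrt{\textnormal{Td}}$-genus handles the $\chi^{\textnormal{vir}}$-identities and the series $B_{i}$.

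The main obstacle is a sign audit: one must confirm that the orientation-induced sign of $\big[\textnormal{Hilb}^{1}(X)\big]^{\textnormal{vir}}$ (produced by the pair construction, not directly by $M_{p}$) is opposite to the sign in $U(f)(q)\equiv f(\mp q)^{\pm1}\pmod{q^{2}}$, once all the factors $(-1)^{n}$ in the proof of Theorem \ref{theoremhilb} and the $(-q)^{n}$ in the proof of Proposition \ref{mega theorem} are tracked. This consistency is forced, since the $n=1$ instance of the Cao--Kool formula $M(-q)^{c_{1}(L)\cdot c_{3}(X)}$ with $[q^{1}]M(-q)=-1$ is already the special case $M=1$, $f_{1}=(1+tx)^{-1}$, $g=1$ of the present statement. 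A secondary technical point is the careful verification that the trace-free part and the shift in $T^{\textnormal{vir}}_{1}$ do not disturb the $K$-theoretic self-duality invoked above.
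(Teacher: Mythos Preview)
Your proposal is correct and follows essentially the same approach as the paper: reduce to $n=1$, identify $\textnormal{Hilb}^{1}(X)\cong X$ with $[\textnormal{Hilb}^{1}(X)]^{\textnormal{vir}}=\textnormal{Pd}(c_{3}(X))$, and compute both sides directly. The one noteworthy difference is how you handle $T^{\textnormal{vir}}_{1}$: the paper invokes the family isomorphism $\Lambda^{i}(TX|_{x})\cong\textnormal{Ext}^{i}(\mathcal{O}_{x},\mathcal{O}_{x})$ (i.e.\ the Koszul resolution of the diagonal) to identify $T^{\textnormal{vir}}_{1}$ in $K^{0}(X)$ explicitly, whereas you extract only $\textnormal{ch}_{1}(T^{\textnormal{vir}}_{1})=0$ from Serre self-duality. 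Your argument is lighter and suffices for the degree-$2$ integral against $\textnormal{Pd}(c_{3}(X))$; the paper's route gives the full $K$-class, which would be needed if one wanted, say, the $q^{1}$-coefficient of a genus involving higher Chern classes of $T^{\textnormal{vir}}$ alone (though by Corollary~\ref{cor vanishing} such invariants vanish anyway). Your explicit unwinding of $U$ and of $H(q)$ modulo $q^{2}$ is a useful addition that the paper leaves implicit.
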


\subsection{4D-2D correspondence explained by wall-crossing}
\label{sec:4d2d}
When $S$ is a surface with $c_1(S)^2=0$, we will follow the same line of arguments for the classes $[\Quot]^{\vir}$  as we did for $[\Hilb]^{\vir}$ in §\ref{VFC section}. We will again only need the following small class of invariants
$$
I(L,q) = 1+\sum_{n>0}q^n\int_{\big[\textnormal{Quot}_S(\mathbb{C}^1,n)\big]^{\textnormal{vir}}}c_n(L^{[n]})
$$
 to determine $[\textnormal{Quot}_S(\mathbb{C}^N,n)]^{\textnormal{vir}}$ as an element in $H_{nN}(\mathcal{P}_S)$. For this, we will need a different definition of the moduli stack of pairs. For simplicity, we assume that $b_1(S)=0$, but we then drop this requirement in Remark \ref{remark b1}. The result describing descendent integrals over $\Quot$ in Lemma \ref{lemma workhorse} is new but its main consequence noted down in Proposition \ref{mega theorem surfaces} was already shown by AJLOP \cite[§2]{AJLOP} using different methods. In the sequel \cite{bojkoquot}, we are going to obtain these results for $[\textnormal{Quot}_{S}(E,n)]^{\textnormal{vir}}$ for any surface and $E$ a torsion-free sheaf using the current approach. These cases could not be addressed in this generality using the same methods as in \cite{AJLOP}.

Let us for now set up the general framework for $[\textnormal{Quot}_S(\mathbb{C}^N,n)]^{\textnormal{vir}}$ for any smooth projective surface $S$.
\begin{definition}
\begin{itemize}
    \item We consider this time the abelian category $\mathcal{B}_N$ of  triples $(E,V,\phi)$, where $V$ is a complex vector space, $F$ is a zero-dimensional sheaf, and $\phi: V\otimes\mathbb{C}^N\otimes \mathcal{O}_S\xrightarrow{\phi}F$ is a morphism of sheaves.\footnote{If we continued using the category of pairs $V\otimes \mO_S\to F$ without restricting to $\mB_N$, we would wall-cross again into $[\textnormal{Hilb}^n(S)]^\vir$ not into the Quot schemes. At least if we follow the assumptions of \protect\cite[§5]{JoyceWC} carefully.}
    \item The moduli stack $\mathcal{N}^N_1$ is the stack of objects in $\mB_N$. It is constructed in a similar way as in Definition \ref{definition pairs}.
    \item We define $\Theta^{N,\textnormal{pa}}$ by
       \begin{align*} 
 \Theta^{N,\textnormal{pa}}_{(n_1p,d_1),(n_2p,d_2)} =& (\pi_{n_1p,d_1}\times \pi_{n_2p,d_2})^* \Big\{(\Theta_{n_1p,n_2p})_{1,3}\\&
\oplus  \Big((\mathcal{V}^{\oplus N}_{d_1})\boxtimes \pi_{2\,*}(\mathcal{E}_{n_2p})^\vee\Big)_{2,3}[1]\oplus \big(\mV_{d_1}\boxtimes \mV_{d_2}^*\big)_{2,4} \Big\}
     \numberthis
   \end{align*}
    with $\Theta_{n_1p,n_2p}=\underline{\textnormal{Hom}}_{\mathcal{M}_{n_1p}\times \mathcal{M}_{n_2p}}(\mathcal{E}_{n_1p},\mathcal{E}_{n_2p})^\vee$ and the form $\chi^{N,\textnormal{pa}}\big((n_1p,d_1),(n_2p,d_2)\big)=\textnormal{rk}\big(\Theta^{N,\textnormal{pa}}_{(n_1p,d_1),(n_2p,d_2)}\big)=   -Nd_1n_2+d_1d_2$.
\end{itemize}
The rest of the data has an obvious modification, which we do not mention here.
\end{definition}
Note that when working with Behrend--Fantechi virtual fundamental classes (at least in the setting of \cite{JoyceWC}), the correct vertex algebra structure requires the symmetrization of $\Theta^{\textnormal{pa}}$, which leads to the following set of data:
\begin{equation}
\big((\mathcal{N}^N_1)^{\textnormal{top}},\ZZ\times \ZZ,\mu_{\mathcal{N}_1}^{\textnormal{top}},\mu_{\mathcal{N}_1}^{\textnormal{top}}, 0^{\textnormal{top}}, \llbracket \Theta^{N,\textnormal{pa}} +\sigma^*\big(\Theta^{N,\textnormal{pa}}\big)^\vee\rrbracket,\epsilon^N\big)
\end{equation}
where $\epsilon^N_{(n_1p,d_1),(n_2p,d_2)} = (-1)^{Nd_1n_2+d_1d_2}$.
We have again a universal family $\mathbb{C}^N\otimes \mathcal{O}_{S\times \textnormal{Quot}_{S}(\mathbb{C}^N,n)}\to \mathcal{F}$ giving us
$$
    \begin{tikzcd}
       &\mathcal{N}^N_1\arrow[d,"\Pi^{\textnormal{pl}}"]\\
\Quot \arrow[ur,"\iota_{n,N}"]\arrow[r,"\iota^{\textnormal{pl}}_{n,N}"]&(\mathcal{N}^N_1)^{\textnormal{pl}}
    \end{tikzcd}\,.
$$
and $$[\Quot]^{\vir}_{\mN} \in H_*(\mathcal{N}^N_1)\,.$$
We take the obvious modification $\tau^{\textnormal{pa}}_N$ of Joyce--Song stability, such that $\mathbb{C}^N\otimes \mathcal{O}_X\xrightarrow{\phi} F$ is $\tau^{\textnormal{pa}}_N$\textit{-stable} if and only if $\phi$ is surjective. Therefore, we again obtain 
$$\Quot = N^{\textnormal{st}}_{(np,1)}(\tau^{\textnormal{pa}}_N)\,,\quad  [\Quot]^{\textnormal{vir}} = [N^{\textnormal{st}}_{(np,1)}(\tau^{\textnormal{pa}}_N)]^{\textnormal{vir}}\,.$$ 
Once the work of Joyce \cite{JoyceWC} is complete, the following conjecture will be a consequence of a more general theorem after proving that some axioms are satisfied. \footnote{Joyce has made his work \protect\cite{JoyceWC} publicly available and the necessary assumptions from \protect\cite[§5]{JoyceWC} have been checked in \protect\cite[Thm. A.4]{bojkoquot}. This makes the Corollary below and all the results depending on it into a theorem. To emphasize the timeline of the development of the subject, I keep it as a conjecture in this work.} 
\begin{conjecture}
\label{conjecture quot WC}
For any smooth projective surface $S$, in $\check{H}_{*}\big(\mathcal{N}^N_1\big)$  we have for all $n,N$
$$
[\textnormal{Quot}_S(\mathbb{C}^N,n)]^{\vir}_{\mN}=\sum_{\begin{subarray}
a k>0,n_1,\ldots,n_k\\
n_1+\ldots +n_k=n
\end{subarray}}\frac{(-1)^k}{k!}\big[\big[\ldots \big[[\mathcal{N}_{(0,1)}]^{\inv},[\mathcal{M}_{n_1p}^{\textnormal{ss}}]^{\inv}],\ldots ],[\mathcal{M}_{n_kp}^{\textnormal{ss}}]^{\inv}]
$$
for some $[\mathcal{M}_{np}^{\textnormal{ss}}]^{\inv}\in \check{H}_2(\mathcal{N}^N_1)$.
\end{conjecture}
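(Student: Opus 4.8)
The plan is to treat Conjecture~\ref{conjecture quot WC} as the surface incarnation of the pair wall-crossing identity (part iii.\ of Conjecture~\ref{conjecture WC}), and to deduce it from Joyce's forthcoming homological wall-crossing package \cite{JoyceWC}. The decisive simplification relative to the Calabi--Yau fourfold case is that on a surface every moduli stack occurring here carries a genuine Behrend--Fantechi perfect obstruction theory --- on $\Quot$ the one of \eqref{eqClF} --- so no orientation data is needed and the homological machinery applies directly, without the extra input of Theorem~\ref{theorem CGJ}.

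First I would build the vertex algebra and its associated Lie algebra on $\check{H}_*(\mathcal{N}^N_0)$ out of the data recorded before the conjecture, namely $\big((\mathcal{N}^N_0)^{\textnormal{top}},\Z\times\Z,\mu^{\textnormal{top}},\mu^{\textnormal{top}},0^{\textnormal{top}},\llbracket\Theta^{N,\textnormal{pa}}+\sigma^*(\Theta^{N,\textnormal{pa}})^\vee\rrbracket,\epsilon^N\big)$, verifying the hypotheses of Definition~\ref{definition haltop}: the self-duality $\sigma^*(-)=(-)^\vee$ is now automatic by the symmetrization, while additivity under $\mu$ and weight-one behaviour under the $\GG_m$-action hold because $\mathcal{V}_d$ and $\pi_{2\,*}(\mathcal{E}_{np})$ are weight $1$ and additive under direct sums, exactly as in Lemma~\ref{lemmapaircon}. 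In parallel I would introduce a topological model $\mathcal{P}_S=\mathcal{C}_S\times BU\times\Z$ carrying a class $\theta^N_{\mathcal{P}}$ patterned on $\Theta^{N,\textnormal{pa}}$ together with a morphism $\Omega:(\mathcal{N}^N_0)^{\textnormal{top}}\to\mathcal{P}_S$ of H-spaces with $\mathbb{C}\mathbb{P}^{\infty}$-action inducing a Lie algebra morphism $\bar\Omega_*$, repeating Proposition~\ref{propmorphism}; the Serre-duality-in-K-theory step is immediate here because it holds tautologically for the symmetrized complex. Next I would record the identification $\Quot=N^{\textnormal{st}}_{(np,1)}(\tau^{\textnormal{pa}}_N)$ and match the Behrend--Fantechi theory \eqref{eqClF} with the one cut out on the stable locus by $\Theta^{N,\textnormal{pa}}$ up to the trace shift, so that $[\Quot]_{\textnormal{vir}}$ becomes a well-defined class in $\check{H}_0$ of the pair stack. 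The asserted formula is then precisely Joyce's pair wall-crossing identity, whose proof in \cite{JoyceWC} writes the stable-pair class as an iterated residue in the homological Hall algebra and collapses it to iterated Lie brackets of the invariants $[\mathcal{M}^{\textnormal{ss}}_{np}]_{\textnormal{inv}}\in\check{H}_2(\mathcal{N}^N_0)$ attached to $\tau$-semistable zero-dimensional sheaves (which are stability-independent here, all having reduced Hilbert polynomial $1$). To make the argument unconditional I would have to verify Joyce's axioms for the present moduli problem: finite type and properness of $\mathcal{M}^{\textnormal{ss}}_{np}$ and of the stable-pair spaces, existence and functoriality in families of the obstruction theories, boundedness ensuring the sum is finite, and correct restriction of $\Theta^{N,\textnormal{pa}}$ along the inclusions of the strata.

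The main obstacle is exactly the one the text flags: the general theorem of \cite{JoyceWC} is not yet in final form, so this route yields the statement only modulo those axioms. A self-contained fallback, which I would also pursue, is to bootstrap $[\mathcal{M}^{\textnormal{ss}}_{np}]_{\textnormal{inv}}$ in the style of Theorem~\ref{theorem workhorse}: begin at $N=1$, where $\textnormal{Quot}_S(\mathbb{C}^1,n)=\textnormal{Hilb}^n(S)$ and the tautological series $I(L;q)$ is known in closed form (Ellingsrud--G\"ottsche--Lehn, Lehn, Marian--Oprea--Pandharipande), invert the pair wall-crossing formula inductively in $n$ to pin down each $[\mathcal{M}^{\textnormal{ss}}_{np}]_{\textnormal{inv}}$ uniquely --- using connectedness of $\textnormal{Hilb}^n(S)$ to fix signs from one nonvanishing invariant --- and then read off the formula for arbitrary $N$ by applying $\bar\Omega_*$ together with the explicit fields \eqref{fields} of the lattice vertex algebra. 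The residual difficulties there are establishing $\bar\Omega_*$ and the normalization of the generator $[\mathcal{N}_{(0,1)}]_{\textnormal{inv}}$ for surfaces with $b_1(S)\neq0$ (the case deferred to Remark~\ref{remark b1}), and checking that the inductive inversion is solvable, i.e.\ that the attendant ``diagonal'' linear system on the coefficients --- of the same shape as the one discussed after \eqref{Qmap} --- is invertible.
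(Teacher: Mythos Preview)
The statement is labeled a \emph{Conjecture} in the paper and is not proved there. The sentence immediately preceding it is the paper's entire treatment: ``Once the work of Joyce \cite{JoyceWC} is complete, the following conjecture will be a consequence of a more general theorem after proving that some axioms are satisfied.'' The paper then \emph{assumes} Conjecture~\ref{conjecture quot WC} in Lemma~\ref{lemma workhorse} and Proposition~\ref{mega theorem surfaces} to compute invariants. Your first route --- reduce to Joyce's pair wall-crossing identity and acknowledge that the obstruction is the unfinished state of \cite{JoyceWC} --- is therefore exactly the paper's position, not a proof but a pointer to where a proof is expected to come from.

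Your fallback, on the other hand, does not prove the conjecture. Inverting the wall-crossing formula at $N=1$ to pin down $[\mathcal{M}^{\textnormal{ss}}_{np}]_{\textnormal{inv}}$ already \emph{presupposes} the identity LHS $=$ RHS at $N=1$: you are using it to solve for the unknowns on the RHS. What you obtain is precisely what the paper does in Lemma~\ref{lemma workhorse} --- a computation of $\mathscr{N}_{np}$ \emph{conditional} on Conjecture~\ref{conjecture quot WC}. Passing to general $N$ via $\bar{\Omega}_*$ and the explicit fields \eqref{fields} then lets you evaluate the RHS inside $\check{H}_*(\mathcal{P}_S)$, but it does not touch the LHS $[\textnormal{Quot}_S(\mathbb{C}^N,n)]_{\textnormal{vir}}$, which is defined independently via the obstruction theory \eqref{eqClF}. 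Matching the pushforward of that class to your computed RHS for $N>1$ is exactly the content of the conjecture, so the argument is circular. (One could hope to run a consistency check against the known closed formulae of \cite{AJLOP}, but even perfect agreement in $\check{H}_*(\mathcal{P}_S)$ would not establish equality in $\check{H}_*(\mathcal{N}^N_0)$, since $\bar{\Omega}_*$ need not be injective.)
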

We again construct the vertex algebra on topological pairs and the \textit{$L$-twisted} vertex algebra.
\begin{definition}
\label{def twistedbylquot}
Define the data $\big(\mathcal{P}_S, K(\mathcal{P}_S), \Phi_{\mathcal{P}_S}, \mu_{\mathcal{P}_S}, 0, \theta^{L,N}_{\mathcal{P}_S}, \tilde{\epsilon}^{L,N}\big)$, $\big(\mathcal{P}_S, K(\mathcal{P}_S), \Phi_{\mathcal{P}_S}, \mu_{\mathcal{P}_S}, 0, \theta^N_{\mathcal{P}_S}, \tilde{\epsilon}^{N}\big)$ as follows:

\begin{itemize}
    \item $K(\mathcal{P}_S) = K^0(S)\times \ZZ$.
    \item Set $\mathfrak{L} = \pi_{2\,*}(\pi_S^*(L)\otimes  \mathfrak{E})\in K^0(\mathcal{C}_S)$.  Then on $\mathcal{P}_S\times \mathcal{P}_S$ we define
   $$
   \theta^{N, \textnormal{ob}}=(\theta)_{1,3}-N\Big(\mathfrak{U}\boxtimes \pi_{2\,*}(\mathfrak{E})^\vee\Big)_{2,3} + \big(\mathfrak{U}\boxtimes \mathfrak{U}^{\vee}\big)_{2,4}$$ where $\theta= \pi_{2,3\,*}\big(\pi^*_{1,2}(\mathfrak{E})\cdot\pi^*_{1,3}(\mathfrak{E})^\vee\big)$ and
\begin{align*}
  &\theta^N_{\mathcal{P}_S} =  \theta^{N,\textnormal{ob}} +\sigma^*(\theta^{N,\textnormal{ob}})^\vee\\
   & \theta^{L,N}_{\mathcal{P}_S} =\theta_{\mathcal{P}_S,N}+  N\Big(\mathfrak{U}\boxtimes \mathfrak{L}^\vee\Big)_{2,3}+N\Big(\mathfrak{L}\boxtimes \mathfrak{U}^\vee\Big)_{1,4}\,,
\end{align*}
\item The symmetric forms $\tilde{\chi}^N: (K^0(S)\times \ZZ)\times (K^0(S)\times \ZZ)\to \ZZ$, $\tilde{\chi}^{L,N}: (K^0(S)\times \ZZ)\times (K^0(S)\times \ZZ)\to \ZZ $ are given by 
\hspace{-20mm}\begin{align*}
\label{chiLtilquot}
\tilde{\chi}^N\big((\alpha,d),(\beta,e)\big)& = \chi(\alpha,\beta)+\chi(\beta,\alpha)-dN\chi(\beta)-eN\chi(\alpha)+2de\,,\\
\tilde{\chi}^{L,N}\big((\alpha,d),(\beta,e)\big)&=\chi(\alpha,\beta)  -dN\big(\chi(\beta)-\chi(\beta\cdot L)\big) \\
&-eN\big(\chi(\alpha) - \chi(\alpha\cdot L)\big)+2de\,.
\numberthis
\end{align*}
\item The signs are defined by $\tilde{\epsilon}^N_{(\alpha,d),(\beta,e)} =(-1)^{\chi(\alpha,\beta)+Nd\chi(\beta)+de}$ and $\tilde{\epsilon}^{L,N}_{(\alpha,d),(\beta,e)} =(-1)^{\chi(\alpha,\beta)+Nd\big(\chi(\beta)-\chi(L\cdot\beta)\big)+de}$. 
\end{itemize}
We denote by $(\hat{H}_*(\mathcal{P}_S),\ket{0}, e^{zT}, Y^N)$, resp. $(\hat{H}^L_*(\mathcal{P}_S),\ket{0}, e^{zT}, Y^{L,N})$ the vertex algebras associated to this data and $(\check{H}_*(\mathcal{P}_S),[-,-]^N)$, resp. $(\check{H}^L_*(\mathcal{P}_X),[-,-]^{L,N})$ the corresponding Lie algebras. We now consider the map
\begin{equation}
    \Omega^N=( \Gamma\times\textnormal{id})\circ (\Sigma^N)^{\textnormal{top}}:(\mN_1^N)^{\textnormal{top}}\to  (\mathcal{M}_X)^{\textnormal{top}}\times BU\times\ZZ\to \mathcal{C}_X \times BU\times\ZZ\,,
\end{equation} 
where $\Sigma^N$ maps a point $[E,V,\phi]$ to $[E,V\otimes \mathcal{O}_S]$.
\end{definition}
Let $\mathbb{B}=B\sqcup \{(0,1)\}$, where $B=\bigsqcup_{i=1}^4B_i$, $\textnormal{ch}(B_i)$ basis of $H^i(S)$ with $B_0 =\{\llbracket\mathcal{O}_S\rrbracket \}$, $B_4 = \{p\}$. Combining all the ideas we used for fourfolds, we can state the following:
\begin{proposition}
\label{propmorphismLquot}
Let $\Q[K^0(S)\times \ZZ]\otimes_{\Q}\textnormal{SSym}_{\Q}[u_{\sigma,i}, \sigma\in \mathbb{B},i>0]$ be the generalized super-lattice vertex algebra associated to $\big((K^0(S)\oplus\ZZ)\oplus K^1(S),(\tilde{\chi}^{L,N})^\bullet\big)$, resp. $\big((K^0(S)\oplus\ZZ)\oplus K^1(S),(\tilde{\chi}^N)^\bullet\big)$, where $(\tilde{\chi}^N)^\bullet = \tilde{\chi}^N\oplus \chi^-$, $(\tilde{\chi}^{L,N})^\bullet = \tilde{\chi}^{L,N}\oplus \chi^-$  and 
\begin{align*}
\chi^-&: K^1(S)\times K^1(S)\to \ZZ\,,\\ \chi^-(\alpha,\beta) &= \int_S\textnormal{ch}(\alpha)^\vee\textnormal{ch}(\beta)\textnormal{Td}(S)+\int_S\textnormal{ch}(\beta)^\vee\textnormal{ch}(\alpha)\textnormal{Td}(S)\,.
\end{align*}
Notice that all three pairings $\tilde{\chi}^N$, $\tilde{\chi}^{L,N}$ and $\tilde{\chi}$ are symmetric unlike the case in Proposition \ref{theorem VA}.
The isomorphism \eqref{isohomol} induces an isomorphism of graded vertex algebras for all $N$:
\begin{align*}
\hat{H}_*(\mathcal{P}_S)\cong \Q[K^0(S)\times \ZZ]\otimes_{\Q}\textnormal{SSym}_{\Q}[ u_{\sigma,i}, \sigma\in \mathbb{B},i>0]\,,\\
\hat{H}^L_*(\mathcal{P}_S)\cong \Q[K^0(S)\times \ZZ]\otimes_{\Q}\textnormal{SSym}_{\Q}[u_{\sigma,i}, \sigma\in \mathbb{B},i>0]\,.
\end{align*}
 \sloppy The map $(\Omega^N)_*: H_*(\mathcal{N}^N_1)\to H_*(\mathcal{P}_S)$ induces morphisms of graded vertex algebras $(\hat{H}_*(\mathcal{N}^N_1),\ket{0},e^{zT}, Y^N)\to (\hat{H}_*(\mathcal{P}_S),\ket{0},e^{zT}, Y^N)$, $(\hat{H}^L_*(\mathcal{N}^N_1),\ket{0},e^{zT}, Y^{L,N})\to (\hat{H}^L_*(\mathcal{P}_S),\ket{0},e^{zT}, Y^{L,N})$ and of graded Lie algebras
\begin{align*}
\bar{\Omega}_* :\big(\check{H}_*(\mathcal{N}^N_1),[-,-]^N\big)\longrightarrow \big(\check{H}_*(\mathcal{P}_S),[-,-]_N\big)\,,\\
\bar{\Omega}_* :\big(\check{H}^L_*(\mathcal{N}^N_1),[-,-]^{L,N}\big)\longrightarrow \big(\check{H}^L_*(\mathcal{P}_S),[-,-]^L_N\big)\,.
\end{align*}
\end{proposition}
We will from now on omit writing $N$ in the superscript of $Y^N$, $Y^{L,N}$, $[-,-]^N$ and $[-,-]^{L,N}$ to easy the notation. 

 The following result replaces Theorem \ref{theorem workhorse} and it is noticeably simpler due to canonical orientations. We use the notation
$$
\mathscr{Q}_{N,n}=\bar{\Omega}^N_*\Big(\big[\Quot\big]^{\vir}_{\mN}\Big)\,,\quad\textnormal{and}\quad  \mathscr{M}_{np}= \bar{\Omega}^N_*\big([\mathcal{M}_{np}]^{\inv}\big)\,.
$$
\begin{lemma}
\label{lemma workhorse}
Let $S$ be a smooth projective surface with $b_1(S)=0$. If Conjecture \ref{conjecture quot WC} holds, then 
$$
  \mathscr{M}_{np} = e^{(np,1)}\otimes 1\cdot \mathscr{N}_{np} + \Q T(e^{(np,1)}\otimes 1)\,,
$$
where for the series $\mathscr{N}(q) = \sum_{n>0}\mathscr{N}_{np}q^n$ we have
$$
\textnormal{exp}\big(\mathscr{N}(q)\big) =  \Big(1-e^pq\Big)^{\Big(\sum_{v\in B_2}c_1(S)_vu_{v,1}\Big)}\,.
$$
If $c_1(S)^2=0$, we have 
\begin{equation}
\label{QNn}
 1+ \sum_{n>0}\frac{\mathscr{Q}_{N,n}}{e^{(np,1)}}q^n=\textnormal{exp}\bigg[\sum_{n>0}[z^{nN-1}]\Big\{\sum_{v\in B_2}-\frac{c_{1,v}}{n}U_v(z)\textnormal{exp}\Big[\sum_{k>0}\frac{ny_k}{k}z^k\Big]\Big\}q^n\bigg]\,.
\end{equation}
\end{lemma}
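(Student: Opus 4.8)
The plan is to mirror, for surfaces, the two-part argument of Theorem~\ref{theorem workhorse} together with Theorem~\ref{theoremhilb}, but with all the simplifications that come from the fact that on a surface the $\textnormal{Quot}$-scheme carries a Behrend--Fantechi obstruction theory and hence canonical orientations, so no sign-chasing over very ample line bundles is needed. First I would pin down the analogue of Proposition~\ref{proposition 68}: under $b_1(S)=0$ (so that $H^1(S,\mathcal O_S)=0=H^1(S,\Q)$ and hence $K^1(S)$ plays no role, as in Remark~\ref{remark H}), the image of $(\Omega^N_{np})_*:H_2(\mathcal N^N_0\text{-component})\to H_2(\mathcal C_S)$ lands in $H^2(S)\oplus H^4(S)$ by the same dimension count on $\textnormal{ch}(\mathcal E_{np})$ (it vanishes below degree $2$), and since $T(e^{(np,1)}\otimes 1)=e^{(np,1)}\otimes n\,u_{p,1}$ the $H^4(S)$ piece is exactly $T$ of weight $0$, leaving $\mathscr N_{np}=\sum_{v\in B_2}a_v(n)u_{v,1}$. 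This is the surface version of \eqref{Nnpdef}.

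Next I would compute $\mathscr N_{np}$ itself. Here I would run the induction on $n$ using Conjecture~\ref{conjecture quot WC} applied to $[\textnormal{Quot}_S(\CC^1,n)]_{\textnormal{vir}}=[\textnormal{Hilb}^n(S)]_{\textnormal{vir}}$ and the $L$-twisted bracket coming from Definition~\ref{def twistedbylquot} with $N=1$; using $\tilde\chi^L$ from \eqref{chiLtilquot} the Lie bracket formula \eqref{fields} collapses (exactly as in \eqref{liebrackettwL}) to $[e^{(mp,1)}\otimes 1,\ e^{(np,0)}\otimes\mathscr N_{np}]^L=\pm e^{((m+n)p,1)}\otimes\sum_{v\in B_2}\big(\int_S c_1(L)\,\textnormal{ch}(v)\big)a_v(n)$, and comparing with the known value of $I(L,q)=1+\sum_n q^n\int_{[\textnormal{Hilb}^n(S)]^{\textnormal{vir}}}c_n(L^{[n]})$ (which for elliptic $S$ is $\big(\tfrac{1}{1-q}\big)^{c_1(L)\cdot c_1(S)}$, equivalently $M$-free because $K_S^2=0$, from the cited results of \cite{MOP1,OP1,AJLOP}) forces $\sum_l \tfrac{n}{l^2}\rightsquigarrow$ nothing: in fact for $K_S^2=0$ the tautological series $I(L,q)$ has the single-exponential form, so the recursion yields $a_v(n)=c_1(S)_v$ with no divisor sum, i.e. $\textnormal{exp}(\mathscr N(q))=(1-e^pq)^{\sum_{v\in B_2}c_1(S)_v u_{v,1}}$. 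The signs here are literally the Behrend--Fantechi canonical ones (no orientation ambiguity), which is why this step is shorter than Theorem~\ref{theorem workhorse}.

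Finally, for the $\textnormal{Quot}$-formula \eqref{QNn} I would substitute this $\mathscr N_{np}$ into the wall-crossing sum of Conjecture~\ref{conjecture quot WC}, reorganise the iterated brackets as in the proof of Theorem~\ref{theoremhilb}, and evaluate the fields. The key input is that $\Theta^{N,\textnormal{pa}}$ has rank $-Nd_1n_2$, so after symmetrisation the relevant $\tilde\chi$-coefficients are $\tilde\chi((np,0),(\llbracket\mathcal O_S\rrbracket,0))=nN$ and $\tilde\chi((np,0),(0,1))=-nN$ and zero on the rest of $B_{2,4}$, which makes $:Y(u_{v,1},z)Y(e^{(np,0)}\otimes 1,z):$ act (after dropping the $b_j$ and $u_{\llbracket\mathcal O_S\rrbracket,j}$ terms, legitimate by the surface analogue of Remark~\ref{remark nob}) by $(-1)^{?}z^{-nN}\big(\sum_k u_{v,k}z^{k}\big)\textnormal{exp}\big[\sum_j \tfrac{ny_j}{j}z^j\big]$ up to a harmless sign that is absorbed into $\epsilon^N$. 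Extracting $[z^{-1}]$ then produces the coefficient extraction $[z^{nN-1}]$ in \eqref{QNn}, and summing over $v\in B_2$ against the coefficients $-c_{1,v}/n$ coming from $\textnormal{log}(1-e^pq)$ gives the stated product-exponential. The main obstacle I anticipate is bookkeeping: getting the power $z^{-nN}$ (rather than $z^{-n}$) and the associated sign $\epsilon^N_{(n_1p,d_1),(n_2p,d_2)}=(-1)^{Nd_1n_2}$ to interact correctly through the iterated bracket so that the final series is sign-coherent — this is exactly the place where the $N$-dependence enters nontrivially and where a naive transcription of the fourfold computation would go wrong. Everything else (the vanishing of off-diagonal $\chi$-pairings on $B_{2,4}$, the reduction to linear-in-$u_{v,1}$ classes, the Lagrange-type resummation into the generating series) is routine given Proposition~\ref{propmorphismLquot} and the lemmas already established.
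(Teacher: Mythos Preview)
Your plan follows the paper's approach closely: determine $\mathscr{N}_{np}$ by comparing the $L$-twisted wall-crossing for $N=1$ with the known tautological series $I(L,q)$ on $\textnormal{Hilb}^n(S)$, then feed the result into the iterated-bracket computation \`a la Theorem~\ref{theoremhilb}. Two points need correction.

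First, a minor slip: from $I(L,q)=(1-q)^{-c_1(L)\cdot c_1(S)}$ the recursion gives $a_v(n)=\pm c_1(S)_v/n$, not $a_v(n)=c_1(S)_v$; it is the $1/n$ that produces $\log(1-e^pq)$ and hence the coefficients $-c_{1,v}/n$ you later use in \eqref{QNn}. Your final formulae are consistent with the correct value, so this is only a typo in the intermediate step.

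Second, and more substantively, your justification for why the annihilation part of $Y(u_{v,1},z)$ drops out is not right. You assert that the relevant $\tilde\chi$-pairings vanish ``on the rest of $B_{2,4}$'', but for $v,w\in B_2$ one has $\tilde\chi\big((v,0),(w,0)\big)=\chi(v,w)+\chi(w,v)=-2\int_S\textnormal{ch}_1(v)\,\textnormal{ch}_1(w)$, which is generically nonzero, and likewise $\tilde\chi\big((v,0),(mp,1)\big)=-N\chi(v)=-\tfrac{N}{2}\int_S\textnormal{ch}_1(v)\,c_1(S)$ does not vanish (unlike the CY fourfold case, where $\textnormal{td}_1(X)=0$). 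What actually kills these contributions in the iterated bracket is that every $u_{w,k}$ appearing in the inner factors $\mathcal{H}_{n_i}$ carries a coefficient proportional to $c_1(S)_w$, so after differentiating and summing over $w$ (and then over $v$ against the outer $c_1(S)_v$) one is left with integrals of the form $\int_S c_1(S)^2$, which vanish precisely because $S$ is elliptic. This is the content of the paper's remark ``using $c_1^2(S)=0$'' and is where the elliptic hypothesis genuinely enters the second formula; without it the clean product form \eqref{QNn} would acquire extra terms. Once you make this correction, your argument goes through and matches the paper's.
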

\begin{proof}
We have $[\textnormal{Quot}_S(\mathbb{C}^1,n)]^{\textnormal{vir}}\cap c_n(L^{[n]}) = [\textnormal{Hilb}^n(S)]\cap c_n\big((K_\textnormal{Hilb}^n(S))^\vee\big)\cap c_n\big(L^{[n]}\big)=(-1)^n[\textnormal{Hilb}^n(S)]\cap c_{2n}(K^{[n]}_S\oplus L^{[n]})$ for an algebraic line bundles $L\to S$. Then by \cite[eq. (18)]{MOPhigher}, we see
$$
I(L,q) = \bigg(\frac{1}{1-q}\bigg)^{c_1(L)\cdot c_1(X)}\,.
$$
Using that $H^2(X)=H^{1,1}(X)$ because of $b_1(X)=0$, we have $\textnormal{ch}(B_2)\subset H^{1,1}(X)$ and therefore the above result for algebraic line bundles is sufficient. Relying on Proposition \ref{propmorphismLquot}, we obtain by a similar computation as in the proof of Theorem \ref{theorem workhorse} that
$$
    [e^{(mp,1)}\otimes 1, e^{(np,0)}\otimes N_{np}]^L = -(-1)^ne^{(m+n)p,1}\otimes \sum_{v\in B_2}\int_Xc_1(L)\textnormal{ch}(v)a_{v}(n)\,.
$$
By simpler arguments than in the proof of Theorem \ref{theorem workhorse} because of the absence of orientations, we obtain $\mathscr{N}(np) = \frac{1}{n}\sum_{v\in B_2}c_1(S)_vu_{v,1}$\,. Then an analogous argument as in the proof of Theorem \ref{theoremhilb} leads to \eqref{QNn} because of $c^2_1(S)=0$.
\end{proof}
\begin{remark}
\label{remark b1}
Going through the above computation without the assumption $b_1(S)=0$, we need two modifications. Firstly, we would use the result of Oprea--Pandharipande \cite[Cor. 15]{OP1} instead of Marian--Oprea--Pandharipande  \cite[eq. (18)]{MOP1} in the proof of Lemma \ref{lemma workhorse} to avoid the issue of purely topological line bundles. One can moreover check that under the projection $\Pi_{\textnormal{even}}:\check{H}_*(\mathcal{P}_S)\to \check{H}_{\textnormal{even}}(\mathcal{P}_S)$ we still obtain the same results for a surface with $c_1(S)^2 = 0$. This is sufficient for us, because we never integrate odd cohomology classes, except when integrating polynomials in $\textnormal{ch}_k(T^{\textnormal{vir}})$, but as the only terms $\mu_{v,k}$ for $v\in B_{\textnormal{odd}}$ are given for $v\in B_3$, each such integral will contain a factor of $\chi^-(v,w)=0$ for $v,w\in B_3$. 
\end{remark}
As a consequence, we then obtain the following result which could also be extracted from AJLOP \cite{AJLOP} for a surface satisfying $c_1(S)^2 = 0$.

\begin{proposition}
\label{mega theorem surfaces}
Let $S$ be a smooth projective surface satisfying $c_1(S)^2=0$ and $f_0(\mathfrak{p},\cdot), f_1(\mathfrak{p},\cdot),\ldots, f_m(\mathfrak{p},\cdot)$ be power-series with $f(0,0)=1$, then define 
\begin{align*}
   \textnormal{Inv}_N(\vec{f}, \vec{\alpha},q)=1+\sum_{n>0}\int_{\big[\Quot\big]^{\textnormal{vir}}}f_0\big(T^{\textnormal{vir}}\big)f_1\big(\alpha^{[n]}_1\big)\cdots f_m\big(\alpha^{[n]}_m\big) q^n\,.
\end{align*}
Setting $\textnormal{rk}(\alpha_j)=a_j$, we have
\begin{align*}
      \textnormal{Inv}_N(\vec{f},\vec{a},q)& = \bigg[\prod_{j=1}^N\prod_{i=1}^m\frac{f_i( H_j(q))}{f_i(0)}\bigg]^{c_1(\alpha_j)\cdot c_1(S)}\,,
      \numberthis
\end{align*}
where $H_j(q)$, $j=1,\ldots, N$ are the different solutions for
$$
      q=\frac{H_j^N}{\prod_{i=1}^m f_i^{a_i}(H_j)\,f_0^N(H_j)}\,.
$$
\end{proposition}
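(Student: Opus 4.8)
The plan is to transcribe the proof of Proposition \ref{mega theorem} into the Quot-scheme setting, with Lemma \ref{lemma workhorse} (more precisely the explicit formula \eqref{QNn} for $\mathscr{Q}_{N,n}$) playing the role of Theorem \ref{theoremhilb}, and with Lemma \ref{Gessel-Lagrange} invoked in its full $N$-root form rather than the $N=1$ specialization used for fourfolds. By Proposition \ref{propmorphismLquot} we may work throughout with the explicit models of $\hat H_*(\mathcal{P}_S)$ and $\tilde H_*(\mathcal{P}_S)$, and by Remark \ref{remark b1} we may assume $K^1(S)=0$ and pass to the even part of homology; the hypothesis that $S$ is elliptic will enter exactly as in Lemma \ref{lemma workhorse}, through $c_1(S)^2=0$.

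First I would reduce each multiplicative-genus insertion to an exponential of an insertion linear in the $\mu_{v,k}$, following the proof of Lemma \ref{propmain}. For the tautological factors $f_i(\alpha_i^{[n]})$ this follows from the surface analogue of Lemma \ref{lemma chern}: inspecting \eqref{QNn} one sees that the only $\mu_{v,k}$ that can act nontrivially on $\mathscr{Q}_{N,n}$ are those with $v\in B_2$, entering through $\chi(\alpha_i^\vee,v)$, together with $v=p$, entering through $\chi(\alpha_i^\vee,p)=\textnormal{rk}(\alpha_i)=a_i$; this produces the generating series $A_i(z)=\log f_i(z)$. For the virtual tangent bundle factor $f_0(T^{\textnormal{vir}})$ I would use the obstruction complex \eqref{eqClF} for $\mathbb{C}^N$, whose topological class is, up to a constant, controlled by $\theta_{\mathcal{P}_S,N}$ from Definition \ref{def twistedbylquot}, and run the vanishing argument of Lemma \ref{propmain} (the analogue of \eqref{reducedchtheta}). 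Using $c_1(S)^2=0$ and the vanishing of the Euler characteristic of self-$\mathrm{Ext}$ of a zero-dimensional sheaf on a surface, this should show that $f_0(T^{\textnormal{vir}})$ reduces to a $\mu_{p,k}$-linear insertion whose net effect is to multiply the ``$Q$-series'' below by $f_0(z)^N$; the power $N$ comes from the $N$ copies $\big(\mathfrak{U}\boxtimes\pi_{2\,*}(\mathfrak{E})^\vee\big)$ appearing in $\theta_{N,\textnormal{ob}}$, with the symmetrized self-$\mathrm{Ext}$ part contributing trivially (in contrast with the fourfold case, where the self-dual $3$-term obstruction theory produces the symmetrization $\{f_0\}$).

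Next I would substitute these linear insertions into \eqref{QNn} and evaluate using Lemma \ref{linear exp}, exactly as in the derivation of Proposition \ref{theorem big}. Writing
$$Q(z)=f_0(z)^N\prod_{i=1}^m f_i(z)^{a_i},\qquad \phi_i(z)=\log f_i(z)-\log f_i(0),$$
the outcome is
$$\textnormal{Inv}_N(\vec f,\vec a,q)=\prod_{i=1}^m\exp\Big\{\sum_{n>0}\frac1n\,[z^{nN-1}]\big(\phi_i'(z)\,Q(z)^n\big)\,q^n\Big\}^{c_1(\alpha_i)\cdot c_1(S)}.$$
Applying Lemma \ref{Gessel-Lagrange} with this $Q$ and with $\phi=\phi_i$ identifies the inner sum with $\sum_{j=1}^N\big(\log f_i(H_j(q))-\log f_i(0)\big)$, where $H_1(q),\dots,H_N(q)$ are the $N$ solutions of $H_j^N=qQ(H_j)$, i.e. of $q=H_j^N/\big(f_0^N(H_j)\prod_i f_i^{a_i}(H_j)\big)$; exponentiating and regrouping the product over $(i,j)$ gives the asserted formula. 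Note that, in contrast with the fourfold case, no $U$-transformation appears: the divisor-sum combinatorics visible in \eqref{Hq} is absent from \eqref{QNn} because the relevant zero-dimensional point series on a surface is the rational function $(1-q)^{-1}$ rather than the MacMahon function.

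The step I expect to be the main obstacle is the reduction of the virtual tangent bundle insertion in the second step: pinning down exactly which terms of $\textnormal{ch}_k(\theta_{\mathcal{P}_S,N})$ survive the action on $\mathscr{Q}_{N,n}$ and confirming that the surviving contribution is precisely $f_0^N$ rather than a mixed expression. This is the surface counterpart of the computation around \eqref{reducedchtheta}, and it is more delicate than for fourfolds, because on a surface additional $\mu_{v,k}$ with $v\in B_2$ can a priori survive (since $c_1(S)$ enters $\chi(-,-)$ nontrivially), so one must verify that these drop out after summing against the coefficients $c_1(S)_v$ in \eqref{QNn} by way of $c_1(S)^2=0$; one must also keep track of the $\sigma^*$-symmetrization built into $\theta_{\mathcal{P}_S,N}$. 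Once this reduction is established, the remaining manipulations (Lemma \ref{linear exp} followed by Lemma \ref{Gessel-Lagrange}) are mechanical.
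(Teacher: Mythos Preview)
Your proposal is correct and follows essentially the same route as the paper: reduce the insertions to an exponential linear in the $\mu_{v,k}$ (the surface analogue of Lemma \ref{propmain}, with the $T^{\textnormal{vir}}$ factor contributing exactly the $f_0^N$ you identify), then evaluate against \eqref{QNn} via Lemma \ref{linear exp} and apply Lemma \ref{Gessel-Lagrange} in its full $N$-root form. The paper's proof is only a sketch asserting precisely these steps, and the point you flag as the main obstacle---checking that the $T^{\textnormal{vir}}$ insertion reduces to $Nb_k\chi(v)\mu_{v,k}$ with the $B_2$-part vanishing against $c_1(S)^2=0$---is indeed the one nontrivial verification left implicit there.
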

\begin{proof}
We can show again that
\begin{align*}
&\int_{[\Quot]^{\textnormal{vir}}}f_0(T^{\textnormal{vir}})f_i(\alpha_i^{[n]})\ldots f_m(\alpha_m^{[n]})\\
&= \int_{\mathscr{Q}_{N,n}}\textnormal{exp}\Big[\sum_{\begin{subarray}a
k>0\\
v\in B_{2,4}
\\
\end{subarray}}\sum_{i=1}^ma_{\alpha_i}(k)\chi(\alpha_i^\vee,v)\mu_{v,k}+Nb_k\chi(v)\mu_{v,k}\Big]\,,
\end{align*}
where $\sum_{k}\frac{a_{\alpha_i(k)}}{k!}q^k = \textnormal{log}\big(f_i(q)\big)$ and $\sum_{k>0}\frac{b_k}{k!}q^k = \textnormal{log}\big(f_0(q)\big)$.
The rest then follows from Lemma \ref{linear exp} and \ref{Gessel-Lagrange} by a similar computation as in §\ref{VFC section}.
\end{proof}
\begin{remark}
For an elliptic curve, $C$ the Quot scheme $\textnormal{Quot}_C(\mathbb{C}^N,n)$ carries the obstruction theory $\mathbb{F}=\Big(\tau_{[0,1]}\underline{\textnormal{Hom}}_{\textnormal{Quot}_C(\mathbb{C}^N,n)}(\mathcal{I},\mathcal{F})\Big)^\vee$ constructed by Marian--Oprea \cite{MO1} which is just a vector bundle of rank $nN$, therefore the construction of the vertex algebra is identical and the same computation applies. We leave it to the reader to check using \cite[Thm. 3]{OP1} that under the projection $\Pi_{\textnormal{even}}:\check{H}_*(\mathcal{P}_C)\to \check{H}_{\textnormal{even}}(\mathcal{P}_C)$ the generating series $1+\sum_{n>0}\frac{\mathscr{Q}_{N,n}}{e^{(np,1)}}$ is given by 
$$
\textnormal{exp}\Big[-\sum_{n>0}\frac{(-1)^n}{n}[z^{nN-1}]\Big\{U_{\llbracket \mathcal{O}_C\rrbracket}(z)\textnormal{exp}\Big[\sum_{k>0}\frac{ny_k}{k}z^k\Big]\Big\}q^n\Big]\,.
$$
\end{remark}

\bibliography{mybib.bib}
\end{document}